\newtheorem{theorem}{Theorem}[section]
\newtheorem{lemma}{Lemma}[section]
\theoremstyle{definition}
\newtheorem{definition}{Definition}[section]
\theoremstyle{corollary}
\newtheorem{corollary}{Corollary}[section]
\theoremstyle{remark}
\newtheorem{remark}{Remark}[section]
\theoremstyle{proposition}
\newtheorem{proposition}{Proposition}[section]
\numberwithin{equation}{section}
\let\div\undefined
\DeclareMathOperator{\div}{div}
\DeclareMathOperator{\ess}{ess}
\DeclareMathOperator{\sign}{sign}
\title[Parabolic equations with double phase flux]
      {Existence and regularity results for a class of parabolic problems with double phase flux of variable growth}
\subjclass{Primary: 35K65, 35K67, 35B65; Secondary: 35K55, 35K995}
 \keywords{Double phase parabolic problem, existence and uniqueness, global higher integrability of the gradient, Musielak-Orlicz spaces, second-order regularity}
 \email{arora@math.muni.cz}
 \email{shmarev@uniovi.es}
\thanks{The first author acknowledges the support from Czech Science Foundation,
project GJ19-14413Y, the second author was supported by the
Research Grant MTM2017-87162-P}% and PID2020-116287GB-I00, MICINN, Spain.}
\thanks{$^*$ Corresponding author: S.~Shmarev}
\begin{document}

\maketitle

% Enter the first author's name and address:
\centerline{\scshape Rakesh Arora}

\medskip

{\footnotesize
% please put the address of the first author
 \centerline{Department of Mathematics and Statistics, Masaryk University}
   \centerline{Building 08
Kotl\'{a}\v{r}sk\'{a} 2}
   \centerline{Brno 611 37, Czech Republic}
}

\medskip

\centerline{\scshape Sergey Shmarev$^*$}

\medskip

{\footnotesize
\centerline{Department of Mathematics, University of Oviedo}
\centerline{c/Federico García Lorca 18}
\centerline{Oviedo 33007, Spain}
}

\medskip

\bigskip

\begin{abstract}
We study the homogeneous Dirichlet problem for the equation
\[
u_t-\operatorname{div}\left((a(z)\vert \nabla u\vert ^{p(z)-2}+b(z)\vert \nabla u\vert ^{q(z)-2})\nabla u\right)=f\quad \text{in $Q_T=\Omega\times (0,T)$},
\]
where $\Omega\subset \mathbb{R}^N$, $N\geq 2$, is a bounded domain with $\partial\Omega \in C^2$. The variable exponents $p$, $q$ and the nonnegative modulating coefficients $a$, $b$ are given Lipschitz-continuous functions of the argument $z=(x,t)\in Q_T$. It is assumed that $\frac{2N}{N+2}<p(z),\ q(z)$ and that the modulating coefficients and growth exponents satisfy the balance conditions
\[
\text{$a(z)+b(z)\geq \alpha>0$ in $\overline{Q}_T$},\; \alpha=const;\qquad
\text{$\vert p(z)-q(z)\vert <\frac{2}{N+2}$ in $\overline{Q}_T$}.
\]
We find conditions on the source $f$ and the initial data $u(\cdot,0)$ that guarantee the existence of a unique strong solution $u$ with $u_t\in L^2(Q_T)$ and $a\vert \nabla u\vert ^{p}+b\vert \nabla u\vert ^q\in L^\infty(0,T;L^1(\Omega))$. The solution possesses the property of global higher integrability of the gradient,
\[
\vert \nabla u\vert ^{\min\{p(z),q(z)\}+r}\in L^1(Q_T)\quad \text{with any $r\in \left(0,\frac{4}{N+2}\right)$},
\]
which is derived with the help of new interpolation inequalities in the variable Sobolev spaces. The second-order differentiability of the strong solution is proven:
\[
D_{x_i}\left(\left(a\vert \nabla u\vert ^{p-2}+b\vert \nabla u\vert ^{q-2}\right)^{\frac{1}{2}}D_{x_j}u\right)\in L^2(Q_T),\quad i,j=1,2,\ldots,N.
\]
\end{abstract}

\tableofcontents

\section{Introduction}
Let $\Omega \subset \mathbb{R}^N$ be a smooth bounded domain, $N \geq 2$ and $0<T< \infty$. We consider the following parabolic equation with the homogeneous Dirichlet boundary conditions:
\begin{equation}\label{eq:main}
         \begin{cases}
             & u_t - \div\left(\mathcal{F}(z,\nabla u)\nabla u\right)
             = f(z)
             \quad\mbox{in $Q_T$},
             \\
             & \mbox{$u=0$ on $\Gamma_T$}, \quad  \mbox{$u(x,0)=u_0(x)$ in $\Omega$},
          \end{cases}
\end{equation}
where $z=(x,t)$ denotes the point in the cylinder $Q_T=\Omega \times (0,T]$ and $\Gamma_T= \partial\Omega \times (0,T)$ is the lateral boundary of the cylinder. The function

\[
\mathcal{F}(z,\nabla u)\nabla u=\left(a(z)\vert \nabla u\vert ^{p(z)-2} + b(z) \vert \nabla u\vert ^{q(z)-2}\right)\nabla u
\]
represents the flux that depends on the given coefficients $a(z)$, $b(z)$ and the variable exponents $p(z)$, $q(z)$. It is assumed that the coefficients $a$, $b$ are nonnegative in $Q_T$ and satisfy the condition
\begin{equation}
\label{eq:null-a-b}
\text{$a(z)+b(z)\geq \alpha$ in $\overline{Q}_T$ with a positive constant $\alpha$}.
\end{equation}

Equations of the type \eqref{eq:main} fall into the class of \textbf{double phase} equations intensively studied in the last decades. This name, introduced in \cite{colombo2015bounded,colombo2015regularity}, reflects the fact that the coercivity and growth conditions on the flux vary from point to point in the problem domain and depend on the relation between $p(z)$ and $q(z)$ and the properties of the coefficients. As special cases, equation \eqref{eq:main} contains the equations

\begin{equation}
\label{eq:example-1}
u_t=\operatorname{div}\left(\vert \nabla u\vert ^{p(z)-2}\nabla u\right)+f,\qquad a\equiv 1,\;b\equiv 0, \end{equation}
and

\begin{equation}
\label{eq:example-2}
\begin{split}
& u_t=\operatorname{div}\left(\vert \nabla u\vert ^{p(z)-2}\nabla u+b(z)\vert \nabla u\vert ^{q(z)-2}\nabla u\right)+f,
\\
& \qquad a(z)\equiv 1,\quad b(z)\geq 0,\quad q(z)\geq p(z).
\end{split}
\end{equation}
Equations \eqref{eq:example-1}, \eqref{eq:example-2} are prototypes of PDEs with variable growth conditions with a gap between the conditions of monotonicity and coercivity. In case of \eqref{eq:example-1}

\[
-c+\vert \xi\vert ^{p^-}\leq \mathcal{F}(z,\xi)\xi\cdot \xi \leq \vert \xi\vert ^{p^+}+C,
\]
where $c$, $C$ are nonnegative constants, $p^-=\inf p(z)<\sup p(z)=p^+$ with the supremun and infimum taken over the problem domain. Much attention has been paid in the literature to the study of the special case  \eqref{eq:example-1}. This interest is motivated by applications to the mathematical modelling of various real-world phenomena, such as the flows of electrorheological or thermorheological fluids, the thermistor problem, or the problem of processing of digital images. With no intention to provide an exhaustive list of references concerning equation \eqref{eq:example-1}, we confine ourselves to \cite{AMS-2004, Ant-Zh-2005, A-S, B-D-2011, E, Duzaar-Mingione-Steffen-2011, ZZX, ant-shm-book-2015}
and references therein for a review of the current state of the subject.

Equation \eqref{eq:example-2} furnishes an example of the double phase operator where the growth of the flux is controlled by the first or the second term depending upon the values of the modulating coefficient $b(z)$. More precisely, on the set $\{z \in Q_T: b(z)=0\}$ the growth is controlled by the $p(z)$-power of the gradient, while on
the set $\{z \in Q_T: b(z) \neq 0\}$ it is given by the sum of the $p(z)$ and $q(z)$ powers of the gradient. This is one of the reasons why we
regard \eqref{eq:example-2} as an equation with the double phase operator. Such operators appeared for the first time in the late 1970s and 1980s in the works by Ball \cite{ball-1976} and Zhikov \cite{zhikov-1986} for interpretation of physical processes in the nonlinear elasticity theory.
In particular, Zhikov \cite{zhikov-1986,Zhikov-1995} investigated the models of strongly anisotropic materials and introduced the following energy functional
\[
u \mapsto \int_{\Omega} \left( \vert \nabla u\vert ^p + b(x) \vert \nabla u\vert ^q\right) ~dx, \quad 0 \leq b(x) \leq L, \quad  1<p<q,
\]
where the modulating coefficient $b(x)$ dictates the geometry of the composite made of two materials with ordered hardening exponent $p$ and $q$. It is easy to see that the corresponding differential form of the integral functional can be drafted as
\[ u \mapsto  - \div\left(\vert \nabla u\vert ^{p-2}\nabla u+b(z)\vert \nabla u\vert ^{q-2}\nabla u \right).\]

The study of such operators was continued in the seminal works of Marcellini \cite{marcellini-1991, marcellini-2020} and Mingione et al. \cite{AMS-2004, colombo2015bounded, colombo2015regularity}. Later on, problems involving the double phase operators attracted attention of many researchers. Despite significant progress in the study of the double phase problems, they remain the subject of active research.
On the one hand, the study of equations with the double phase operators is a challenging mathematical problem. On the other hand, such problems  started appearing in a variety of other physical models. We refer here to \cite{bahrouni-2019} for applications in transonic flows, \cite{benci-2000} for quantum physics and  \cite{cherfils-2005} for reaction-diffusion systems. There is an extensive literature devoted to the study of the questions of existence and qualitative properties of solutions to the stationary and evolution double phase problems for single PDEs and systems of equations. For the stationary problems we refer to \cite{Alves-Radulescu-2020, chlebicka-2018,colombo2015bounded,colombo2015regularity, esposito2004sharp,Gasinski-Winkert-2020,Hasto-Ok-2019,Liu-Dai-2018,marcellini-1991,
ok-2020,Radulescu-2019,Radulescu-Zhang-2018}.

The results on the existence and qualitative properties of solutions to evolution problems can be found in  \cite{Alves-2021,Gwiazda-2021,Chlebicka-2019-1, Elmani-Meskine-2005-1,GWWrZ-2015,marcellini-2020,Swirczewska-Gwiazda-2014}. For a review of the existence results we refer to papers \cite{Gwiazda-2021,chlebicka-2018,Chlebicka-2019-1}. The natural analytic framework for the study of double-phase problems is the theory of Musielak-Orlicz spaces, for the $p(z)$-Laplace equation the natural function space is the variable Sobolev space $\mathbb{W}_{p(\cdot)}(Q_T)$. These functions space are introduced and briefly described in Section \ref{sec:results}. The existence results of \cite{Gwiazda-2021,Chlebicka-2019-1} are obtained in the context of the  Musielak-Orlicz spaces. When applied to the model equation \eqref{eq:example-2}, these results guarantee the existence of distributional solutions if the exponents $p$, $q$ and the coefficient $b$ satisfy certain regularity and balance conditions (see \cite[Example 1.18 (E2)]{Gwiazda-2021}) sufficient for the density of smooth functions in the Musielak-Orlicz space the solution belongs to.

The variational approach to a wide class of parabolic equations and systems with nonstandard growth is discussed in \cite{BDM-2013,marcellini-2020}, see also references therein for the previous results. The existence results of \cite{marcellini-2020} apply to the Dirichlet problem for the equations
\[
u_t=\operatorname{div} D_\xi f(x,u,\nabla u)-D_u f(x,u,\nabla u)
\]
with a convex integrand $f:\Omega\times \mathbb{R}\times \mathbb{R}^N\mapsto [0,\infty]$ satisfying the coercivity condition $f(x,u,\xi)\geq \nu \vert \xi\vert ^p-g(x)(1+\vert u\vert )$ with some constants $\nu>0$, $p>1$ and a nonnegative function $g\in L^{\frac{p}{p-1}}(\Omega)$. The initial function is assumed to satisfy the conditions
$u_0\in L^{2}(\Omega)\cap W^{1,p}(\Omega)$, $f(x,u_0,\nabla u_0)\in L^1(\Omega)$
(cf. with the assumptions on $u_0$ in Theorem \ref{th:main-result-1}). This class of equations includes \eqref{eq:example-1} and \eqref{eq:example-2} with constant $p,q$ and $b\equiv b(x)\geq 0$. For the latter, the integrand $f$ has the form $f(x,\xi) =\frac{1}{p}\vert \xi\vert ^{p}+\frac{b(x)}{q}\vert \xi\vert ^q$.

\medskip

In the present work, we address the following two questions.

\medskip

- Sufficient conditions of existence of strong solutions to problem \eqref{eq:main}. By the strong solution we mean a solution whose time derivative belongs to $L^2(Q_T)$ and $\mathcal{F}(z,\nabla u)\vert \nabla u\vert ^2\in L^{\infty}(0,T;L^{1}(\Omega))$, see Definition \ref{def:weak}.

- The global higher integrability of the gradient and second-order spatial regularity of the strong solution.\\

Our approach to the question of existence is different from \cite{Gwiazda-2021,Chlebicka-2019-1,marcellini-2020}. We find a solution via Galerkin's method as the limit of a sequence of finite-dimensional approximations. We show thereafter that the terms of this approximation sequence possess uniform estimates on the higher-order derivatives, which allows one to extract a subsequence with better regularity properties. For the solutions of problem \eqref{eq:main} we prove that
\begin{equation}
\label{eq:second-order-intr}
\sqrt{\mathcal{F}(z,\nabla u)}\nabla u \in W^{1,2}(Q_T).
\end{equation}

For the evolution $p$-Laplace equation with constant $p$, it is shown in \cite{Duzaar-Mingione-Steffen-2011} that $u_t\in L^{\frac{p}{p-1}}_{loc}(Q_T)$ and $D_{x_i}\left((\mu^2+\vert \nabla u\vert ^2)^{\frac{p-2}{4}}D_{x_j}u\right)\in L^2_{loc}(Q_T)$. The stronger result on the differentiablity of the flux is obtained in \cite{CM-2020}. It is shown that $u_t\in L^2(Q_T)$ and $\vert \nabla u\vert ^{p-2}\nabla u\in W^{1,2}(Q_T)$, provided that $f\in L^2(Q_T)$ and $u_0\in L^2(Q_T)\cap W^{1,p}_0(\Omega)$. This regularity result is sharp - see \cite[Remark 2.3]{CM-2020}. It is worth noting here that the use of Galerkin's approximations prevents one from employing the techniques developed in these works.

A crucial element of the proofs is the property of global higher integrability of the gradient. This property allows us to show that both terms of the flux function $\mathcal{F}$ are well-defined everywhere in $Q_T$, to prove strong convergence of the gradients of the approximations, and their higher regularity with respect to the spatial variables. The property of higher integrability of the gradient is interesting in itself. This is an intrinsic property of solutions of nonlinear elliptic and parabolic equations. For the solutions of evolution equations and systems of $p$-Laplace structure, it was established in \cite{Lewis-Kinnunen-2000} and has been studied since then in numerous works under different conditions on the nonlinear structure of the equation. We refer to \cite{AMS-2004, AB-2019, ABO-2020, Ant-Zh-2005, BDM-2013-ARMA, B-D-2011, Gianetti-Passarelli-Scheven-2020, hasto-ok, Parviainen-2009, parviainen-2007, ZZX, ZP} for the parabolic equations with nonstandard power growth. For the solutions of equation \eqref{eq:example-1}, the local version of this property reads as follows: if $u$ is a weak solution of equation \eqref{eq:example-1} and $\vert \nabla u\vert ^{p(z)}\in L^{1}(Q_T)$, then for every strictly interior sub-cylinder $Q'=\Omega'\times (\epsilon,T)$, $\Omega'\Subset \Omega$, $\epsilon>0$, there is a constant $\delta>0$ such that $\vert \nabla u\vert ^{p(z)+\delta}\in L^1(Q')$. The constant $\delta$ depends on the distance between the parabolic boundaries of $Q_T$ and $Q'$. It was recently shown in \cite{AB-2019} for constant $p$ and in \cite{ABO-2020} for variable $p(x,t)$ that this property remains valid in the domains $\Omega\times (\epsilon,T)$ under mild conditions on the regularity of $\partial\Omega$. The global higher integrability of the gradient in the whole cylinder $Q_T$ is proven in \cite{A-S} for the solutions of equation \eqref{eq:example-1}, now we prove its analogue for the solutions of equation \eqref{eq:main} in the absence of the order relation between the variable exponents $p(z)$ and $q(z)$. We show that
\begin{equation}\label{eq:higher:integra}
  \vert \nabla u\vert ^{\min\{p(z),q(z)\}+r}\in L^1(Q_T)\quad \text{with any $r\in \left(0,\frac{4}{N+2}\right)$}.
\end{equation}
In application to the solutions of equation \eqref{eq:example-1}, or equation \eqref{eq:example-2} with the exponents subject to the additional order condition $p(z)\leq q(z)$, this property reads as follows:
\[
\text{$\displaystyle \vert \nabla u\vert ^{p(z)+r}\in L^1(Q_T)$ with any $0<r<\frac{4}{N+2}$}.
\]
This result refines the property of global higher integrability of the gradient proven in \cite{A-S} for the solutions of equation \eqref{eq:example-1} and recovers the best order of local integrability of the gradient proven for weak solutions of equations with constant $(p,q)$-growth conditions in \cite{BDM-2013} in the case $2\leq p\leq q$, and in \cite{Singer-2015} for the case $\frac{2N}{N+2}<p<2$ and $p\leq q$.\\

Let us summarize the results and novelties of the present work.

\begin{itemize}
\item It is proven that problem \eqref{eq:main} has a unique strong solution. This result does not require any assumption on the null sets of the coefficients $a(z)$, $b(z)$,  except for condition \eqref{eq:null-a-b}. The constructed solution is continuous with respect to the problem data.

    \item The solution possesses the property of global higher integrability of the gradient \eqref{eq:higher:integra}.

\item Under the conditions of the existence theorem the solution $u(z)$ possesses the second-order spatial regularity \eqref{eq:second-order-intr} and $u_t\in L^2(Q_T)$.

    \item The existence and regularity results do not require any order relation between the variable exponents $p(z)$ and $q(z)$. This assumption is substituted by the balance condition on the admissible gap between the values of $p(z)$ and $q(z)$:
         \begin{equation}
         \label{eq:gap-intro}
         \max\{p(z),q(z)\}<\min\{p(z),q(z)\}+\frac{2}{N+2}.
         \end{equation}
         Moreover, we do not distinguish between the cases of degenerate or singular equations. The exponents are only subject to condition \eqref{eq:gap-intro} and may vary within the intervals $[p^-,p^+], [q^-,q^+]\subset (\frac{2N}{N+2},\infty)$ independently of each other.
\end{itemize}

\section{Assumptions and results}
\label{sec:results}

To formulate the results we have to introduce the function spaces the data and solutions of problem \eqref{eq:main} belong to.

\subsection{The functions spaces}

\subsubsection{Variable Lebesgue spaces} We begin with a brief description of the Lebesgue and
Sobolev spaces with variable exponents. A detailed insight into
the theory of these spaces and a review of the bibliography can be
found in \cite{UF,DHHR-2011,KR}. Let $\Omega \subset \mathbb{R}^N$, $N \geq 2$, be a
bounded domain with Lipschitz continuous boundary $\partial
\Omega$. Define the set

\[
\mathcal{P}(\Omega):= \{\text{measurable functions on} \ \Omega \ \text{with values in}\ (1, \infty)\}.
\]
Given $r \in \mathcal{P}(\Omega)$, we introduce the modular

\begin{equation}
\label{eq:modular} A_{r(\cdot)}(f)= \int_{\Omega} \vert f(x)\vert ^{r(x)}
~dx
\end{equation}
and the set

\[
L^{r(\cdot)}(\Omega) = \{f: \Omega \to \mathbb{R}\ \vert \ \text{measurable on}\ \Omega,\ A_{r(\cdot)}(f) < \infty\}.
\]
The set $L^{r(\cdot)}(\Omega)$ equipped with the Luxemburg norm

\[
\|  f\|  _{r(\cdot), \Omega}= \inf \left\{\lambda>0 : A_{r(\cdot)}\left(\frac{f}{\lambda}\right) \leq 1\right\}
\]
becomes a Banach space. By convention, from now on we use the notation

$$
r^- := \ess\min_{x \in \Omega} r(x), \quad r^+ := \ess\max_{x \in \Omega} r(x).
$$
If $r \in \mathcal{P}(\Omega)$ and $1< r^- \leq r(x) \leq r^+ < \infty$ in $\Omega$, then the following properties hold.

\begin{enumerate}%[label=(\roman*)]
\item $L^{r(\cdot)}(\Omega)$ is a reflexive and separable Banach space.
\item For every $f \in L^{r(\cdot)}(\Omega)$

\begin{equation}
\label{eq:mod-2}
\min\{\|  f\|  ^{r^-}_{r(\cdot), \Omega}, \|  f\|  ^{r^+}_{r(\cdot), \Omega}\} \leq A_{r(\cdot)}(f) \leq \max\{\|  f\|  ^{r^-}_{r(\cdot), \Omega}, \|  f\|  ^{r^+}_{r(\cdot), \Omega}\}.
\end{equation}
\item For every $f \in L^{r(\cdot)}(\Omega)$ and $g \in L^{r'(\cdot)}(\Omega)$, the generalized H\"older inequality holds:

\begin{equation}
\label{eq:Holder}
\int_{\Omega} \vert fg\vert  \leq \left(\frac{1}{r^-} + \frac{1}{(r')^-} \right) \|  f\|  _{r(\cdot), \Omega} \|  g\|  _{r'(\cdot), \Omega} \leq 2 \|  f\|  _{r(\cdot), \Omega} \|  g\|  _{r'(\cdot), \Omega},
\end{equation}
where $r'= \frac{r}{r-1}$ is the conjugate exponent of $r$.
\item If $p_1, p_2 \in \mathcal{P}(\Omega)$ and satisfy the inequality $p_1(x) \leq p_2(x)$ a.e. in $\Omega$, then $L^{p_2(\cdot)}(\Omega)$ is continuously embedded in $L^{p_1(\cdot)}(\Omega)$ and for all $u \in L^{p_2(\cdot)}(\Omega)$
 \begin{equation}
 \label{eq:emb-1}\|  u\|  _{p_1(\cdot), \Omega} \leq C\|  u\|  _{p_2(\cdot), \Omega},\qquad C=C(\vert \Omega\vert , p_1^\pm, p_2^\pm).
 \end{equation}
 \item For every sequence $\{f_k\} \subset L^{r(\cdot)}(\Omega)$ and $f \in L^{r(\cdot)}(\Omega)$
\begin{equation}
\label{eq:conv-modular}
\|  f_k-f\|  _{r(\cdot),\Omega} \to 0 \quad \text{iff $ A_{r(\cdot)}(f_k-f) \to 0$ as $k \to \infty$}.
\end{equation}
\end{enumerate}

\subsubsection{Variable Sobolev spaces}
The variable Sobolev space $W^{1,r(\cdot)}_0(\Omega)$ is the set of functions

\[
W^{1,r(\cdot)}_0(\Omega)= \{u: \Omega \to \mathbb{R}\ \vert \  u \in L^{r(\cdot)}(\Omega)\cap W^{1,1}_0(\Omega),\; \vert \nabla u\vert  \in L^{r(\cdot)}(\Omega) \}
\]
equipped with the norm

\[
\|  u\|  _{W^{1,r(\cdot)}_0(\Omega)}= \|  u\|  _{r(\cdot), \Omega} + \|  \nabla u\|  _{r(\cdot), \Omega}.
\]

\noindent If $r \in C^0(\overline{\Omega})$, the Poincar\'e inequality holds: for every $u\in W_0^{1,r(\cdot)}(\Omega)$

\begin{equation}
\label{eq:Poincare}
\|  u\|  _{r(\cdot),\Omega} \leq C \|  \nabla u\|  _{r(\cdot),\Omega}.
\end{equation}
Inequality \eqref{eq:Poincare} means that the equivalent norm of $W^{1,r(\cdot)}_0(\Omega)$ is given by

\begin{equation}
\label{eq:equiv-norm}
\|  u\|  _{W^{1,r(\cdot)}_0(\Omega)}=\|  \nabla u\|  _{r(\cdot),\Omega}.
\end{equation}
Let us denote by $C_{{\rm log}}(\overline{\Omega})$ the subset of $\mathcal{P}(\Omega)$ composed of the functions continuous on $\overline{\Omega}$ with the logarithmic modulus of continuity:

\begin{equation}
\label{eq:log-cont}
{p} \in C_{{\rm log}}(\overline{\Omega})\quad \Leftrightarrow\quad \vert p(x)-p(y)\vert \leq \omega(\vert x-y\vert )\quad \forall x,y\in \overline{\Omega}, \;\vert x-y\vert <\frac{1}{2},
\end{equation}
where {$\omega$} is a nonnegative function such that

\[
\limsup_{s\to 0^+}\omega(s)\ln \frac{1}{s}=C,\quad C=const.
\]
If {$r \in C_{{\rm log}}(\overline{\Omega})$}, then the set $C_{0}^\infty(\Omega)$ of smooth functions with finite support is dense in $W^{1,r(\cdot)}_0(\Omega)$. This property allows one to use the equivalent definition of the space $W^{1,r(\cdot)}_0(\Omega)$:

\[
W^{1,r(\cdot)}_0(\Omega)=\left\{\text{the closure of $C_{0}^{\infty}(\Omega)$ with respect to the norm $\|  \cdot\|  _{W^{1,r(\cdot)}_0(\Omega)}$}\right\}.
\]
Given a function $u\in W^{1,r(\cdot)}_0(\Omega)$ with {$r \in C_{{\rm log}}(\overline{\Omega})$}, the smooth approximations of $u$ in $W^{1,r(\cdot)}_0(\Omega)$ can be obtained by mollification.

\subsubsection{Musielak-Orlicz spaces}
Let $a_0,b_0:\overline{\Omega}\mapsto [0,\infty)$ be given functions, $a_0,b_0\in C^{0,1}(\overline{\Omega})$. Assume that the exponents $p(\cdot), q(\cdot)\in C^{0,1}(\overline{\Omega})$ take values in the intervals $[p^-,p^+]$, $[q^-,q^+]$.
Set

\[
s(x)=\max\{2,\min\{p(x),q(x)\}\},\quad r(x)=\max\{2,p(x)\},\quad \sigma(x)=\max\{2,q(x)\}
\]
and consider the function

\[
\mathcal{H}(x,t)=t^{s(x)}+a_0(x)t^{r(x)}+b_0(x)t^{\sigma(x)},\quad t\geq 0,\quad x\in \Omega.
\]
The set

\[
L^\mathcal{H}(\Omega)=\left\{u:\Omega\mapsto \mathbb{R}\ \vert  \ \text{$u$ is measurable},\,\rho_{\mathcal{H}}(u)=\int_{\Omega}\mathcal{H}(x,\vert u\vert )\,dx<\infty \right\}
\]
equipped with the Luxemburg norm

\[
\|  u\|  _{\mathcal{H}}=\inf \left\{\lambda>0:\,\rho_{\mathcal{H}}\left(\dfrac{u}{\lambda}\right)\leq 1\right\}
\]
becomes a Banach space. The space $L^\mathcal{H}(\Omega)$ is separable and reflexive \cite{Fan-2012}. By $W^{1,\mathcal{H}}(\Omega)$ we denote the Musielak-Sobolev space

\begin{equation}
\label{eq:M-O}
W^{1,\mathcal{H}}(\Omega)=\left\{u\in L^{\mathcal{H}}(\Omega):\,\vert \nabla u\vert \in L^{\mathcal{H}}(\Omega)\right\}
\end{equation}
with the norm

\[
\|  u\|  _{1,\mathcal{H}}=\|  u\|  _{\mathcal{H}}+\|  \nabla u\|  _{\mathcal{H}}.
\]
The space $W^{1,\mathcal{H}}_0(\Omega)$ is defined as the closure of $C_{0}^\infty(\Omega)$ with respect to the norm of $W^{1,\mathcal{H}}(\Omega)$.

For the elements of the space $W^{1,\mathcal{H}}_0(\Omega)$, the norm and modular convergence are equivalent:

\begin{equation}
\label{eq:norm-modular-M-O}
\begin{split}
& \|  u\|  _{W^{1,\mathcal{H}}_0(\Omega)}\to 0\quad \Leftrightarrow\quad \rho_{\mathcal{H}}(u)+\rho_{\mathcal{H}}(\vert \nabla u\vert )\to 0.
\end{split}
\end{equation}
The detailed presentation of the theory of Musielak-Orlicz spaces can be found in the monograph \cite{Hasto-Harjulehto-2019-book} and the references therein.

\subsubsection{Spaces of function depending on $x$ and $t$}
For the study of parabolic problem \eqref{eq:main} we use the spaces of functions depending on $z=(x,t)\in Q_T$. Assume that $\partial\Omega\in C^2$, $p(z)\in C^{0,1}(\overline{Q}_T)$ and define the spaces
\begin{equation}
\label{eq:spaces}
\begin{split}
 & \mathbb{V}_{p(\cdot,t)}(\Omega) = \{u: \Omega \to \mathbb{R}\ \vert \  u \in L^2(\Omega)
\cap W_0^{1,1}(\Omega),\,\vert \nabla u\vert ^{p(x,t)}\in L^{1}(\Omega)
\},\quad t\in (0,T),
\\
& \mathbb{W}_{p(\cdot)}(Q_T)= \{u : (0,T) \to \mathbb{V}_{p(\cdot,t)}(\Omega) \ \vert \ u \in L^2(Q_T), \vert \nabla u\vert ^{p(z)}\in L^1(Q_T)\}.
\end{split}
\end{equation}
The norm of $\mathbb{W}_{p(\cdot)}(Q_T)$ is defined by
\[
\|  u\|  _{\mathbb{W}_{p(\cdot)}(Q_T)}=\|  u\|  _{2,Q_T}+\|  \nabla u\|  _{p(\cdot),Q_T}.
\]
Since $p\in C^{0,1}(\overline{Q}_T)\subset C_{{\rm log}}(\overline{Q}_T)$, the space $\mathbb{W}_{p(\cdot)}(Q_T)$ is the closure of $C_{0}^{\infty}(Q_T)$ with respect to this norm.

\subsection{Main results}

Let $p,q: Q_T \mapsto \mathbb{R}$  be functions satisfying the following conditions:
\begin{equation}\label{assum1}
\begin{split}
&\frac{2N}{N+2} < p_- \leq p(z) \leq p_+,\quad
\frac{2N}{N+2} < q_- \leq q(z) \leq q_+ \quad  \text{in $\overline{Q}_T$}
\end{split}
\end{equation}
with positive constants $p^\pm$, $q^\pm$,

\begin{equation}
\label{eq:Lip-p-q}
\begin{split}
\text{$p(z),q(z)\in C^{0,1}(\overline{Q}_T)$ with the Lipschitz constant $L_{p,q}$}.
\end{split}
\end{equation}
Let us define the functions
\begin{equation}
\label{eq:s-prelim}
\underline{s}(z)=\min\{p(z),q(z)\},\quad \overline{s}(z)=\max\{p(z),q(z)\}
\end{equation}
and accept the notation
\begin{equation}
\label{eq:s}
\underline{s}^-=\min_{\overline{Q}_T}\underline{s}(z),\quad \overline{s}^+=\max_{\overline{Q}_T}\overline{s}(z), \quad \dfrac{2N}{N+2}<\underline{s}^-\leq \overline{s}^+\leq \max\{p^+,q^+\}.
\end{equation}
We will repeatedly use the following threshold numbers:
\begin{equation}
\notag
\label{eq:constants}
r^\sharp=\dfrac{4}{N+2}, \qquad r_\ast=\dfrac{2}{N+2}.
\end{equation}
The modulating coefficients {$a(\cdot)$ and $ b(\cdot)$} satisfy the following conditions:
\begin{equation}
\label{eq:a-b}
\begin{split}
& \text{$a(z)$, $b(z)$  are nonnegative in $\overline{Q}_T$},
\\
& \text{$a,b \in C^{0,1}(\overline{Q}_T)$ with the Lipschitz constant $L_{a,b}$},
\\
& \max_{\overline{Q}_T}a(z)=a^+,\quad \max_{\overline{Q}_T}b(z)=b^+,
\\
& \text{$a(z) + b(z) \geq \alpha >0$ in $\overline{Q}_T$ with a positive constant $\alpha$}.
\end{split}
\end{equation}

\begin{definition}
\label{def:weak}
A function {$u:Q_T\mapsto \mathbb{R}$} is called {\bf strong solution} of problem \eqref{eq:main} if

\begin{enumerate}

\item $u \in  \mathbb{W}_{\overline{s}(\cdot)}(Q_T)$,
    $u_t \in L^2(Q_T)$,
    $\vert \nabla u\vert  \in L^{\infty}(0,T; L^{r(\cdot)}(\Omega))$

    with $r(z)=\max\{2,\underline{s}(z)\}$,
\item for every $\phi \in \mathbb{W}_{\overline{s}(\cdot)}(Q_T)$
\begin{equation}
\label{eq:def}
\int_{Q_T} u_t \phi ~dz + \int_{Q_T} \mathcal{F}(z,\nabla u)\nabla u\cdot \nabla \phi ~dz=
%\left(a(z) \vert \nabla
%u\vert ^{p(z)-2} + b(z) \vert \nabla u\vert ^{q(z)-2}\right) \nabla u \cdot \nabla \phi ~dz =
\int_{Q_T} f \phi \,dz,
\end{equation}
\item for every $\psi \in C_0^1(\Omega)$
\[
\int_{\Omega} (u(x,t)-u_0(x)) \psi  ~dx \to 0\quad\text{as $t \to
0$}.
\]
\end{enumerate}
\end{definition}

The main results are given in the following theorems. The first result concerns the existence and uniqueness of strong solution of the problem \eqref{eq:main} in the sense of Definition \ref{def:weak}. We assume that the initial datum belongs to $W_0^{1,\mathcal{H}}(\Omega)$ (see \eqref{eq:M-O}):
\begin{equation}
\label{eq:ini}
\|  \nabla u_0\|  _{2,\Omega}^2+\int_\Omega \mathcal{F}((x,0),\nabla u_0)\vert \nabla u_0\vert ^2\,dx=K<\infty
\end{equation}
with the coefficients $a$, $b$ and exponents $p$, $q$ in the flux function $\mathcal{F}$ taken at the instant $t=0$.

\begin{theorem}[Existence and uniqueness]
\label{th:main-result-1}
Let $\Omega\subset \mathbb{R}^N$, $N\geq 2$, be a bounded domain with the boundary $\partial\Omega\in C^2$.
Assume that {$p(\cdot)$, $q(\cdot)$} satisfy conditions \eqref{assum1}, \eqref{eq:Lip-p-q},

\begin{equation}
\label{eq:gap-z}
\max_{z \in \overline{Q}_T} \vert p(z)-q(z)\vert  < r_\ast,
\end{equation}
and the coefficients $a(z), b(z)$ satisfy conditions \eqref{eq:a-b}. Then for every
$f_0 \in L^2(0,T; W_0^{1,2}(\Omega))$ and every $u_0 \in
W^{1,\mathcal{H}}_0(\Omega)$ satisfying \eqref{eq:ini} problem \eqref{eq:main} has a unique strong solution $u(z)$. The solution satisfies the estimate

\begin{equation}
\label{eq:strong-esti}
\|  u_t\|  _{2,Q_T}^{2} +
\operatorname{ess}\sup_{(0,T)}\int_{\Omega} \left(\vert \nabla
u\vert ^{2}+\mathcal{F}(z,\nabla u)\vert \nabla u\vert ^2
 \right)\,dx \leq C
\end{equation}
with a constant $C$ depending upon $N$, $\partial \Omega$, $T$, $p^\pm$, $q^\pm$, the
constants in conditions \eqref{eq:Lip-p-q}, \eqref{eq:a-b}, \
$\|  u_0\|  _{W_0^{1,\mathcal{H}}(\Omega)}$ and $\|  f_0\|  _{L^{2}(0,T;W^{1,2}_0(\Omega))}$.
\end{theorem}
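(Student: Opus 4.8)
The plan is to construct the strong solution by Galerkin's method and pass to the limit using \emph{a priori} estimates that are uniform in the dimension of the approximating subspaces. Fix an orthonormal basis $\{w_k\}_{k\ge1}$ of $L^2(\Omega)$ consisting of eigenfunctions of the Dirichlet Laplacian; since $\partial\Omega\in C^2$ these are smooth up to the boundary, and finite linear combinations lie in $W^{1,\mathcal{H}}_0(\Omega)\cap\mathbb{W}_{\overline{s}(\cdot)}(Q_T)$. Seek $u^m(z)=\sum_{k=1}^m c^m_k(t)w_k(x)$ solving the finite-dimensional system obtained by testing \eqref{eq:def} against $w_1,\dots,w_m$, with $u^m(\cdot,0)$ the $L^2$-projection (or better, the $W^{1,2}_0$-projection) of $u_0$ onto $\mathrm{span}\{w_1,\dots,w_m\}$. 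Local solvability of the resulting ODE system follows from Carathéodory theory because the coefficients $a,b,p,q$ are Lipschitz and the flux is continuous in $\nabla u$; global solvability on $(0,T)$ follows once the first energy estimate is in hand.

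The core of the argument is three layers of estimates. \textbf{(i) Energy estimate.} Testing with $u^m$ (i.e.\ multiplying the $k$-th equation by $c^m_k$ and summing) and integrating in time yields control of $\sup_{(0,T)}\|u^m(\cdot,t)\|_{2,\Omega}^2$ and of $\int_{Q_T}\mathcal{F}(z,\nabla u^m)|\nabla u^m|^2\,dz$, hence of $\int_{Q_T}(a|\nabla u^m|^{p}+b|\nabla u^m|^{q}+|\nabla u^m|^{\underline s})\,dz$ after using \eqref{eq:a-b} and Young's inequality to absorb the lower-order part; the source contributes through $\int_{Q_T} f u^m$, estimated with Hölder and $f\in L^2(Q_T)$ (which follows from $f\in L^2(0,T;W^{1,2}_0(\Omega))$). \textbf{(ii) Second energy estimate ($u_t\in L^2$).} Testing with $u^m_t$ (multiply by $\dot c^m_k$ and sum): the elliptic term becomes $\frac{d}{dt}\int_\Omega \widetilde{\mathcal F}(z,\nabla u^m)\,dx$ up to the $z$-dependence of $a,b,p,q$, where $\widetilde{\mathcal F}$ is the convex potential $\frac{a(z)}{p(z)}|\xi|^{p(z)}+\frac{b(z)}{q(z)}|\xi|^{q(z)}$; the explicit $t$-derivatives of the coefficients produce error terms of the form $\int_\Omega (|\partial_t a|+|\partial_t b|+\ldots)(|\nabla u^m|^{p}+|\nabla u^m|^{q})(1+|\log|\nabla u^m||)\,dx$, which are controlled by the Lipschitz bounds \eqref{eq:Lip-p-q}, \eqref{eq:a-b} together with the higher-integrability from step (iii) and Gronwall. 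This gives $\|u^m_t\|_{2,Q_T}^2+\mathrm{ess\,sup}_{(0,T)}\int_\Omega\mathcal F(z,\nabla u^m)|\nabla u^m|^2\,dx\le C$, using the hypothesis \eqref{eq:ini} on $u_0$ to bound the data at $t=0$; the $W^{1,2}_0$-projection is what makes $\|\nabla u^m(\cdot,0)\|_{2,\Omega}$ and $\int_\Omega\mathcal F((x,0),\nabla u^m(0))|\nabla u^m(0)|^2\,dx$ bounded by $K$ up to a fixed constant. \textbf{(iii) Global higher integrability of the gradient.} Using the second-order / difference-quotient estimates together with parabolic interpolation in the variable Sobolev spaces (the new interpolation inequalities announced in the abstract), upgrade the bound to $\int_{Q_T}|\nabla u^m|^{\underline s(z)+r}\,dz\le C$ for every $r\in(0,r^\sharp)$; it is exactly here that the gap condition \eqref{eq:gap-z}, i.e.\ $\overline s^+-\underline s^-<r_\ast$, is consumed, so that $\overline s(z)<\underline s(z)+r$ for an admissible $r$ and both flux terms $a|\nabla u^m|^{p}$, $b|\nabla u^m|^{q}$ are dominated by $|\nabla u^m|^{\underline s+r}$ and thus equi-integrable.

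With these bounds, extract a subsequence (not relabelled) with $u^m\to u$ weakly-$*$ in the relevant spaces, $u^m_t\rightharpoonup u_t$ in $L^2(Q_T)$, $u^m\to u$ strongly in $L^2(Q_T)$ and a.e.\ (Aubin--Lions, using $u_t\in L^2$ and the gradient bound), and $\nabla u^m\rightharpoonup\nabla u$ in $L^{\underline s(z)+r}(Q_T)$. To identify the nonlinear flux I would use Minty's monotonicity trick: the operator $\xi\mapsto\mathcal F(z,\xi)\xi$ is monotone in $\xi$ for a.e.\ $z$, so testing with $u^m-\phi$, using $\limsup\int_{Q_T}\mathcal F(z,\nabla u^m)\nabla u^m\cdot\nabla u^m\le\ldots$ (obtained from the equation and weak convergence of $u^m_t$ and strong convergence of $u^m$), and letting $\phi$ range over a dense set gives \eqref{eq:def} for the limit; the equi-integrability from (iii) is what legitimises passing the limit inside $\int_{Q_T}\mathcal F(z,\nabla u^m)\nabla u^m\cdot\nabla\phi$. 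The initial condition in the sense of item (3) of Definition \ref{def:weak} follows from $u\in C([0,T];L^2(\Omega))$ (a consequence of $u,u_t\in L^2(Q_T)$) and $u^m(\cdot,0)\to u_0$ in $L^2(\Omega)$. Finally, \emph{uniqueness} and continuous dependence on the data come from testing the difference of two solutions (with data $f_0^{(1)},u_0^{(1)}$ and $f_0^{(2)},u_0^{(2)}$) against $u^{(1)}-u^{(2)}$, using monotonicity of the flux to drop the elliptic term and Gronwall on $\|u^{(1)}-u^{(2)}\|_{2,\Omega}^2$; the estimate \eqref{eq:strong-esti} is just the passage to the limit of the uniform bounds from steps (i)--(ii), lower semicontinuity taking care of the $\mathrm{ess\,sup}$ term.

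\textbf{Main obstacle.} The delicate point is step (iii): making the higher-integrability bound uniform in $m$ while working with Galerkin approximations rather than with the genuine solution, because the standard self-improving / Caccioppoli machinery for parabolic $(p,q)$-growth equations is not directly available at the finite-dimensional level. The remedy is to derive it as a consequence of the second-order estimate $\sqrt{\mathcal F(z,\nabla u^m)}\,\nabla u^m\in W^{1,2}$ — which \emph{is} available for $u^m$ via testing with $-\Delta u^m$ (legitimate since the $w_k$ are Laplacian eigenfunctions) plus boundary regularity from $\partial\Omega\in C^2$ — and then interpolating; the coupling of this with the coefficient-error terms in step (ii) through Gronwall, under the exact gap \eqref{eq:gap-z}, is where the proof does its real work, and the absence of any order relation between $p$ and $q$ forces one to always dominate by $|\nabla u^m|^{\underline s(z)+r}$ and never to split according to which exponent is larger.
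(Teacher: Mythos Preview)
Your outline captures the broad architecture of the paper's argument (Galerkin approximation, the three tiers of estimates, Minty's device, uniqueness by monotonicity), but it omits two technical devices that the paper treats as indispensable, and without which your steps (ii)--(iii) cannot be carried out rigorously.

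\textbf{(a) The $\epsilon$-regularization of the flux.} The paper never works directly with $\mathcal{F}(z,\xi)\xi$ at the level of a priori estimates; it first replaces it by the nondegenerate flux $\mathcal{F}_\epsilon^{(0,0)}(z,\xi)=a(z)(\epsilon^2+|\xi|^2)^{(p-2)/2}+b(z)(\epsilon^2+|\xi|^2)^{(q-2)/2}$ and only \emph{after} obtaining the solution $u_\epsilon$ of the regularized problem does it send $\epsilon\to 0$. This is not cosmetic. Your ``test with $-\Delta u^m$'' manoeuvre requires two applications of Green's formula to the term $\operatorname{div}(\mathcal{F}(z,\nabla u^m)\nabla u^m)\,\Delta u^m$, which in turn needs $(\mathcal{F}(z,\nabla u^m)D_iu^m)_{x_i}\in L^2(\Omega)$ and a pointwise product rule. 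When $\min\{p(z),q(z)\}<2$ the factor $|\nabla u^m|^{p-2}$ is singular on the (possibly large) set $\{\nabla u^m=0\}$, and neither the differentiation nor the boundary integrals in the paper's Lemma 6.2 are justified. The interpolation inequalities of Section 4, on which your step (iii) rests, are likewise formulated and proven for the regularized flux. So the correct scheme is a \emph{double} limit: first $m\to\infty$ with $\epsilon$ fixed, then $\epsilon\to 0$.

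\textbf{(b) Density of eigenfunctions and approximation of $u_0$.} With only $\partial\Omega\in C^2$ the Laplacian eigenfunctions lie in $H^2(\Omega)$ but their span is \emph{not} known to be dense in $W^{1,\mathcal H}_0(\Omega)$ or in $\mathbb{W}_{\overline s(\cdot)}(Q_T)$; the paper proves density (Lemmas 5.1, 5.3) only under $\partial\Omega\in C^k$ with $k\ge 2+[N/2]$, and then recovers $C^2$ boundaries by an inner approximation with smooth subdomains. This matters twice: first, your Minty argument needs the test functions to be approximable by elements of $\bigcup_m\mathcal N_m$; second, your claim that the $W^{1,2}_0$-projection of $u_0$ keeps $\int_\Omega\mathcal F((x,0),\nabla u^m(0))|\nabla u^m(0)|^2\,dx$ bounded is false in general --- control of $\|\nabla u^m(0)\|_{2,\Omega}$ says nothing about $\int a|\nabla u^m(0)|^p+b|\nabla u^m(0)|^q$ when $\overline s^+>2$. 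The paper instead arranges $u_0^{(m)}\to u_0$ in $W^{1,\mathcal H}_0(\Omega)$, which is exactly what the density lemma delivers.

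A smaller point: you paraphrase \eqref{eq:gap-z} as $\overline s^+-\underline s^-<r_\ast$, but the hypothesis is the \emph{pointwise} bound $\overline s(z)-\underline s(z)<r_\ast$; the global oscillation $\overline s^+-\underline s^-$ may be large. The paper handles this by covering $Q_T$ with small cylinders on which the oscillation of $\underline s$ is controlled and applying the classical Gagliardo--Nirenberg inequality locally --- this covering is an essential ingredient of the interpolation step, not an afterthought.
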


\begin{theorem}[Continuity with respect to the data]
\label{th:energy}
Under the conditions of Theorem \ref{th:main-result-1} the solution $u(z)$ satisfies the energy equality
\begin{equation}
\label{eq:energy}
\dfrac{1}{2}\|  u(t)\|  _{2,\Omega}^2+\int_{Q_T} \mathcal{F}(z,\nabla u)\vert \nabla u\vert ^2
\,dz= \dfrac{1}{2}\|  u_0\|  _{2,\Omega}^2+\int_{Q_T}uf\,dz.
\end{equation}
The solution is continuous with respect to the data: if $u(z)$, $v(z)$ are two strong solutions of problem \eqref{eq:main} corresponding to the initial data $u_0$, $v_0$ and the free terms $f$, $g$, then

\begin{equation}
  \label{eq:stab}
  \begin{split}
  & \sup_{(0,T)}\|  u-v\|  ^2_{2,\Omega}(t) + \int_{Q_T}\mathcal{F}(z,\nabla (u-v))\ \vert \nabla (u-v)\vert ^2\,dz
\to 0,
\\
& \int_{Q_T}\vert \nabla (u-v)\vert ^{\underline{s}(z)}\,dz\to 0
\quad \text{as $\|  u_0-v_0\|  _{2,\Omega}+\|  f-g\|  _{2,Q_T}\to 0$}.
\end{split}
\end{equation}
\end{theorem}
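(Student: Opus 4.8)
The plan is to establish the energy identity \eqref{eq:energy} first and then to deduce the continuity estimates \eqref{eq:stab} from it. For \eqref{eq:energy} I would test \eqref{eq:def} with $\phi=u$, which is admissible since $u\in\mathbb{W}_{\overline{s}(\cdot)}(Q_T)$ by item (1) of Definition \ref{def:weak}, the elliptic term being finite because $\mathcal{F}(z,\nabla u)\nabla u\cdot\nabla u=a|\nabla u|^{p}+b|\nabla u|^{q}\in L^1(Q_T)$ by \eqref{eq:strong-esti}. From $u_t\in L^2(Q_T)$ and $u\in L^2(Q_T)$ one has $u\in W^{1,2}(0,T;L^2(\Omega))\hookrightarrow C([0,T];L^2(\Omega))$, so that item (3) of Definition \ref{def:weak} forces $u(\cdot,0)=u_0$ in $L^2(\Omega)$ and $\int_{\Omega\times(0,t)}u_t u\,dz=\frac{1}{2}\|u(t)\|_{2,\Omega}^2-\frac{1}{2}\|u_0\|_{2,\Omega}^2$ for every $t\in[0,T]$; inserting $\phi=u$ into \eqref{eq:def} on $\Omega\times(0,t)$ and taking $t=T$ then yields \eqref{eq:energy}.

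For the continuity estimates I would take strong solutions $v_n$ with data $(v_{0,n},g_n)$ such that $\delta_n:=\|u_0-v_{0,n}\|_{2,\Omega}+\|f-g_n\|_{2,Q_T}\to0$ (the general situation, in which $u$ also varies, is treated identically); since the existence theorem requires $v_{0,n}\in W_0^{1,\mathcal{H}}(\Omega)$ and $g_n\in L^2(0,T;W_0^{1,2}(\Omega))$, I assume in addition that these data stay bounded in the respective norms, so that Theorem \ref{th:main-result-1}, \eqref{eq:strong-esti} and the global higher integrability \eqref{eq:higher:integra} furnish bounds on the $v_n$ that are uniform in $n$. Setting $w_n=u-v_n\in\mathbb{W}_{\overline{s}(\cdot)}(Q_T)$, subtracting the formulations \eqref{eq:def} for $u$ and $v_n$, testing with $w_n$ and handling the parabolic term as above, one obtains for every $t$
\[
\frac{1}{2}\|w_n(t)\|_{2,\Omega}^2+I_n(t)=\frac{1}{2}\|u_0-v_{0,n}\|_{2,\Omega}^2+\int_{\Omega\times(0,t)}(f-g_n)\,w_n\,dz,
\]
where $I_n(t)=\int_{\Omega\times(0,t)}(\mathcal{F}(z,\nabla u)\nabla u-\mathcal{F}(z,\nabla v_n)\nabla v_n)\cdot\nabla w_n\,dz\ge0$ because $a,b\ge0$ and $\xi\mapsto\mathcal{F}(z,\xi)\xi=a(z)|\xi|^{p(z)-2}\xi+b(z)|\xi|^{q(z)-2}\xi$ is monotone. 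Dropping $I_n(t)$ and applying the elementary Gronwall inequality to $\|w_n(t)\|_{2,\Omega}$ gives $\sup_{(0,T)}\|w_n(t)\|_{2,\Omega}\le(1+\sqrt{T})\delta_n\to0$, and reinserting this at $t=T$ yields $I_n:=I_n(T)\to0$.

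The remaining and main part is to pass from $I_n\to0$ to the gradient convergences in \eqref{eq:stab}. I would split $I_n=I_n^a+I_n^b$ according to the two terms of $\mathcal{F}$; each summand is nonnegative, so $I_n^a\to0$ and $I_n^b\to0$. For $m>1$ the form $G_m(\xi,\eta):=(|\xi|^{m-2}\xi-|\eta|^{m-2}\eta)\cdot(\xi-\eta)$ is nonnegative, vanishes only at $\xi=\eta$, and is continuous and coercive in $\eta$, hence $\mu(\delta,M):=\inf\{G_m(\xi,\eta):m\in[\underline{s}^-,\overline{s}^+],\,|\xi|\le M,\,|\xi-\eta|\ge\delta\}>0$ for all $\delta,M>0$. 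Since $a+b\ge\alpha$, at a.e. point one of $a,b$ exceeds $\alpha/2$, so the integrand of $I_n$ is $\ge\frac{\alpha}{2}\min\{G_{p(z)},G_{q(z)}\}(\nabla u,\nabla v_n)$ a.e., which gives
\[
\bigl|\{z\in Q_T:\,|\nabla w_n|>\delta,\,|\nabla u|\le M\}\bigr|\le\frac{2}{\alpha\,\mu(\delta,M)}\,I_n\to0 ;
\]
letting $M\to\infty$ and using $|\nabla u|<\infty$ a.e. one obtains $\nabla w_n\to0$ in measure on $Q_T$. By \eqref{eq:gap-z} I would fix $r\in(\max_{\overline{Q}_T}|p-q|,\,r^\sharp)$, for which $\underline{s}(z)+r>\overline{s}(z)$ uniformly; then \eqref{eq:higher:integra}, uniform in $n$, shows that $\{|\nabla v_n|^{\overline{s}(\cdot)}\}$ is bounded in $L^{1+\epsilon}(Q_T)$ for some $\epsilon>0$, hence equi-integrable, and the same holds for $\{a|\nabla v_n|^{p}+b|\nabla v_n|^{q}\}$ and $\{|\nabla v_n|^{\underline{s}(\cdot)}\}$. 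Using the elementary bounds
\[
\mathcal{F}(z,\nabla w_n)|\nabla w_n|^2\le2^{\overline{s}^+}\bigl(a|\nabla u|^{p}+b|\nabla u|^{q}+a|\nabla v_n|^{p}+b|\nabla v_n|^{q}\bigr),
\]
\[
|\nabla w_n|^{\underline{s}(z)}\le2^{\overline{s}^+}\bigl(|\nabla u|^{\overline{s}(z)}+|\nabla v_n|^{\overline{s}(z)}+2\bigr),
\]
the two families on the left are equi-integrable and tend to $0$ in measure, so Vitali's theorem gives $\int_{Q_T}\mathcal{F}(z,\nabla w_n)|\nabla w_n|^2\,dz\to0$ and $\int_{Q_T}|\nabla w_n|^{\underline{s}(z)}\,dz\to0$; together with the $L^2$ bound this is \eqref{eq:stab}.

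I expect the main obstacle to be this last step. When one (or both) of $p,q$ lies in the singular range $(\tfrac{2N}{N+2},2)$, the form $G_m$ degenerates where $|\nabla u|$ and $|\nabla v_n|$ are both large, so $I_n\to0$ by itself controls $\nabla w_n$ only in measure, not in $L^{\underline{s}(\cdot)}(Q_T)$; upgrading this to the strong convergences of \eqref{eq:stab} relies on the equi-integrability of $\{|\nabla v_n|^{\overline{s}(\cdot)}\}$, which is exactly what the global higher integrability \eqref{eq:higher:integra} supplies, and the gap condition \eqref{eq:gap-z} is precisely what permits the surplus exponent $r$ to be chosen strictly above $\max_{\overline{Q}_T}|p-q|$, so that a genuine $L^{1+\epsilon}$ bound, rather than a mere $L^1$ bound, is obtained.
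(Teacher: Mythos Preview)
Your argument is correct, and up to the point where you obtain $I_n=\mathcal{G}_0(\nabla u,\nabla v_n)\to 0$ it coincides with the paper's proof. The divergence is in the final step. The paper does not pass through convergence in measure and Vitali; instead it invokes the quantitative inequality of Proposition~\ref{pro:strict-1},
\[
\mathcal{N}(\nabla(u-v))=\int_{Q_T}\mathcal{F}(z,\nabla(u-v))\,|\nabla(u-v)|^2\,dz\leq C\Big(\mathcal{G}_0^{\overline{s}^+/2}+\mathcal{G}_0^{\underline{s}^-/2}+\mathcal{G}_0\Big),
\]
obtained by combining the sharp monotonicity bound $\mathcal{S}_p(\xi,\eta)\geq C\,|\xi-\eta|^2(|\xi|^2+|\eta|^2)^{(p-2)/2}$ in the singular range with H\"older's inequality; the $\underline{s}(\cdot)$-convergence then follows from the continuous embedding $\mathcal{W}(Q_T)\hookrightarrow\mathbb{W}_{\underline{s}(\cdot)}(Q_T)$ of Lemma~\ref{le:cont-emb-1} (equivalently Lemma~\ref{le:cont-convergence}). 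This route is purely algebraic and yields an explicit rate in terms of $\mathcal{G}_0$, with the constant depending only on uniform $\mathbb{W}_{\overline{s}(\cdot)}$-bounds on $\nabla u,\nabla v_n$. Your approach is softer --- no rate is produced, and you use the global higher integrability of Theorem~\ref{th:global-reg} as a black box to supply equi-integrability for Vitali --- but it is equally valid and arguably more transparent, since it avoids the separate treatment of the singular region $\{p<2\}\cup\{q<2\}$ that underlies Proposition~\ref{pro:strict-1}. Note that both routes ultimately require uniform control of $\|\nabla v_n\|_{\overline{s}(\cdot),Q_T}$ along the sequence, which, as you observe, is guaranteed by Theorem~\ref{th:global-reg} together with the gap condition \eqref{eq:gap-z}.
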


\begin{theorem}[Global regularity]
\label{th:global-reg}
Let the conditions of Theorem \ref{th:main-result-1} be fulfilled. Then
\begin{enumerate}
    \item  for any $\varsigma \in \left.\left[0,\frac{4}{N+2}\right.\right)$
    \begin{equation}
\label{eq:strong-est}
\int_{Q_T}\vert \nabla u\vert ^{\underline{s}(z)+\frac{4}{N+2}-\varsigma}\,dz \leq C
\end{equation}
with
a constant $C$ which depends on $N, \partial \Omega$, $T, p^\pm, q^\pm$, $\varsigma$, the
constants in conditions \eqref{eq:Lip-p-q}, \eqref{eq:a-b},
$\|  f_0\|  _{L^{2}(0,T;W^{1,2}_0(\Omega))}$ and $K$;
\item
\[
\begin{split}
& D_{x_i}\left(\sqrt{\mathcal{F}(z,\nabla u)}%\left(a(z)\vert \nabla u\vert ^{p(z)-2} + b(z)\vert \nabla u\vert ^{q(z)-2} \right)^{\frac{1}{2}}
 D_{x_j}u\right)\in
L^2(Q_T),
\\
& D^2_{x_ix_j} u\in L^{\underline{s}(\cdot)}_{loc}(Q_T \cap \{z: \max\{p(z), q(z)\}< 2\}),
\end{split}
\]
and the corresponding norms are bounded by constants depending only on the data.
\end{enumerate}
\end{theorem}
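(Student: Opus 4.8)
\textbf{Proof proposal for Theorem \ref{th:global-reg}.}

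The plan is to prove all three assertions as a priori estimates on the Galerkin approximations $u_m$ constructed in the existence proof of Theorem \ref{th:main-result-1}, and then pass to the limit using weak lower semicontinuity together with the strong convergence $\nabla u_m\to\nabla u$ that is available once the higher integrability is known. I will treat the approximations as genuinely smooth (in $x$) functions, so that differentiating the equation in $x_k$ is legitimate; the needed uniform bounds will only involve the data, so they survive the limit.

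First, for the higher integrability \eqref{eq:strong-est}, the key idea is a standard parabolic interpolation/iteration argument adapted to the Musielak--Orlicz setting. Starting from the strong-solution estimate \eqref{eq:strong-esti}, I have $\nabla u\in L^\infty(0,T;L^{r(\cdot)}(\Omega))$ with $r(z)=\max\{2,\underline s(z)\}$, hence in particular $u\in L^\infty(0,T;L^2(\Omega))\cap L^{\underline s^-}(0,T;W^{1,\underline s^-}_0(\Omega))$ (using the balance condition \eqref{eq:gap-z} to absorb the $\overline s$-term, as $\overline s<\underline s+r_\ast$, by an interpolation between the $L^\infty_tL^{r}_x$ bound and the base modular bound). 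Then the Gagliardo--Nirenberg inequality in the form $\|w\|_{L^{\gamma}(\Omega)}\le C\|\nabla w\|_{\underline s^-,\Omega}^\theta\|w\|_{2,\Omega}^{1-\theta}$ with the exponent relations producing the gain $\frac{4}{N+2}$ over $\underline s$ — this is exactly the classical parabolic bootstrap that yields $\nabla u\in L^{\underline s+\frac{4}{N+2}}$ when $\nabla u\in L^\infty_tL^2_x\cap L^{\underline s}_{z}$ — is applied on a.e. time slice and integrated in $t$. Because the exponents are variable, I will use the new variable-Sobolev interpolation inequalities referred to in the abstract (the ``$\varsigma$-loss'' in \eqref{eq:strong-est} comes from having to work with a fixed exponent $\underline s^-+\frac{4}{N+2}-\varsigma$ below the variable target and then using the embedding \eqref{eq:emb-1} and the log-continuity to localize and patch the exponent back up to $\underline s(z)+\frac{4}{N+2}-\varsigma$). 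The constant depends on the data exactly as claimed. I would phrase this via a covering of $\overline Q_T$ by small cylinders on which $\osc\underline s$ is tiny, so that on each the variable exponent is squeezed between two nearby constants.

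Second, for the second-order regularity, I differentiate the approximate equation in $x_k$, multiply by $D_{x_k}u_m$, and sum over $k$. The principal term produces, after the usual computation for $p$-Laplacian-type operators,
\[
\int_{Q_T}\sum_{k}\Big(a|\nabla u_m|^{p-2}+b|\nabla u_m|^{q-2}\Big)\Big(|D^2_{x_kx_j}u_m|^2+(\text{nonneg. terms})\Big)\,dz
\]
bounded, modulo the following error terms: (i) time-derivative term $\tfrac12\frac{d}{dt}\int_\Omega|\nabla u_m|^2$, controlled by \eqref{eq:ini} and \eqref{eq:strong-esti}; (ii) the right-hand side contributes $\int_{Q_T}\nabla f\cdot\nabla u_m$, finite since $f\in L^2(0,T;W^{1,2}_0(\Omega))$ and $\nabla u_m\in L^2(Q_T)$; (iii) the ``bad'' terms coming from $D_{x_k}$ falling on the coefficients $a,b$ and on the exponents $p,q$ (producing $\ln|\nabla u_m|$ factors). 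Terms of type (iii) are the main obstacle: they are estimated by $C\int_{Q_T}\big(|\nabla u_m|^{\overline s}+|\nabla u_m|^{\overline s}\ln^2(e+|\nabla u_m|)\big)\,dz$ times the Lipschitz constants, and here I invoke the already-proven higher integrability \eqref{eq:strong-est}: since $\overline s(z)<\underline s(z)+r_\ast<\underline s(z)+\frac{4}{N+2}$, there is $\varsigma>0$ with $\overline s(z)+\epsilon\le \underline s(z)+\frac{4}{N+2}-\varsigma$ for a small $\epsilon>0$ absorbing the logarithm, so these terms are finite and bounded by the data. The quantity $\sqrt{\mathcal F(z,\nabla u_m)}D^2_{x_ix_j}u_m$ is then bounded in $L^2(Q_T)$; one checks that $D_{x_i}\big(\sqrt{\mathcal F}\,D_{x_j}u_m\big)=\sqrt{\mathcal F}\,D^2_{x_ix_j}u_m+(\text{lower order with }\nabla\sqrt{\mathcal F})$, and the lower-order piece is again controlled by the higher integrability. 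Passing $m\to\infty$ with weak convergence in $L^2(Q_T)$ and the strong gradient convergence identifies the weak limit as $D_{x_i}(\sqrt{\mathcal F(z,\nabla u)}D_{x_j}u)$.

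Finally, for the local bound on $D^2_{x_ix_j}u$ inside $\{z:\max\{p(z),q(z)\}<2\}$, I localize with a cutoff $\eta\in C_0^\infty$ supported where $\overline s<2$, so there $\mathcal F(z,\nabla u)\ge \alpha|\nabla u|^{\overline s-2}\wedge(\cdots)\gtrsim (1+|\nabla u|)^{\overline s-2}$ is bounded below by a negative power of $|\nabla u|$; writing $|D^2u|^{\underline s}=\big(\mathcal F^{1/2}|D^2u|\big)^{\underline s}\mathcal F^{-\underline s/2}$ and applying Hölder with exponents $2/\underline s$ and $2/(2-\underline s)$ reduces the claim to the $L^2$-bound just obtained for $\mathcal F^{1/2}D^2u$ together with $\int_{\supp\eta}\mathcal F^{-\underline s/(2-\underline s)}\lesssim\int(1+|\nabla u|)^{(2-\overline s)\underline s/(2-\underline s)}$, and the latter exponent is $<\underline s+\frac{4}{N+2}$ in the relevant range, hence finite by \eqref{eq:strong-est}. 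The hardest step overall is controlling the logarithmic error terms of type (iii) in the second-order estimate; everything there hinges on having the sharp gain $\frac{4}{N+2}$ in the higher integrability, which is why \eqref{eq:strong-est} must be proven first.
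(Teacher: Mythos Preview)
There is a genuine gap in your first step. You claim that the higher integrability $\nabla u\in L^{\underline s(z)+\frac{4}{N+2}-\varsigma}(Q_T)$ follows from Gagliardo--Nirenberg applied on time slices, using only the information $\nabla u\in L^\infty(0,T;L^{r(\cdot)}(\Omega))\cap L^{\underline s(\cdot)}(Q_T)$ coming from \eqref{eq:strong-esti}. But the inequality you write, $\|w\|_{\gamma,\Omega}\le C\|\nabla w\|^\theta_{\underline s^-,\Omega}\|w\|^{1-\theta}_{2,\Omega}$, applied with $w=u$ gives higher integrability of $u$, not of $\nabla u$; applied with $w=D_{x_k}u$ it would require control of $D^2u$, which you do not yet have. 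There is no ``classical parabolic bootstrap'' that produces $\nabla u\in L^{\underline s+4/(N+2)}$ from $\nabla u\in L^\infty_tL^2_x\cap L^{\underline s}_z$ alone: the sharp gain $4/(N+2)$ for the gradient always comes from second-order information. Consequently your ordering --- first \eqref{eq:strong-est}, then the second-order estimate using \eqref{eq:strong-est} to control the logarithmic error terms --- is circular.

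The paper resolves this by deriving both estimates simultaneously. The interpolation inequalities of Section~\ref{sec:interpolation} (Lemma~\ref{lem:interpol} through Theorem~\ref{th:integr-par}) are obtained by integrating by parts in $\int_\Omega \mathcal{F}^{(r_1,r_2)}_\epsilon(x,\nabla u)|\nabla u|^2\,dx$ against $u$ and then estimating the resulting lower-order terms by Gagliardo--Nirenberg on a fine cover; the output is
\[
\int_\Omega|\nabla u|^{\underline s(x)+r^\sharp-\varsigma}\,dx\le \delta\int_\Omega\mathcal{F}^{(0,0)}_\epsilon(x,\nabla u)|u_{xx}|^2\,dx+C(\|u\|_{2,\Omega}),
\]
with the second-order term on the right. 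This inequality is then fed \emph{into} the proof of the second-order a priori estimate (Lemma~\ref{second}): the logarithmic terms $P_2$ and the coefficient terms $P_{a,b}$ coming from differentiating the equation are bounded by $\delta'\int_\Omega\mathcal{F}^{(0,0)}_\epsilon|u_{xx}|^2+C$ via the interpolation inequality, and absorbed on the left. Only after this absorption and integration in $t$ (Lemma~\ref{le:est-1}) do both \eqref{eq:ineq-0} and \eqref{eq:ineq-high} emerge together. Your treatment of the second-order estimate and of the local $L^{\underline s(\cdot)}$ bound on $D^2u$ where $\overline s<2$ is otherwise along the right lines, but it cannot start until this simultaneous derivation is in place.
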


Outline of the paper. In Section \ref{sec:prelim}, we collect several technical assertions which are repeatedly referred to in the rest of the text. These are elementary algebraic relations and some useful relations between the elements of space  $\mathbb{W}_{\underline{s}(\cdot)}(Q_T)$ and the functions with $\mathcal{F}(z,\nabla v)\vert \nabla v\vert ^2\in L^1(Q_T)$.

Section \ref{sec:interpolation} is of utter importance for the rest of the work. This is where we derive the interpolation inequalities which yield the property of global higher integrability of the gradient, the key tool for the further study. These inequalities are proven for the regularized fluxes $\mathcal{F}\left(z,(\epsilon^2+(\nabla v,\nabla v))^{\frac{1}{2}}\right)$, $\epsilon\in (0,1)$. The strategy of the proof consists in reduction to a situation where the classical Gagliardo-Nirenberg inequalities in Sobolev spaces with constant exponents become applicable. To this end, we make use of the uniform continuity of the exponents $p(z)$, $q(z)$ and choose a finite cover of the domain $\Omega$, or the cylinder $\Omega\times (0,T)$, by subdomains where the oscillations of $p$ and $q$ are suitably small. Proceeding in this way we overcome the difficulties typical for variable Sobolev spaces where, unlike the classical Sobolev spaces, the inequalities for the norms and modulars need not coincide - see, e.g., \eqref{eq:mod-2}. In the result, we show that for all sufficiently smooth functions $v$ and every $\beta>0$
\[
\alpha\int_{Q_T}\vert \nabla v\vert ^{\underline{s}(z)+\frac{4}{N+2}-\varsigma}\,dz\leq \beta\int_{Q_T}\mathcal{F}\left(z,(\epsilon^2+(\nabla
v,\nabla v))^{\frac{1}{2}}\right)\sum_{i,j=1}^{N}\vert D^2_{x_ix_j}v\vert ^{2}\,dz+C
\]
with $\underline{s}(z)=\min\{p(z),q(z)\}$, any $\varsigma\in \left(0,\frac{4}{N+2}\right)$, and a constant $C$ depending on the known constants, $\beta$,  and $\operatorname{ess}\sup_{(0,T)}\|  u(t)\|  _{2,\Omega}$.

In Section \ref{sec:reg-problem} we formulate the problem with the regularized flux and describe the scheme of construction of a solution as the limit of a sequence of finite-dimensional approximations. Special attention is paid to the choice of the basis. On the one hand, in our conditions on the data, the set of smooth functions with compact support $C_0^\infty(\Omega)$ is dense in the variable Sobolev spaces and the generalized Musielak-Orlicz space $W^{1,\mathcal{H}}_0(\Omega)$. On the other hand, it is convenient to take for the basis the set of eigenfunctions of the Dirichlet problem for the Laplace operator but this set approximates the elements of $C_0^\infty(\Omega)$ in a suitable function space only if the boundary $\partial \Omega$ possesses an extra regularity. For this reason, we proceed in two steps. In the first step, we prove the main assertions for the regularized problem in a smooth domain with $\partial\Omega\in C^{k}$, $k\geq 2+[\frac{N}{2}]$. The solution is constructed as the limit of Galerkin's approximations. In the second step, we extend all these assertions to the domain with minimal required regularity $\partial\Omega\in C^2$. This is done by approximation of $\Omega$ from the interior by a family of expanding smooth domains.

Solvability of the regularized problem and the regularity of the solution are studied in Sections \ref{sec:a-priori}, \ref{sec:reg-existence}. In Section \ref{sec:a-priori} we derive uniform a priori estimates on the sequence of Galerkin's approximations in a smooth domain, including the estimates on the nonlinear terms that involve the second-order derivatives. In Section \ref{sec:reg-existence} we use these estimates to extract a subsequence with suitable convergence properties, and to show that the limit is a solution of the regularized problem. The second-order higher regularity of the limit follows from the a priori estimates on the regularized fluxes and the pointwise convergence of the sequence of gradients of the approximations.

Theorems \ref{th:main-result-1}, \ref{th:energy}, \ref{th:global-reg} are proven in Section \ref{sec:proofs-main-results} by passing to the limit with respect to the regularization parameter.

In the special case $p(z)=q(z)$ and $a=b=const$ equation \eqref{eq:main} transforms into the evolution $p(z)$-Laplace equation \eqref{eq:example-1}. For this equation, the questions of global higher integrability of the gradient and the second-order spatial regularity were studied in \cite{A-S}. The assertions of Theorem \ref{th:main-result-1} and \ref{th:global-reg} improve the corresponding results in \cite{A-S} and, by the same token, complete the results of  \cite{arora_shmarev2020} for another special case \eqref{eq:example-2}.

\section{Auxiliary propositions}
\label{sec:prelim}
We collect here the technical assertions used throughout the rest of the work. With certain abuse of notation, here we denote by $p(x)$, $q(x)$, $\underline{s}(x)$, $\overline{s}(x)$, $a(x)$, $b(x)$ the functions of the variables $(x,t)$ with ``frozen" $t\in [0,T]$. By continuity of these functions in $\overline{Q}_T$, conditions \eqref{eq:gap-z}, \eqref{eq:s} and \eqref{eq:a-b} remain in force if we fix $t$ and consider $p(x,t)$, $q(x,t)$ and $a(x,t)$, $b(x,t)$ as functions of the variable $x\in \Omega$. So, we write
\[
\begin{split}
& \overline{s}(x):= \max\{p(x), q(x)\},\qquad \overline{s}^+:= \max_{x \in \overline{\Omega}} \overline{s}(x),
\\
&
\underline{s}(x):= \min\{p(x), q(x)\},
\qquad \underline{s}^-:= \min_{x \in \overline{\Omega}} \underline{s}(x).
\end{split}
\]
Given $\epsilon\in (0,1)$, $\xi\in \mathbb{R}^N$,  and non-negative functions $s_1, s_2$, we denote
\begin{equation}
\notag
\label{eq:gamma}
\begin{split}
& \beta_{\epsilon}(\xi)=\epsilon^2+\vert \xi\vert ^{2},
\qquad
\gamma^{(p(x))}_{\epsilon}(\xi)=\beta^{\frac{p(x)-2}{2}}_{\epsilon}(\xi),
\\
&
\mathcal{F}^{(s_1, s_2)}_{\epsilon}(x,\xi)= a(x) \beta^{\frac{p(x)+s_1(x)-2}{2}}_{\epsilon}(\xi) + b(x) \beta^{\frac{q(x)+s_2(x)-2}{2}}_{\epsilon}(\xi).
\end{split}
\end{equation}
The function $\mathcal{F}_{\epsilon}^{(s_1, s_2)}(y,\xi)$ depends on $y$ implicitly, through the exponents $p$, $q$, $s_1, s_2$ and the coefficients $a$, $b$. For this reason, we will write $\mathcal{F}_{\epsilon}^{(s_1, s_2)}(x,\xi)$ if  the exponents and coefficients depend on $x\in\Omega$, or $\mathcal{F}_{\epsilon}^{(s_1, s_2)}(z,\xi)$ if at least one of these functions depends on $z=(x,t)\in Q_T$. The functions $\mathcal{F}_{\epsilon}^{(s_1,s_2)}(y,\xi)\xi$ approximate the flux: $\mathcal{F}_{0}^{(0,0)}(y,\xi)\xi\equiv \mathcal{F}(y,\xi)\xi$.

\begin{enumerate}
\item
For every $\epsilon\in (0,1)$, $\mu>0$, and $\xi\in \mathbb{R}^N$
\begin{equation}\label{eq:interchange}
    \vert \xi\vert ^{2\mu}\leq \beta^{\mu}_{\epsilon}(\xi)\leq \begin{cases}
(2\vert \xi\vert ^2)^{\mu}=2^\mu \vert \xi\vert ^{2\mu} & \text{if $\vert \xi\vert \geq \epsilon$},
\\
(2\epsilon^2)^\mu & \text{if $\vert \xi\vert <\epsilon$}
\end{cases}
\leq 2^\mu\left(1+\vert \xi\vert ^{2\mu}\right).
\end{equation}
\item For the nonnegative coefficients $a, b \in L^\infty(\Omega)$ and the exponents $s_1,s_2 \geq 0$, $p$, $q$ such that $p+s_1\geq 2$, $q+s_2\geq 2$
\[
\begin{split}
a(x)\vert \xi\vert ^{p+s_1-2}+b(x)\vert \xi\vert ^{q+s_2-2} & \leq \mathcal{F}^{(s_1,s_2)}_{\epsilon}(x,\xi)
\\
&
\leq C\left(1+a(x)\vert \xi\vert ^{p+s_1-2}+b(x)\vert \xi\vert ^{q+s_2-2}\right)
\end{split}
\]
with a constant $C$ independent of $x$ and $\xi$.
\item  For the nonnegative coefficients $a, b \in L^\infty(\Omega)$, $\epsilon \in (0,1)$, $\xi\in \mathbb{R}^N$,  and the parameters $s_1,s_2 \geq 0$
\begin{equation}
\label{eq:null-eps-prelim}
\begin{split}
\mathcal{F}^{(s_1, s_2)}_{\epsilon}(x,\xi)\beta_{\epsilon}(\xi) & \leq \begin{cases}
a(x)(2\epsilon^2)^{p+s_1}+b(x)(2\epsilon^2)^{q+s_2} & \text{if $\vert \xi\vert \leq \epsilon$},
\\
2 \mathcal{F}^{(s_1, s_2)}_\epsilon(x,\xi)\vert \xi\vert ^2 & \text{if $\vert \xi\vert >\epsilon$}
\end{cases}
\\
&
\leq C+ 2\mathcal{F}_\epsilon^{(s_1, s_2)}(x,\xi)\vert \xi\vert ^2.
\end{split}
\end{equation}
In particular,
\begin{equation}
\label{eq:null-eps}
\begin{split}
\mathcal{F}^{(s_1, s_2)}_0(x,\xi)\vert \xi\vert ^2\,dx & \equiv a(x)\vert \xi\vert ^{p+s_1}+b(x)\vert \xi\vert ^{q+s_2}
\\
&
\leq \mathcal{F}^{(s_1, s_2)}_{\epsilon}(x,\xi)\beta_{\epsilon}(\xi)
\leq  C+2\mathcal{F}^{(s_1, s_2)}_\epsilon(x,\xi)\vert \xi\vert ^2.
\end{split}
\end{equation}
\item For every $\zeta>0$, $\xi\in \mathbb{R}^N$, and $\mu\in (0,\zeta)$
\begin{equation}
\label{eq:log}
\begin{split}
\vert \xi\vert ^{\zeta}\vert \ln \vert \xi\vert \vert  & =\begin{cases}
\left(\vert \xi\vert ^{\zeta+\mu}\right)\left(\vert \xi\vert ^{-\mu}\vert \ln \vert \xi\vert \vert \right) & \text{if $\vert \xi\vert \geq 1$},
\\
\left(\vert \xi\vert ^{\zeta-\mu}\right)\left(\vert \xi\vert ^{\mu}\vert \ln \vert \xi\vert \vert \right)  & \text{if $\vert \xi\vert < 1$},
\end{cases}
\\
&
\leq C(\mu,\zeta)\left(1+\vert \xi\vert ^{\zeta+\mu}\right).
\end{split}
\end{equation}
\end{enumerate}

\begin{proposition}[Strict monotonicity]
\label{pro:strict-monotone}
Let $\epsilon\in [0,1)$. There exists a constant $C=C(p^\pm)$ such that for all $\xi,\zeta\in \mathbb{R}^N \setminus \{0\}$ and every $z\in Q_T$

\begin{equation}
\label{eq:mon-strict}
\begin{split}
\mathcal{S}_p(\xi,\eta) & :=\big(\gamma^{(p)}_\epsilon(\xi)\xi  - \gamma^{(p)}_\epsilon(\eta)\eta,\xi-\eta\big)
\\
&
\geq  C\begin{cases} \vert \xi-\zeta\vert ^{p} & \text{if $p\geq 2$},
\\
\vert \xi-\eta\vert ^2\left(\epsilon^2 +\vert \xi\vert ^2+\vert \eta\vert ^2\right)^{\frac{p-2}{2}} & \text{if $p\in (1,2)$}.
\end{cases}
\end{split}
\end{equation}
\end{proposition}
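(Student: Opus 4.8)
The plan is to reduce the vector inequality \eqref{eq:mon-strict} to the well-known monotonicity estimates for the map $\xi\mapsto(\epsilon^2+\vert\xi\vert^2)^{\frac{p-2}{2}}\xi$ by writing the left-hand side as an integral along the segment joining $\eta$ to $\xi$. Concretely, set $\eta_t=\eta+t(\xi-\eta)$ for $t\in[0,1]$ and $\Phi(\zeta)=\gamma^{(p)}_\epsilon(\zeta)\zeta=\beta^{\frac{p-2}{2}}_\epsilon(\zeta)\zeta$. Since $\Phi\in C^1(\mathbb{R}^N)$ (the regularization by $\epsilon^2$ removes the singularity at the origin; for $\epsilon=0$ one uses an approximation argument or works directly with the standard result), the fundamental theorem of calculus gives
\[
\mathcal{S}_p(\xi,\eta)=\big(\Phi(\xi)-\Phi(\eta),\xi-\eta\big)=\int_0^1 \big(D\Phi(\eta_t)(\xi-\eta),\xi-\eta\big)\,dt.
\]
A direct computation of the Jacobian yields, for any $w\in\mathbb{R}^N$,
\[
\big(D\Phi(\zeta)w,w\big)=\beta^{\frac{p-2}{2}}_\epsilon(\zeta)\Big(\vert w\vert^2+(p-2)\,\frac{(\zeta,w)^2}{\beta_\epsilon(\zeta)}\Big),
\]
which is bounded below by $\min\{1,p-1\}\,\beta^{\frac{p-2}{2}}_\epsilon(\zeta)\vert w\vert^2$: if $p\ge 2$ the extra term is nonnegative, while if $p\in(1,2)$ one uses $(\zeta,w)^2\le\vert\zeta\vert^2\vert w\vert^2\le\beta_\epsilon(\zeta)\vert w\vert^2$ so that the bracket is at least $(p-1)\vert w\vert^2$. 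Therefore
\[
\mathcal{S}_p(\xi,\eta)\ge \min\{1,p-1\}\,\vert\xi-\eta\vert^2\int_0^1\beta^{\frac{p-2}{2}}_\epsilon(\eta_t)\,dt,
\]
and the constant may be taken uniform in $z$ because $p^-\le p(z)\le p^+$; it suffices to take $C=C(p^\pm)=\min\{1,p^--1\}\cdot c_0$ with $c_0$ from the step below.

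It remains to estimate $\int_0^1\beta^{\frac{p-2}{2}}_\epsilon(\eta_t)\,dt$ from below by $c_0\,\big(\epsilon^2+\vert\xi\vert^2+\vert\eta\vert^2\big)^{\frac{p-2}{2}}$ when $p\in(1,2)$, and by $c_0\,\vert\xi-\eta\vert^{p-2}$ when $p\ge2$. For $p\ge2$ the integrand is $\beta^{\frac{p-2}{2}}_\epsilon(\eta_t)\ge\vert\eta_t\vert^{p-2}$, and the convexity estimate $\int_0^1\vert\eta+t(\xi-\eta)\vert^{p-2}\,dt\ge c(p)\vert\xi-\eta\vert^{p-2}$ is a classical elementary lemma (it follows, e.g., by scaling and compactness, or by the explicit computation in Giusti's or Lindqvist's monograph); combined with the previous display this gives $\mathcal{S}_p(\xi,\eta)\ge C\vert\xi-\eta\vert^p$. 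For $p\in(1,2)$ the exponent $\frac{p-2}{2}$ is negative, so $s\mapsto s^{\frac{p-2}{2}}$ is decreasing and we need an \emph{upper} bound on $\beta_\epsilon(\eta_t)$: from $\vert\eta_t\vert\le\vert\xi\vert+\vert\eta\vert$ we get $\beta_\epsilon(\eta_t)\le\epsilon^2+(\vert\xi\vert+\vert\eta\vert)^2\le 3\big(\epsilon^2+\vert\xi\vert^2+\vert\eta\vert^2\big)$, hence $\beta^{\frac{p-2}{2}}_\epsilon(\eta_t)\ge 3^{\frac{p-2}{2}}\big(\epsilon^2+\vert\xi\vert^2+\vert\eta\vert^2\big)^{\frac{p-2}{2}}$, and integrating over $t\in[0,1]$ yields the claim with $c_0=3^{\frac{p^+-2}{2}}$.

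I expect the only genuinely delicate point to be the elementary convexity lemma $\int_0^1\vert\eta+t(\xi-\eta)\vert^{p-2}\,dt\ge c(p)\vert\xi-\eta\vert^{p-2}$ in the range $p\ge2$: one must verify it does not degenerate when $\xi$ and $\eta$ are nearly antipodal (so that $\eta_t$ passes close to the origin), and check that the constant depends only on $p$ (hence on $p^\pm$) and stays bounded away from $0$ as $p\to2^+$; this is where a short separate argument — normalize $\vert\xi-\eta\vert=1$, use that the integral is a continuous positive function of the remaining parameters on a compact set, and track its infimum — is needed. Everything else is the routine differentiation of $\Phi$ and the two-line bounds above; in particular the uniformity in $z\in Q_T$ is automatic once the constants are expressed through $p^\pm$ only.
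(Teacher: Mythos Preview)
Your proof is correct and follows the standard ``integral along the segment'' approach (see, e.g., Lindqvist or DiBenedetto), but the paper proceeds differently for $p\ge 2$. There the authors use the algebraic identity
\[
\mathcal{S}_p(\xi,\eta)=\tfrac{1}{2}\big(\gamma^{(p)}_\epsilon(\xi)+\gamma^{(p)}_\epsilon(\eta)\big)\vert\xi-\eta\vert^2+\tfrac{1}{2}\big(\gamma^{(p)}_\epsilon(\xi)-\gamma^{(p)}_\epsilon(\eta)\big)(\vert\xi\vert^2-\vert\eta\vert^2),
\]
observe that the second term is nonnegative since $s\mapsto(\epsilon^2+s^2)^{\frac{p-2}{2}}$ is nondecreasing for $p\ge 2$, drop it, and then use $\gamma^{(p)}_\epsilon(\xi)\ge\vert\xi\vert^{p-2}$ together with $(\vert\xi\vert+\vert\eta\vert)^{p-2}\le C_p(\vert\xi\vert^{p-2}+\vert\eta\vert^{p-2})$ and $\vert\xi-\eta\vert\le\vert\xi\vert+\vert\eta\vert$ to conclude. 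For $p\in(1,2)$ the paper simply cites the literature. Your route has the advantage of treating both ranges of $p$ in a single unified framework via the Jacobian lower bound $\min\{1,p-1\}\beta_\epsilon^{\frac{p-2}{2}}$, at the cost of invoking the auxiliary integral lemma $\int_0^1\vert\eta_t\vert^{p-2}\,dt\ge c(p)\vert\xi-\eta\vert^{p-2}$; the paper's argument for $p\ge 2$ is more self-contained and avoids that lemma entirely, but requires the algebraic splitting which has no analogue when $p<2$.
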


\begin{proof}
The second line of \eqref{eq:mon-strict} with $\epsilon\not =0$ is proven in \cite[Lemma 6.1]{A-S}, in the case $\epsilon=0$ the proof can be found in \cite[Lemma 17.3]{Chipot-mon}. For this reason we only have to consider the case $p\geq 2$. For every $\xi,\eta\in \mathbb{R}^N$

\[
\begin{split}
\mathcal{S}_p(\xi,\eta)= \gamma^{(p)}_\epsilon(\xi)\vert \xi\vert ^2 + \gamma^{(p)}_\epsilon(\eta)\vert \eta\vert ^2- \left(\gamma^{(p)}_\epsilon(\xi)+\gamma^{(p)}_\epsilon(\eta)\right)(\xi,\eta).
\end{split}
\]
Expressing the inner product $(\xi,\eta)$ from the relation

\[
\vert \xi-\eta\vert ^2=(\xi-\eta,\xi-\eta)=\vert \xi\vert ^2+\vert \eta\vert ^2-2(\xi,\eta)
\]
we transform the previous equality to the form

\[
\begin{split}
\mathcal{S}_p(\xi,\eta) & = \frac{1}{2}\left(\gamma^{(p)}_\epsilon(\xi) +\gamma^{(p)}_\epsilon(\eta)\right)\vert \xi-\eta\vert ^2 + \frac{1}{2}\left(\gamma^{(p)}_\epsilon(\xi)-\gamma^{(p)}_\epsilon(\eta)\right) (\vert \xi\vert ^2-\vert \eta^2\vert ).
\end{split}
\]
For $p\geq 2$ the function $(\epsilon^2+\vert \xi\vert ^2)^{\frac{p-2}{2}}$ is monotone increasing as a function of $\vert \xi\vert $, therefore the second term is nonnegative. Dropping it, we estimate $\mathcal{S}_p(\xi,\eta)$ from below in the following way:

\[
\begin{split}
\mathcal{S}_p(\xi,\eta) & \geq \frac{1}{2}\left(\gamma^{(p)}_\epsilon(\xi) +\gamma^{(p)}_\epsilon(\eta)\right)\vert \xi-\eta\vert ^2
\\
&
\geq \frac{1}{2}\left(\vert \xi\vert ^{p-2}+\vert \eta\vert ^{p-2}\right)\vert \xi-\eta\vert ^2\geq \frac{1}{2C_p}(\vert \xi\vert +\vert \eta\vert )^{p-2}\vert \xi-\eta\vert ^2,
\end{split}
\]
with the constant $C_p$ from the inequality $(a+b)^{p-2}\leq C_p(a^p+b^p)$, $p\geq 2$, $a,b\geq 0$.
Finally,

\[
\begin{split}
\vert \xi-\eta\vert ^{p} & =\left(\vert \xi-\eta\vert ^2\right)^{\frac{p-2}{2}}\vert \xi-\eta\vert ^2
= \left((\xi-\eta,\xi-\eta)\right)^{\frac{p-2}{2}}\vert \xi-\eta\vert ^2
\\
& =(\vert \xi\vert ^2 -2(\xi,\eta)+\vert \eta\vert ^2)^{\frac{p-2}{2}}\vert \xi-\eta\vert ^2
\leq (\vert \xi\vert ^2+2\vert \xi\vert \vert \eta\vert +\vert \eta\vert ^2)^{\frac{p-2}{2}}\vert \xi-\eta\vert ^2
\\
&
=(\vert \xi\vert +\vert \eta\vert )^{p-2}\vert \xi-\eta\vert ^2
\leq 2C_p\mathcal{S}_p(\xi,\eta).
\end{split}
\]
\end{proof}

Now we consider the functions defined on the cylinder $Q_T$ and depending on the variables $z=(x,t)\in \Omega\times (0,T)$. Let us denote

\begin{equation}
\label{eq:prelim-notation}
\begin{split}
& \mathcal{N}(\nabla w)=\int_{Q_T}(a(z)\vert \nabla w\vert ^{p(z)}+b(z)\vert \nabla w\vert ^{q(z)})\,dz,
\\
& \mathcal{G}_{\epsilon}(\nabla u,\nabla v)= \int_{Q_T} \left( \mathcal{F}_{\epsilon}^{(0,0)}(z,\nabla u)\nabla u-\mathcal{F}_{\epsilon}^{(0,0)}(z,\nabla v)\nabla v\right)\cdot \nabla (u-v)\,dz.
\end{split}
\end{equation}

\begin{proposition}
\label{pro:strict-1}
For every $u,v\in \mathbb{W}_{\overline{s}(\cdot)}(Q_T)$ and any $\epsilon\in [0,1)$
\begin{equation}
\label{eq:mon-strict-cor}
\mathcal{N}(\nabla u-\nabla v)\leq C\left(\mathcal{G}_\epsilon^{\frac{\overline{s}^+}{2}}(\nabla u,\nabla v)+\mathcal{G}_\epsilon^{\frac{\underline{s}^-}{2}}(\nabla u,\nabla v)+\mathcal{G}_\epsilon(\nabla u,\nabla v)\right)
\end{equation}
with a constant $C$ depending on $p^\pm$, $q^\pm$, $a^+$, $b^+$, $\|  \nabla u\|  _{\overline{s}(\cdot),Q_T}$, $\|  \nabla v\|  _{\overline{s}(\cdot),Q_T}$.
\end{proposition}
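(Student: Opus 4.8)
The plan is to estimate the modular $\mathcal{N}(\nabla u - \nabla v)$ by splitting each of the two power-type integrands according to the two regimes in Proposition \ref{pro:strict-monotone}, namely $p(z) \geq 2$ versus $p(z) \in (1,2)$ (and analogously for $q$), and then to dominate the resulting pieces by the monotonicity functional $\mathcal{G}_\epsilon$. Concretely, write $w = u - v$ and split $Q_T = Q_T^{(p,\geq)} \cup Q_T^{(p,<)}$ where $Q_T^{(p,\geq)} = \{z : p(z) \geq 2\}$ and similarly for $q$; on each piece we will use the corresponding line of \eqref{eq:mon-strict} applied pointwise with $\xi = \nabla u(z)$, $\eta = \nabla v(z)$. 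Note that the integrand of $\mathcal{G}_\epsilon$ is, by \eqref{eq:mon-strict-cor}'s left-hand side being built from $\mathcal{F}_\epsilon^{(0,0)}(z,\cdot) = a(z)\gamma_\epsilon^{(p(z))}(\cdot) + b(z)\gamma_\epsilon^{(q(z))}(\cdot)$, a sum of the two terms $a(z)\mathcal{S}_{p(z)}(\nabla u,\nabla v)$ and $b(z)\mathcal{S}_{q(z)}(\nabla u,\nabla v)$, so the pointwise lower bounds in \eqref{eq:mon-strict} give us direct control of $a(z)|\nabla w|^{p(z)}$ on $Q_T^{(p,\geq)}$ and of $a(z)|\nabla w|^2(\epsilon^2 + |\nabla u|^2 + |\nabla v|^2)^{\frac{p(z)-2}{2}}$ on $Q_T^{(p,<)}$, and similarly for the $b,q$ pair.

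On the sets where the exponent is $\geq 2$ there is nothing further to do: integrating the first line of \eqref{eq:mon-strict} over $Q_T^{(p,\geq)}$ (with weight $a(z)$, using $a,b \geq 0$) bounds $\int_{Q_T^{(p,\geq)}} a(z)|\nabla w|^{p(z)}\,dz$ by $C\,\mathcal{G}_\epsilon(\nabla u,\nabla v)$. The genuine work is on the singular set $Q_T^{(p,<)}$, where $p(z) \in (\frac{2N}{N+2}, 2)$. Here I would recover $|\nabla w|^{p(z)}$ from $|\nabla w|^2(\epsilon^2 + |\nabla u|^2 + |\nabla v|^2)^{\frac{p(z)-2}{2}}$ by the standard trick: write
\[
|\nabla w|^{p(z)} = \left(|\nabla w|^2 (\epsilon^2+|\nabla u|^2+|\nabla v|^2)^{\frac{p(z)-2}{2}}\right)^{\frac{p(z)}{2}} (\epsilon^2+|\nabla u|^2+|\nabla v|^2)^{\frac{p(z)(2-p(z))}{4}},
\]
and apply Young's inequality with exponents $\frac{2}{p(z)}$ and $\frac{2}{2-p(z)}$. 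Since $p(z) \in (1,2)$ these exponents are admissible, and the second factor raised to the power $\frac{2}{2-p(z)}$ produces $(\epsilon^2+|\nabla u|^2+|\nabla v|^2)^{\frac{p(z)}{2}}$, which for $\epsilon \in [0,1)$ is bounded by $C(1 + |\nabla u|^{p(z)} + |\nabla v|^{p(z)})$ via \eqref{eq:interchange}. Integrating and using that $|\nabla u|, |\nabla v| \in L^{p(\cdot)}(Q_T) \subseteq$ (after \eqref{eq:emb-1}, \eqref{eq:mod-2}) the controlled modulars $\mathcal{N}(\nabla u), \mathcal{N}(\nabla v)$, which in turn are finite because $u, v \in \mathbb{W}_{\overline{s}(\cdot)}(Q_T)$, we get the bound on $\int_{Q_T^{(p,<)}} a(z)|\nabla w|^{p(z)}\,dz$ in terms of $\mathcal{G}_\epsilon(\nabla u,\nabla v)$ plus a constant — but a constant is not allowed in \eqref{eq:mon-strict-cor}. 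To remove it, absorb the unit contribution into the $\mathcal{G}_\epsilon^{\underline{s}^-/2}$ term, or, more cleanly, apply Young's inequality directly in the form that yields powers of $\mathcal{G}_\epsilon$: after Hölder in the $z$-integral with respect to the splitting above, the factor $\left(\int_{Q_T^{(p,<)}} |\nabla w|^2(\cdots)^{\frac{p(z)-2}{2}} a(z)\,dz\right)^{\theta}$ with some $\theta \in \{\frac{\underline{s}^-}{2}, 1\}$ appears, which is precisely one of the three terms on the right of \eqref{eq:mon-strict-cor} — the three exponents $\frac{\overline{s}^+}{2}$, $\frac{\underline{s}^-}{2}$, $1$ being exactly what is needed to cover the possible ranges of the Hölder exponent $\frac{2}{p(z)}$ as $p(z)$ ranges over $[\underline{s}^-, \overline{s}^+]$ and whether $\mathcal{G}_\epsilon \lessgtr 1$.

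The main obstacle is bookkeeping the passage from the variable-exponent pointwise estimate to a single modular inequality: because the Hölder/Young exponent $\frac{2}{p(z)}$ varies with $z$, the power to which $\mathcal{G}_\epsilon$ (or rather the $a$- and $b$-weighted sub-integrals of its integrand) is raised is not constant, and one must split once more according to whether $\int_{Q_T} a\,\mathcal{S}_{p(z)}\,dz$ is $\geq 1$ or $< 1$ to land on a clean power $\mathcal{G}_\epsilon^{\overline{s}^+/2}$ (large values, large $p$), $\mathcal{G}_\epsilon^{\underline{s}^-/2}$ (small values, small $p$), or $\mathcal{G}_\epsilon$ itself. The dependence of the final constant $C$ on $\|\nabla u\|_{\overline{s}(\cdot), Q_T}$ and $\|\nabla v\|_{\overline{s}(\cdot), Q_T}$ enters precisely through the bound on the auxiliary integrals $\int (\epsilon^2+|\nabla u|^2+|\nabla v|^2)^{p(z)/2}\,dz$ and $\int(\cdots)^{q(z)/2}\,dz$, handled by \eqref{eq:mod-2} and \eqref{eq:emb-1}. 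Everything else — the edge cases $\epsilon = 0$, the decomposition $\overline{s}, \underline{s} \in \{p,q\}$, the symmetric treatment of the $b$-term — is routine.
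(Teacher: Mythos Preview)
Your proposal is correct and follows essentially the same approach as the paper's proof: split $Q_T$ according to whether $p(z)\geq 2$ or $p(z)<2$ (and likewise for $q$), use the first line of \eqref{eq:mon-strict} directly on the non-singular part to produce the $\mathcal{G}_\epsilon$ term, and on the singular part use the factorization $a|\nabla w|^{p}=a^{1-p/2}\mathcal{R}^{p/2}\bigl(a\mathcal{R}^{-1}|\nabla w|^{2}\bigr)^{p/2}$ with $\mathcal{R}=(\epsilon^2+|\nabla u|^2+|\nabla v|^2)^{(2-p)/2}$ together with the generalized H\"older inequality \eqref{eq:Holder} for the exponents $\tfrac{2}{2-p(\cdot)}$, $\tfrac{2}{p(\cdot)}$. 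Your initial detour through pointwise Young's inequality is unnecessary --- the paper goes straight to the variable-exponent H\"older step --- but your corrected version (H\"older in $z$, then the norm--modular relation \eqref{eq:mod-2} to convert the $L^{2/p(\cdot)}$-norm into the powers $\mathcal{G}_\epsilon^{p^{\pm}/2}$, bounded in turn by $\mathcal{G}_\epsilon^{\overline{s}^+/2}+\mathcal{G}_\epsilon^{\underline{s}^-/2}$) is exactly what the paper does.
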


\begin{proof}
Because of \eqref{eq:mon-strict} it is sufficient to consider in detail the terms with the coefficient $a(z)$, the terms with the coefficient $b(z)$ are considered likewise.
Let us split the domain $Q_T$ into two parts,

\[
\begin{split}
& Q_+=Q_T\cap \{p\geq 2\}, \quad Q_-=Q_T\cap \{p< 2\}.
\end{split}
\]
Given $\epsilon \in [0,1)$, we denote
\[
\mathcal{R}(\nabla u,\nabla v)=(\epsilon+\vert \nabla u\vert ^2+\vert \nabla v\vert ^2)^{\frac{2-p(z)}{2}},\qquad \left(\mathcal{R}^{\frac{p}{2}}(\nabla u,\nabla v)\right)^{\frac{2}{2-p}}\leq C(1+\vert \nabla u\vert ^p+\vert \nabla v\vert ^p)\quad \text{on $Q_-$}.
\]
By the generalized H\"older inequality \eqref{eq:Holder}

\[
\begin{split}
\int_{Q_-} & a(z)\vert \nabla (u-v)\vert ^{p(z)}\,dz =\int_{Q_-} a^{1-\frac{p}{2}} \mathcal{R}^{\frac{p}{2}}(\nabla u,\nabla v)\left(a\mathcal{R}^{-1}(\nabla u,\nabla v)\vert \nabla(u-v)\vert ^2 \right)^{\frac{p}{2}}\,dz
\\
& \leq 2(a^+)^{1-\frac{p}{2}}\|  \mathcal{R}^{\frac{p}{2}}(\nabla u,\nabla v)\|  _{\frac{2}{2-p(\cdot)},Q_-} \left\|  a^{\frac{p}{2}}\mathcal{R}^{-\frac{p}{2}}(\nabla u,\nabla v)\vert \nabla (u-v)\vert ^{p}\right\|  _{\frac{2}{p(\cdot)},Q_-},
\end{split}
\]
where

\[
\begin{split}
\|  \mathcal{R}^{\frac{p}{2}}(\nabla u,\nabla v)\|  _{\frac{2}{2-p(\cdot)},Q_-} & \leq
C(p^\pm, \|  \nabla u\|  _{p(\cdot),Q_T},\|  \nabla v\|  _{p(\cdot),Q_T}),
\end{split}
\]

\[
\begin{split}
& \left\|  a^{\frac{p}{2}}\mathcal{R}^{-\frac{p}{2}}(\nabla u,\nabla v)\vert \nabla (u-v)\vert ^{p}\right\|  _{\frac{2}{p(\cdot)},Q_-}
\\
 & \qquad \qquad
\leq \max\left\{\left(\int_{Q_-}\frac{a}{\mathcal{R}(\nabla u, \nabla v)}\vert \nabla (u-v)\vert ^2\,dz\right)^{\frac{p^+}{2}}, \left(\int_{Q_T}\ldots\right)^{\frac{p^-}{2}}\right\}.
\end{split}
\]
Gathering these inequalities and using \eqref{eq:mon-strict} we obtain:

\[
\begin{split}
\int_{Q_-}a\vert \nabla (u-v)\vert ^p\,dz & \leq C\left( \left(\int_{Q_-}a\mathcal{S}_{p(z)}(\nabla u,\nabla v)\,dz\right)^{\frac{p^+}{2}}
+ \left(\int_{Q_-}
\ldots
\right)^{\frac{p^-}{2}}\right)
\\
& \leq C\left(\mathcal{G}_{\epsilon}^{\frac{\overline{s}^+}{2}}(\nabla u,\nabla v) + \mathcal{G}_{\epsilon}^{\frac{\underline{s}^-}{2}}(\nabla u,\nabla v)\right).
\end{split}
\]
The estimate on the integral of $a\vert \nabla (u-v)\vert ^p$ over $Q_+$ is straightforward and follows directly from the first line in \eqref{eq:mon-strict}:

\[
\int_{Q_+}a\vert \nabla (u-v)\vert ^{p}\,dz\leq C \mathcal{G}_{\epsilon}(\nabla u,\nabla v).
\]
The proof is concluded by gathering these inequalities with the corresponding inequalities for the terms with the coefficient $b(z)$ and the exponent $q(z)$.
\end{proof}
Let

\[
\mathcal{W}(Q_T)=\left\{u:Q_T\mapsto \mathbb{R}: u\in L^2(Q_T), \,\mathcal{N}(\nabla u)<\infty\right\}.
\]

\begin{lemma}
\label{le:cont-emb-1}
There is the continuous embedding $\mathcal{W}(Q_T)\subset \mathbb{W}_{\underline{s}(\cdot)}(Q_T)$.
\end{lemma}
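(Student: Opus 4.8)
The goal is to show that every $u \in \mathcal{W}(Q_T)$, i.e. every $u \in L^2(Q_T)$ with $\mathcal{N}(\nabla u) < \infty$, in fact belongs to $\mathbb{W}_{\underline{s}(\cdot)}(Q_T)$, with a norm bound that is uniform in terms of $\|u\|_{2,Q_T}$ and $\mathcal{N}(\nabla u)$. Since the target space norm is $\|u\|_{2,Q_T} + \|\nabla u\|_{\underline{s}(\cdot),Q_T}$ and the $L^2$ part is already controlled, the entire matter reduces to estimating $\|\nabla u\|_{\underline{s}(\cdot),Q_T}$ — equivalently, by the modular–norm equivalence \eqref{eq:conv-modular}–\eqref{eq:mod-2}, to estimating the modular $\int_{Q_T} |\nabla u|^{\underline{s}(z)}\,dz$ by a quantity depending only on $\mathcal{N}(\nabla u)$.

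First I would fix a point $z \in Q_T$ and note that by definition $\underline{s}(z) = \min\{p(z),q(z)\}$, and by the balance condition \eqref{eq:a-b} we have $a(z)+b(z) \ge \alpha > 0$, so at least one of $a(z)$, $b(z)$ is $\ge \alpha/2$. Consider the set $E_a = \{z \in Q_T : a(z) \ge b(z)\}$ and its complement $E_b$; on $E_a$ we have $a(z) \ge \alpha/2$ and on $E_b$ we have $b(z) \ge \alpha/2$. On $E_a$ the integrand $a(z)|\nabla u|^{p(z)}$ controls $\tfrac{\alpha}{2}|\nabla u|^{p(z)}$, and since $p(z) \ge \underline{s}(z)$, I would split the region further into where $|\nabla u| \ge 1$ (there $|\nabla u|^{\underline{s}(z)} \le |\nabla u|^{p(z)}$) and where $|\nabla u| < 1$ (there $|\nabla u|^{\underline{s}(z)} \le 1$). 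This gives
\[
\int_{E_a} |\nabla u|^{\underline{s}(z)}\,dz \le |Q_T| + \frac{2}{\alpha}\int_{E_a} a(z)|\nabla u|^{p(z)}\,dz \le |Q_T| + \frac{2}{\alpha}\,\mathcal{N}(\nabla u).
\]
The identical argument on $E_b$ with $b(z)|\nabla u|^{q(z)}$ and $q(z) \ge \underline{s}(z)$ yields $\int_{E_b} |\nabla u|^{\underline{s}(z)}\,dz \le |Q_T| + \tfrac{2}{\alpha}\mathcal{N}(\nabla u)$. Adding, we obtain $\int_{Q_T}|\nabla u|^{\underline{s}(z)}\,dz \le 2|Q_T| + \tfrac{4}{\alpha}\mathcal{N}(\nabla u) < \infty$.

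It remains to promote this modular bound to a norm bound and thereby to the continuity of the embedding. Since $\underline{s}^- > 1$ and $\overline{s}^+ < \infty$, the exponent $\underline{s}(\cdot)$ satisfies $1 < \underline{s}^- \le \underline{s}(z) \le \overline{s}^+$, so by \eqref{eq:mod-2} (applied on $Q_T$) one has $\min\{\|\nabla u\|_{\underline{s}(\cdot),Q_T}^{\underline{s}^-}, \|\nabla u\|_{\underline{s}(\cdot),Q_T}^{\overline{s}^+}\} \le \int_{Q_T}|\nabla u|^{\underline{s}(z)}\,dz$, whence $\|\nabla u\|_{\underline{s}(\cdot),Q_T}$ is bounded by a continuous function of $\mathcal{N}(\nabla u)$. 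Together with $\|u\|_{2,Q_T}$ this shows $u \in \mathbb{W}_{\underline{s}(\cdot)}(Q_T)$ and that the inclusion map is bounded (hence continuous, being linear). I would also observe that $\vert\nabla u\vert \in L^1(Q_T)$ and $u\in L^2(Q_T)$ give $u \in W^{1,1}_0(Q_T)$-slices, so $u$ genuinely lies in the space as defined in \eqref{eq:spaces}; this is routine.

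**Main obstacle.** There is no serious obstacle here — the proof is a direct consequence of the balance condition \eqref{eq:a-b} plus the elementary fact that $|\nabla u|^{\underline{s}(z)} \le 1 + |\nabla u|^{\max\{p(z),q(z)\}}$ split over $\{|\nabla u| \gtrless 1\}$. The only point requiring a modicum of care is the bookkeeping: one must not lose track of the fact that at each point $z$ it may be a \emph{different} one of the two coefficients that is bounded below, which is handled cleanly by the measurable partition $Q_T = E_a \cup E_b$. The passage from modular to norm is then standard via \eqref{eq:mod-2}.
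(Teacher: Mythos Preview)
Your partition $Q_T = E_a \cup E_b$ and the split over $\{|\nabla u| \gtrless 1\}$ correctly establish the \emph{set inclusion} $\mathcal{W}(Q_T) \subset \mathbb{W}_{\underline{s}(\cdot)}(Q_T)$, and more cheaply than the paper's argument. However, there is a genuine gap in the continuity claim. In this context ``continuous embedding'' means the modular implication
\[
\|u\|_{2,Q_T}^2 + \mathcal{N}(\nabla u) \to 0 \quad \Longrightarrow \quad \|u\|_{\mathbb{W}_{\underline{s}(\cdot)}(Q_T)} \to 0,
\]
which is exactly what is needed downstream: Lemma~\ref{le:cont-convergence} is declared an ``immediate byproduct'' of the present lemma, and there one has $\mathcal{G}_\epsilon \to 0 \Rightarrow \mathcal{N}(\nabla(u-v)) \to 0$ via Proposition~\ref{pro:strict-1}, and must conclude $\int|\nabla(u-v)|^{\underline{s}} \to 0$. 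Your estimate $\int_{Q_T}|\nabla u|^{\underline{s}(z)}\,dz \le 2|Q_T| + \tfrac{4}{\alpha}\mathcal{N}(\nabla u)$ does \emph{not} deliver this: as $\mathcal{N}(\nabla u) \to 0$ the right side tends to $2|Q_T|$, not $0$. The sentence ``the inclusion map is bounded (hence continuous, being linear)'' is a non-sequitur, since $\mathcal{W}(Q_T)$ carries only a modular, not a norm, and your bound is affine rather than homogeneous.

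The paper avoids the additive constant by writing $a|\nabla u|^{\underline{s}} = a^{1-\underline{s}/p}(a|\nabla u|^p)^{\underline{s}/p}$ (similarly for $b,q$) and applying the generalized H\"older inequality~\eqref{eq:Holder} with exponent $p(\cdot)/\underline{s}(\cdot)$, arriving at a bound of the form $\int|\nabla u|^{\underline{s}} \le C\bigl(\mathcal{N}^{\underline{s}^-/\overline{s}^+} + \mathcal{N}^{\overline{s}^+/\underline{s}^-}\bigr)$ with strictly positive exponents, which does vanish at $\mathcal{N}=0$. Your partition can be repaired along the same lines: on $E_a$ one has $\int_{E_a}|\nabla u|^{p(z)}\,dz \le \tfrac{2}{\alpha}\mathcal{N}(\nabla u)$, and then the continuous embedding $L^{p(\cdot)}(E_a) \hookrightarrow L^{\underline{s}(\cdot)}(E_a)$ from~\eqref{eq:emb-1} (valid since $\underline{s}\le p$) gives $\|\nabla u\|_{\underline{s}(\cdot),E_a} \le C\|\nabla u\|_{p(\cdot),E_a} \to 0$; similarly on $E_b$ with $q$.
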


\begin{proof} We have to show that for every $u\in \mathcal{W}(Q_T)$ the norm $\|  u\|  _{\mathbb{W}_{\underline{s}(\cdot)}(Q_T)}$ is bounded and

\[
\|  u\|  _{\mathbb{W}_{\underline{s}(\cdot)}(Q_T)}=\|  u\|  _{2,Q_T}+\|  \nabla u\|  _{\underline{s}(\cdot),Q_T}\to 0\quad \text{as $\|  u\|  ^2_{2,Q_T}+\mathcal{N}(\nabla u)\to 0$}.
\]
Let $u\in \mathcal{W}(Q_T)$. The following inequality holds:
\[
\begin{split}
\alpha\int_{Q_T} & \vert \nabla u\vert ^{\underline{s}(z)}\,dz \leq \int_{Q_T}(a(z)+b(z))\vert \nabla u\vert ^{\underline{s}(z)}\,dz
\\
&
\equiv \int_{Q_T}a^{1-\frac{\underline{s}}{p}}\left(a\vert \nabla u\vert ^{p}\right)^{\frac{\underline{s}}{p}}\,dz + \int_{Q_T}b^{1-\frac{\underline{s}}{q}}\left(b\vert \nabla u\vert ^{q}\right)^{\frac{\underline{s}}{q}}\,dz
\\
& \leq A^+\int_{Q_T}\left(a\vert \nabla u\vert ^{p}\right)^{\frac{\underline{s}}{p}}\,dz + B^+ \int_{Q_T}\left(b\vert \nabla u\vert ^{q}\right)^{\frac{\underline{s}}{q}}\,dz
\equiv A^+\mathcal{J}_a+B^+\mathcal{J}_b
\end{split}
\]
with the constants $A^+=\sup_{Q_T}a^{1-\frac{\underline{s}(z)}{p(z)}}(z)$, $B^+=\sup_{Q_T}b^{1-\frac{\underline{s}(z)}{q(z)}}(z)$. The estimates on $\mathcal{J}_a$ and $\mathcal{J}_b$ are similar, for this reason we provide the details only for the first one. By the generalized H\"older inequality

\[
\begin{split}
\mathcal{J}_a & \leq  2C_a\|  (a\vert \nabla u\vert ^{p})^{\frac{\underline{s}}{p}} \|  _{\frac{p(\cdot)}{\underline{s}(\cdot)},Q_T}
  \\
  &
  \leq 2C_a
 \max \left\{
 \left(\int_{Q_T}a\vert \nabla u\vert ^{p}\,dz\right)^{\frac{\underline{s}^-}{p^+}}, \left(\int_{Q_T}a\vert \nabla u\vert ^{p}\,dz\right)^{\frac{\overline{s}^+}{p^-}}\right\}
 \\
 & \leq 2C_a\left(\mathcal{N}^{\frac{\underline{s}^-}{p^+}}(\nabla u)+\mathcal{N}^{\frac{\overline{s}^+}{p^-}}(\nabla u)\right)
 \end{split}
\]
with the constant $C_a=\|  1\|  _{({p}(\cdot)/\underline{s}(\cdot))',Q_T}$. Gathering this estimate with the similar estimate for $\mathcal{J}_b$ with constant $C_b=\|  1\|  _{(q(\cdot)/\underline{s}(\cdot))',Q_T}$ we obtain

\[
\alpha \int_{Q_T}\vert \nabla u\vert ^{\underline{s}(z)}\,dz\leq 4(C_a+C_b)(A^++B^+)\left(\mathcal{N}^{\frac{\underline{s}^-}{\overline{s}^+}}(\nabla u)+\mathcal{N}^{\frac{\overline{s}^+}{\underline{s}^-}}(\nabla u)\right).
\]
\end{proof}
The next assertion is an immediate byproduct of Lemma \ref{le:cont-emb-1}.
\begin{lemma}
\label{le:cont-convergence}
Let $\epsilon\in [0,1)$. If $u,v\in \mathbb{W}_{\overline{s}(\cdot)}(Q_T)$, then

\[
\int_{Q_T}\vert \nabla (u-v)\vert ^{\underline{s}(z)}\,dz\to 0 \quad \text{when}\quad
\mathcal{G}_{\epsilon}(\nabla u,\nabla v)\to 0.
\]
\end{lemma}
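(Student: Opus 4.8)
The plan is to obtain the statement by chaining Proposition \ref{pro:strict-1} with Lemma \ref{le:cont-emb-1}, exactly as the remark preceding the lemma suggests. First I would check that $w:=u-v$ lies in $\mathcal{W}(Q_T)$: indeed $w\in L^2(Q_T)$, and since $p(z),q(z)\le\overline{s}(z)$ in $\overline{Q}_T$, the elementary bound $\vert\nabla w\vert^{p(z)}+\vert\nabla w\vert^{q(z)}\le 2+2\vert\nabla w\vert^{\overline{s}(z)}$ combined with $\vert\nabla w\vert\le\vert\nabla u\vert+\vert\nabla v\vert$, the boundedness of $a,b$, and the assumption $u,v\in\mathbb{W}_{\overline{s}(\cdot)}(Q_T)$ gives $\mathcal{N}(\nabla w)<\infty$. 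Thus Lemma \ref{le:cont-emb-1} applies to $w$; reading off the final inequality in its proof, there is a constant $C_1$ depending only on the data for which
\[
\int_{Q_T}\vert\nabla(u-v)\vert^{\underline{s}(z)}\,dz\le C_1\Bigl(\mathcal{N}^{\underline{s}^-/\overline{s}^+}(\nabla(u-v))+\mathcal{N}^{\overline{s}^+/\underline{s}^-}(\nabla(u-v))\Bigr).
\]

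Next I would invoke Proposition \ref{pro:strict-1}, which is legitimate precisely because $u,v\in\mathbb{W}_{\overline{s}(\cdot)}(Q_T)$: it bounds $\mathcal{N}(\nabla(u-v))$ above by $C_2\bigl(\mathcal{G}_\epsilon^{\overline{s}^+/2}+\mathcal{G}_\epsilon^{\underline{s}^-/2}+\mathcal{G}_\epsilon\bigr)$, where $\mathcal{G}_\epsilon=\mathcal{G}_\epsilon(\nabla u,\nabla v)$ and $C_2$ depends on the data together with $\|\nabla u\|_{\overline{s}(\cdot),Q_T}$ and $\|\nabla v\|_{\overline{s}(\cdot),Q_T}$. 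By Proposition \ref{pro:strict-monotone} the integrand defining $\mathcal{G}_\epsilon$ equals $a(z)\mathcal{S}_{p(z)}(\nabla u,\nabla v)+b(z)\mathcal{S}_{q(z)}(\nabla u,\nabla v)\ge 0$, so $\mathcal{G}_\epsilon\ge 0$; and since $\underline{s}^-\le\overline{s}^+$ are both $>\frac{2N}{N+2}>0$, every exponent occurring above is strictly positive. Hence $\mathcal{G}_\epsilon(\nabla u,\nabla v)\to 0$ forces $\mathcal{N}(\nabla(u-v))\to 0$, and feeding this into the displayed inequality from Lemma \ref{le:cont-emb-1} yields $\int_{Q_T}\vert\nabla(u-v)\vert^{\underline{s}(z)}\,dz\to 0$, which is the claim.

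I do not expect a genuine obstacle here; the only point worth a comment is that the constant $C_2$ from Proposition \ref{pro:strict-1} involves $\|\nabla u\|_{\overline{s}(\cdot),Q_T}$ and $\|\nabla v\|_{\overline{s}(\cdot),Q_T}$, so the statement should be read either for a fixed pair $(u,v)$ — where it amounts to a quantitative modulus of smallness — or, in the way it is actually used later in the paper, along sequences $\{u_k\},\{v_k\}$ staying in a bounded subset of $\mathbb{W}_{\overline{s}(\cdot)}(Q_T)$, in which case $C_2$ stays bounded and the conclusion is uniform. Everything else is the routine bookkeeping already performed in the proofs of Lemma \ref{le:cont-emb-1} and Proposition \ref{pro:strict-1}, so the lemma is indeed an immediate byproduct of those two results.
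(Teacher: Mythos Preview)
Your proposal is correct and matches the paper's approach exactly: the paper states that the lemma is ``an immediate byproduct of Lemma \ref{le:cont-emb-1}'' and gives no further proof, while you have spelled out precisely the chain Proposition \ref{pro:strict-1} $\Rightarrow$ $\mathcal{N}(\nabla(u-v))\to 0$ followed by the modular inequality from the proof of Lemma \ref{le:cont-emb-1}. Your remark about the dependence of $C_2$ on $\|\nabla u\|_{\overline{s}(\cdot),Q_T}$ and $\|\nabla v\|_{\overline{s}(\cdot),Q_T}$ is also well taken and consistent with how the lemma is actually applied later in the paper (along sequences bounded in $\mathbb{W}_{\overline{s}(\cdot)}(Q_T)$).
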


\section{Interpolation inequalities}
\label{sec:interpolation}
The bulk of this section is devoted to deriving interpolation inequalities for functions of the variable $x\in \Omega$.
By agreement, we will write $\displaystyle \vert u_{xx}\vert ^2=\sum_{i,j=1}^{N}\left\vert D^2_{x_ix_j}u\right\vert ^2.$

\subsection{Global inequalities}
\begin{lemma}\label{lem:interpol} Let $\partial \Omega \in C^2$, $u\in
C^1(\overline{\Omega})\cap H^{2}_0(\Omega)$. Assume that $p$, $q$, $a$, $b$ are functions of $x\in \Omega$ and satisfy conditions \eqref{eq:s} and \eqref{eq:a-b} in $\overline{\Omega}$. If

\[
\int_\Omega\mathcal{F}^{(0,0)}_{\epsilon}(x,\nabla
u)\vert u_{xx}\vert ^2\,dx<\infty,\qquad \int_{\Omega}u^2\,dx= M_0,
\]
then for every pair of nonnegative functions $r_1, r_2: \overline{\Omega} \to \mathbb{R}$, $r_1, r_2 \in C^{0,1}(\overline{\Omega})$ with the Lipschitz constants $L_{r_1}$, $L_{r_2}$, and arbitrary parameters $\delta, \nu \in (0,1)$
\begin{equation}
\label{eq:principal}
\begin{split}
\int_{\Omega} & \mathcal{F}_{\epsilon}^{(r_1, r_2)}(x,\nabla u)\vert \nabla u\vert ^2\,dx  \leq \delta \int_{\Omega}\mathcal{F}^{(0,0)}_{\epsilon}(x,\nabla
u)\vert u_{xx}\vert ^{2}\,dx
\\
&
\leq C_0 + C_1  \int_{\Omega}u^2\,dx
\\
& + C_2 \int_{\Omega} \left(\vert u\vert ^{\frac{p(x)+r_1(x)}{1-\nu}}+ \vert u\vert ^{\frac{q(x)+r_2(x)}{1-\nu}}\right)\,dx
\\
&
+ C_3  \int_\Omega  \vert u\vert \ \left( \beta_{\epsilon}^{\frac{p(x)+r_1(x)-1}{2}}(\nabla u) + \beta_{\epsilon}^{\frac{q(x)+r_2(x)-1}{2}}(\nabla u)\right)\,dx\\
& + C_4\int_\Omega \vert u\vert ^2 \ \left(  \beta_{\epsilon}^{\frac{p(x)+2r_1(x)-2}{2}}(\nabla u)+  \beta_{\epsilon}^{\frac{q(x)+2r_2(x)-2}{2}}(\nabla u)\right)\,dx
\end{split}
\end{equation}
with independent of $u$ constants $C_i=C_i(\partial
\Omega,\delta,\alpha, \nu, s^\pm,N,r_i, L_{r_i}, M_0, a^+, b^+)$.
\end{lemma}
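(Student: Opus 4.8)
The inequality to be established is the interpolation estimate \eqref{eq:principal}, which bounds $\int_\Omega \mathcal{F}_\epsilon^{(r_1,r_2)}(x,\nabla u)|\nabla u|^2\,dx$ by a small multiple of the second-order energy $\int_\Omega \mathcal{F}_\epsilon^{(0,0)}(x,\nabla u)|u_{xx}|^2\,dx$ plus lower-order terms involving only $u$ and fractional powers of $\beta_\epsilon(\nabla u)$. Since the flux $\mathcal{F}_\epsilon^{(r_1,r_2)}$ is a sum of an $a$-weighted $p$-term and a $b$-weighted $q$-term, and the right-hand side is structured the same way, it suffices to treat one term, say $\int_\Omega a(x)\beta_\epsilon^{\frac{p(x)+r_1(x)-2}{2}}(\nabla u)|\nabla u|^2\,dx$, the $b$-$q$ term being entirely analogous.

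The plan is to integrate by parts. Writing $|\nabla u|^2 = \sum_i (D_{x_i}u)(D_{x_i}u)$ and moving one gradient onto the rest of the integrand (the boundary term vanishes because $u\in H^2_0(\Omega)$, or is controlled by $\partial\Omega\in C^2$ after a localization near the boundary as in the standard second-order a priori estimate), one produces a term $-\int_\Omega u\,D_{x_i}\!\bigl(a(x)\beta_\epsilon^{\frac{p(x)+r_1(x)-2}{2}}(\nabla u)D_{x_i}u\bigr)\,dx$. Differentiating the product generates: (i) a term where $D_{x_i}$ hits $a(x)$, yielding $|u|\cdot|\nabla a|\cdot\beta_\epsilon^{\frac{p+r_1-1}{2}}(\nabla u)$, absorbed into the $C_3$-integral using the Lipschitz bound on $a$; (ii) a term where $D_{x_i}$ hits the exponent $\frac{p(x)+r_1(x)-2}{2}$, producing a logarithmic factor $|\ln \beta_\epsilon(\nabla u)|$ times $|u|\,\beta_\epsilon^{\frac{p+r_1-2}{2}}(\nabla u)|\nabla u|$, which is handled by the elementary inequality \eqref{eq:log} to trade the logarithm for an arbitrarily small extra power, again landing in the $C_3$-term (after adjusting the power by a harmless $\mu$); (iii) the main term where $D_{x_i}$ hits $\beta_\epsilon(\nabla u)$ or the other $D_{x_i}u$, which is bounded by $C\,|u|\,\beta_\epsilon^{\frac{p+r_1-2}{2}}(\nabla u)|u_{xx}|$. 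For this last term, one uses that $\beta_\epsilon^{\frac{p+r_1-2}{2}}(\nabla u) = \beta_\epsilon^{\frac{p-2}{4}}(\nabla u)\cdot\beta_\epsilon^{\frac{p-2}{4}+\frac{r_1}{2}}(\nabla u)$ and applies Young's inequality splitting the $\beta_\epsilon^{\frac{p-2}{4}}|u_{xx}|$ part into the $\delta$-weighted second-order energy $\mathcal{F}_\epsilon^{(0,0)}|u_{xx}|^2$ and the remaining $|u|^2\beta_\epsilon^{\frac{p+2r_1-2}{2}}(\nabla u)$ into the $C_4$-integral. Finally, the leftover purely lower-order pieces are dominated by $C_0 + C_1\int u^2 + C_2\int(|u|^{\frac{p+r_1}{1-\nu}} + \dots)$ via Young's inequality with the exponent $\nu$ absorbing the $\beta_\epsilon$-free contributions; the constant $M_0 = \int u^2$ enters through these estimates.

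The reason for the hypotheses $\frac{2N}{N+2}<s^\pm$ and the structure of the right side is to make the integration-by-parts output coercive: when $p(x)<2$ the weight $\beta_\epsilon^{\frac{p-2}{4}}$ is singular, and one must be careful that the Young splitting produces exactly $\mathcal{F}_\epsilon^{(0,0)}=a\beta_\epsilon^{\frac{p-2}{2}}+b\beta_\epsilon^{\frac{q-2}{2}}$ as the second-order weight and not something worse — this forces the particular exponent bookkeeping. The main obstacle I anticipate is precisely this bookkeeping: carrying the variable exponents $p(x),r_1(x)$ through the product rule cleanly, controlling the logarithmic term uniformly in $\epsilon\in(0,1)$ via \eqref{eq:log}, and verifying that every error term genuinely falls into one of the four prescribed integrals $C_1,\dots,C_4$ with constants depending only on the listed data (not on $u$ or $\epsilon$). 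A secondary technical point is the boundary term: since we only assume $u\in H^2_0(\Omega)$ with $\partial\Omega\in C^2$ rather than $u\in C_0^\infty$, one either works directly (the trace of $u$ vanishes so the boundary integral is zero) or invokes the standard localization/flattening argument near $\partial\Omega$ that contributes only lower-order curvature terms absorbed into $C_0+C_1\int u^2$.
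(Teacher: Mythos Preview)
Your plan is essentially the paper's proof: integrate by parts (the boundary term drops since $u=0$ on $\partial\Omega$), expand the divergence into the four pieces you list, and estimate each by Young's inequality, routing them into the $C_1$--$C_4$ integrals.

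One bookkeeping correction: in your item (ii), the logarithmic term does \emph{not} land in the $C_3$-integral. After applying \eqref{eq:log} you get $|u|\,a(x)\,\beta_\epsilon^{\frac{p+r_1-1+\nu}{2}}(\nabla u)$, whose exponent is strictly larger than the $C_3$ exponent $\frac{p+r_1-1}{2}$, so it does not fit there. The paper instead applies Young's inequality with conjugate exponents $\frac{p+r_1}{1-\nu}$ and $\frac{p+r_1}{p+r_1-1+\nu}$ to split this into $C\int_\Omega |u|^{\frac{p+r_1}{1-\nu}}\,dx$ (the $C_2$-term) plus a small multiple of $\int_\Omega a\,\beta_\epsilon^{\frac{p+r_1}{2}}(\nabla u)\,dx$, the latter being reabsorbed into the left-hand side via \eqref{eq:null-eps}. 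This is the sole origin of the $C_2$-integral and of the parameter $\nu$ in the statement; your vague ``leftover lower-order pieces'' sentence should be replaced by this explicit step.
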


\begin{proof}
Let $\mathbf{n}$ denotes the outer normal to $\partial\Omega$. By employing Green's formula

\begin{equation*}
\begin{split}
\int_{\Omega}a(x) \beta_{\epsilon}^{\frac{p(x)+r_1(x)-2}{2}} &(\nabla u)
\vert \nabla u\vert ^2\,dx  =\int_{\Omega} a(x)\ \beta_{\epsilon}^{\frac{p(x)+r_1(x)-2}{2}}(\nabla u) \ \nabla u\cdot \nabla u\,dx
\\
& = \int_{\partial \Omega} a(x) u \beta_{\epsilon}^{\frac{p(x)+r_1(x)-2}{2}}(\nabla u) \ \nabla u \cdot
\mathbf{n} \,dS
\\
&\quad
-\int_{\Omega}u\ \operatorname{div}\left(a(x)\beta_{\epsilon}^{\frac{p(x)+r_1(x)-2}{2}}(\nabla
u) \nabla u\right)\,dx\\
& = -\int_{\Omega}u\ \operatorname{div}\left(a(x)\beta_{\epsilon}^{\frac{p(x)+r_1(x)-2}{2}}(\nabla
u) \nabla u\right)\,dx := -J.
\end{split}
\end{equation*}
A straightforward computation leads to the following representation
\begin{equation*}
\begin{split}
J &= \int_\Omega a(x) u\ \beta_{\epsilon}^{\frac{p(x)+r_1(x)-2}{2}}(\nabla
u)\ \Delta u\,dx +
 \int_{\Omega} u\ \beta_{\epsilon}^{\frac{p(x)+r_1(x)-2}{2}} (\nabla
u) \  \nabla u \cdot
\nabla a \,dx
\\
&  + \int_\Omega (p(x)+r_1(x)-2)\ a(x)u\ \beta_{\epsilon}^{\frac{p(x)+r_1(x)-2}{2}-1}(\nabla
u) \sum_{i=1}^n
\left(u_{x_i}\sum_{j=1}^{n}u_{x_j}u_{x_ix_j}\right)\,dx
\\
&  + \int_{\Omega}a(x)u\ \beta_{\epsilon}^{\frac{p(x)+r_1(x)-2}{2}} (\nabla
u) \ln(\beta_{\epsilon}(\nabla u)) \nabla u \cdot
\nabla (p+ r_1) \,dx
\\
& := J_1 + J_2 + J_3 + J_4,
\end{split}
\end{equation*}
The terms $J_i$ are estimated as follows. Using Young's inequality, for any $\lambda_1 \in (0,1)$ we have
\begin{equation}\label{est:J1}
    \begin{split}
        & \vert J_1\vert  \leq \int_\Omega \left(\vert u\vert  \ (a(x))^{\frac{1}{2}} \beta_{\epsilon}^{\frac{p(x)+r_1(x)-2}{2}- \frac{p(x)-2}{4}}(\nabla u) \right)
        \\
        & \qquad
        \times \left((a(x))^{\frac{1}{2}} \ \beta_{\epsilon}^{\frac{p(x)-2}{4}}(\nabla u)\ \vert \Delta u\vert \right)\,dx\\
        & \leq \lambda_1 \int_\Omega a(x) \ \beta_{\epsilon}^{\frac{p(x)-2}{2}}(\nabla u)\ \vert \Delta u\vert ^2\,dx + C' \int_\Omega \vert u\vert ^2 \ a(x)\ \beta_{\epsilon}^{\frac{p(x)+2r_1(x)-2}{2}}(\nabla u)\,dx\\
        & \leq \lambda_1 \int_\Omega a(x) \ \beta_{\epsilon}^{\frac{p(x)-2}{2}}(\nabla u)\ \vert u_{xx}\vert ^2\,dx + C \int_\Omega \vert u\vert ^2  \beta_{\epsilon}^{\frac{p(x)+2r_1(x)-2}{2}}(\nabla u)\,dx
    \end{split}
\end{equation}
with $C=C(\lambda_1, a^+)$
%\max_{\overline{\Omega}}a(x))$
 and
\begin{equation}\label{est:J2}
    \begin{split}
        \vert J_2\vert  & \leq \int_\Omega \vert u\vert  \ \vert \nabla a\vert \ \beta_{\epsilon}^{\frac{p(x)+r_1(x)-2}{2}}(\nabla u)\ \vert \nabla u\vert \,dx
        \\
        &
        \leq L_{a,b} \int_\Omega  \vert u\vert \  \beta_{\epsilon}^{\frac{p(x)+r_1(x)-1}{2}}(\nabla u)\,dx.
    \end{split}
\end{equation}
By Young's inequality, for any $\lambda_2 \in (0,1)$ we obtain
\begin{equation}\label{est:J3}
\begin{split}
\vert J_3\vert  & \leq  \lambda_2 \int_\Omega a(x) \ \beta_{\epsilon}^{\frac{p(x)-2}{2}}(\nabla u)\ \vert u_{xx}\vert ^2\,dx + C \int_\Omega \vert u\vert ^2  \beta_{\epsilon}^{\frac{p(x)+2r_1(x)-2}{2}}(\nabla u)\,dx
\end{split}
\end{equation}
with $C= C(p^\pm,r_1, \lambda_2, a^+)$, $a^+=\max_{\overline{\Omega}}a(x)$,  and
\begin{equation}\label{est:J4}
\begin{split}
\vert J_4\vert  & \leq
(L_{p,q} + L_{r_1}) \int_{\Omega}\vert u\vert  \ a(x) \ \beta_{\epsilon}^{\frac{p(x)+r_1(x)-1}{2}}(\nabla
u) \vert \ln \beta_{\epsilon}(\nabla u)\vert \,dx.
\end{split}
\end{equation}
Applying \eqref{eq:log} with $\vert \xi\vert =\beta_\epsilon(\nabla u)$, $\zeta=\frac{p(x)+r_1(x)-1}{2}$ and $\mu=\nu/2\in (0,1/2)$, we estimate
\begin{equation}\label{eq:elem1}
\begin{split}
\beta_{\epsilon}^{\frac{p(x)+r_1(x)-1}{2}}(\nabla u)\vert \ln
\beta_{\epsilon}(\nabla u)\vert  & \leq
C+C(\nu)\beta^{\frac{p(x)+r_1(x)-1+\nu}{2}}_{\epsilon}(\nabla u).
\end{split}
\end{equation}
By virtue of \eqref{eq:null-eps}
\begin{equation}\label{eq:elem2}
\vert \nabla u\vert ^{{\frac{p(x)+r_1(x)}{2}}} \leq \beta_{\epsilon}^{\frac{p(x)+r_1(x)}{2}}(\nabla u) \leq
C+2
\beta_{\epsilon}^{\frac{p(x)+r_1(x)-2}{2}}(\nabla u)\vert \nabla u\vert ^2.
\end{equation}
By using \eqref{eq:elem1} and \eqref{eq:elem2} in \eqref{est:J4}, and applying then Young's inequality with $\lambda_3 \in (0,1)$, we obtain
\begin{equation}\label{est:J4-final}
\begin{split}
& \vert J_4\vert  \leq C \int_{\Omega}a(x) \vert u\vert  \,dx
+ C\int_{\Omega} \left(\vert u\vert \ (a(x))^{\frac{1-\nu}{p(x)+r_1(x)}}\right)
\\
& \qquad
\times
\left( (a(x))^{\frac{p(x)+r_1(x)-1+\nu}{p(x)+r_1(x)}} \beta^{\frac{p(x)+r_1(x)-1+\nu}{2}}_{\epsilon}(\nabla u) \right)\,dx
\\
& \leq  C (1 + M_0^{\frac{1}{2}}) + \frac{\lambda_3}{2}
\int_{\Omega} a(x) \beta_{\epsilon}^{\frac{p(x)+r_1(x)}{2}}(\nabla
u)\,dx+
C_{\lambda_3}' \int_{\Omega}a(x) \vert u\vert ^{\frac{p(x)+r_1(x)}{1-\nu}}\,dx\\
& \leq C_1 +
C_2 \int_{\Omega} \vert u\vert ^{\frac{p(x)+r_1(x)}{1-\nu}}\,dx + \lambda_3 \int_{\Omega} a(x) \beta_{\epsilon}^{\frac{p(x)+r_1(x)-2}{2}}(\nabla
u) \vert \nabla u\vert ^2\,dx,
\end{split}
\end{equation}
where $C_1= C_1(a^+,  p^\pm,r_1, \lambda_3, M_0)$ and $C_2= C_2(\lambda_3, p^\pm,\nu, a^+)$. By combining \eqref{est:J1}-\eqref{est:J3} and \eqref{est:J4-final}, we get the following estimate:
\[
\begin{split}
\int_{\Omega}a(x) &  \beta_{\epsilon}^{\frac{p(x)+r_1(x)-2}{2}} (\nabla u)
\vert \nabla u\vert ^2\,dx \leq C_1 + C_2 \int_{\Omega} \vert u\vert ^{\frac{p(x)+r_1(x)}{1-\nu}}\,dx
\\
&
+  C_3 \int_\Omega  \vert u\vert \  \beta_{\epsilon}^{\frac{p(x)+r_1(x)-1}{2}}(\nabla u)\,dx  + C_4 \int_\Omega \vert u\vert ^2 \ \beta_{\epsilon}^{\frac{p(x)+2r_1(x)-2}{2}}(\nabla u)\,dx
\\
&
+ (\lambda_1 + \lambda_2) \int_\Omega a(x) \ \beta_{\epsilon}^{\frac{p(x)-2}{2}}(\nabla u)\ \vert u_{xx}\vert ^2\,dx.
\end{split}
\]
We get the claim by repeating the same arguments for the term with the exponent $q$ on the left-hand side of \eqref{eq:principal} and gathering the results.
\end{proof}
Let us rewrite inequality \eqref{eq:principal} in the form
\begin{equation}\label{eq:complete}
\begin{split}
\int_\Omega & \mathcal{F}_\epsilon^{(r_1, r_2)}(x,\nabla u)\vert \nabla u\vert ^2\,dx \leq \delta \int_{\Omega}\mathcal{F}^{(0,0)}_{\epsilon}(x,\nabla
u)\vert u_{xx}\vert ^{2}\,dx
\\
&
+ C_0 + C_1  \int_{\Omega}u^2\,dx  + C_2  \mathcal{Q}_1^{(r_1, r_2)} + C_3  \mathcal{Q}_2^{(r_1, r_2)} + C_4 \mathcal{Q}_3^{(r_1, r_2)},
\end{split}
\end{equation}
where
\[
\begin{split}
& \mathcal{Q}_1^{(r_1, r_2)}:= \int_{\Omega} \left(\vert u\vert ^{\frac{p(x)+r_1(x)}{1-\nu}}+ \vert u\vert ^{\frac{q(x)+r_2(x)}{1-\nu}}\right)\,dx \quad \ \text{for some} \ \nu \in (0,1),
\\
&
\mathcal{Q}_2^{(r_1, r_2)}:= \int_\Omega  \vert u\vert \ \left( \beta_{\epsilon}^{\frac{p(x)+r_1(x)-1}{2}}(\nabla u) + \beta_{\epsilon}^{\frac{q(x)+r_2(x)-1}{2}}(\nabla u)\right)\,dx,
\\
& \mathcal{Q}_3^{(r_1, r_2)}:=  \int_\Omega \vert u\vert ^2 \ \left(  \beta_{\epsilon}^{\frac{p(x)+2r_1(x)-2}{2}}(\nabla u)+  \beta_{\epsilon}^{\frac{q(x)+2r_2(x)-2}{2}}(\nabla u)\right)\,dx.
\end{split}
\]
The integrals $\mathcal{Q}_i^{(r_1, r_2)}$ will be estimated separately and under a special choice of the functions $r_1(x)$, $r_2(x)$. We assume that the exponents $p$, $q$ are subject to the balance condition \eqref{eq:gap-z},  which for the independent of $t$ functions $p$, $q$ reads
\begin{equation}\label{eq:osc-p-q}
    %\gamma^\ast:=%
    \max_{x \in \overline{\Omega}} \vert p(x) -q(x)\vert  < r_\ast.
\end{equation}
Given $p(x)$, $q(x)$, we introduce the Lipschitz-continuous functions
\begin{equation}
\label{eq:r-i}
r_1(x):= r^\sharp + \underline{s}(x) - p(x), \quad
r_2(x):=
r^\sharp + \underline{s}(x) - q(x).
\end{equation}
Let $\{\Omega_i\}_{i=1}^{K}$ be a finite cover of $\Omega$, $\Omega_i\subset \Omega$,
$\partial\Omega_i\in C^2$, $\Omega\subseteq \bigcup_{i=1}^K\Omega_i$. Set
\[
\begin{split}
\underline{s}_i^+=\max_{\overline{\Omega}_i}\underline{s}(x), \quad \underline{s}_i^-=\min_{\overline{\Omega}_i}\underline{s}(x).
%\quad {\color{red} \text{and} \quad  r^{[\varsigma]}=
%r^\sharp -\varsigma.}
\end{split}
\]
Fix an arbitrary number $\varsigma \in (0, r^\sharp)$. By continuity of the map $\underline{s}(x)$,  the cover $\{\Omega_i\}_{i=1}^{K}$ can be chosen in such a way that for every $i=1,2,\ldots,K$
\begin{equation}
\label{eq:osc-loc}
0\leq  \underline{s}_i^+-\underline{s}_i^- <  \varsigma.
\end{equation}
The number of elements in the cover $\{\Omega_i\}$ depends on $\varsigma$ and the modules of continuity $\omega_p$, $\omega_q$ of functions $p$ and $q$ in $\Omega$: $K=K(\omega_p,\omega_q,\varsigma)$.
Under the choice \eqref{eq:r-i} we have $p(x)+r_1(x)=q(x)+r_2(x)=\underline{s}(x)+r^\sharp$, where the value of the constant $r^\sharp$ is prompted by the classical embedding inequalities used in the proofs of the forthcoming estimates on the integrals $\mathcal{Q}^{(r_1,r_2)}_{i}$.

\begin{lemma}
\label{le:Q1}
Let the conditions of Lemma \ref{lem:interpol} be fulfilled.
Then for every $
\varsigma \in \left(0,r^\sharp\right)$ with $r^\sharp\equiv \dfrac{4}{N+2}$, every $\sigma >0$, and $r_1(x)$, $r_2(x)$ defined in \eqref{eq:r-i}

\[
\mathcal{Q}_{1}^{(r_1-\varsigma, r_2-\varsigma)}\leq C+\sigma  \int_{\Omega}\mathcal{F}_{\epsilon}^{(r_1-\varsigma, r_2-\varsigma)}(x,\nabla u)\vert \nabla u\vert ^2\,dx
\]
with a constant $C$ depending on $\sigma$, $N$, $M_0$, $r_1$, $r_2$, $p^\pm$, $q^\pm$, $\varsigma$, and the modules of continuity $\omega_p$, $\omega_q$ of the exponents $p$, $q$ in $\Omega$.
\end{lemma}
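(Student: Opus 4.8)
The plan is to reduce this variable-exponent inequality to the classical Gagliardo--Nirenberg inequality applied on the members of the finite cover $\{\Omega_i\}_{i=1}^{K}$, exploiting the smallness \eqref{eq:osc-loc} of the oscillation of $\underline{s}$. First I observe that the choice \eqref{eq:r-i} makes the two exponents coincide: $p(x)+(r_1(x)-\varsigma)=q(x)+(r_2(x)-\varsigma)=\underline{s}(x)+r^\sharp-\varsigma=:m(x)$, so that $\mathcal{Q}_1^{(r_1-\varsigma,r_2-\varsigma)}=2\int_\Omega |u|^{m(x)/(1-\nu)}\,dx$ and $\mathcal{F}_\epsilon^{(r_1-\varsigma,r_2-\varsigma)}(x,\nabla u)=(a(x)+b(x))\beta_{\epsilon}^{\frac{m(x)-2}{2}}(\nabla u)$. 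Applying \eqref{eq:null-eps} with $s_1=r_1-\varsigma$, $s_2=r_2-\varsigma$ together with $a+b\geq\alpha$ gives $\alpha\int_\Omega |\nabla u|^{m(x)}\,dx\leq C+2\int_\Omega\mathcal{F}_\epsilon^{(r_1-\varsigma,r_2-\varsigma)}(x,\nabla u)|\nabla u|^2\,dx$ with $C=C(a^+,b^+,p^\pm,q^\pm,|\Omega|)$. Hence it is enough to prove, for a suitable $\nu\in(0,1)$ and every $\sigma'>0$,
\[
\int_\Omega |u|^{m(x)/(1-\nu)}\,dx\leq C+\sigma'\int_\Omega |\nabla u|^{m(x)}\,dx .
\]

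Next I localize. Set $m_i^\pm=\underline{s}_i^\pm+r^\sharp-\varsigma$ and $\rho_i=m_i^+/(1-\nu)$, where $\nu\in(0,1)$ is taken small enough that $\rho_i<\frac{N+2}{N}\,m_i^-$ for all $i$; this choice (the key point of the argument, justified in the last paragraph) is possible, and it also guarantees $\rho_i\leq(m_i^-)^{*}$ because $m_i^->\frac{2N}{N+2}$. For any nonnegative $g$ one has $\int_\Omega g\,dx\leq\sum_{i=1}^{K}\int_{\Omega_i}g\,dx$; separating $\{|u|\leq1\}$ from $\{|u|>1\}$ on each $\Omega_i$ yields $\int_{\Omega_i}|u|^{m(x)/(1-\nu)}\,dx\leq|\Omega|+\int_{\Omega_i}|u|^{\rho_i}\,dx$ and, symmetrically, $\int_{\Omega_i}|\nabla u|^{m_i^-}\,dx\leq|\Omega|+\int_{\Omega_i}|\nabla u|^{m(x)}\,dx$. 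So it suffices to prove the constant-exponent estimate $\int_{\Omega_i}|u|^{\rho_i}\,dx\leq C+\sigma''\int_{\Omega_i}|\nabla u|^{m_i^-}\,dx$ for each $i$. For that I use the Gagliardo--Nirenberg inequality on the bounded $C^2$ domain $\Omega_i$ in the form (valid without boundary conditions) $\|u\|_{L^{\rho_i}(\Omega_i)}\leq C\,\|u\|_{W^{1,m_i^-}(\Omega_i)}^{\theta_i}\|u\|_{L^2(\Omega_i)}^{1-\theta_i}$ with $\frac1{\rho_i}=\theta_i\big(\frac1{m_i^-}-\frac1N\big)+\frac{1-\theta_i}{2}$; raising it to the power $\rho_i$, using $\|u\|_{L^2(\Omega_i)}^2\leq M_0$, re-absorbing the lower-order part of the $W^{1,m_i^-}$-norm by interpolating $L^{m_i^-}$ between $L^2$ and $L^{\rho_i}$, and applying Young's inequality, one obtains the required bound precisely because $\theta_i\rho_i<m_i^-$ (the case $\rho_i<2$, in which $\int_{\Omega_i}|u|^{\rho_i}\,dx\leq|\Omega|^{1-\rho_i/2}M_0^{\rho_i/2}$, needs no gradient term).

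The heart of the matter, and the step I expect to be the main obstacle, is verifying that a single $\nu\in(0,1)$ makes $\theta_i\rho_i<m_i^-$ for every $i$. By the scaling relation above this is equivalent to $\rho_i<\frac{N+2}{N}\,m_i^-$. Now \eqref{eq:osc-loc} gives $m_i^+<m_i^-+\varsigma$, hence $\rho_i<\frac{m_i^-+\varsigma}{1-\nu}$; and since $\varsigma<r^\sharp=\frac{4}{N+2}$ while $m_i^->\underline{s}^->\frac{2N}{N+2}$, we get $m_i^-+\varsigma<m_i^-+r^\sharp<\frac{N+2}{N}\,m_i^-$, with a gap that is uniform in $i$, so it is enough to take $\nu$ so small that $\frac{m_i^-+\varsigma}{1-\nu}\leq\frac{N+2}{N}\,m_i^-$ for all $i$; this is exactly where the precise value of $r^\sharp$ and the constraint $\varsigma<r^\sharp$ enter, and note that here the $\varsigma$-shift in $m_i^\pm=\underline{s}_i^\pm+r^\sharp-\varsigma$ cancels out of the ratio $m_i^+/m_i^-$. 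Summing the local estimates over the finitely many indices (with $K$ and all constants depending only on $N$, $\varsigma$ and the moduli of continuity $\omega_p$, $\omega_q$) and substituting into the reduction of the first paragraph yields the assertion.
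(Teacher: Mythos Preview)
Your proposal is correct and follows essentially the same strategy as the paper: reduce to constant exponents on each $\Omega_i$ of the cover, split $\{|u|\leq1\}$ versus $\{|u|>1\}$, apply the Gagliardo--Nirenberg inequality with the $L^2$-norm controlled by $M_0$, and verify that the resulting gradient power can be absorbed because the oscillation bound \eqref{eq:osc-loc} together with $\underline{s}^->\tfrac{2N}{N+2}$ and $\varsigma<r^\sharp$ forces $\theta_i\rho_i<m_i^-$. The only cosmetic differences are that the paper applies Gagliardo--Nirenberg with the lower gradient exponent $\underline{s}_i^-$ (rather than your $m_i^-=\underline{s}_i^-+r^\sharp-\varsigma$) and fixes $\nu$ so that $\theta\sigma=\underline{s}_i^-$ exactly, which lets it invoke the form of the inequality with $\|\nabla u\|_{\underline{s}_i^-}$ alone and avoids your re-absorption of the lower-order part of the $W^{1,m_i^-}$-norm; both routes lead to the same conclusion.
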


\begin{proof}
For a fixed $\varsigma \in (0, r^\sharp)$, it is sufficient to prove the required inequality for each of the sets $\Omega_i$ from the chosen cover of $\Omega$. Gathering these estimates for all $i=1,2,\ldots,K$, we obtain the needed assertion. Let $\mathcal{Q}_{1,i}^{(r_1-\varsigma, r_2-\varsigma)}$ denote the integrals $\mathcal{Q}_{1}^{(r_1-\varsigma, r_2-\varsigma)}$ over the domains $\Omega_i$ instead of $\Omega$. By Young's inequality, for every $i=1,2,\ldots,K$ \begin{equation}\label{est:Q1}
\begin{split}
  & \mathcal{Q}_{1,i}^{(r_1-\varsigma, r_2-\varsigma)}
  =\int_{\Omega_i}\left(\vert u\vert ^{\frac{p+r_1-\varsigma}{1-\nu}} + \vert u\vert ^{\frac{q+r_2- \varsigma}{1-\nu}}\right)\,dx = 2 \int_{\Omega_i}  \vert u\vert ^{\frac{\underline{s}_i(x)+r^{\sharp}-\varsigma}{1-\nu}}\,dx
  \\
  & \quad = \left( \int_{\Omega_i \cap \{\vert u\vert  \leq 1\}} \ldots + \int_{\Omega_i \cap \{\vert u\vert  > 1\}}\ldots \right) \leq C + \int_{\Omega_i} \vert u\vert ^{\frac{\underline{s}^+_i+r^{\sharp}-\varsigma}{1-\nu}}\,dx
\end{split}
\end{equation}
with a finite constant $C$ independent of $u$.
To estimate the second term we apply the Gagliardo-Nirenberg inequality (\cite[Lemma 1.29, Chapter 1]{ant-shm-book-2015}):

\begin{equation}
\label{eq:G-N} \|  u\|  _{\sigma,\Omega_i}^{\sigma}\leq C_1\|  \nabla
u\|  _{\underline{s}^-_i,\Omega_i}^{\sigma\theta}\|  u\|  _{2,\Omega_i}^{\sigma(1-\theta)}
+C_2\|  u\|  ^{\sigma}_{2,\Omega_i}\leq C_1'\|  \nabla
u\|  _{\underline{s}^-_i,\Omega_i}^{\sigma\theta}+C_2M_0^{\frac{\sigma}{2}},
\end{equation}
with $
C'_1=C_1M_0^{\frac{\sigma}{2}(1-\theta)}$ and

\[
\sigma=\dfrac{\underline{s}^+_i+r^{\sharp}-\varsigma}{1-\nu} >\underline{s}^+_i+r^{\sharp}-\varsigma> \underline{s}^-_i,\quad \theta=\dfrac{\underline{s}^-_i}{\sigma}\in (0,1),\quad \dfrac{1}{\sigma}=\left(\frac{1}{\underline{s}^-_i} -\frac{1}{N}\right)\theta+\frac{1-\theta}{2}.
\]
Such a choice of the parameters $\sigma$, $\theta$ is possible, provided that
\[
\nu=1-\dfrac{\underline{s}^+_i+r^{\sharp}-\varsigma}{\underline{s}^-_i}\dfrac{N}{N+2}>0\quad \Leftrightarrow\quad \underline{s}^+_i-\underline{s}^-_i+r^{\sharp}-\varsigma<\underline{s}^-_i\dfrac{2}{N}.
\]
The last inequality is true due to the assumption $\underline{s}^-_i>\frac{2N}{N+2}$, the choice of cover $\{\Omega_i\}$ with condition \eqref{eq:osc-loc} and the inequality
\[
\begin{split}
\underline{s}_i^+-\underline{s}_i^- + r^{\sharp}-\varsigma <  \frac{4}{N+2} =\dfrac{2}{N}\dfrac{2N}{N+2} <\frac{2}{N}\underline{s}^-_i.
\end{split}
\]
By the definition of functions $r_1$, $r_2$, and by virtue of \eqref{eq:null-eps-prelim} and Lemma \ref{le:cont-emb-1}, we obtain
\begin{equation}\label{est:transfer}
    \alpha \int_{\Omega_i}\vert \nabla u\vert ^{\underline{s}(x)+r^{\sharp}-\varsigma} \,dx \leq \int_{\Omega_i} \mathcal{F}_{\epsilon}^{(r_1-\varsigma, r_2-\varsigma)}(x,\nabla u)\beta_\epsilon(\nabla u)\,dx.
\end{equation}
Applying Young's inequality and using \eqref{est:transfer} and \eqref{eq:null-eps}, we conclude that have for every $\lambda>0$
\[
\begin{split}
  \mathcal{Q}_{1,i}^{(r_1-\varsigma, r_2-\varsigma)}\leq C\left(1+\|  \nabla u\|  ^{\underline{s}_i^-}_{\underline{s}_i^-,\Omega_i}\right)&\leq C''+\lambda \alpha\int_{\Omega_i}\vert \nabla u\vert ^{\underline{s}(x)+r^{\sharp}-\varsigma} \,dx\\
&\leq C+\lambda \int_{\Omega_i} \mathcal{F}_{\epsilon}^{(r_1-\varsigma, r_2-\varsigma)}(x,\nabla u)\vert \nabla u\vert ^2\,dx.
\end{split}
\]
\end{proof}

\begin{lemma}
\label{le:Q2}
Under the conditions of Lemma \ref{le:Q1}, for every $ \varsigma \in (0,r_\ast)$

\[
\begin{split}
\mathcal{Q}_2^{(r_1-\varsigma, r_2-\varsigma)} & \leq C +\lambda\int_{\Omega}\mathcal{F}_\epsilon^{(r_1-\varsigma, r_2-\varsigma)}(x,\nabla u)\vert \nabla
u\vert ^2\,dx
\end{split}
\]
with an arbitrary $\lambda>0$ with a constant $C=C(\lambda, \omega_p,\omega_q,N,M_0,\varsigma,\alpha, p^\pm, q^\pm)$.
\end{lemma}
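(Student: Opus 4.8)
The plan is to reduce everything to a single application of Young's inequality together with Lemma~\ref{le:Q1}. First observe that, under the choice \eqref{eq:r-i}, $p(x)+r_1(x)-\varsigma=q(x)+r_2(x)-\varsigma=\underline{s}(x)+r^\sharp-\varsigma$, and that the two smallness constraints $\varsigma<r_\ast$ and $\max_{\overline{\Omega}}|p-q|<r_\ast$ (condition \eqref{eq:osc-p-q}), together with $r^\sharp=2r_\ast$, give $r_1(x)-\varsigma,\ r_2(x)-\varsigma\ge r_\ast-\varsigma>0$ in $\overline{\Omega}$; hence $\mathcal{F}_{\epsilon}^{(r_1-\varsigma,r_2-\varsigma)}$ is an admissible flux in \eqref{eq:null-eps} and in Lemma~\ref{le:Q1}. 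Consequently
\[
\mathcal{Q}_2^{(r_1-\varsigma,r_2-\varsigma)}=2\int_\Omega |u|\,\beta_{\epsilon}^{\frac{\underline{s}(x)+r^\sharp-\varsigma-1}{2}}(\nabla u)\,dx .
\]
Since $\underline{s}^->\frac{2N}{N+2}$ and $\varsigma<r_\ast$, a direct computation gives $\underline{s}(x)+r^\sharp-\varsigma-1>\frac{N}{N+2}>0$, so \eqref{eq:interchange} yields $\beta_{\epsilon}^{\frac{\underline{s}(x)+r^\sharp-\varsigma-1}{2}}(\nabla u)\le C\bigl(1+|\nabla u|^{\underline{s}(x)+r^\sharp-\varsigma-1}\bigr)$, and, using $\int_\Omega u^2\,dx=M_0$ together with Cauchy--Schwarz for the lower-order piece,
\[
\mathcal{Q}_2^{(r_1-\varsigma,r_2-\varsigma)}\le C+C\int_\Omega |u|\,|\nabla u|^{\underline{s}(x)+r^\sharp-\varsigma-1}\,dx .
\]

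Next I would apply Young's inequality to the remaining integrand with the variable conjugate exponents $m(x)=\underline{s}(x)+r^\sharp-\varsigma$ and $m'(x)=\frac{m(x)}{m(x)-1}$: for every $\eta>0$,
\[
|u|\,|\nabla u|^{m(x)-1}\le \eta\,|\nabla u|^{m(x)}+C(\eta)\,|u|^{m(x)},
\]
where $C(\eta)$ can be taken independent of $x$ because $m(x)$ ranges in the compact interval $[\underline{s}^-+r^\sharp-\varsigma,\ \overline{s}^++r^\sharp-\varsigma]\subset(1,\infty)$. For the gradient term I would invoke \eqref{eq:null-eps}, which in view of $a+b\ge\alpha$ gives
\[
\alpha\,|\nabla u|^{\underline{s}(x)+r^\sharp-\varsigma}\le \mathcal{F}_{\epsilon}^{(r_1-\varsigma,r_2-\varsigma)}(x,\nabla u)\,\beta_{\epsilon}(\nabla u)\le C+2\,\mathcal{F}_{\epsilon}^{(r_1-\varsigma,r_2-\varsigma)}(x,\nabla u)\,|\nabla u|^2 ,
\]
so that choosing $\eta$ small enough converts $\eta\int_\Omega|\nabla u|^{m(x)}\,dx$ into $C+\tfrac{\lambda}{2}\int_\Omega\mathcal{F}_{\epsilon}^{(r_1-\varsigma,r_2-\varsigma)}(x,\nabla u)|\nabla u|^2\,dx$.

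It remains to absorb the low-order term $\int_\Omega|u|^{m(x)}\,dx=\int_\Omega|u|^{\underline{s}(x)+r^\sharp-\varsigma}\,dx$, which is precisely the object controlled by Lemma~\ref{le:Q1}. Fixing any $\nu\in(0,1)$ and using the elementary pointwise bound $|u|^{m(x)}\le 1+|u|^{m(x)/(1-\nu)}$, one gets $\int_\Omega|u|^{m(x)}\,dx\le C+\tfrac12\,\mathcal{Q}_1^{(r_1-\varsigma,r_2-\varsigma)}$; then Lemma~\ref{le:Q1}, applied with its free parameter chosen small relative to $C(\eta)$ and $\lambda$, bounds $C(\eta)\int_\Omega|u|^{m(x)}\,dx$ by $C+\tfrac{\lambda}{2}\int_\Omega\mathcal{F}_{\epsilon}^{(r_1-\varsigma,r_2-\varsigma)}(x,\nabla u)|\nabla u|^2\,dx$. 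Collecting the three contributions yields the claimed inequality, with $C$ depending only on $\lambda,\omega_p,\omega_q,N,M_0,\varsigma,\alpha,p^\pm,q^\pm$.

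I do not expect a genuinely hard step here: the argument is bookkeeping of exponents. The two points that need care are (i) deducing, from the constraints $\varsigma<r_\ast$ and $\max_{\overline{\Omega}}|p-q|<r_\ast$, that $r_1-\varsigma$ and $r_2-\varsigma$ remain nonnegative and that $\underline{s}(x)+r^\sharp-\varsigma-1>0$ — the former is what makes \eqref{eq:null-eps} and Lemma~\ref{le:Q1} applicable to the shifted exponents, the latter is what legitimises passing from $\beta_{\epsilon}$ to powers of $|\nabla u|$ in the first reduction; and (ii) ensuring that the constant in Young's inequality is uniform in $x$, which holds precisely because $m(x)$ stays bounded away from $1$ and from $\infty$.
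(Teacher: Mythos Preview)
Your proof is correct and takes a genuinely different route from the paper's. The paper works locally on the cover $\{\Omega_i\}$: it applies Young's inequality with the fixed exponent $b\frac{N+2}{N}$, $b=\underline{s}_i^-+r_\ast$, then invokes the Gagliardo--Nirenberg inequality directly on each $\Omega_i$, and finally checks by hand that the residual gradient exponent $\kappa$ stays below $\underline{s}_i^-+r^\sharp-\varsigma$ (this is where the restriction $\varsigma<r_\ast$ enters, via \eqref{eq:equiv-check}--\eqref{eq:equiv-cond}). You instead work globally: a single Young inequality with the \emph{variable} exponent $m(x)=\underline{s}(x)+r^\sharp-\varsigma$ splits off $\int_\Omega|u|^{m(x)}\,dx$, and the pointwise bound $|u|^{m(x)}\le 1+|u|^{m(x)/(1-\nu)}$ reduces this term exactly to $\mathcal{Q}_1^{(r_1-\varsigma,r_2-\varsigma)}$, so Lemma~\ref{le:Q1} can be quoted as a black box. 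Your approach is shorter and more modular; the paper's is self-contained and makes the exponent arithmetic explicit. The one point to be slightly careful about in your write-up is that the parameter $\nu$ is not free: you must take the $\nu$ for which Lemma~\ref{le:Q1} is proved (it is determined, patch by patch, by the Gagliardo--Nirenberg constraint), but since your inequality $|u|^{m(x)}\le 1+|u|^{m(x)/(1-\nu)}$ holds for every $\nu\in(0,1)$ this causes no difficulty.
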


\begin{proof}
As in the proof of Lemma \ref{le:Q1}, it is sufficient to consider the integrals $\mathcal{Q}_{2,i}^{(r_1-\varsigma, r_2-\varsigma)}$ over the elements of the cover $\{\Omega_i\}$ satisfying condition \eqref{eq:osc-loc}. For $\varsigma \in (0, r_\ast)$ we have $r^{\sharp}-\varsigma > r_\ast$. Set $b=\underline{s}_i^-+r_\ast$. By Young's inequality

\begin{equation}
\label{eq:Q-2-prim}
\begin{split}
\mathcal{Q}_{2,i}^{(r_1-\varsigma, r_2-\varsigma)} & \leq C M_0^{\frac{1}{2}} + \int_{\Omega_i} \vert u\vert \  \beta_{\epsilon}^{\frac{\underline{s}_i(x)+r^{\sharp}-\varsigma-1}{2}}(\nabla u) \,dx
\\
&
\leq C + C_\lambda\int_{\Omega_i}\vert u\vert ^{b\frac{N+2}{N}}\,dx+\lambda \int_{\Omega_i} \beta_\epsilon^{\frac{\kappa}{2}}(\nabla u)\,dx
\end{split}
\end{equation}
with an arbitrary $\lambda>0$ and the exponent

\[
\quad \kappa=(\underline{s}_i(x)+r^{\sharp}-\varsigma-1) \frac{b\frac{N+2}{N}}{b\frac{N+2}{N}-1}>0.
\]
The second term on the right-hand side of \eqref{eq:Q-2-prim} is estimated by the Gagliardo-Nirenberg inequality

\[
\int_{\Omega_i}\vert u\vert ^{b\frac{N+2}{N}}\,dx\leq C\left(1+\|  \nabla u\|  _{b,\Omega_i}^{\theta b \frac{N+2}{N}}\right)\quad\text{with}\quad \theta=\frac{\frac{1}{2}- \frac{N}{b(N+2)}}{\frac{N+2}{2N}-\frac{1}{b}} =\frac{N}{N+2}\in (0,1),
\]
whence

\[
\int_{\Omega_i}\vert u\vert ^{\underline{s}_i^-\frac{N+2}{N}}\,dx\leq C\left(1+\int_{\Omega_i}\vert \nabla u\vert ^{\underline{s}_i^-+r_\ast}\,dx\right),\qquad C=C(\underline{s}_i^-,\overline{s}_i^+,N,M_0).
\]
To estimate the last term of \eqref{eq:Q-2-prim} we claim that $\kappa<\underline{s}_i^-+r^\sharp-\varsigma$:

\begin{equation}
\label{eq:equiv-check}
\begin{split}
\underline{s}_i^++r^{\sharp}-\varsigma-1 & <(\underline{s}_i^-+r^{\sharp}-\varsigma)\left(1-\frac{N}{b(N+2)}\right)
%\quad
\\
&
\Leftrightarrow\quad \underline{s}_i^+- \underline{s}_i^- <1-\frac{N}{N+2}\frac{\underline{s}_i^-+r^{\sharp}-\varsigma}{\underline{s}_i^-+r_\ast}.
\end{split}
\end{equation}
The last inequality is equivalent to

\[
\begin{split}
\underline{s}_i^+-\underline{s}_i^- +( r^\sharp-\varsigma-r_\ast)\frac{N}{b(N+2)}<\frac{2}{N+2},
\end{split}
\]
which is fulfilled due to \eqref{eq:osc-loc} and the choice of $\varsigma$:
\begin{equation}
\label{eq:equiv-cond}
\begin{split}
\underline{s}_i^+-\underline{s}_i^- & +(r^{\sharp}-\varsigma-r_\ast)\frac{N}{b(N+2)}
\\
&
<  \frac{4}{N+2} - r^{\sharp} + \varsigma +(r^{\sharp}-\varsigma-r_\ast)\frac{N}{b(N+2)}
< \frac{2}{N+2}
\\
&
\Leftrightarrow\quad (r^{\sharp}-\varsigma-r_\ast) \left( 1-\frac{N}{b(N+2)} \right) >0.
\end{split}
\end{equation}
Gathering these estimates and applying once again the Young inequality we arrive at the estimate

\[
\begin{split}
\mathcal{Q}_{2,i}^{(r_1-\varsigma, r_2-\varsigma)} & \leq C+\int_{\Omega}\vert \nabla u\vert ^{\underline{s}_i^-+r_\ast}\,dx+\lambda \int_{\Omega_i}\vert \nabla u\vert ^{\underline{s}_i^-+r^{\sharp}-\varsigma}\,dx
\\
&
\leq C+\delta \int_{\Omega_i}\vert \nabla u\vert ^{\underline{s}(x)+r^{\sharp}-\varsigma}\,dx
\end{split}
\]
with any $\delta>0$ and $\varsigma \in (0, r_\ast)$. By collecting the estimates for all $i=1,2,\ldots,K$ and using \eqref{est:transfer}, we obtain the needed inequality.
\end{proof}

\begin{lemma}
\label{le:Q3}
Let the conditions of Lemma \ref{le:Q1} be fulfilled. Then, for every $\varsigma \in (0, r_\ast/2)$ and every $\mu>0$

\[
\begin{split}
\mathcal{Q}_{3}^{(r_1-\varsigma, r_2-\varsigma)} & \leq C+\mu \int_{\Omega}\mathcal{F}_\epsilon^{(r_1-\varsigma, r_2-\varsigma)}(x,\nabla u)\vert \nabla u\vert ^2\,dx
\end{split}
\]
with a constant $C=C(\mu, \omega_p,\omega_q,N,M_0,K,\alpha,\varsigma, p^\pm, q^\pm)$
\end{lemma}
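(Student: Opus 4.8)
The plan is to argue as in the proofs of Lemmas~\ref{le:Q1} and \ref{le:Q2}: reduce the estimate to the finite cover $\{\Omega_i\}_{i=1}^{K}$ subordinate to \eqref{eq:osc-loc}, bound $\mathcal{Q}_{3,i}^{(r_1-\varsigma,r_2-\varsigma)}$ on each $\Omega_i$ separately, and then sum over $i$. First I would write the exponents explicitly: since $p(x)+r_1(x)=q(x)+r_2(x)=\underline{s}(x)+r^\sharp$, the two integrands of $\mathcal{Q}_3^{(r_1-\varsigma,r_2-\varsigma)}$ are $|u|^2\beta_\epsilon^{A(x)}(\nabla u)$, where $A(x)$ denotes either
\[
A_p(x)=r^\sharp+\underline{s}(x)-\tfrac{p(x)}{2}-\varsigma-1\qquad\text{or}\qquad A_q(x)=r^\sharp+\underline{s}(x)-\tfrac{q(x)}{2}-\varsigma-1 .
\]
This is the first point where the hypothesis $\varsigma<r_\ast/2$ is used: from $p(x),q(x)<\underline{s}(x)+r_\ast$ (a consequence of \eqref{eq:osc-p-q}), $\underline{s}(x)\ge\underline{s}^->\tfrac{2N}{N+2}$ and $r^\sharp=2r_\ast$, one gets $A(x)>\tfrac{r_\ast}{2}-\varsigma>0$ on $\overline{\Omega}$, so that by \eqref{eq:interchange} $\beta_\epsilon^{A(x)}(\nabla u)\le C(1+|\nabla u|^{2A(x)})$ with a constant independent of $\epsilon$, and the integrands are genuinely controllable. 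On the other side, $\mathcal{F}^{(r_1-\varsigma,r_2-\varsigma)}_\epsilon(x,\xi)=(a(x)+b(x))\beta_\epsilon^{\frac{\underline{s}(x)+r^\sharp-\varsigma-2}{2}}(\xi)\ge\alpha\,\beta_\epsilon^{\frac{\underline{s}(x)+r^\sharp-\varsigma-2}{2}}(\xi)$, so by \eqref{eq:null-eps} the right-hand side of the asserted inequality controls, modulo an additive constant, $\alpha\int_\Omega|\nabla u|^{\underline{s}(x)+r^\sharp-\varsigma}\,dx$; this is exactly \eqref{est:transfer}, and it is into this quantity that everything below must be absorbed.

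Fix $i$ and set $\ell_i:=\underline{s}_i^-+r^\sharp-\varsigma$ and $A_i^+:=r^\sharp+\tfrac{\underline{s}_i^+}{2}-\varsigma-1$; since $p,q\ge\underline{s}$ and $\underline{s}\le\underline{s}_i^+$ on $\Omega_i$, we have $0<A(x)\le A_i^+$ there, and $A_i^+>0$ (because $\underline{s}_i^+>\tfrac{2N}{N+2}$ and $\varsigma<r_\ast/2$). I would then apply Young's inequality with a small parameter to $|u|^2\cdot\beta_\epsilon^{A_i^+}(\nabla u)$, with conjugate exponents $(m,m')$ chosen so that $2A_i^+m'=\ell_i$, that is,
\[
m'=\frac{\ell_i}{2A_i^+},\qquad m=\frac{m'}{m'-1}=\frac{\ell_i}{\ell_i-2A_i^+} .
\]
A short computation using \eqref{eq:osc-loc} gives $\ell_i-2A_i^+=2-r^\sharp+\varsigma-(\underline{s}_i^+-\underline{s}_i^-)>2-r^\sharp=\tfrac{2N}{N+2}>0$, so $\ell_i>2A_i^+$, i.e. $m',m>1$. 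Using $\beta_\epsilon^{A(x)}(\nabla u)\le 1+\beta_\epsilon^{A_i^+}(\nabla u)$ and then Young's inequality, this yields
\[
\int_{\Omega_i}|u|^2\beta_\epsilon^{A(x)}(\nabla u)\,dx\le M_0+C_\lambda\int_{\Omega_i}|u|^{2m}\,dx+\lambda\int_{\Omega_i}\beta_\epsilon^{A_i^+m'}(\nabla u)\,dx ;
\]
since $A_i^+m'=\ell_i/2$, we have $\beta_\epsilon^{A_i^+m'}(\nabla u)\le C+|\nabla u|^{\ell_i}$ with $\ell_i\le\underline{s}(x)+r^\sharp-\varsigma$, so the last integral is absorbed into $\lambda\int_{\Omega_i}|\nabla u|^{\underline{s}(x)+r^\sharp-\varsigma}\,dx$, hence into the right-hand side via \eqref{est:transfer} and \eqref{eq:null-eps}.

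It remains to estimate $\int_{\Omega_i}|u|^{2m}\,dx$, and for this I would invoke the Gagliardo--Nirenberg inequality \eqref{eq:G-N} with the interpolation parameter fixed at $\theta=\tfrac{N}{N+2}$ and the gradient exponent $\ell:=2m\theta=\tfrac{2mN}{N+2}$. This choice is compatible with \eqref{eq:G-N}, turns $\|\nabla u\|_{\ell,\Omega_i}^{2m\theta}$ into the honest integral $\int_{\Omega_i}|\nabla u|^{\ell}\,dx$, and gives $\|u\|_{2m,\Omega_i}^{2m}\le C\bigl(1+\int_{\Omega_i}|\nabla u|^{\ell}\,dx\bigr)$ with $\|u\|_{2,\Omega_i}^2=M_0$ absorbed into the constant. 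Two admissibility conditions remain: that $\ell$ lies in the range of \eqref{eq:G-N}, i.e. $\ell\ge\tfrac{2N}{N+2}$ when $\ell<N$ — equivalent to $m\ge1$, which holds since $m>1$, i.e. since $\varsigma<r_\ast/2$; and that $\ell<\underline{s}(x)+r^\sharp-\varsigma$ pointwise, so that $\int_{\Omega_i}|\nabla u|^{\ell}\,dx$ can be Young-absorbed into $\mu\int_{\Omega_i}|\nabla u|^{\underline{s}(x)+r^\sharp-\varsigma}\,dx$ — using $m=\tfrac{\ell_i}{\ell_i-2A_i^+}$ together with $\ell_i-2A_i^+>\tfrac{2N}{N+2}$ obtained above, this last point reduces to an inequality that holds by \eqref{eq:osc-loc} alone. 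The only delicate part of the proof is precisely this exponent bookkeeping — keeping $m'>1$, $m>1$ (equivalently, the uniform positivity of $A(x)$, whence the need for $\varsigma<r_\ast/2$) and $\ell\in[\tfrac{2N}{N+2},\,\underline{s}_i^-+r^\sharp-\varsigma)$ all at once — together with the trivial sub-case $\underline{s}_i^-\ge N$, in which \eqref{eq:G-N} holds for every $\ell$. Collecting the estimates over $i=1,\dots,K$, replacing $\int|\nabla u|^{\underline{s}(x)+r^\sharp-\varsigma}$ by $\int\mathcal{F}^{(r_1-\varsigma,r_2-\varsigma)}_\epsilon(x,\nabla u)|\nabla u|^2$ at the cost of an additive constant through \eqref{est:transfer} and \eqref{eq:null-eps}, and rescaling $\mu$, finishes the proof.
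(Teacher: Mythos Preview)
Your argument is correct and follows essentially the same strategy as the paper: localize to the cover, split $|u|^2\beta_\epsilon^{A}$ by Young's inequality, and control $\int|u|^{2m}$ via the Gagliardo--Nirenberg inequality with $\theta=\tfrac{N}{N+2}$. The one difference is bookkeeping: you first replace the variable exponent $A(x)$ by the constant upper bound $A_i^+$ and then choose the Young exponents so that $2A_i^+m'=\ell_i$, whereas the paper keeps $A_p(x),A_q(x)$ variable, introduces an auxiliary parameter $\sigma_{i,\varsigma}$, and passes to a refined cover with oscillation $<\varsigma/4$ to close the inequalities for $\kappa_1,\kappa_2$; your ordering avoids both the extra parameter and the refined cover, at the (harmless) cost of the preliminary bound $\beta_\epsilon^{A(x)}\le 1+\beta_\epsilon^{A_i^+}$.
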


\begin{proof}
Fix a number $\varsigma \in (0, r_\ast/2)$ and take a finite cover $\{\Omega_i\}$ of $\Omega$ such that, instead of \eqref{eq:osc-loc},
\[
\underline{s}_i^+-\underline{s}_i^-<\dfrac{\varsigma}{4}\quad \text{on each $\Omega_i$}.
\]
Set $h=\underline{s}_i^-+r^{\sharp}-(\varsigma + \sigma_{i, \varsigma})>0$, where $\sigma_{i, \varsigma}>0$ is a number to be defined. By Young's inequality, for every $\lambda>0$

\begin{equation}
\label{eq:l-4-3}
\mathcal{Q}_{3,i}^{(r_1-\varsigma, r_2-\varsigma)}\leq C_\lambda\int_{\Omega_i}\vert u\vert ^{h\frac{N+2}{N}}\,dx+\lambda \left( \int_{\Omega_i}\beta_{\epsilon}^{\frac{\kappa_1}{2}}(\nabla u)\,dx + \int_{\Omega_i}\beta_{\epsilon}^{\frac{\kappa_2}{2}}(\nabla u)\,dx \right)
\end{equation}
with
\[
\kappa_1=(p(x)+2(r_1-\varsigma-1))\frac{h\frac{N+2}{2N}}{h\frac{N+2}{2N}-1},  \qquad \kappa_2=(q(x)+2(r_2-\varsigma-1))\frac{h\frac{N+2}{2N}}{h\frac{N+2}{2N}-1}.
\]
The first term on the right-hand side is estimated by the Gagliardo-Nirenberg inequality:

\[
\int_{\Omega_i}\vert u\vert ^{h\frac{N+2}{N}}\,dx\leq C\left(1+\|  \nabla u\|  _{h,\Omega_i}^{\theta h\frac{N+2}{N}}\right),\qquad \theta=\dfrac{\frac{1}{2} -\frac{N}{h(N+2)}}{\frac{N+2}{2N}-\frac{1}{h}}=\frac{N}{N+2}\in (0,1),
\]
with a constant $C$ depending on $\underline{s}_i^+$, $\underline{s}_i^-$, $N$, $M_0$ and $\varsigma$. Thus, for every $\nu>0$

\[
\int_{\Omega_i}\vert u\vert ^{h\frac{N+2}{N}}\,dx\leq C\left(1+\int_{\Omega_i}\vert \nabla u\vert ^{\underline{s}_i^-+r^{\sharp}-(\varsigma + \sigma_{i, \varsigma})}\,dx\right)\leq C'+\nu\int_{\Omega_i}\vert \nabla u\vert ^{\underline{s}(x)+r^{\sharp}-\varsigma}\,dx.
\]
To estimate the second term we claim $0<\kappa_1<\underline{s}(x)+r^{\sharp}-\varsigma$ on $\Omega_i$. The first inequality is satisfied due to the assumption $\underline{s}^- > \frac{2N}{N+2}$, the choice $\varsigma \in (0, r_\ast/2)$ and \eqref{eq:osc-p-q}:

\[
\begin{split}
p(x) & +2(\underline{s}(x)+r^\sharp-p(x)-\varsigma-1)
% = 2\underline{s}(x)-p(x)+\frac{8}{N+2}-2\varsigma-2
% \\
\geq \underline{s}(x)-p(x)+\frac{8}{N+2}+\frac{2N}{N+2}-2\varsigma-2
\\
&
=
\begin{cases}
\frac{4}{N+2}-2\varsigma & \text{if $p(x)\leq q(x)$}
\\
q(x)-p(x)+\frac{4}{N+2} -2\varsigma & \text{if $p(x)>q(x)$}
\end{cases}
> \begin{cases}
0 & \text{if $p(x)\leq q(x)$}
\\
\frac{4}{N+2}-2r_\ast & \text{otherwise}
\end{cases}
=0.
\end{split}
\]
Since $h>\dfrac{2N}{N+2}$ by definition, $\kappa_1>0$.
To fulfil the second inequality for $\kappa_1$, we claim that the stronger condition holds:
\[
p(x)+2(r_1-\varsigma-1) <(\underline{s}_i^-+r^{\sharp}-\varsigma) \left(1-\dfrac{2N}{h(N+2)}\right).
\]
Due to the choice of $h$ and $r_1$, and because of the inequalities $\underline{s}_i^-\leq \underline{s}(x)\leq p(x)$ on $\Omega_i$, this is true if
\[
\begin{split}
2\underline{s}(x)-p(x) & +r^\sharp-\varsigma\leq 2\underline{s}_i^+- \underline{s}_i^-+\frac{4}{N+2}-\varsigma<2+\underline{s}_i^- -\frac{2N}{N+2} -\frac{2N\sigma_{i,\varsigma}}{h(N+2)}.
\end{split}
\]
The second inequality holds if $\sigma_{i,\varsigma}$ is chosen from the condition
\[
2(\underline{s}_i^+-\underline{s}_i^-)<\varsigma -\frac{2N\sigma_{i,\varsigma}}{(N+2)(\underline{s}_i^- +r^\sharp-\varsigma-\sigma_{i,\varsigma})}\quad \Leftarrow\quad 0<\sigma_{i,\varsigma}<\dfrac{(N+2)(2-\varsigma)\varsigma}{4N+(N+2)\varsigma}.
\]

The third term on the right-hand side of \eqref{eq:l-4-3} is estimated in the same way. Gathering these estimates and applying the Young inequality we conclude that for every $\mu>0$ and $\varsigma \in (0,r_\ast/2)$

\[
\begin{split}
\mathcal{Q}_{3,i}^{(r_1-\varsigma, r_2-\varsigma)} & \leq C
+ \mu \int_{\Omega_i}\mathcal{F}_\epsilon^{(r_1-\varsigma, r_2-\varsigma)}(x,\nabla u)\vert \nabla u\vert ^2\,dx.
\end{split}
\]
The proof is completed by gathering the estimates for all $\Omega_i$.
\end{proof}

\begin{lemma}
\label{le:final-ell}
Let the conditions of Lemma \ref{lem:interpol} and condition  \eqref{eq:osc-p-q} be fulfilled. For every $u\in C^1(\overline{\Omega})\cap H^2_0(\Omega)$, any $\varsigma \in (0,r^\sharp)$, and an arbitrary $\delta>0$

\begin{equation}\label{eq:complete-new}
\int_{\Omega}\mathcal{F}_{\epsilon}^{(r_1-\varsigma, r_2-\varsigma)}(x,\nabla u)\vert \nabla u\vert ^2\,dx\leq \delta \int_{\Omega}\mathcal{F}_{\epsilon}^{(0,0)}(x,\nabla u)\vert u_{xx}\vert ^2\,dx +C
\end{equation}
with a constant $C=C(\omega_p,\omega_q,N,M_0,\alpha,\delta,\varsigma)$. \end{lemma}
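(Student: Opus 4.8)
The plan is to synthesize the master inequality \eqref{eq:complete} of Lemma \ref{lem:interpol} with the three estimates of Lemmas \ref{le:Q1}, \ref{le:Q2}, \ref{le:Q3}. First I would apply Lemma \ref{lem:interpol} with the Lipschitz exponents $r_1,r_2$ replaced by $r_1-\varsigma,r_2-\varsigma$; these are again Lipschitz with constants controlled by $L_{p,q}$, and the constants $C_i$ produced by \eqref{eq:complete} then depend only on the data, on $\varsigma$, and on the auxiliary parameter $\nu\in(0,1)$, which I keep the same as the one used inside Lemma \ref{le:Q1}. Using the hypothesis $\int_\Omega u^2\,dx=M_0$, this yields
\[
\int_\Omega \mathcal{F}_\epsilon^{(r_1-\varsigma,r_2-\varsigma)}(x,\nabla u)\vert\nabla u\vert^2\,dx\le \delta\int_\Omega\mathcal{F}_\epsilon^{(0,0)}(x,\nabla u)\vert u_{xx}\vert^2\,dx+C+C_2\,\mathcal{Q}_1^{(r_1-\varsigma,r_2-\varsigma)}+C_3\,\mathcal{Q}_2^{(r_1-\varsigma,r_2-\varsigma)}+C_4\,\mathcal{Q}_3^{(r_1-\varsigma,r_2-\varsigma)}.
\]

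Next, on the common admissible range $\varsigma\in(0,r_\ast/2)$ — on which $r_1-\varsigma,r_2-\varsigma\ge0$ by virtue of \eqref{eq:osc-p-q}, so that Lemma \ref{lem:interpol} applies, and on which Lemmas \ref{le:Q2} and \ref{le:Q3} are valid — I would estimate each $\mathcal{Q}_j^{(r_1-\varsigma,r_2-\varsigma)}$ by means of Lemmas \ref{le:Q1}--\ref{le:Q3}, obtaining $C+\theta_j\int_\Omega\mathcal{F}_\epsilon^{(r_1-\varsigma,r_2-\varsigma)}(x,\nabla u)\vert\nabla u\vert^2\,dx$ with $\theta_j>0$ at my disposal. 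Choosing $\theta_1,\theta_2,\theta_3$ (that is, the parameters $\sigma,\lambda,\mu$ in those lemmas) so small that $C_2\theta_1+C_3\theta_2+C_4\theta_3\le\frac{1}{2}$, the corresponding multiple of $\int_\Omega\mathcal{F}_\epsilon^{(r_1-\varsigma,r_2-\varsigma)}\vert\nabla u\vert^2\,dx$ on the right is absorbed into the left-hand side, leaving exactly \eqref{eq:complete-new} with a constant depending only on $\omega_p,\omega_q,N,M_0,\alpha,\delta,\varsigma$ and, crucially, not on $\epsilon$.

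It then remains to reach the full range $\varsigma\in(0,r^\sharp)$. I would fix once and for all some $\varsigma_0\in(0,r_\ast/2)$ and prove that for every $\varsigma\in[r_\ast/2,r^\sharp)$ there holds the pointwise bound $\mathcal{F}_\epsilon^{(r_1-\varsigma,r_2-\varsigma)}(x,\xi)\vert\xi\vert^2\le C+\mathcal{F}_\epsilon^{(r_1-\varsigma_0,r_2-\varsigma_0)}(x,\xi)\vert\xi\vert^2$ with $C$ independent of $\epsilon,x,\xi$: since $p(x)+r_1(x)-\varsigma=q(x)+r_2(x)-\varsigma=\underline s(x)+r^\sharp-\varsigma>0$ and this value decreases as $\varsigma$ grows, on $\{\vert\xi\vert\ge1\}$ (where $\beta_\epsilon(\xi)\ge1$) the $\varsigma_0$-integrand dominates the $\varsigma$-integrand, while on $\{\vert\xi\vert<1\}$ the relations \eqref{eq:interchange} and \eqref{eq:null-eps} bound the $\varsigma$-integrand by a constant depending only on $a^+,b^+,p^\pm,q^\pm,N$. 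Integrating over $\Omega$ and invoking the already-proven inequality \eqref{eq:complete-new} for $\varsigma_0$ would finish the proof for all $\varsigma\in(0,r^\sharp)$.

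Once these pieces are assembled the argument is largely mechanical, and I expect the main obstacle to be bookkeeping rather than new ideas: namely, (i) the $\epsilon$-uniformity of every constant, which forces each application of Young's inequality and of \eqref{eq:null-eps}--\eqref{eq:null-eps-prelim} to be arranged so that the regularizing term $\epsilon^2$ in $\beta_\epsilon$ only ever helps; and (ii) the comparison step of the preceding paragraph, where one must keep track of the sign of the exponents $p+r_i-\varsigma-2$ (possibly negative for large $\varsigma$) and use $\beta_\epsilon(\xi)\ge\vert\xi\vert^2$ rather than $\beta_\epsilon(\xi)\ge\epsilon^2$ on the set $\{\vert\xi\vert<1\}$.
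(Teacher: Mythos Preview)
Your proposal is correct and follows essentially the same approach as the paper: combine \eqref{eq:complete} (applied with exponents $r_i-\varsigma$) with Lemmas \ref{le:Q1}--\ref{le:Q3} on the range $\varsigma\in(0,r_\ast/2)$, absorb, and then extend to $\varsigma\in[r_\ast/2,r^\sharp)$ by a pointwise comparison reducing to a smaller $\varsigma_0$. The only cosmetic difference is that the paper carries out the extension step via Young's inequality and \eqref{eq:null-eps} (passing through $\beta_\epsilon$ rather than splitting on $\{|\xi|\gtrless 1\}$), which is equivalent to your case analysis.
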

\begin{proof} For $\varsigma\in (0,r_\ast/2)$ inequality \eqref{eq:complete} follows from the estimates of Lemmas \ref{le:Q1}, \ref{le:Q2}, \ref{le:Q3} with $\sigma+\lambda+\mu<1$. If $\varsigma \in (r_\ast/2,r^\sharp)$, it is sufficient to observe that by virtue of Young's inequality and \eqref{eq:null-eps}
\[
\begin{split}
  & \mathcal{F}_{\epsilon}^{(r_1-\varsigma, r_2-\varsigma)}(x,\nabla u)\vert \nabla u\vert ^2\,dx \leq \mathcal{F}_{\epsilon}^{(r_1-\varsigma, r_2-\varsigma)}(x,\nabla u) \beta_\epsilon(\nabla u) \\
  &\leq C'+\mathcal{F}_{\epsilon}^{(r_1-\widetilde \varsigma, r_2-\widetilde \varsigma)}(x,\nabla u) \beta_\epsilon(\nabla u)
    \leq C_2 + 2 \mathcal{F}_{\epsilon}^{(r_1-\widetilde\varsigma , r_2-\widetilde \varsigma)}(x,\nabla u)\vert \nabla u\vert ^2\,dx
\end{split}
\]
with $\widetilde \varsigma < \varsigma$, $\widetilde \varsigma \in (0, r_\ast/2)$ and independent of $u$ constants $C_1$, $C_2$.
\end{proof}

The estimate of Lemma \ref{le:final-ell} can be extended to the functions defined on the cylinder $Q_T$, provided that for a.e. every $t\in (0,T)$ the exponents $p,q$ and the coefficients $a,b$ satisfy the conditions of Lemma \ref{lem:interpol} and \eqref{eq:osc-p-q} with $r^\sharp= \frac{4}{N+2}$.

\begin{theorem}
\label{th:integr-par} Let $\partial \Omega \in C^2$, $u\in
C^{0}([0,T];C^1(\overline{\Omega})\cap H^2_0(\Omega))$.  Assume that the exponents $p(z)$, $q(z)$ satisfy conditions \eqref{assum1}, \eqref{eq:Lip-p-q}, \eqref{eq:gap-z}, and the coefficients $a(z)$, $b(z)$ satisfy conditions \eqref{eq:a-b}. If
\begin{equation}
\label{eq:cond-embed-par}
\int_{Q_T}\mathcal{F}^{(0,0)}_{\epsilon}(z,\nabla
u)\vert u_{xx}\vert ^2\,dz<\infty,\qquad \sup_{(0,T)}\|  u(t)\|  _{2,\Omega}^2= M_0,
\end{equation}
then for every $\varsigma \in (0,r^\sharp)$ and every $\beta\in (0,1)$

\begin{equation}
\label{eq:principal2}
\begin{split}
\int_{Q_T} \mathcal{F}_{\epsilon}^{(r_1-\varsigma, r_2-\varsigma)}(z,\nabla u)\vert \nabla u\vert ^2\,dz\leq \beta\int_{Q_T}\mathcal{F}^{(0,0)}_{\epsilon}(z,\nabla
u)\vert u_{xx}\vert ^{2}\,dx+C
\end{split}
\end{equation}
with
$$r_1(z):=\underline{s}(z)+r^\sharp - p(z), \quad \text{and} \quad r_2(z):=\underline{s}(z)+r^\sharp - q(z)$$
and a constant $C=C(\partial
\Omega,\beta,\alpha, \omega_p,\omega_q,N,M_0, L, \varsigma)$, where $\omega_p$, $\omega_q$ denote the modules of continuity of $p$ and $q$ in $Q_T$. Moreover, for every $\varsigma\in (0,r^\sharp)$
\begin{equation}
\label{eq:principal-3}
\alpha\int_{Q_T}\vert \nabla u\vert ^{\underline{s}(z)+r^\sharp-\varsigma}\,dz\leq \beta\int_{Q_T}\mathcal{F}^{(0,0)}_{\epsilon}(x,\nabla
u)\vert u_{xx}\vert ^{2}\,dx+C
\end{equation}
with a constant $C$ depending on the same quantities as the constant in \eqref{eq:principal2}.
\end{theorem}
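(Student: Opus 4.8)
The plan is to derive \eqref{eq:principal2} by applying the elliptic interpolation estimate of Lemma~\ref{le:final-ell} on a.e.\ time slice $t\in(0,T)$ and then integrating in $t$; the single delicate issue is the $t$-independence of the constant.

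First I would freeze $t$. Since $u\in C^{0}([0,T];C^1(\overline\Omega)\cap H^2_0(\Omega))$, each slice $u(\cdot,t)$ lies in $C^1(\overline\Omega)\cap H^2_0(\Omega)$; by Fubini the first hypothesis in \eqref{eq:cond-embed-par} gives $\int_\Omega\mathcal F^{(0,0)}_\epsilon(z,\nabla u)|u_{xx}|^2\,dx<\infty$ for a.e.\ $t$, and $\int_\Omega u^2(x,t)\,dx\le M_0$. As noted at the start of Section~\ref{sec:prelim}, freezing $t$ preserves \eqref{eq:s} and \eqref{eq:a-b}, and \eqref{eq:osc-p-q} for $p(\cdot,t),q(\cdot,t)$ is precisely \eqref{eq:gap-z}. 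Hence for a.e.\ $t$ the hypotheses of Lemma~\ref{le:final-ell} hold for $u(\cdot,t)$ with $M_0$ replaced by $\|u(t)\|_{2,\Omega}^2$ and with the functions $r_1(\cdot,t)=\underline s(\cdot,t)+r^\sharp-p(\cdot,t)$, $r_2(\cdot,t)=\underline s(\cdot,t)+r^\sharp-q(\cdot,t)$ of \eqref{eq:r-i}; applying it with $\delta=\beta$ gives, for a.e.\ $t$,
\[
\int_\Omega\mathcal F^{(r_1-\varsigma,r_2-\varsigma)}_\epsilon(z,\nabla u)|\nabla u|^2\,dx\le \beta\int_\Omega\mathcal F^{(0,0)}_\epsilon(z,\nabla u)|u_{xx}|^2\,dx+C(t).
\]

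The hard part will be to bound $C(t)$ uniformly in $t$. Two things have to be checked. First, the finite cover $\{\Omega_i\}$ that enters Lemmas~\ref{le:Q1}--\ref{le:Q3} (hence Lemma~\ref{le:final-ell}) must be chosen so that the spatial oscillation of $\underline s(\cdot,t)$ over each $\Omega_i$ is small in the sense of \eqref{eq:osc-loc} (and of its stronger variant used in the proof of Lemma~\ref{le:Q3}); since $p,q\in C^{0,1}(\overline Q_T)$ with Lipschitz constant $L$, one has $\osc_{\Omega_i}\underline s(\cdot,t)\le L\,\operatorname{diam}\Omega_i$ uniformly in $t$, so one fixed cover, depending only on $L$, $N$, $\varsigma$, serves for every $t$ at once. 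Second, the constant in Lemma~\ref{le:final-ell} is non-decreasing in $M_0$: the $M_0$-dependence enters only through the factors $M_0^{1/2}$, $M_0^{\sigma/2}$ from the Gagliardo--Nirenberg inequality \eqref{eq:G-N} and the term $C_1\int_\Omega u^2\,dx$ in \eqref{eq:principal}, and through no other channel. Replacing $\|u(t)\|_{2,\Omega}^2$ by $M_0=\sup_{(0,T)}\|u(t)\|_{2,\Omega}^2$ then yields a $t$-independent constant $\overline C=C(\partial\Omega,\beta,\alpha,\omega_p,\omega_q,N,M_0,L,\varsigma)$, with $\omega_p,\omega_q$ the moduli of continuity of $p,q$ in $\overline Q_T$. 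Integrating the displayed inequality over $(0,T)$ (the integrands being measurable in $t$, and $C(t)\le\overline C$ a.e.) produces \eqref{eq:principal2} with $C=T\overline C$.

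Finally, for \eqref{eq:principal-3} I would use that by construction $p+r_1-\varsigma=q+r_2-\varsigma=\underline s+r^\sharp-\varsigma$, so \eqref{eq:null-eps} together with the last line of \eqref{eq:a-b} gives the pointwise chain
\[
\alpha|\nabla u|^{\underline s(z)+r^\sharp-\varsigma}\le(a+b)|\nabla u|^{\underline s(z)+r^\sharp-\varsigma}=\mathcal F^{(r_1-\varsigma,r_2-\varsigma)}_0(z,\nabla u)|\nabla u|^2\le C'+2\,\mathcal F^{(r_1-\varsigma,r_2-\varsigma)}_\epsilon(z,\nabla u)|\nabla u|^2
\]
with a bounded $C'$ (note $\underline s+r^\sharp-\varsigma>0$ since $\underline s>\tfrac{2N}{N+2}$ and $\varsigma<r^\sharp$); integrating over $Q_T$ and inserting \eqref{eq:principal2} with $\beta/2$ in place of $\beta$ completes the proof.
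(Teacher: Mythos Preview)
Your proposal is correct and follows essentially the same strategy as the paper: freeze $t$, apply the elliptic interpolation estimate of Lemma~\ref{le:final-ell} slice by slice, then integrate over $(0,T)$; the derivation of \eqref{eq:principal-3} from \eqref{eq:principal2} via the identity $p+r_1=q+r_2=\underline s+r^\sharp$, the bound $a+b\ge\alpha$, and \eqref{eq:null-eps} is likewise what the paper does. The only cosmetic difference is that the paper organizes the oscillation control through a finite cover of $Q_T$ by space--time cylinders $Q^{(i)}=\Omega_i\times(t_{i-1},t_i)$ on which $\osc\,\underline s<\varsigma/4$, whereas you use one fixed spatial cover that works for every $t$ by the uniform Lipschitz bound; both rest on $p,q\in C^{0,1}(\overline Q_T)$ and yield the same $t$-independent constant.
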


\begin{proof}
Since the exponents $p,q$ are Lipschitz-continuous in $\overline{Q}_T$, for every $0 < \varsigma < r_\ast/2$ there exists a finite cover of $Q_T$ composed of the cylinders $Q^{(i)}=\Omega_i\times (t_{i-1},t_i)$, $i=1,2,\ldots,K$, such that

\[
\begin{split}
& t_0=0,\quad t_K=T,\quad t_i-t_{i-1}=\rho,\quad Q_T\subset \bigcup_{i=1}^{K}Q^{(i)},\quad \partial\Omega_i\in C^2,
\\
& \underline{s}_i^+=\max_{\overline{Q}^{(i)}} \underline{s}(z),\quad \underline{s}_i^{-}=\min_{\overline{Q}^{(i)}}\underline{s}(z),
\quad \underline{s}_i^+-\underline{s}_i^- <\dfrac{\varsigma}{4},\quad i=1,2,\ldots,K,\;\;K=K(\omega_p,\omega_q,\varsigma).
\end{split}
\]
For a.e. $t\in (0,T)$ the function $u(\cdot,t):\Omega\mapsto \mathbb{R}$  satisfies inequality \eqref{eq:complete-new} on each of $\Omega_i$. Integrating these inequalities over the intervals $(t_{i-1},t_i)$ and summing the results we obtain \eqref{eq:principal2} with $\varsigma<r_\ast/2$. The case $\varsigma\in [r_\ast/2,r^\sharp)$ follows as in the proof of Lemma \ref{le:final-ell}. Due to the choice of $r_1(z)$, $r_2(z)$, inequality \eqref{eq:principal-3} follows from \eqref{eq:principal2} with the help of \eqref{eq:null-eps} and Lemma \ref{le:cont-emb-1}.
\end{proof}

\subsection{Estimates on the traces}
We will need estimates on the traces on the lateral boundary of the cylinder $Q_T$. In the assertions formulated below the exponents $p$, $q$ and coefficients $a$, $b$ are considered as functions of $x\in \Omega$ and the variable $t$ is regarded as a parameter.
\begin{lemma}[Lemma 3.3, \cite{arora_shmarev2020}]
\label{le:trace-old}
Let $\Omega\subset \mathbb{R}^N$, $N\geq 2$ be a bounded domain with the boundary $\partial\Omega\in C^{2}$, and $ {a,b \in W^{1,\infty}(\Omega)}$ be given nonnegative functions. Assume that $v\in H^{3}_0(\Omega)$ and denote

\begin{equation}
\label{eq:K}
\mathcal{K}=\int_{\partial\Omega} \mathcal{F}^{(0,0)}_\epsilon(x, \nabla v) \left(\Delta v \,(\nabla v\cdot \mathbf{n})-\nabla (\nabla v\cdot \mathbf{n})\cdot \nabla v\right)\,dS,
\end{equation}
where $\mathbf{n}$ stands for the exterior normal to ${\partial \Omega}$. There exists a constant $\kappa=\kappa(\partial\Omega)$ such that
\[
\mathcal{K}\leq \kappa\int_{\partial\Omega}\mathcal{F}^{(0,0)}_\epsilon(x, \nabla v) \vert \nabla v\vert ^2\,dS.
\]
\end{lemma}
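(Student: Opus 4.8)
The plan is to exploit the fact that $v\in H^3_0(\Omega)$ vanishes on $\partial\Omega$, so that \emph{all tangential derivatives of $v$ vanish there}; consequently, on $\partial\Omega$ the gradient is purely normal, $\nabla v=(\nabla v\cdot\mathbf{n})\,\mathbf{n}$, and in particular $\vert\nabla v\vert^{2}=(\nabla v\cdot\mathbf{n})^{2}$. The heart of the argument is the pointwise identity on $\partial\Omega$
\begin{equation*}
\Delta v\,(\nabla v\cdot\mathbf{n})-\nabla(\nabla v\cdot\mathbf{n})\cdot\nabla v = H(x)\,\vert\nabla v\vert^{2}\qquad\text{on }\partial\Omega,
\end{equation*}
where $H(x)$ is the sum of the principal curvatures of $\partial\Omega$ at $x$ (the trace of the second fundamental form). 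Granting this, since $a,b\geq 0$ implies $\mathcal{F}^{(0,0)}_{\epsilon}(x,\nabla v)\geq 0$, and since $\vert H(x)\vert\leq\kappa(\partial\Omega)<\infty$ because $\partial\Omega\in C^{2}$, one gets at once
\[
\mathcal{K}=\int_{\partial\Omega}\mathcal{F}^{(0,0)}_{\epsilon}(x,\nabla v)\,H(x)\,\vert\nabla v\vert^{2}\,dS\leq\kappa\int_{\partial\Omega}\mathcal{F}^{(0,0)}_{\epsilon}(x,\nabla v)\,\vert\nabla v\vert^{2}\,dS,
\]
with $\kappa$ depending only on $\partial\Omega$, which is the assertion.

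To establish the identity I would first reduce, by density of $C^{\infty}(\overline{\Omega})\cap H^{1}_0(\Omega)$ in $H^{3}(\Omega)\cap H^{1}_0(\Omega)$, to the case of smooth $v$, so that all boundary quantities are classical. Extend $\mathbf{n}$ to a tubular neighbourhood of $\partial\Omega$ as $\nabla d$, $d$ the signed distance to $\partial\Omega$; then $\vert\mathbf{n}\vert\equiv 1$ and $\partial_{\mathbf{n}}\mathbf{n}=0$ near $\partial\Omega$. Writing $w=\nabla v\cdot\mathbf{n}=\partial_{\mathbf{n}}v$, the vanishing of $\partial_{\mathbf{n}}\mathbf{n}$ gives $\partial_{\mathbf{n}}w=\partial^{2}_{\mathbf{n}\mathbf{n}}v$, whence on $\partial\Omega$, using $\nabla v=w\,\mathbf{n}$,
\[
\nabla w\cdot\nabla v=w\,\partial_{\mathbf{n}}w=w\,\partial^{2}_{\mathbf{n}\mathbf{n}}v.
\]
On the other hand, the tangential--normal splitting of the Laplacian reads $\Delta v=\partial^{2}_{\mathbf{n}\mathbf{n}}v+H\,\partial_{\mathbf{n}}v+\Delta_{\partial\Omega}v$ on $\partial\Omega$, and $\Delta_{\partial\Omega}v=0$ there because $v\equiv 0$ on $\partial\Omega$; hence $\Delta v\,w=(\partial^{2}_{\mathbf{n}\mathbf{n}}v)\,w+Hw^{2}$. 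Subtracting the two displays yields $\Delta v\,w-\nabla w\cdot\nabla v=Hw^{2}=H\vert\nabla v\vert^{2}$, as claimed. (A direct check on $\Omega=B_1$ with $v=1-\vert x\vert^{2}$ gives the bracket equal to $4(N-1)$ and $\vert\nabla v\vert^{2}=4$ on $\partial B_1$, consistent with $H=N-1$.)

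The passage back to general $v\in H^{3}_0(\Omega)$ is then routine: the traces of $\nabla v$, $\nabla^{2}v$ and $\Delta v$ on $\partial\Omega$ depend continuously on $v$ in the $H^{3}(\Omega)$-norm, and $\mathcal{F}^{(0,0)}_{\epsilon}(x,\cdot)$ is locally Lipschitz, so both the identity and the estimate survive the limit (along a subsequence converging a.e.\ on $\partial\Omega$ if desired). The main obstacle is the pointwise identity of the second paragraph: one must pick the extension of $\mathbf{n}$ so that the spurious derivatives of $\mathbf{n}$ cancel, and one must correctly extract the mean-curvature term from the splitting of $\Delta$ --- this is precisely where the $C^{2}$ regularity of $\partial\Omega$ enters, and it is what makes $\kappa$ depend on $\partial\Omega$ alone. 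Everything else (density, trace bounds, nonnegativity of $\mathcal{F}^{(0,0)}_{\epsilon}$) is elementary.
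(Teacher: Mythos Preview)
The paper does not supply a proof of this lemma: it is quoted verbatim as Lemma 3.3 of \cite{arora_shmarev2020} and used as a black box in the estimate of $P_{\partial\Omega}$. So there is no ``paper's own proof'' to compare against here.

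That said, your argument is correct and is exactly the standard one for this type of boundary identity. The key observation --- that on $\partial\Omega$ the Dirichlet condition forces $\nabla v=(\partial_{\mathbf{n}}v)\mathbf{n}$, and that extending $\mathbf{n}=\nabla d$ via the signed distance kills the term $\partial_{\mathbf{n}}\mathbf{n}$ --- is precisely what reduces the bracket $\Delta v\,(\nabla v\cdot\mathbf{n})-\nabla(\nabla v\cdot\mathbf{n})\cdot\nabla v$ to $H\vert\nabla v\vert^{2}$, with $H=\operatorname{div}\mathbf{n}$ the (scalar) mean curvature. The $C^{2}$ assumption on $\partial\Omega$ is exactly what is needed for $H\in L^{\infty}(\partial\Omega)$, and the nonnegativity of $\mathcal{F}^{(0,0)}_{\epsilon}$ lets you pass from the pointwise identity to the one-sided bound with $\kappa=\|H\|_{L^{\infty}(\partial\Omega)}$. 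Your density and trace-continuity remarks for passing from smooth $v$ to $H^{3}_0(\Omega)$ are also in order. Nothing is missing.
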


\begin{theorem}\label{th:trace-main} Let $\partial\Omega\in C^2$, $u\in C^1(\overline{\Omega})\cap H^2_0(\Omega)$. Assume that $a(x)$, $b(x)$, $p(x)$, $q(x)$ satisfy the conditions of Lemma \ref{lem:interpol} and \eqref{eq:osc-p-q}. Then for every $\lambda\in (0,1)$
\begin{equation}
\label{eq:trace-3} \int_{\partial \Omega} \mathcal{F}^{(0,0)}_{\epsilon}(x,\nabla u)\vert \nabla u\vert ^{2}\,dS\leq
\lambda \int_{\Omega} \mathcal{F}^{(0,0)}_{\epsilon}(x,\nabla u) \vert u_{xx}\vert ^2\,dx+ C
\end{equation}
with a constant $C$ depending on $\lambda$,
$\underline{s}^-$, $\underline{s}^+$, $N$, $L$, $\alpha$, and $\|  u\|  _{2,\Omega}$.
\end{theorem}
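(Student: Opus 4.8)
The plan is to turn the surface integral into a volume integral via the divergence theorem applied to the product of $g:=\mathcal{F}^{(0,0)}_{\epsilon}(x,\nabla u)\,\beta_{\epsilon}(\nabla u)$ with a $C^1$ vector field extending the outer normal, and then to dispatch the interior integrals that arise: the ones carrying second derivatives are absorbed, by Young's inequality, into $\int_{\Omega}\mathcal{F}^{(0,0)}_{\epsilon}(x,\nabla u)\vert u_{xx}\vert^2\,dx$, while the purely first-order ones are absorbed into a constant by the interpolation inequality of Lemma \ref{le:final-ell}. Concretely, I would first fix $\varsigma\in(0,r_\ast/2)$ and an auxiliary $\nu>0$ with $\varsigma+\nu<r_\ast$, take $r_1,r_2$ as in \eqref{eq:r-i} (so $p+r_1=q+r_2=\underline{s}+r^\sharp$, and $r_1-\varsigma,r_2-\varsigma>0$ since $\overline{s}-\underline{s}<r_\ast$ and $\varsigma<r_\ast$), and choose $\mathbf{h}\in C^1(\overline{\Omega};\mathbb{R}^N)$ with $\mathbf{h}=\mathbf{n}$ on $\partial\Omega$ and $\Vert\mathbf{h}\Vert_{C^1(\overline{\Omega})}\le C(\partial\Omega)$ --- available since $\partial\Omega\in C^2$ (e.g.\ a cut-off times the gradient of the signed distance to $\partial\Omega$). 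Because $u\in C^1(\overline{\Omega})\cap H^2_0(\Omega)$, one has $g\in W^{1,1}(\Omega)$ ($\nabla u$ is bounded, $\beta_{\epsilon}(\nabla u)\ge\epsilon^2>0$, $u_{xx}\in L^2$); and since $\vert\nabla u\vert^2\le\beta_{\epsilon}(\nabla u)$ and $\mathbf{h}\cdot\mathbf{n}=1$ on $\partial\Omega$, the divergence theorem yields
\begin{equation}
\notag
\int_{\partial\Omega}\mathcal{F}^{(0,0)}_{\epsilon}(x,\nabla u)\vert\nabla u\vert^2\,dS\le\int_{\partial\Omega}g\,(\mathbf{h}\cdot\mathbf{n})\,dS=\int_{\Omega}g\,\div\mathbf{h}\,dx+\int_{\Omega}\mathbf{h}\cdot\nabla g\,dx .
\end{equation}

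The core step is the expansion of $\nabla g$. Differentiating $\mathcal{F}^{(0,0)}_{\epsilon}(x,\nabla u)=a(x)\beta_{\epsilon}^{\frac{p(x)-2}{2}}(\nabla u)+b(x)\beta_{\epsilon}^{\frac{q(x)-2}{2}}(\nabla u)$ and the factor $\beta_{\epsilon}(\nabla u)$, the integrand $\mathbf{h}\cdot\nabla g$ splits into: (i) terms $\nabla a,\nabla b$ times $\beta_{\epsilon}^{\frac p2},\beta_{\epsilon}^{\frac q2}$; (ii) logarithmic terms $a\,\nabla p\,(\ln\beta_{\epsilon})\,\beta_{\epsilon}^{\frac p2}$ and $b\,\nabla q\,(\ln\beta_{\epsilon})\,\beta_{\epsilon}^{\frac q2}$; (iii) terms $a\beta_{\epsilon}^{\frac{p-2}{2}}\nabla\beta_{\epsilon}(\nabla u)$, $b\beta_{\epsilon}^{\frac{q-2}{2}}\nabla\beta_{\epsilon}(\nabla u)$; (iv) $\mathcal{F}^{(0,0)}_{\epsilon}(x,\nabla u)\,\nabla\beta_{\epsilon}(\nabla u)$. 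Using $\vert\nabla\beta_{\epsilon}(\nabla u)\vert\le 2\vert\nabla u\vert\,\vert u_{xx}\vert$ and Young's inequality, the contributions of (iii) and (iv) are bounded by $\delta\int_{\Omega}\mathcal{F}^{(0,0)}_{\epsilon}(x,\nabla u)\vert u_{xx}\vert^2\,dx+C_{\delta}\int_{\Omega}\mathcal{F}^{(0,0)}_{\epsilon}(x,\nabla u)\vert\nabla u\vert^2\,dx$, and $\int_{\Omega}g\,\div\mathbf{h}\,dx\le C+C\int_{\Omega}\mathcal{F}^{(0,0)}_{\epsilon}(x,\nabla u)\vert\nabla u\vert^2\,dx$ by \eqref{eq:null-eps-prelim}. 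For (i) and (ii) I would bound $\vert\nabla a\vert,\vert\nabla b\vert,\vert\nabla p\vert,\vert\nabla q\vert$ by $L$ and use \eqref{eq:log} to replace $\beta_{\epsilon}^{p/2}\vert\ln\beta_{\epsilon}\vert$ by $C+C\beta_{\epsilon}^{(p+\nu)/2}$ (and likewise with $q$), reducing (i), (ii) to $C+C\int_{\Omega}\bigl(\beta_{\epsilon}^{\frac{p+\nu}{2}}(\nabla u)+\beta_{\epsilon}^{\frac{q+\nu}{2}}(\nabla u)\bigr)\,dx$. Here the balance condition \eqref{eq:osc-p-q}, $\overline{s}(x)-\underline{s}(x)<r_\ast$, together with $\varsigma+\nu<r_\ast$, gives $\beta_{\epsilon}^{\frac{p+\nu}{2}},\beta_{\epsilon}^{\frac{q+\nu}{2}}\le C+\beta_{\epsilon}^{\frac{\underline{s}+r^\sharp-\varsigma}{2}}(\nabla u)$; and since $p+r_1=q+r_2=\underline{s}+r^\sharp$ and $a+b\ge\alpha$, one has $\mathcal{F}^{(r_1-\varsigma,r_2-\varsigma)}_{\epsilon}(x,\nabla u)\,\beta_{\epsilon}(\nabla u)=(a+b)\beta_{\epsilon}^{\frac{\underline{s}+r^\sharp-\varsigma}{2}}(\nabla u)\ge\alpha\,\beta_{\epsilon}^{\frac{\underline{s}+r^\sharp-\varsigma}{2}}(\nabla u)$, so by \eqref{eq:null-eps} the contributions of (i), (ii) are $\le C+C\int_{\Omega}\mathcal{F}^{(r_1-\varsigma,r_2-\varsigma)}_{\epsilon}(x,\nabla u)\vert\nabla u\vert^2\,dx$. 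The same comparison (valid because $r_1-\varsigma,r_2-\varsigma>0$) gives $\mathcal{F}^{(0,0)}_{\epsilon}(x,\nabla u)\vert\nabla u\vert^2\le C+C\,\mathcal{F}^{(r_1-\varsigma,r_2-\varsigma)}_{\epsilon}(x,\nabla u)\vert\nabla u\vert^2$, so all the leftover $\mathcal{F}^{(0,0)}_{\epsilon}$-integrals merge into the $\mathcal{F}^{(r_1-\varsigma,r_2-\varsigma)}_{\epsilon}$-one, which by Lemma \ref{le:final-ell} (legitimate since $\int_{\Omega}\mathcal{F}^{(0,0)}_{\epsilon}\vert u_{xx}\vert^2<\infty$) is at most $\delta'\int_{\Omega}\mathcal{F}^{(0,0)}_{\epsilon}(x,\nabla u)\vert u_{xx}\vert^2\,dx+C$. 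Collecting everything and choosing $\delta,\delta'$ small enough that the total coefficient of $\int_{\Omega}\mathcal{F}^{(0,0)}_{\epsilon}\vert u_{xx}\vert^2\,dx$ does not exceed $\lambda$ yields \eqref{eq:trace-3}.

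The bookkeeping of the many summands of $\nabla g$ is routine. The single genuinely delicate point --- and the place where the structural hypotheses are used in tandem --- is the passage, in items (i) and (ii), from the \emph{unweighted} powers $\beta_{\epsilon}^{p/2},\beta_{\epsilon}^{q/2}$ (which come with only the Lipschitz constant $L$, no coefficient $a$ or $b$) to the weighted quantity $\mathcal{F}^{(r_1-\varsigma,r_2-\varsigma)}_{\epsilon}(x,\nabla u)\vert\nabla u\vert^2$ to which Lemma \ref{le:final-ell} applies: it is the small-gap condition \eqref{eq:osc-p-q} that lets one dominate both $\beta_{\epsilon}^{p/2}$ and $\beta_{\epsilon}^{q/2}$ by $1+\beta_{\epsilon}^{(\underline{s}+r^\sharp-\varsigma)/2}$, and the non-degeneracy $a+b\ge\alpha$ that converts this into a bound by the single weight $\mathcal{F}^{(r_1-\varsigma,r_2-\varsigma)}_{\epsilon}$. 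One must also check that all constants are independent of $\epsilon\in(0,1)$, which follows from the elementary inequalities \eqref{eq:interchange}, \eqref{eq:null-eps-prelim}, \eqref{eq:null-eps} and \eqref{eq:log}: every negative power of $\beta_{\epsilon}$ occurring above is compensated by a factor $\vert\nabla u\vert^2$ or $\beta_{\epsilon}(\nabla u)$ (for instance $\beta_{\epsilon}^{\frac{p-2}{2}}\vert\nabla u\vert^2\le\beta_{\epsilon}^{\frac p2}$).
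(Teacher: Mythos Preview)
Your argument is correct and follows essentially the same route as the paper's proof. The only difference is that the paper invokes an external trace inequality (\cite[Corollary 4.1]{arora_shmarev2020}) to pass from the boundary integral to $\delta\int_{\Omega}\mathcal{F}^{(0,0)}_{\epsilon}\vert u_{xx}\vert^2\,dx$ plus unweighted volume terms $\int_{\Omega}(\vert\nabla u\vert^{p}+\vert\nabla u\vert^{q})(1+\vert\ln\vert\nabla u\vert\vert)\,dx$, whereas you obtain the same intermediate estimate by hand via the divergence theorem with the extended normal $\mathbf{h}$; the remaining steps (the log estimate \eqref{eq:log}, the use of the gap condition \eqref{eq:osc-p-q} to reach the exponent $\underline{s}+r^\sharp-\varsigma$, and the appeal to Lemma \ref{le:final-ell}) are identical in both proofs.
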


\begin{proof}
Applying \cite[Corollary 4.1]{arora_shmarev2020}  to $\vert \nabla u\vert $ we obtain
\begin{equation}
\label{eq:trace-4}
\begin{split}
\int_{\partial \Omega}   \mathcal{F}^{(0,0)}_{\epsilon}(x,\nabla u) \vert \nabla u\vert ^2\,dS & \leq \delta
\int_{\Omega}  \mathcal{F}^{(0,0)}_{\epsilon}(x,\nabla u) \vert u_{xx}\vert ^2\,dx
\\
&
+ C' \int_{\Omega} \left(\vert \nabla u\vert ^{p(x)} + \vert \nabla u\vert ^{q(x)}\right)\,dx
\\
& + C'' \int_{\Omega} (\vert \nabla
u\vert ^{p(x)} + \vert \nabla
u\vert ^{q(x)})\vert \ln\vert \nabla u\vert \vert \,dx +C'''
\end{split}
\end{equation}
with independent of $u$ constants $C', C'', C'''$. {Choose $\ell_0 \in (0,1)$ and $\ell_{00} \in [r_\ast, r^{\#})$ such that $$\overline{s}(x)+\ell_0 < \underline{s}(x) + \ell_{00} < \underline{s}(x) + r^{\#}.$$
Using \eqref{eq:log} and Young's inequality we estimate
\[
(\vert \nabla u\vert ^{p(x)} + \vert \nabla u\vert ^{q(x)})\vert \ln \vert \nabla u\vert \vert
\leq C(\ell_0,p^\pm, q^\pm) (1+ \vert \nabla u\vert ^{\underline{s}(x)+ \ell_{00}})
\]
with a constant $C$ independent of $u$}. Using the above inequality and then applying Lemma \ref{le:final-ell} with $\varsigma= r^\sharp-\ell_{00}$ we continue \eqref{eq:trace-4} as follows:
\[
\begin{split}
\int_{\partial \Omega}   \mathcal{F}^{(0,0)}_{\epsilon}(x,\nabla u) \vert \nabla u\vert ^2\,dS & \leq \delta
\int_{\Omega}  \mathcal{F}^{(0,0)}_{\epsilon}(x,\nabla u) \vert u_{xx}\vert ^2\,dx
\\
&
+C \left(1+\alpha \int_{\Omega} \vert \nabla u\vert ^{\underline{s}(x)+ \ell_{00}} \,dx\right)
\\
& \leq (\delta+ \beta) \int_{\Omega}\mathcal{F}^{(0,0)}_{\epsilon}(x,\nabla u)\vert u_{xx}\vert ^2\,dx +C.
\end{split}
\]
\end{proof}

\section{Approximation of the regularized problem}
\label{sec:reg-problem}

Given $\epsilon>0$, let us consider the family of regularized unordered double phase parabolic equations:
\begin{equation}\label{eq:reg-prob}
\left\{
         \begin{alignedat}{2}
             {} \partial_t u - \div(\mathcal{F}^{(0,0)}_{\epsilon}(z,\nabla u)  \nabla u)
             & {}= f(z)
             && \quad\mbox{ in } \, Q_T,
             \\
             u & {}= 0
             && \quad\mbox{ on }\, \Gamma_T,
             \\
             u(\cdot,0)&{}= u_0
             && \quad\  \mbox{in}\, \ \Omega, \ \epsilon \in (0,1),
          \end{alignedat}
     \right.
\end{equation}
where $\mathcal{F}^{(0,0)}_{\epsilon}(z,\xi)= a(z) \beta^{\frac{p(z)-2}{2}}_{\epsilon}(\xi) + b(z) \beta^{\frac{q(z)-2}{2}}_{\epsilon}(\xi)$.
\begin{definition}
\label{def:weak:reg}
A function {$u_\epsilon:Q_T\mapsto \mathbb{R}$} is called {\bf strong solution} of problem \eqref{eq:reg-prob} if

\begin{enumerate}

\item $u_\epsilon \in  \mathbb{W}_{\overline{s}(\cdot)}(Q_T)$,
    $u_{\epsilon t} \in L^2(Q_T)$,
    $\vert \nabla u_\epsilon\vert  \in L^{\infty}(0,T; L^{r(\cdot)}(\Omega))$ with $r(z)= \max\{2,\overline{s}(z)\}$,

\item for every $\phi \in \mathbb{W}_{\overline{s}(\cdot)}(Q_T)$

\begin{equation}
\label{eq:def-reg}
\int_{Q_T} u_{\epsilon t} \phi ~dz + \int_{Q_T}
\mathcal{F}_{\epsilon}^{(0,0)}(z,\nabla u_\epsilon)\nabla u_\epsilon\cdot \nabla \phi~dz =
\int_{Q_T} f \phi \,dz,
\end{equation}
\item for every $\psi \in C_0^1(\Omega)$
\[
\int_{\Omega} (u_\epsilon(x,t)-u_0(x)) \psi  ~dx \to 0\quad\text{as $t \to
0$}.
\]
\end{enumerate}
\end{definition}

\subsection{Dense sets in $W^{1,p(\cdot)}_0(\Omega)$}
Let $\{\phi_i\}$ and $\{\lambda_i\}$ be the eigenfunctions and the corresponding eigenvalues of the Dirichlet problem for the Laplacian:

\begin{equation}
\label{eq:eigen}
(\nabla \phi_i,\nabla \psi)_{2,\Omega}=\lambda_i(\phi_i,\psi)\qquad \forall \psi\in H^{1}_0(\Omega).
\end{equation}
The functions $\phi_i$ form an orthogonal basis of $L^2(\Omega)$ and are mutually orthogonal in $H^1_0(\Omega)$. If $\partial\Omega\in C^k$, $k\geq 1$, then $\phi_i\in C^{\infty}(\Omega)\cap H^{k}(\Omega)$. Let us denote by $H^k_{\mathcal{D}}(\Omega)$ the subspace of the Hilbert space $H^{k}(\Omega)$ composed of the functions $f$ for which

\[
f=0,\quad \Delta f=0,\quad \ldots, \Delta^{[\frac{k-1}{2}]} f=0\quad \text{on $\partial\Omega$},\qquad H^{0}_{\mathcal{D}}(\Omega)=L^2(\Omega).
\]
The relations

\[
[f,g]_{k}=\begin{cases}
(\Delta^{\frac{k}{2}}f,\Delta^{\frac{k}{2}}g)_{2,\Omega} & \text{if $k$ is even},
\\
(\Delta^{\frac{k-1}{2}}f,\Delta^{\frac{k-1}{2}}g)_{H^1(\Omega)} & \text{if $k$ is odd}
\end{cases}
\]
define an equivalent scalar product on ${H}^{k}_{\mathcal{D}}(\Omega)$:
$
[f,g]_k=\displaystyle\sum_{i=1}^\infty \lambda_i^k f_ig_i$,
where $f_i$, $g_i$ are the Fourier coefficients of $f$, $g$ in the basis $\{\phi_i\}$ of $L^2(\Omega)$. The corresponding equivalent norm of $H^k_{\mathcal{D}(\Omega)}$ is defined by $\|  f\|  ^2_{{H}^{k}_{\mathcal{D}}(\Omega)}=[f,f]_k$. Let $f^{(m)}=\displaystyle \sum_{i=1}^{m}f_i\phi_i$ be the partial sum of the Fourier series of $f\in L^{2}(\Omega)$. We will rely on the following known assertions.
\begin{proposition}
\label{pro:series}
Let $\partial\Omega\in C^k$, $k\geq 1$. A function $f$ can be represented by the Fourier series in the system $\{\phi_i\}$,
convergent in the norm of $H^k(\Omega)$, if and only if $f\in H^{k}_{\mathcal{D}}(\Omega)$. If $f\in H^{k}_{\mathcal{D}}(\Omega)$, then the Fourier series is convergent, its sum is bounded by $C\|  f\|  _{H^k(\Omega)}$ with an independent of $f$ constant $C$, and $\|  f^{(m)}-f\|  _{H^k(\Omega)}\to 0$ as $m\to \infty$. If $k\geq [\frac{N}{2}]+1$, then the Fourier series in the system $\{\phi_i\}$ of every function $f\in H^{k}_{\mathcal{D}}(\Omega)$ converges to $f$ in $C^{k-[\frac{N}{2}]-1}(\overline{\Omega})$.
\end{proposition}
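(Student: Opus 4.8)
\emph{Proof proposal.} The plan is to identify the functions admitting an $H^k(\Omega)$-convergent Fourier expansion in $\{\phi_i\}$ with the domain of a power of the Dirichlet Laplacian, and then to reduce everything to one norm equivalence obtained by an elliptic regularity bootstrap. I would write $A=-\Delta$ with homogeneous Dirichlet conditions, so that $A\phi_i=\lambda_i\phi_i$ with $0<\lambda_1\le\lambda_2\le\cdots\to\infty$, and introduce the Hilbert scale
\[
X^k=\Bigl\{g\in L^2(\Omega):\ \textstyle\sum_i\lambda_i^kg_i^2<\infty\Bigr\},\qquad \|g\|_{X^k}^2=\sum_i\lambda_i^kg_i^2,\quad g_i=(g,\phi_i)_{2,\Omega}.
\]
For $g\in X^k$ the partial sums $g^{(m)}$ automatically satisfy $\|g^{(m)}\|_{X^k}\le\|g\|_{X^k}$ and $\|g^{(m)}-g\|_{X^k}\to0$ (tail of a convergent series), while conversely convergence of $\{g^{(m)}\}$ in $X^k$ forces the limit to be $g$ and $g\in X^k$, since $X^k\hookrightarrow L^2(\Omega)$. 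Each $\phi_i$ lies in $C^\infty(\Omega)\cap H^k_{\mathcal D}(\Omega)$: interior smoothness is interior regularity for $-\Delta\phi_i=\lambda_i\phi_i$, membership in $H^k(\Omega)$ is up‑to‑the‑boundary regularity for the same equation (bootstrapping in steps of two, using $\partial\Omega\in C^k$), and the boundary conditions defining $H^k_{\mathcal D}(\Omega)$ hold because $\Delta^j\phi_i=(-\lambda_i)^j\phi_i=0$ on $\partial\Omega$. Hence every assertion of the proposition reduces to the single claim that $X^k=H^k_{\mathcal D}(\Omega)$ as sets, with $[\,\cdot\,,\cdot\,]_k^{1/2}$ comparable to $\|\cdot\|_{X^k}$ and both comparable to $\|\cdot\|_{H^k(\Omega)}$ on this space.

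For the inclusion $H^k_{\mathcal D}(\Omega)\subset X^k$ I would use Green's formula. If $f\in H^k_{\mathcal D}(\Omega)$ and $k=2\ell$, write $f_i=\lambda_i^{-\ell}\bigl(f,(-\Delta)^\ell\phi_i\bigr)_{2,\Omega}$ and integrate by parts $\ell$ times; at the $j$-th step one boundary integral vanishes because $(-\Delta)^{\ell-j}\phi_i=\lambda_i^{\ell-j}\phi_i=0$ on $\partial\Omega$, and the other because $(-\Delta)^{j-1}f=0$ on $\partial\Omega$ for $j\le\ell$ — precisely what membership in $H^k_{\mathcal D}$ with $[(k-1)/2]=\ell-1$ provides. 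This yields $\lambda_i^\ell f_i=\bigl((-\Delta)^\ell f\bigr)_i$, hence $\|f\|_{X^k}^2=\|\Delta^\ell f\|_{2,\Omega}^2$, comparable to $[f,f]_k$ and $\le C\|f\|_{H^k(\Omega)}^2$. For odd $k=2\ell+1$ the same computation gives $\lambda_i^\ell f_i=\bigl((-\Delta)^\ell f\bigr)_i$ with $h=(-\Delta)^\ell f\in H^1_0(\Omega)$, so $[f,f]_k$ is comparable to $\|\nabla h\|_{2,\Omega}^2=\sum_i\lambda_i h_i^2=\sum_i\lambda_i^{2\ell+1}f_i^2=\|f\|_{X^k}^2$.

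For the reverse inclusion $X^k\subset H^k_{\mathcal D}(\Omega)$ with $\|f\|_{H^k(\Omega)}\le C\|f\|_{X^k}$ I would induct on $k$ via boundary elliptic estimates: $k=0$ is trivial, $k=1$ is Poincar\'e together with $\|\nabla f\|_{2,\Omega}^2=\sum_i\lambda_i f_i^2$; for $k\ge2$, given $f\in X^k$, the function $g=Af=\sum_i\lambda_i f_i\phi_i$ lies in $X^{k-2}$ and solves $-\Delta f=g$ in $\Omega$, $f=0$ on $\partial\Omega$, so the $C^k$-boundary estimate $\|f\|_{H^k(\Omega)}\le C(\|g\|_{H^{k-2}(\Omega)}+\|f\|_{2,\Omega})$ combined with the induction hypothesis $\|g\|_{H^{k-2}(\Omega)}\le C\|g\|_{X^{k-2}}=C\|f\|_{X^k}$ and $\|f\|_{2,\Omega}\le\lambda_1^{-k/2}\|f\|_{X^k}$ closes the loop; the compatibility conditions $\Delta^jf|_{\partial\Omega}=0$, $j\le[(k-1)/2]$, follow inductively from $\Delta f=-g\in H^{k-2}_{\mathcal D}(\Omega)$ (for odd $k\ge3$ one may instead interpolate, $H^k_{\mathcal D}(\Omega)=[H^{k-1}_{\mathcal D}(\Omega),H^{k+1}_{\mathcal D}(\Omega)]_{1/2}$, the $X$-side being immediate). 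This identifies $X^k$ with $H^k_{\mathcal D}(\Omega)$, gives the uniform bound $\|f^{(m)}\|_{H^k(\Omega)}\le C\|f^{(m)}\|_{X^k}\le C\|f\|_{X^k}\le C'\|f\|_{H^k(\Omega)}$ applied to $f^{(m)}\in H^k_{\mathcal D}(\Omega)$, and yields (a)–(b): $f$ has an $H^k$-convergent Fourier series iff its partial sums are Cauchy in $H^k(\Omega)$ iff $\sum_{i>m}\lambda_i^kf_i^2\to0$ iff $f\in X^k=H^k_{\mathcal D}(\Omega)$, in which case $f^{(m)}\to f$ in $H^k(\Omega)$. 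Part (c) is then immediate from the Sobolev embedding $H^k(\Omega)\hookrightarrow C^{k-[N/2]-1}(\overline\Omega)$, valid for $k\ge[N/2]+1$, which gives $\|f^{(m)}-f\|_{C^{k-[N/2]-1}(\overline\Omega)}\le C\|f^{(m)}-f\|_{H^k(\Omega)}\to0$.

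I expect the \textbf{main obstacle} to be the lower bound $\|f\|_{H^k(\Omega)}\le C\|f\|_{X^k}$ on $H^k_{\mathcal D}(\Omega)$: the opposite inequality is the elementary Green's-formula identity above, but this one requires iterated up-to-the-boundary elliptic regularity with only $\partial\Omega\in C^k$, together with careful bookkeeping of the compatibility conditions $\Delta^jf|_{\partial\Omega}=0$ — especially in the half-integer (odd $k$) case, where one must lean on $D(A^{1/2})=H^1_0(\Omega)$ and an interpolation argument. The remaining ingredients — the series-tail estimates, the integrations by parts, and the Sobolev embedding — are routine.
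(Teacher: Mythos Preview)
The paper does not prove Proposition \ref{pro:series}; it is stated as a ``known assertion'' and simply relied upon. Your proposal is therefore not being compared against any argument in the paper, but it is the standard proof: identify $H^k_{\mathcal D}(\Omega)$ with the spectral space $X^k=D(A^{k/2})$ via Green's formula in one direction and iterated $H^2$-boundary regularity in the other, then read off the series convergence and invoke the Sobolev embedding $H^k\hookrightarrow C^{k-[N/2]-1}$ for the last claim. This is correct and is exactly how one proves such results (see, e.g., Evans or Lions--Magenes for the underlying elliptic estimates and the characterization of $D(A^{k/2})$). Your flagged ``main obstacle'' is real but not deep: for even $k$ the bootstrap is clean, and for odd $k$ one can avoid interpolation altogether by noting that $f\in X^{2\ell+1}$ gives $f\in X^{2\ell}\subset H^{2\ell}_{\mathcal D}$ and $Af\in X^{2\ell-1}\subset H^{2\ell-1}_{\mathcal D}$ by induction, then applying the $C^{2\ell+1}$-boundary estimate to $-\Delta f=Af\in H^{2\ell-1}$.
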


\begin{proposition}[\cite{DNR-2012}, Th. 4.7, Proposition 4.10]
\label{pro:density}
Let $\partial \Omega\in Lip$ and $p(x)\in C_{\rm log}(\overline{\Omega})$. Then the set $C^\infty_{0}(\Omega)$ is dense in $W_0^{1,p(\cdot)}(\Omega)$.
\end{proposition}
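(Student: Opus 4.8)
The statement is classical and, as indicated in the excerpt, can be extracted from \cite{DNR-2012}; here I sketch the argument I would reproduce. The plan rests on two ingredients. First, I would extend $p$ from $\overline\Omega$ to a function $\widetilde p\in C_{\log}(\mathbb R^N)$ with the same bounds $1<p^-\le\widetilde p\le p^+<\infty$ (a McShane/Whitney-type extension preserves the logarithmic modulus of continuity, and one may take $\widetilde p$ constant outside a large ball). Log-Hölder continuity is precisely the condition under which the Hardy--Littlewood maximal operator $M$ is bounded on $L^{\widetilde p(\cdot)}(\mathbb R^N)$ (Diening's theorem). Second, from this maximal bound one obtains the key convergence fact: for a standard mollifier $\phi_\delta$ and every $g\in L^{\widetilde p(\cdot)}(\mathbb R^N)$ one has $\phi_\delta*g\to g$ in $L^{\widetilde p(\cdot)}(\mathbb R^N)$. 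Indeed $|\phi_\delta*g|\le C\,Mg$ pointwise with $Mg\in L^{\widetilde p(\cdot)}$, so the operators $g\mapsto\phi_\delta*g$ are uniformly bounded on $L^{\widetilde p(\cdot)}$; since $\phi_\delta*h\to h$ uniformly (hence in $L^{\widetilde p(\cdot)}$ on fixed bounded supports) for $h\in C_c(\mathbb R^N)$, and $C_c$ is dense in $L^{\widetilde p(\cdot)}$, an $\varepsilon/3$ argument gives the claim, and the same for $\nabla(\phi_\delta*g)=\phi_\delta*\nabla g$.

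With these tools I would argue as follows. If $u\in W_0^{1,p(\cdot)}(\Omega)$ has support compactly contained in $\Omega$, then for $\delta$ small $\phi_\delta*u\in C_0^\infty(\Omega)$ and both $\phi_\delta*u\to u$ and $\nabla(\phi_\delta*u)\to\nabla u$ in $L^{p(\cdot)}(\Omega)$ by the convergence fact, so $u$ is approximated in the norm of $W_0^{1,p(\cdot)}(\Omega)$. For general $u$, since $\partial\Omega$ is Lipschitz I would fix a finite open cover $U_0\Subset\Omega$, $U_1,\dots,U_m$ of $\overline\Omega$ in which each $U_j\cap\Omega$ ($j\ge1$) is, after a rigid motion, the region above the graph of a Lipschitz function and enjoys a uniform interior cone, together with a subordinate $C^\infty$ partition of unity $\{\psi_j\}$. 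Then $u=\sum_j\psi_j u$, each $\psi_j u\in W_0^{1,p(\cdot)}(\Omega)$ because $|\nabla(\psi_j u)|\le|\psi_j|\,|\nabla u|+|\nabla\psi_j|\,|u|\in L^{p(\cdot)}(\Omega)$; the term $\psi_0 u$ is handled by the interior case, so it remains to approximate a function $u\in W_0^{1,p(\cdot)}(\Omega)$ supported in one boundary chart $U_j$.

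For such $u$, the zero extension $\bar u$ across $\partial\Omega$ belongs to $W^{1,1}(\mathbb R^N)$ (this is exactly the content of $u\in W_0^{1,1}(\Omega)$) with $\bar u,|\nabla\bar u|\in L^{\widetilde p(\cdot)}(\mathbb R^N)$. Using the cone structure in $U_j$ I would pick a fixed direction $e$ and a constant $c>0$ so that, for small $h>0$, the translate $\bar u_h:=\bar u(\cdot+he)$ is supported within the chart at distance $\ge ch$ from $\partial\Omega$; hence $\phi_\delta*\bar u_h\in C_0^\infty(\Omega)$ whenever $\delta\ll h$. Letting first $\delta\to0$ (convergence fact) and then $h\to0$ yields the approximation, provided $\bar u_h\to\bar u$ and $(\nabla\bar u)_h\to\nabla\bar u$ in $L^{\widetilde p(\cdot)}(\mathbb R^N)$, i.e.\ strong continuity of translations. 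On the fixed bounded set carrying all the relevant supports this follows by a truncation argument: cutting off $\bar u$ (resp.\ $\nabla\bar u$) at level $K$ gives a bounded function lying in $L^{p^+}$ of that set, on which translations are continuous; the $L^{p^+}$-norm controls the $L^{\widetilde p(\cdot)}$-norm there; and the truncations converge to $\bar u$ (resp.\ $\nabla\bar u$) in $L^{\widetilde p(\cdot)}$ by dominated convergence for the modular, so a further $\varepsilon/3$ argument closes it. Diagonalizing over $j$, $h$, $\delta$ then produces the desired $C_0^\infty(\Omega)$-approximation of $u$.

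The main obstacle — and the only place where the hypothesis on $p$ is genuinely used — is the convergence of mollifications in the variable-exponent norm: without log-Hölder continuity the maximal operator need not be bounded on $L^{p(\cdot)}$, the averaging operators $\phi_\delta*\,\cdot$ need not be uniformly bounded there, and smooth functions may indeed fail to be dense. Everything else (partition of unity, zero extension of $W^{1,1}_0$ functions, the geometric ``push into the domain'', and the truncation) is routine once this point is in place.
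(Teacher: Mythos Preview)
The paper does not prove this proposition; it merely cites it from \cite{DNR-2012}. So there is no ``paper's own proof'' to compare against, and your sketch is in fact the standard route taken in that reference: extend $p$, use Diening's maximal theorem to get convergence of mollifiers in $L^{p(\cdot)}$, then localize with a partition of unity and push boundary pieces inward before mollifying. The overall architecture is correct.

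There is, however, a genuine gap in your ``strong continuity of translations'' step. Your $\varepsilon/3$ argument needs control of $\|g_h-g^K_h\|_{\widetilde p(\cdot)}=\|(g-g^K)(\cdot+h)\|_{\widetilde p(\cdot)}$ uniformly in small $h$, but translations are \emph{not} bounded (let alone isometric) on variable-exponent Lebesgue spaces, even for log-H\"older exponents: the modular of $(g-g^K)_h$ equals $\int|g-g^K|^{p(\cdot-h)}(y)\,dy$, and on the set where $|g|$ is large and $p(y-h)>p(y)$ this may well be infinite, since nothing better than $|g|^{p(\cdot)}\in L^1$ is assumed. So the third leg of the $\varepsilon/3$ does not close, and your truncation device does not rescue it.

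The clean repair is not to separate the inward translation from the mollification at all. In each boundary chart with interior-cone direction $e$, use the \emph{shifted} mollifier $\widetilde\phi_\delta(x):=\phi_\delta(x-c\delta e)$ with $c$ chosen from the cone constant. Then $\widetilde\phi_\delta\ast\bar u\in C_0^\infty(\Omega)$ for small $\delta$, and since $\operatorname{supp}\widetilde\phi_\delta\subset B_{(c+1)\delta}(0)$ one still has $|\widetilde\phi_\delta\ast g|\le C\,Mg$ pointwise. Hence the same maximal-operator argument you already invoked gives $\widetilde\phi_\delta\ast\bar u\to\bar u$ and $\widetilde\phi_\delta\ast\nabla\bar u\to\nabla\bar u$ in $L^{\widetilde p(\cdot)}$ directly, with no separate translation step needed. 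This is how the cited reference actually proceeds.
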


Let $k\in \mathbb{N}$ be so large that

\begin{equation}
\label{eq:k}
k\geq N\left(\frac{1}{2}+\frac{1}{N}-\frac{1}{q^+}\right), \quad q(x)=\max\{2,p(x)\},\quad q^+=\sup_{\Omega}q(x).
\end{equation}
Set $\mathcal{P}_m=\operatorname{span} \{\phi_1,\ldots,\phi_m\}$

\begin{lemma}
\label{le:density-1}
Assume $k,N,q$ satisfy \eqref{eq:k}. Then the set $\bigcup\limits_{m=1}^\infty \mathcal{P}_m$ is dense in $W^{1,q(\cdot)}_0(\Omega)$.
\end{lemma}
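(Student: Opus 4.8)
The plan is to establish density of $\bigcup_{m=1}^\infty \mathcal{P}_m$ in $W^{1,q(\cdot)}_0(\Omega)$ by a two-step approximation: first approximate an arbitrary $u \in W^{1,q(\cdot)}_0(\Omega)$ by a smooth compactly supported function, and then approximate that smooth function by partial sums of its Fourier series in the eigenbasis $\{\phi_i\}$. Let me describe each step.

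\emph{Step 1: reduce to $C_0^\infty(\Omega)$.} Since $\partial\Omega \in \mathrm{Lip}$ (in fact $C^2$) and $q(\cdot) = \max\{2,p(\cdot)\} \in C_{\mathrm{log}}(\overline\Omega)$ because $p \in C^{0,1}(\overline\Omega) \subset C_{\mathrm{log}}(\overline\Omega)$ and the pointwise maximum with a constant preserves the logarithmic modulus of continuity, Proposition \ref{pro:density} applies: given $u \in W^{1,q(\cdot)}_0(\Omega)$ and $\eta > 0$, there is $\varphi \in C_0^\infty(\Omega)$ with $\|u - \varphi\|_{W^{1,q(\cdot)}_0(\Omega)} < \eta/2$. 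So it suffices to approximate each $\varphi \in C_0^\infty(\Omega)$ by elements of $\bigcup_m \mathcal{P}_m$ in the $W^{1,q(\cdot)}_0(\Omega)$-norm.

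\emph{Step 2: Fourier approximation of $\varphi \in C_0^\infty(\Omega)$.} Because $\varphi$ has compact support in $\Omega$, it vanishes near $\partial\Omega$ together with all its derivatives; in particular $\varphi = \Delta\varphi = \cdots = \Delta^{[(k-1)/2]}\varphi = 0$ on $\partial\Omega$, so $\varphi \in H^k_{\mathcal{D}}(\Omega)$ for the $k$ fixed in \eqref{eq:k}. By Proposition \ref{pro:series}, since $k \geq [N/2]+1$ (which follows from \eqref{eq:k}: the inequality $k \geq N(\tfrac12 + \tfrac1N - \tfrac1{q^+}) = \tfrac{N}{2} + 1 - \tfrac{N}{q^+}$ forces $k \geq [N/2]+1$ once one checks the integer rounding, using $q^+ \geq 2$), the partial sums $\varphi^{(m)} = \sum_{i=1}^m \varphi_i \phi_i \in \mathcal{P}_m$ converge to $\varphi$ in $C^{k - [N/2] - 1}(\overline\Omega)$, and in particular in $C^1(\overline\Omega)$ provided $k - [N/2] - 1 \geq 1$. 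The choice \eqref{eq:k} is calibrated precisely so that $C^1(\overline\Omega)$-convergence holds: one verifies $k - [N/2] - 1 \geq 1$, i.e. $k \geq [N/2] + 2$, from $k \geq \tfrac{N}{2} + 1 - \tfrac{N}{q^+}$ — this is the point where the exact form of \eqref{eq:k} matters and should be checked carefully.

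\emph{Step 3: conclude.} Convergence $\varphi^{(m)} \to \varphi$ in $C^1(\overline\Omega)$ implies $\varphi^{(m)} \to \varphi$ and $\nabla\varphi^{(m)} \to \nabla\varphi$ uniformly on $\overline\Omega$, hence in $L^{q(\cdot)}(\Omega)$ since $\Omega$ is bounded and $q^+ < \infty$: indeed uniform convergence plus boundedness of $\Omega$ gives $A_{q(\cdot)}(\varphi^{(m)} - \varphi) \to 0$ and $A_{q(\cdot)}(\nabla\varphi^{(m)} - \nabla\varphi) \to 0$, which by \eqref{eq:conv-modular} yields norm convergence in $L^{q(\cdot)}(\Omega)$. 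Therefore $\|\varphi^{(m)} - \varphi\|_{W^{1,q(\cdot)}_0(\Omega)} \to 0$; picking $m$ large enough that this is below $\eta/2$ and combining with Step 1 via the triangle inequality gives $\|u - \varphi^{(m)}\|_{W^{1,q(\cdot)}_0(\Omega)} < \eta$ with $\varphi^{(m)} \in \mathcal{P}_m \subset \bigcup_m \mathcal{P}_m$. Since $\eta$ and $u$ were arbitrary, density follows. The main obstacle is the bookkeeping in Step 2: verifying that condition \eqref{eq:k} indeed guarantees $C^1$-convergence of the Fourier partial sums (the Sobolev embedding exponent loss $[N/2]+1$ in Proposition \ref{pro:series} must leave at least one derivative to spare), and making sure $C_0^\infty$ functions genuinely land in $H^k_{\mathcal{D}}(\Omega)$ — the latter is immediate from compact support, the former is the delicate inequality.
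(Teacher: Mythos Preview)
Your Step 1 and the observation that $C_0^\infty(\Omega)\subset H^k_{\mathcal D}(\Omega)$ are fine, and the overall two-step strategy matches the paper's. The gap is in Step 2: condition \eqref{eq:k} does \emph{not} imply $k\geq [N/2]+2$, so you cannot extract $C^1$-convergence of the Fourier partial sums from Proposition \ref{pro:series}. Concretely, take $N=2$ and $q^+=2$; then \eqref{eq:k} reads $k\geq 2\bigl(\tfrac12+\tfrac12-\tfrac12\bigr)=1$, whereas $C^1$-convergence would need $k-[N/2]-1\geq 1$, i.e.\ $k\geq 3$. Even $C^0$-convergence (which needs $k\geq [N/2]+1=2$) fails here. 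So the route through uniform convergence of $\varphi^{(m)}$ and $\nabla\varphi^{(m)}$ collapses under the stated hypotheses.

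The paper avoids this by using the \emph{other} conclusion of Proposition \ref{pro:series}, namely $\|\varphi^{(m)}-\varphi\|_{H^k(\Omega)}\to 0$, and then the chain of continuous embeddings
\[
H^k_{\mathcal D}(\Omega)\subset W^{1,q^+}_0(\Omega)\subseteq W^{1,q(\cdot)}_0(\Omega).
\]
The first embedding is Sobolev: $H^k\hookrightarrow W^{1,q^+}$ holds iff $W^{k-1,2}\hookrightarrow L^{q^+}$, i.e.\ iff $k-1\geq N\bigl(\tfrac12-\tfrac{1}{q^+}\bigr)$, which is exactly \eqref{eq:k}. So \eqref{eq:k} is calibrated for the $H^k\to W^{1,q^+}$ embedding, not for $C^1$-convergence. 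Replace your Step 2--3 by this embedding argument and the proof goes through.
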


\begin{proof}
Take an arbitrary $u\in W_0^{1,q(\cdot)}(\Omega)$. We want to show that for every $\epsilon>0$ there exists $m(\epsilon)\in \mathbb{N}$ such that for every  $m\geq m(\epsilon)$ there exists $v_m\in \mathcal{P}_m$  satisfying $\|  v_m-u\|  _{W^{1,q(\cdot)}_0(\Omega)}<\epsilon$. By Proposition \ref{pro:density} there exists $v_{\epsilon}\in C_{0}^{\infty}(\Omega)\subset H_{\mathcal{D}}^k(\Omega)$ such that

\[
\|  u-v_\epsilon\|  _{W^{1,q(\cdot)}_0(\Omega)} <\frac{\epsilon}{2},\qquad v_\epsilon\in C_{0}^{\infty}(\Omega)\cap H_{\mathcal{D}}^k(\Omega).
\]
By Proposition \ref{pro:series} $\displaystyle v_\epsilon=\sum_{i=1}^{\infty}v_{\epsilon,i}\phi_i\in H_{\mathcal{D}}^k(\Omega) \subset H^k(\Omega)
$
and
\[
v_{\epsilon}^{(k)}=\sum_{i=1}^k v_{\epsilon,i}\phi_i\in \mathcal{P}_k, \qquad \text{$v_\epsilon^{(k)}\to v_\epsilon$ in $H_{\mathcal{D}}^k(\Omega)$ \ as \ $k \to \infty$}.
\]
Since $k,N,q$ satisfy condition \eqref{eq:k}, the embeddings $H_{\mathcal{D}}^k(\Omega) \subset W_0^{1, q^+}(\Omega) \subseteq W^{1,q(\cdot)}_0(\Omega)$ are continuous:

\[
\|  w\|  _{W^{1,q(\cdot)}_0(\Omega)}\leq
C\|  w\|  _{W_0^{1, q^+}(\Omega)}\leq  C'\|  w\|  _{H^k(\Omega)}\qquad \forall w\in H_{\mathcal{D}}^k(\Omega)
\]
with independent of $w$ constants $C$, $C'$. Given $\epsilon$, we may find $k(\epsilon)\in \mathbb{N}$ such that forall $k\geq k(\epsilon)$
\[
\|  v_\epsilon-v_{\epsilon}^{(k)}\|  _{W_0^{1, q(\cdot)}(\Omega)} \leq C'\|  v_\epsilon-v_{\epsilon}^{(k)}\|  _{H^{k}(\Omega)} =C' \big\|  v_\epsilon-\sum_{i=1}^k v_{\epsilon,i}\phi_i\big\|  _{H^k(\Omega)}<\frac{\epsilon}{2}.
\]
It follows that for every $k\geq k(\epsilon)$

\[
C''\|  u-v_\epsilon^{(k)}\|  _{W^{1,q(\cdot)}_0(\Omega)} <\frac{\epsilon}{2}+ \frac{\epsilon}{2}=\epsilon
\]
with a constant $C''>0$ independent of $u$, $\epsilon$, and $k$.
\end{proof}

\begin{corollary}
Let $q(\cdot)\in C_{{\rm log}}(\overline{Q}_T)$ and $\partial\Omega\in C^k$ with $k$ satisfying condition \eqref{eq:k}. Denote
\begin{equation}
\label{eq:N}
\mathcal{N}_m=\left\{w(x,t):\;w(x,t)= \sum_{i=1}^m \theta_i(t)\phi_i(x),\;\theta_i \in C^{0,1}[0,T]\right\}.
\end{equation}
The set $\bigcup_{m=1}^\infty\mathcal{N}_m$ is dense in $\mathbb{W}_{q(\cdot)}(Q_T)$.
\end{corollary}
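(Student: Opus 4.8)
The plan is to reduce the claim to the density of $C_0^\infty(Q_T)$ in $\mathbb{W}_{q(\cdot)}(Q_T)$, which holds because $q\in C_{{\rm log}}(\overline{Q}_T)$, and then to approximate a fixed $g\in C_0^\infty(Q_T)$ by its spatial Fourier partial sums with time-dependent coefficients, following the scheme of the proof of Lemma \ref{le:density-1} while keeping track of the dependence on $t$. Concretely, for $g\in C_0^\infty(Q_T)$ I would set $\theta_i(t)=(g(\cdot,t),\phi_i)_{2,\Omega}$ and $g^{(m)}(x,t)=\sum_{i=1}^m\theta_i(t)\phi_i(x)$; differentiating under the integral sign shows that each $\theta_i$ belongs to $C^\infty[0,T]\subset C^{0,1}[0,T]$, so $g^{(m)}\in\mathcal{N}_m$ (see \eqref{eq:N}). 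It then remains to show $\|g^{(m)}-g\|_{\mathbb{W}_{q(\cdot)}(Q_T)}\to 0$ as $m\to\infty$.

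For the $L^2$-part this is routine: $\int_\Omega|g-g^{(m)}|^2\,dx=\sum_{i>m}\theta_i(t)^2\to 0$ for every $t$ and is bounded by $\|g(\cdot,t)\|_{2,\Omega}^2$, which is bounded on $[0,T]$, so dominated convergence gives $\|g^{(m)}-g\|_{2,Q_T}\to 0$. For the gradient part I would use the equivalence \eqref{eq:conv-modular} and instead prove that the space-time modular $\int_{Q_T}|\nabla g^{(m)}-\nabla g|^{q(z)}\,dz$ tends to zero. Freezing $t$, the function $g(\cdot,t)$ lies in $C_0^\infty(\Omega)\subset H^k_{\mathcal{D}}(\Omega)$, so by Proposition \ref{pro:series}, condition \eqref{eq:k}, and the continuous embedding $H^k_{\mathcal{D}}(\Omega)\subset W_0^{1,q^+}(\Omega)\subseteq W^{1,q(\cdot)}_0(\Omega)$ used in the proof of Lemma \ref{le:density-1}, one has $\|g^{(m)}(\cdot,t)-g(\cdot,t)\|_{W^{1,q(\cdot)}_0(\Omega)}\to 0$, hence, again by \eqref{eq:conv-modular} applied on $\Omega$, $\int_\Omega|\nabla g^{(m)}(\cdot,t)-\nabla g(\cdot,t)|^{q(x,t)}\,dx\to 0$ for every $t$.

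To upgrade this pointwise-in-$t$ convergence to convergence of the full modular over $Q_T$, I would apply dominated convergence once more, and the crucial ingredient is the uniform bound on Fourier partial sums furnished by Proposition \ref{pro:series}, namely $\|g^{(m)}(\cdot,t)\|_{H^k(\Omega)}\le C\|g(\cdot,t)\|_{H^k(\Omega)}$ with $C$ independent of $m$ and $t$. Combining it with the elementary inequality $|\xi|^{q(x,t)}\le 1+|\xi|^{q^+}$ and the embedding $H^k(\Omega)\hookrightarrow W^{1,q^+}(\Omega)$ yields $\int_\Omega|\nabla g^{(m)}(\cdot,t)-\nabla g(\cdot,t)|^{q(x,t)}\,dx\le |\Omega|+C\|g(\cdot,t)\|_{H^k(\Omega)}^{q^+}$, whose right-hand side is independent of $m$ and bounded on $[0,T]$ since $g$ is smooth. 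Dominated convergence then gives $\int_{Q_T}|\nabla g^{(m)}-\nabla g|^{q(z)}\,dz\to 0$; invoking \eqref{eq:conv-modular} on $Q_T$ and combining with the $L^2$-estimate completes the argument, so $\bigcup_m\mathcal{N}_m$ is dense in $\mathbb{W}_{q(\cdot)}(Q_T)$. The only mildly delicate point is exactly this last step — controlling the approximation in the space-time modular rather than merely pointwise in $t$ — and it is resolved by the uniform $H^k$-bound on the partial sums.
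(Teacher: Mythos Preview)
Your argument is correct and is precisely the natural elaboration the paper leaves implicit: the corollary is stated without proof right after Lemma \ref{le:density-1}, and your scheme---reduce to $g\in C_0^\infty(Q_T)$ by the density statement for $\mathbb{W}_{q(\cdot)}(Q_T)$, take spatial Fourier partial sums with coefficients $\theta_i(t)=(g(\cdot,t),\phi_i)_{2,\Omega}$, apply Lemma \ref{le:density-1} at each fixed $t$, and pass from pointwise-in-$t$ modular convergence to convergence over $Q_T$ via the uniform $H^k$ bound from Proposition \ref{pro:series} and dominated convergence---is exactly the intended route. Nothing is missing.
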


\subsection{Dense sets in $W^{1,\mathcal{H}}_0(\Omega)$} Let $W^{1,\mathcal{H}}_0(\Omega)$ be the Musielak-Orlicz space defined in \eqref{eq:M-O} where $p_0$, $q_0$, $a_0$, $b_0$ are the exponents and coefficients from equation \eqref{eq:main} taken at the initial moment $t=0$.

\begin{proposition}
\label{pro:density-2}
Let $\partial\Omega \in Lip$ and $a_0,b_0,p_0,q_0\in C^{0,1}(\overline{\Omega})$. If

\[
\frac{\max\{r^+,\sigma^+\}}{s^-}\leq 1+\frac{1}{N},
\]
then $C^\infty_0(\Omega)\cap W^{1,\mathcal{H}}(\Omega)$ is dense in $W^{1,\mathcal{H}}_0(\Omega)$.
\end{proposition}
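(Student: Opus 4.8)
This is a density statement for a Musielak--Orlicz--Sobolev space, so the strategy is to recognise the generalised $\Phi$-function
\[
\mathcal{H}(x,t)=t^{s(x)}+a_0(x)t^{r(x)}+b_0(x)t^{\sigma(x)},\qquad s=\max\{2,\min\{p_0,q_0\}\},\ \ r=\max\{2,p_0\},\ \ \sigma=\max\{2,q_0\},
\]
as a \emph{regular} $\Phi$-function for which smooth functions are dense, and then to invoke the abstract density theory (see \cite{Hasto-Harjulehto-2019-book}; the variable-exponent prototype is Proposition \ref{pro:density}, after \cite{DNR-2012}). The conditions to be verified are the standard $(\mathrm{A0})$, $(\mathrm{A1})$, $(\mathrm{A2})$ together with the growth bounds $(\mathrm{aInc})$ and $(\mathrm{aDec})$. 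Most of these are immediate: $\mathcal H$ is proper, convex, even in $t$ and lower semicontinuous by inspection; it satisfies $(\mathrm{aInc})_{s^-}$ and $(\mathrm{aDec})_{\max\{r^+,\sigma^+\}}$ because it is a sum of powers with exponents lying in $[s^-,\max\{r^+,\sigma^+\}]$ and nonnegative bounded coefficients; the normalisation $(\mathrm{A0})$ holds since $\mathcal H(x,1)=1+a_0(x)+b_0(x)\in[1,\,1+a_0^++b_0^+]$; and the decay condition $(\mathrm{A2})$ is trivial because $\Omega$ is bounded and $a_0,b_0,p_0,q_0\in C^{0,1}(\overline\Omega)$.

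The substantive point is the local continuity condition $(\mathrm{A1})$: for balls $B_\rho\subset\mathbb R^N$ of small radius $\rho$ and a.e. $x,y\in B_\rho\cap\Omega$ one must dominate $\mathcal H(x,t)$ by $\mathcal H(y,t)$ on the range $1\le t\le\rho^{-N/s^-}$, which is the scale natural for $W^{1,s^-}$ on $B_\rho$ (note $s^-\ge2$ and $W^{1,\mathcal H}(\Omega)\hookrightarrow W^{1,s^-}(\Omega)$). The pure-power term is handled by the Lipschitz, hence log-H\"older, continuity of $p_0,q_0$ exactly as for variable Lebesgue spaces, since $t^{\,s(x)-s(y)}\le t^{\,L\rho}$ stays bounded on the indicated $t$-range. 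For the weighted term, write $a_0(x)\le a_0(y)+L\rho$; it then suffices to bound $L\,\rho\,t^{r(x)}$ by $\mathcal H(y,t)$, and for $1\le t\le\rho^{-N/s^-}$,
\[
\rho\,t^{r(x)}\le \rho\,t^{s^-}\,t^{\,r^+-s^-}\le \rho\,t^{s^-}\,\rho^{-\frac{N}{s^-}(r^+-s^-)}= t^{s^-}\,\rho^{\,1-\frac{N}{s^-}(r^+-s^-)}\le t^{s^-}\le\mathcal H(y,t),
\]
where the penultimate inequality uses $\rho\le1$ together with $1-\tfrac{N}{s^-}(r^+-s^-)\ge0$, i.e. $r^+/s^-\le 1+1/N$. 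The term involving $b_0$ and $\sigma$ is identical, so both are controlled precisely under the hypothesis $\max\{r^+,\sigma^+\}/s^-\le 1+1/N$. This is the ``no Lavrentiev gap'' mechanism of the classical double-phase theory, here in the variable-exponent weighted form.

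With $(\mathrm{A0})$--$(\mathrm{A2})$, $(\mathrm{aInc})$ and $(\mathrm{aDec})$ verified, the abstract theorem gives that $C_0^\infty(\Omega)\cap W^{1,\mathcal H}(\Omega)$ is dense in $W^{1,\mathcal H}_0(\Omega)$ in the norm topology, equivalently (by \eqref{eq:norm-modular-M-O}) in the modular topology. If one prefers a self-contained proof, the same conditions make the classical three-step scheme run: first truncate $u\in W^{1,\mathcal H}_0(\Omega)$ at heights $k\to\infty$, the convergence of $\rho_{\mathcal H}(u-u_k)$ to $0$ following from the $\Delta_2$-property and dominated convergence; then, using a finite partition of unity adapted to a Lipschitz atlas of $\partial\Omega$, apply the dilations $u_\lambda(x)=u(x_0+\lambda^{-1}(x-x_0))$ with $\lambda\uparrow1$ to push the support slightly inside, the convergences $u_\lambda\to u$ and $\nabla u_\lambda\to\nabla u$ in $L^{\mathcal H}$ being guaranteed by $(\mathrm{A1})$; and finally mollify the resulting bounded compactly supported function, the convergence of the mollification in $L^{\mathcal H}$ again resting on $(\mathrm{A1})$ and $(\mathrm{aDec})$. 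I expect the verification of $(\mathrm{A1})$ to be the crux of the argument: it is where the quantitative balance $\max\{r^+,\sigma^+\}/s^-\le 1+1/N$ is unavoidable and where one must argue uniformly across the degeneracy sets $\{a_0=0\}$ and $\{b_0=0\}$; the remaining steps are routine once it is in place.
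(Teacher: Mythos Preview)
Your proposal is correct and follows essentially the same route as the paper: the paper does not give a self-contained argument but simply states that the assertion follows from \cite[Th.~3.1]{Chlebicka-2019-1} or \cite[Th.~6.4.7]{Hasto-Harjulehto-2019-book}, and that the verification of the structural hypotheses can be carried out as in \cite[Theorem~2.21]{Blanco-2021} for the single-weight case. You have in fact gone further than the paper by explicitly sketching the verification of the key condition $(\mathrm{A1})$ and isolating precisely where the balance hypothesis $\max\{r^+,\sigma^+\}/s^-\le 1+1/N$ enters.
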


The question of density of smooth functions in the Musielak-Sobolev spaces was studied in several works. The assertion of Proposition \ref{pro:density-2} follows from \cite[Th.3.1]{Chlebicka-2019-1} or \cite[Th.6.4.7]{Hasto-Harjulehto-2019-book}. To check the fulfillment of all conditions listed in \cite{Hasto-Harjulehto-2019-book}, one may literally repeat the proof given in \cite[Theorem 2.21]{Blanco-2021} for the special case of the space $W^{1,\mathcal{H}}(\Omega)$ generated by the function $\mathcal{H}$ with $b_0\equiv 0$.

\begin{lemma}
\label{le:density-V}
Let $p,q,a_0,b_0$ satisfy the conditions of Proposition \ref{pro:density-2}. If $\partial\Omega\in C^k$ with

\begin{equation}
\label{eq:k-1}
k\geq N\left(\frac{1}{2}+\frac{1}{N}-\frac{1}{\,\max\{2,p^+,q^+\}}\right),
\end{equation}
then the set $\bigcup\limits_{m=1}^\infty \mathcal{P}_m$ is dense in $W^{1,\mathcal{H}}_0(\Omega)$.
\end{lemma}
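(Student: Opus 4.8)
The plan is to transcribe the proof of Lemma~\ref{le:density-1}, replacing the variable Sobolev space $W^{1,q(\cdot)}_0(\Omega)$ by the Musielak--Orlicz--Sobolev space $W^{1,\mathcal{H}}_0(\Omega)$ and the density statement of Proposition~\ref{pro:density} by Proposition~\ref{pro:density-2}. Set $m:=\max\{2,p^+,q^+\}$; then condition \eqref{eq:k-1} reads $k\ge N\left(\tfrac12+\tfrac1N-\tfrac1m\right)$, which is exactly the threshold guaranteeing the Sobolev embedding $H^k(\Omega)\hookrightarrow W^{1,m}(\Omega)$ (equivalently $W^{k-1,2}(\Omega)\hookrightarrow L^m(\Omega)$). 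Since $\partial\Omega\in C^k$, the Dirichlet eigenfunctions satisfy $\phi_i\in C^\infty(\Omega)\cap H^k_{\mathcal{D}}(\Omega)$ — indeed $\Delta^j\phi_i=(-\lambda_i)^j\phi_i$ vanishes on $\partial\Omega$ for every $j$ — so each $\mathcal{P}_\ell$ is contained in $H^k_{\mathcal{D}}(\Omega)$.

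First I would fix $u\in W^{1,\mathcal{H}}_0(\Omega)$ and $\eta>0$ and, using Proposition~\ref{pro:density-2}, pick $v_\eta\in C_0^\infty(\Omega)\cap W^{1,\mathcal{H}}(\Omega)$ with $\|u-v_\eta\|_{W^{1,\mathcal{H}}_0(\Omega)}<\eta/2$; since every derivative of $v_\eta$ vanishes near $\partial\Omega$, we have $v_\eta\in H^k_{\mathcal{D}}(\Omega)$. Next, by Proposition~\ref{pro:series}, the partial sums $v_\eta^{(\ell)}=\sum_{i=1}^{\ell}v_{\eta,i}\phi_i\in\mathcal{P}_\ell$ of the Fourier expansion of $v_\eta$ in the system $\{\phi_i\}$ converge to $v_\eta$ in $H^k(\Omega)$, with $v_\eta-v_\eta^{(\ell)}\in H^k_{\mathcal{D}}(\Omega)$.

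The only nonroutine step, and the main obstacle, is to upgrade this $H^k$-convergence to convergence in the norm of $W^{1,\mathcal{H}}_0(\Omega)$: unlike in classical Lebesgue spaces the Luxemburg norm and the modular of $\mathcal{H}$ do not agree, so one cannot simply plug in. The way around it is the pointwise bound
\[
\mathcal{H}(x,t)=t^{s(x)}+a_0(x)t^{r(x)}+b_0(x)t^{\sigma(x)}\le C\bigl(t^2+t^m\bigr),\qquad t\ge0,
\]
valid with $C$ depending only on $\|a_0\|_\infty$, $\|b_0\|_\infty$ and with \emph{no} additive remainder, precisely because all three exponents lie in $[2,m]$. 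Hence, for $w\in W^{1,m}_0(\Omega)$, and since $\Omega$ is bounded,
\[
\rho_{\mathcal{H}}(w)+\rho_{\mathcal{H}}(|\nabla w|)\le C\bigl(\|w\|_{W^{1,m}_0(\Omega)}^{2}+\|w\|_{W^{1,m}_0(\Omega)}^{m}\bigr),
\]
so by the norm--modular equivalence \eqref{eq:norm-modular-M-O} the identity map $W^{1,m}_0(\Omega)\to W^{1,\mathcal{H}}_0(\Omega)$ is well defined, linear and continuous, hence bounded. Chaining this with the Sobolev estimate $\|w\|_{W^{1,m}_0(\Omega)}\le C\|w\|_{H^k(\Omega)}$ on $H^k_{\mathcal{D}}(\Omega)$ (supplied by \eqref{eq:k-1}, together with zero trace) gives $\|v_\eta-v_\eta^{(\ell)}\|_{W^{1,\mathcal{H}}_0(\Omega)}\le C\|v_\eta-v_\eta^{(\ell)}\|_{H^k(\Omega)}\to0$. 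Choosing $\ell(\eta)$ so that the right-hand side is below $\eta/2$ for $\ell\ge\ell(\eta)$ and adding the two errors yields $\|u-v_\eta^{(\ell)}\|_{W^{1,\mathcal{H}}_0(\Omega)}<\eta$ with $v_\eta^{(\ell)}\in\mathcal{P}_\ell$, which is the claimed density. Everything in this last paragraph except the modular/norm comparison is a direct repetition of the argument for Lemma~\ref{le:density-1}.
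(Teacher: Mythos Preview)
Your proof is correct and follows essentially the same approach as the paper, which simply says to imitate the proof of Lemma~\ref{le:density-1} with Proposition~\ref{pro:density-2} in place of Proposition~\ref{pro:density}. Your explicit treatment of the embedding $W^{1,m}_0(\Omega)\hookrightarrow W^{1,\mathcal{H}}_0(\Omega)$ via the pointwise bound $\mathcal{H}(x,t)\le C(t^2+t^m)$ and the norm--modular equivalence \eqref{eq:norm-modular-M-O} is a useful clarification that the paper leaves implicit (the analogous step in Lemma~\ref{le:density-1} used the simpler embedding $W^{1,q^+}_0\subseteq W^{1,q(\cdot)}_0$).
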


\begin{proof}
The assertion follows by imitating the proof of Proposition \ref{pro:density}. Given a function $v\in W^{1,\mathcal{H}}_0(\Omega)$ and an arbitrary $\epsilon>0$, by Proposition \ref{pro:density-2} we may find $v_\epsilon\in C_{0}^\infty(\Omega)\subset C_{0}^{\infty}(\Omega) \cap H^{k}_{\mathcal{D}}(\Omega)$ such that $\|  v-v_\epsilon\|  _{W^{1,\mathcal{H}}}<\frac{\epsilon}{2}$, and then use Proposition \ref{pro:series} to approximate $v_\epsilon\in H^{k}_{\mathcal{D}}(\Omega)$ by the partial sums $v_\epsilon^{(m)}$.
\end{proof}

\subsection{Galerkin's method}
Let $\epsilon>0$ be a fixed parameter. The sequence $\{u^{(m)}_\epsilon\}$ of finite-dimensional Galerkin's approximations for the solutions of the regularized problem \eqref{eq:reg-prob} is sought in the form
 \begin{equation}\label{eq:coeff}
 u^{(m)}_\epsilon(x,t)= \sum_{j=1}^m u_{j}^{(m)}(t) \phi_j(x)
 \end{equation}
 where $\{\phi_j\}$ and $\{\lambda_j\}$ are the eigenfunctions and the corresponding eigenvalues of problem \eqref{eq:eigen}. The coefficients $u_j^{(m)}(t)$ are characterized as the solutions of the Cauchy problem for the system of $m$ ordinary differential equations
\begin{equation}\label{system}
  \left\{
         \begin{alignedat}{2}
             {} (u^{(m)}_j)'(t)
             & {}= - \int_{\Omega} \mathcal{F}^{(0,0)}_{\epsilon}(z, \nabla u_\epsilon^{(m)}) \nabla u^{(m)}_\epsilon \cdot \nabla \phi_j ~dx + \int_{\Omega} f \phi_j ~dx
             && ,
             \\
             u^{(m)}_j(0)
             & {}= (u_0^{(m)}, \phi_j)_{2, \Omega}, \quad j=1,2,\dots,m,
             &&
          \end{alignedat}
     \right.
\end{equation}
where the functions $u_0^{(m)}$ are chosen in such a way that

\begin{equation}
\label{eq:choice}
\begin{split}
& u_0^{(m)}= \sum_{j=1}^m (u_0, \phi_j)_{2, \Omega} \phi_j \in \operatorname{span}\{\phi_1, \phi_2, \dots, \phi_m\},
\\
&
\text{$u_0^{(m)}\to u_0$ in $W^{1,\mathcal{H}}_0(\Omega)$},
\quad \|  u_0^{(m)}\|  _{W^{1,\mathcal{H}}_0}\leq C
\end{split}
\end{equation}
where $C$ is independent of $m$. The existence of such a sequence follows from Lemma \ref{le:density-V}. By the Carath\'{e}odory existence theorem, for every $m$ system \eqref{system} has an absolute continuous solution $u_1^{(m)},\ldots u_m^{(m)}$, defined on an interval $(0,T_m)$. The possibility of continuation of this solution to the whole interval $(0,T)$ will follow from the a priori estimates derived in the next section.

\section{A priori estimates}
\label{sec:a-priori}
Throughout this section, when deriving the estimates for the approximations $u_\epsilon^{(m)}$ we always assume that $p$, $q$, $a$, $b$ and $\partial \Omega$ satisfy the conditions of Lemma \ref{le:density-V}.

\begin{lemma}\label{1st}
Let $\Omega$ be a bounded domain with the Lipschitz boundary. Assume that {$p(\cdot),q(\cdot)$} satisfy \eqref{assum1}, {$a(\cdot), b(\cdot)$} satisfy \eqref{eq:a-b},
$u_0 \in L^2(\Omega)$ and $f \in L^2(Q_T)$. Then $u^{(m)}_\epsilon$ satisfies the estimates
\begin{equation}\label{secderiboun}
\begin{split}
\sup_{t \in (0,T)} \|  u^{(m)}_\epsilon(\cdot,t)\|  ^2_{2,\Omega} & + \int_{Q_T} \mathcal{F}^{(0,0)}_{\epsilon}(z, \nabla u_\epsilon^{(m)}) \vert \nabla u^{(m)}_\epsilon\vert ^2 ~dz
\\
&
\leq C_1 {\rm e}^{ T} (\|  f\|  ^2_{2,Q_T}  + \|  u_0\|  ^2_{2,\Omega})
\end{split}
\end{equation}
and
\begin{equation}\label{gradbound}
\begin{split}
\int_{Q_{T}} & \mathcal{F}_0^{(0,0)}(z,\nabla u_{\epsilon}^{(m)})\vert \nabla u_{\epsilon}^{(m)}\vert ^2\,dz
\\
&
\leq C_2
\int_{Q_T}  \mathcal{F}^{(0,0)}_{\epsilon}(z, \nabla u_\epsilon^{(m)}) \vert \nabla u^{(m)}_\epsilon\vert ^2 ~dz + C_3
\end{split}
\end{equation}
with independent of $m$ and $\epsilon$ constants $C_i.$
\end{lemma}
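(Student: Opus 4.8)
The plan is to derive the two estimates by testing the Galerkin system with natural combinations of the basis functions. First I would establish \eqref{secderiboun}: multiply the $j$-th equation of \eqref{system} by $u_j^{(m)}(t)$ and sum over $j=1,\dots,m$. Since the $\phi_j$ are the eigenfunctions from \eqref{eq:eigen}, the sum $\sum_j (u_j^{(m)})'u_j^{(m)}$ equals $\frac12\frac{d}{dt}\|u_\epsilon^{(m)}(\cdot,t)\|_{2,\Omega}^2$, the flux term reconstructs $\int_\Omega \mathcal{F}_\epsilon^{(0,0)}(z,\nabla u_\epsilon^{(m)})|\nabla u_\epsilon^{(m)}|^2\,dx$ because $\sum_j u_j^{(m)}\nabla\phi_j=\nabla u_\epsilon^{(m)}$, and the source term becomes $\int_\Omega f\,u_\epsilon^{(m)}\,dx$. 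Bounding the latter by Young's inequality as $\frac12\|f\|_{2,\Omega}^2+\frac12\|u_\epsilon^{(m)}\|_{2,\Omega}^2$, integrating in $t$, using $\|u_0^{(m)}\|_{2,\Omega}\le\|u_0\|_{2,\Omega}$ (orthogonal projection) and then Gronwall's lemma yields \eqref{secderiboun} with the factor $e^T$; the non-negativity of $\mathcal{F}_\epsilon^{(0,0)}$, guaranteed by \eqref{eq:a-b} and the definition of $\beta_\epsilon$, lets me keep the flux integral on the left-hand side. This also gives the a priori bound needed to continue the Carath\'eodory solution of \eqref{system} to all of $(0,T)$.

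For \eqref{gradbound} I would simply invoke the pointwise inequality \eqref{eq:null-eps} from Section~\ref{sec:prelim} with $s_1=s_2=0$, which states
\[
\mathcal{F}_0^{(0,0)}(z,\xi)|\xi|^2 \equiv a(z)|\xi|^{p(z)}+b(z)|\xi|^{q(z)} \leq \mathcal{F}_\epsilon^{(0,0)}(z,\xi)\beta_\epsilon(\xi)\leq C + 2\,\mathcal{F}_\epsilon^{(0,0)}(z,\xi)|\xi|^2
\]
pointwise in $z$ and $\xi=\nabla u_\epsilon^{(m)}(z)$, with $C$ depending only on $\epsilon\in(0,1)$ through terms like $a(z)(2\epsilon^2)^{p(z)}$, hence bounded uniformly in $m$ and in $\epsilon\in(0,1)$ by a constant $C_3$ times $|Q_T|$. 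Integrating this over $Q_T$ gives \eqref{gradbound} with $C_2=2$ and $C_3$ as above.

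The only mild subtlety, and the step I would be most careful about, is the first one: one must justify that $\sum_{j=1}^m(u_j^{(m)})'(t)u_j^{(m)}(t)=\frac12\frac{d}{dt}\sum_{j=1}^m (u_j^{(m)}(t))^2$ and that this equals $\frac12\frac{d}{dt}\|u_\epsilon^{(m)}(\cdot,t)\|_{2,\Omega}^2$, which follows from the $L^2(\Omega)$-orthonormality of $\{\phi_j\}$ after normalization; the coefficients $u_j^{(m)}$ are only absolutely continuous, so the differentiation is performed for a.e.\ $t$ and the Fundamental Theorem of Calculus for absolutely continuous functions is used when integrating back. No genuine obstacle arises here — both estimates are standard energy bounds — the real difficulties of the paper lie in the higher-order estimates of the subsequent lemmas, which exploit the interpolation inequalities of Section~\ref{sec:interpolation}.
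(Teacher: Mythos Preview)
Your proposal is correct and matches the paper's argument essentially step for step: test \eqref{system} with $u_j^{(m)}$, obtain the energy identity, estimate the source term by Cauchy/Young, and absorb the $\|u_\epsilon^{(m)}\|_{2,\Omega}^2$ term via Gronwall (the paper does this with the equivalent integrating-factor trick $\frac{d}{dt}(e^{-t}\|u_\epsilon^{(m)}\|_{2,\Omega}^2)$); then \eqref{gradbound} follows from \eqref{eq:null-eps} exactly as you say.
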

\begin{proof}
Let us multiply each of equations in \eqref{system} by $u_j^{(m)}(t)$ and sum up the results for $j=1,2,\dots,m$:
\begin{equation}\label{est01}
\begin{split}
\frac{1}{2} & \frac{d}{dt} \|  u^{(m)}_\epsilon{(\cdot, t)}\|  ^2_{2,\Omega}
=\sum_{j=1}^m u_j^{(m)}(t) (u^{(m)}_j)'(t)
\\
&
= - \sum_{j=1}^m  u_j^{(m)}(t) \int_{\Omega} \mathcal{F}^{(0,0)}_{\epsilon}(z, \nabla u_\epsilon^{(m)}) \nabla u^{(m)}_\epsilon . \nabla \phi_j \,~dx
\\
&\quad \quad
+  \sum_{j=1}^m \int_{\Omega} f(x,t) \phi_j(x) u_j^{(m)}(t) ~dx \\
& = - \int_{\Omega} \mathcal{F}^{(0,0)}_{\epsilon}(z, \nabla u_\epsilon^{(m)}) \vert \nabla u^{(m)}_\epsilon\vert ^2 ~dx + \int_{\Omega} f(x,t) u^{(m)}_\epsilon ~dx.
\end{split}
\end{equation}
By employing the Cauchy inequality, we obtain
\begin{equation}
\label{eq:option-1}
\begin{split}
\frac{1}{2} \frac{d}{dt} \|  u^{(m)}_\epsilon {(\cdot, t)}\|  ^2_{2,\Omega}  & + \int_{\Omega} \mathcal{F}^{(0,0)}_{\epsilon}(z, \nabla u_\epsilon^{(m)}) \vert \nabla u^{(m)}_\epsilon\vert ^2 ~dx
\\
&
\leq  \frac{1}{2} \|  f{(\cdot, t)}\|  ^2_{2,\Omega} + \frac{1}{2} \|  u^{(m)}_\epsilon{(\cdot, t)}\|  ^2_{2,\Omega}.
\end{split}
\end{equation}
Now, by rephrasing the inequality in \eqref{eq:option-1} as
\begin{equation*}
\begin{split}
\frac{1}{2} \frac{d}{dt} \left( {\rm e}^{- t}\|  u^{(m)}_\epsilon{(\cdot, t)} \|  ^2_{L^2(\Omega)}\right) &+ {\rm e}^{-t} \int_{\Omega}\mathcal{F}^{(0,0)}_{\epsilon}(z, \nabla u_\epsilon^{(m)}) \vert \nabla u^{(m)}_\epsilon\vert ^2 ~dx \leq  \frac{{\rm e}^{-t}}{2} \|  f{(\cdot, t)}\|  ^2_{2,\Omega}
\end{split}
\end{equation*}
and integrating with respect to $0<t<T_m$, we arrive at the inequality
\begin{equation}\label{est03}
\begin{split}
\sup_{t \in (0,T_m)} \|  u^{(m)}_\epsilon(\cdot,t)\|  ^2_{L^2(\Omega)} & + \int_{Q_{T_m}} \mathcal{F}^{(0,0)}_{\epsilon}(z, \nabla u_\epsilon^{(m)}) \vert \nabla u^{(m)}_\epsilon\vert ^2 ~dx ~dt
\\
&
\leq C {\rm e}^{ T} \left(\|  f\|  ^2_{2,Q_T}  + \|  u_0\|  ^2_{2,\Omega}\right).
\end{split}
\end{equation}
where the constant $C$ is independent of $\epsilon$ and $m$. It follows that $u_{\epsilon}^{(m)}(\cdot,T_m)\in L^2(\Omega)$ and, thus, system \eqref{system} can be solved on an interval $(T_m,T_m+h)$ with some $h>0$. In the result we obtain \eqref{est03} with $T_m$ substituted by $T_m+h$. This process can be continued until we exhaust the interval $(0,T).$ Moreover, \eqref{gradbound} follows from \eqref{eq:null-eps}
and \eqref{secderiboun}.
\end{proof}

\begin{lemma}\label{second}
Let $\Omega$ be a bounded domain with the boundary $\partial \Omega \in C^k$ with $k\geq 2+[\frac{N}{2}]$. Assume that $p(\cdot),q(\cdot)$ satisfy conditions \eqref{assum1}, \eqref{eq:Lip-p-q}, \eqref{eq:gap-z} and
$a(\cdot), b(\cdot)$ satisfy conditions  \eqref{eq:a-b}. If $u_0 \in W^{1,\mathcal{H}}_0(\Omega)$, $f\in L^2(0,T;W_0^{1,2}(\Omega))$, then for a.e. $t\in (0,T)$ the following inequality holds:
\begin{equation}
\label{eq:aux-est}
\begin{split}
\frac{1}{2} \frac{d}{dt} & \|  \nabla
u^{(m)}_\epsilon{(\cdot, t)}\|  ^2_{2,\Omega}  + C_0 \int_{\Omega}
\mathcal{F}^{(0,0)}_{\epsilon}(z, \nabla u_\epsilon^{(m)})
\vert (u^{(m)}_\epsilon)_{xx}\vert ^2 \,dx
\\
& \leq  C_1 \left(1 +
\int_{\Omega}
\mathcal{F}^{(0,0)}_{\epsilon}(z, \nabla u_\epsilon^{(m)})\vert \nabla u_\epsilon^{(m)}\vert ^2\,dx
+\|  f{(\cdot, t)}\|  _{W^{1,2}_0(\Omega)}^2\right)
\end{split}
\end{equation}
where the constants $C_i$ are independent of $\epsilon$ and $m$.
\end{lemma}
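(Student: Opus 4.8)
The plan is to test the Galerkin system \eqref{system} with $-\Delta u^{(m)}_\epsilon$; this is admissible precisely because the basis $\{\phi_j\}$ consists of eigenfunctions of the Dirichlet Laplacian. Multiplying the $j$-th equation of \eqref{system} by $\lambda_j u^{(m)}_j(t)$, summing over $j$, and using $-\Delta u^{(m)}_\epsilon=\sum_j\lambda_j u^{(m)}_j\phi_j$ together with the orthogonality of the $\phi_j$ in $H^1_0(\Omega)$, I obtain the identity
\[
\tfrac12\tfrac{d}{dt}\|\nabla u^{(m)}_\epsilon(\cdot,t)\|^2_{2,\Omega}
=\int_\Omega \mathcal{F}^{(0,0)}_\epsilon(z,\nabla u^{(m)}_\epsilon)\,\nabla u^{(m)}_\epsilon\cdot\nabla\Delta u^{(m)}_\epsilon\,dx
+\int_\Omega \nabla f\cdot\nabla u^{(m)}_\epsilon\,dx,
\]
the source term having been integrated by parts (legitimate since $f(\cdot,t)\in W^{1,2}_0(\Omega)$). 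By the hypothesis $\partial\Omega\in C^k$, $k\geq 2+[\tfrac{N}{2}]$, each $\phi_j$ — hence $u^{(m)}_\epsilon(\cdot,t)$ — lies in $C^1(\overline\Omega)\cap H^3(\Omega)$ and satisfies $u^{(m)}_\epsilon=\Delta u^{(m)}_\epsilon=0$ on $\partial\Omega$ (cf.\ Proposition \ref{pro:series}); this makes all the integrations by parts below licit and $\int_\Omega\mathcal{F}^{(0,0)}_\epsilon(z,\nabla u^{(m)}_\epsilon)\vert (u^{(m)}_\epsilon)_{xx}\vert^2\,dx$ finite for fixed $m$, $\epsilon$, so that the results of Section \ref{sec:interpolation} apply to $u^{(m)}_\epsilon(\cdot,t)$ with $M_0=\sup_{(0,T)}\|u^{(m)}_\epsilon(t)\|^2_{2,\Omega}$ — bounded, uniformly in $m$ and $\epsilon$, by Lemma \ref{1st}.

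A further integration by parts in the principal term, shifting a derivative off $\nabla\Delta u^{(m)}_\epsilon$, produces $-\int_\Omega\mathcal{F}^{(0,0)}_\epsilon(z,\nabla u^{(m)}_\epsilon)\vert (u^{(m)}_\epsilon)_{xx}\vert^2\,dx$ (the favourable term), a surface integral over $\partial\Omega$, and a remainder $\mathcal R$ collecting the terms in which the derivative falls on the $x$-dependence of $\mathcal{F}^{(0,0)}_\epsilon$ — on $a$, $b$, on $p$, $q$ (through a factor $\ln\beta_\epsilon(\nabla u^{(m)}_\epsilon)$), or on $\beta_\epsilon(\nabla u^{(m)}_\epsilon)$. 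The surface integral is disposed of through the trace machinery: $\Delta u^{(m)}_\epsilon=0$ on $\partial\Omega$ kills one contribution, and the remaining boundary term is reorganized, using $u^{(m)}_\epsilon=0$ on $\partial\Omega$ and the $C^2$ geometry, into the quantity $\mathcal K$ of Lemma \ref{le:trace-old}, hence is bounded by $\kappa\int_{\partial\Omega}\mathcal{F}^{(0,0)}_\epsilon(z,\nabla u^{(m)}_\epsilon)\vert\nabla u^{(m)}_\epsilon\vert^2\,dS$ and then, by Theorem \ref{th:trace-main}, by $\lambda\int_\Omega\mathcal{F}^{(0,0)}_\epsilon(z,\nabla u^{(m)}_\epsilon)\vert (u^{(m)}_\epsilon)_{xx}\vert^2\,dx+C$ with $\lambda$ at our disposal. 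Inside $\mathcal R$, the piece in which the derivative acts on the exponent of $\beta_\epsilon^{(p-2)/2}$ equals $\tfrac{p-2}{4}\int_\Omega a\,\beta_\epsilon^{(p-2)/2}\,\tfrac{\vert\nabla\beta_\epsilon\vert^2}{\beta_\epsilon}\,dx$ (and similarly with $b$, $q$); it is favourable when $p\geq 2$, whereas for $p<2$ — using $\vert\nabla\beta_\epsilon\vert^2\leq 4\vert\nabla u^{(m)}_\epsilon\vert^2\vert (u^{(m)}_\epsilon)_{xx}\vert^2\leq 4\beta_\epsilon\vert (u^{(m)}_\epsilon)_{xx}\vert^2$ and $2-p\leq 2-\underline{s}^-<r^\sharp\leq 1$ (here \eqref{assum1} enters) — the combined $p$- and $q$-pieces are controlled by $(2-\underline{s}^-)^+\int_\Omega\mathcal{F}^{(0,0)}_\epsilon(z,\nabla u^{(m)}_\epsilon)\vert (u^{(m)}_\epsilon)_{xx}\vert^2\,dx$ with a coefficient strictly below $1$, so they can be transferred to the left-hand side.

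The remaining terms of $\mathcal R$ — those carrying $\nabla a$, $\nabla b$, $\nabla p$ or $\nabla q$ — are treated by Young's inequality. The ones with $\nabla p$, $\nabla q$ keep the weight $a$ (resp.\ $b$) and split, after estimating the logarithm by \eqref{eq:log}, into a $\delta$-multiple of $\mathcal{F}^{(0,0)}_\epsilon(z,\nabla u^{(m)}_\epsilon)\vert (u^{(m)}_\epsilon)_{xx}\vert^2$ and a term $\int_\Omega\mathcal{F}^{(r_1-\varsigma,\,r_2-\varsigma)}_\epsilon(z,\nabla u^{(m)}_\epsilon)\vert\nabla u^{(m)}_\epsilon\vert^2\,dx$ with $r_1$, $r_2$ as in \eqref{eq:r-i}; the ones with $\nabla a$, $\nabla b$ lack this weight and are handled by an analogous Young splitting (writing $\beta_\epsilon^{(p-2)/2}\nabla\beta_\epsilon=\beta_\epsilon^{(p-1)/2}\,\beta_\epsilon^{-1/2}\nabla\beta_\epsilon$ so that $\beta_\epsilon^{-1}\vert\nabla\beta_\epsilon\vert^2\leq 4\vert (u^{(m)}_\epsilon)_{xx}\vert^2$ carries the second-order factor) combined with the decomposition $\Omega=\{a\geq\alpha/2\}\cup\{b\geq\alpha/2\}$ — possible by $a+b\geq\alpha$ — and the bound $\vert p-q\vert<r_\ast$, which together recover the missing weight modulo a supercritical $\vert\nabla u^{(m)}_\epsilon\vert$-integral plus a constant. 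Every such supercritical integral is bounded by $\delta\int_\Omega\mathcal{F}^{(0,0)}_\epsilon(z,\nabla u^{(m)}_\epsilon)\vert (u^{(m)}_\epsilon)_{xx}\vert^2\,dx+C$ via Lemma \ref{le:final-ell} (applied at a.e.\ $t$) and Theorem \ref{th:integr-par}, which is where the gap condition \eqref{eq:gap-z} is used. Finally, $\int_\Omega\nabla f\cdot\nabla u^{(m)}_\epsilon\,dx\leq\tfrac12\|\nabla f\|^2_{2,\Omega}+\tfrac12\|\nabla u^{(m)}_\epsilon\|^2_{2,\Omega}$, and since $\underline{s}^-+r^\sharp>2$ (again by \eqref{assum1}) the term $\|\nabla u^{(m)}_\epsilon\|^2_{2,\Omega}$ is itself $\leq C+\delta\int_\Omega\mathcal{F}^{(0,0)}_\epsilon(z,\nabla u^{(m)}_\epsilon)\vert (u^{(m)}_\epsilon)_{xx}\vert^2\,dx$ by the same interpolation inequality. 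Choosing $\lambda$ and all the $\delta$'s so small that the total coefficient multiplying $\int_\Omega\mathcal{F}^{(0,0)}_\epsilon(z,\nabla u^{(m)}_\epsilon)\vert (u^{(m)}_\epsilon)_{xx}\vert^2\,dx$ stays strictly below $1$ then yields \eqref{eq:aux-est} with some $C_0>0$ and $C_1$ depending only on the structural data and on $M_0$ — hence, by Lemma \ref{1st}, only on $\|u_0\|_{2,\Omega}$, $\|f\|_{2,Q_T}$ — in particular independent of $m$ and $\epsilon$.

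The step I expect to be the main obstacle is the control of $\mathcal R$: there is no order relation between $p$ and $q$, a crude estimate of its lower-order terms leaves integrals of $\vert\nabla u^{(m)}_\epsilon\vert$ raised to a power above the natural exponent $\underline{s}(z)$, and its leading second-order term carries, in the singular regime, a coefficient that must be beaten by the favourable term. Both difficulties are overcome by the structural hypotheses working in tandem: the gap $\vert p-q\vert<r_\ast=\tfrac{2}{N+2}$ leaves exactly the margin $r^\sharp-r_\ast=r_\ast$ needed for the interpolation inequalities of Section \ref{sec:interpolation} to apply; the bound $\underline{s}^->\tfrac{2N}{N+2}$ from \eqref{assum1} is what makes $2-\underline{s}^-<1$ as well as $\underline{s}^-+r^\sharp>2$; and the balance $a+b\geq\alpha$ lets the $p$- and the $q$-phase substitute for one another on complementary subsets of $\Omega$. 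The surface term, in contrast, is entirely outsourced to Lemma \ref{le:trace-old} and Theorem \ref{th:trace-main}, whose hypotheses hold here thanks to $\partial\Omega\in C^2$ and the choice of a Galerkin basis made of Dirichlet--Laplacian eigenfunctions, so that $\Delta u^{(m)}_\epsilon$ vanishes on $\partial\Omega$.
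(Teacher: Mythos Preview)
Your proposal is correct and follows essentially the same route as the paper: test with $-\Delta u^{(m)}_\epsilon$, integrate by parts to extract $-\int_\Omega\mathcal{F}^{(0,0)}_\epsilon|(u^{(m)}_\epsilon)_{xx}|^2$, and control the four remainder pieces (derivative on $\beta_\epsilon$, on $p,q$, on $a,b$, and the boundary term) via the bound $2-\underline{s}^-<1$, inequality \eqref{eq:log} combined with Lemma \ref{le:final-ell}, a weight-recovery device, and Lemma \ref{le:trace-old} with Theorem \ref{th:trace-main}, respectively. The only minor deviations are cosmetic: for the $\nabla a,\nabla b$ terms you use the decomposition $\Omega=\{a\geq\alpha/2\}\cup\{b\geq\alpha/2\}$, whereas the paper uses the equivalent trick $\sqrt{\alpha}\leq\sqrt{a}+\sqrt{b}$ (both produce a cross-exponent term like $\beta_\epsilon^{(2p-q)/2}$ that the gap condition \eqref{eq:gap-z} handles); and the paper keeps $\int_\Omega\mathcal{F}^{(0,0)}_\epsilon|\nabla u^{(m)}_\epsilon|^2$ on the right-hand side rather than absorbing $\|\nabla u^{(m)}_\epsilon\|_{2,\Omega}^2$ via interpolation as you do.
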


\begin{proof}
Multiplying $j^{\text{th}}$ equation of \eqref{system} by $\lambda_j
u_j^{(m)}$ and then summing up the results for $j=1,2, \dots, m$, we obtain the equality
\begin{equation}
\label{eq:prime}
\begin{split}
\frac{1}{2} & \frac{d}{dt} \|  \nabla
u^{(m)}_\epsilon {(\cdot, t)}\|  ^2_{2,\Omega} =\sum_{j=1}^m \lambda_j
(u_j^{(m)})'(t) u_j^{(m)}(t)
\\
& = \sum_{j=1}^m \lambda_j u_j^{(m)} \int_{\Omega}
\div(\mathcal{F}^{(0,0)}_{\epsilon}(z, \nabla u_\epsilon^{(m)}) \nabla u_\epsilon^{(m)})\ \phi_j \,dx
\\
&
\qquad + \sum_{j=1}^m \lambda_j
u_{j}^{(m)} \int_{\Omega} f(x,t) \phi_j \,dx\\ &= - \int_{\Omega}
\div(\mathcal{F}^{(0,0)}_{\epsilon}(z, \nabla u_\epsilon^{(m)}) \nabla u_\epsilon^{(m)}) \ \Delta u^{(m)}_\epsilon \,dx + \int_{\Omega} f(x,t)
\Delta u^{(m)}_\epsilon \,dx.
\end{split}
\end{equation}
The assumption $\partial\Omega\in C^{k}$ with $k\geq 2+[\frac{N}{2}]$ yields fulfillment of condition \eqref{eq:k-1} and the inclusion $u_\epsilon^{(m)}(\cdot,t)\in C^{1}(\overline{\Omega})\cap H^k_{\mathcal{D}}(\Omega)$ for every fixed $t\in [0,T]$. It follows that $\left(\mathcal{F}^{(0,0)}_{\epsilon}(z, \nabla u_\epsilon^{(m)}) (
u^{(m)}_\epsilon)_{x_i}\right)_{x_i} \in L^{2}(\Omega)$, therefore we may transform the first term on the right-hand side of \eqref{eq:prime} using the Green formula two times (see \cite[Lemma 5.2]{arora_shmarev2020} or \cite[Lemma 3.2]{A-S} for the details):
\[
\begin{split}
- \int_{\Omega}  &   \mathrm{div}\left( \mathcal{F}^{(0,0)}_{\epsilon}(z, \nabla u_\epsilon^{(m)}) \nabla u_\epsilon^{(m)}\right) \,\Delta u_\epsilon^{(m)}\,dx
\\
&
= - \int_{\Omega} \mathcal{F}^{(0,0)}_{\epsilon}(z, \nabla u_\epsilon^{(m)}) \vert (u_{\epsilon}^{(m)})_{xx}\vert ^2\,dx +P_{1}+P_2+P_{\partial
\Omega} + P_{a,b},
\end{split}
\]
where $\mathbf{n}=(n_1,\ldots,n_N)$ is the outer normal vector to $\partial\Omega$. Here

\begin{equation*}
\begin{split}
P_1:&= \int_{\Omega} (2-p(z)) a(z) (\epsilon^2 + \vert \nabla
u^{(m)}_\epsilon\vert ^2)^{\frac{p(z)-2}{2}-1} \left(\sum_{k=1}^N
\left(\nabla u^{(m)}_\epsilon \cdot \nabla
(u^{(m)}_\epsilon)_{x_k} \right)^2 \right)\,dx\\
&  + \int_{\Omega}  (2-q(z)) b(z) (\epsilon^2 + \vert \nabla
u^{(m)}_\epsilon\vert ^2)^{\frac{q(z)-2}{2}-1} \left(\sum_{k=1}^N
\left(\nabla u^{(m)}_\epsilon \cdot \nabla
(u^{(m)}_\epsilon)_{x_k} \right)^2 \right)\,dx,
\end{split}
\end{equation*}
\begin{equation*}
\begin{split}
P_2&= - \int_{\Omega} \sum_{k,i=1}^N (u^{(m)}_\epsilon)_{x_k x_i }
(u^{(m)}_\epsilon)_{x_i} a(z)  (\epsilon^2 + \vert \nabla
u^{(m)}_\epsilon\vert ^2)^{\frac{p(z)-2}{2}} \frac{p_{x_k}}{2}
\ln(\epsilon^2 + \vert \nabla u^{(m)}_\epsilon\vert ^2)\,dx\\
& \quad \quad - \int_{\Omega} \sum_{k,i=1}^N (u^{(m)}_\epsilon)_{x_k x_i }
(u^{(m)}_\epsilon)_{x_i}  b(z) (\epsilon^2 + \vert \nabla
u^{(m)}_\epsilon\vert ^2)^{\frac{q(z)-2}{2}} \frac{q_{x_k}}{2}
\ln(\epsilon^2 + \vert \nabla u^{(m)}_\epsilon\vert ^2)\,dx,
\end{split}
\end{equation*}
\begin{equation*}
\begin{split}
P_{\partial \Omega}= - \int_{\partial\Omega} \mathcal{F}^{(0,0)}_{\epsilon}(z, \nabla u_\epsilon^{(m)})  \left(\Delta
u^{(m)}_\epsilon (\nabla u^{(m)}_\epsilon \cdot {\bf n})- \nabla
u^{(m)}_\epsilon \cdot \nabla (\nabla u^{(m)}_\epsilon \cdot{\bf n
})\right)\,dS,
\end{split}
\end{equation*}
\begin{equation*}
\begin{split}
P_{a,b}&= - \int_{\Omega} \sum_{i,k=1}^N a_{x_k} (u^{(m)}_\epsilon)_{x_i} (\epsilon^2 + \vert \nabla u^{(m)}_\epsilon\vert ^2)^{\frac{p(z)-2}{2}} {(u_\epsilon^{(m)})_{x_k x_i}}\,dx \\
& \quad - \int_{\Omega} \sum_{i,k=1}^N b_{x_k} (u^{(m)}_\epsilon)_{x_i} (\epsilon^2 + \vert \nabla u^{(m)}_\epsilon\vert ^2)^{\frac{q(z)-2}{2}} {(u_\epsilon^{(m)})_{x_k x_i}}\,dx.
\end{split}
\end{equation*}
By substituting the these equalities into \eqref{eq:prime}, we obtain the following inequality
\begin{equation}\label{mainest}
\begin{split}
\frac{1}{2} & \frac{d}{dt} \|  \nabla
u^{(m)}_\epsilon{(\cdot, t)}\|  ^2_{2,\Omega} + \int_{\Omega}  \mathcal{F}^{(0,0)}_{\epsilon}(z, \nabla u_\epsilon^{(m)})
\vert (u^{(m)}_\epsilon)_{xx}\vert ^2\,dx
\\
& = P_{1}+P_2+P_{\partial
\Omega} + P_{a,b} - \int_{\Omega}\nabla f \cdot
\nabla u_{\epsilon}^{(m)}\,dx
\\
& \leq P_{1}+P_2+P_{\partial
\Omega} + P_{a,b} +\frac{1}{2}\|  \nabla
u_{\epsilon}^{(m)}{(\cdot, t)}\|  _{2,\Omega}^{2}+\frac{1}{2}\|  f (\cdot, t)\|  _{W^{1,2}_{0}(\Omega)}^{2}.
\end{split}
\end{equation}
The terms on the right-hand side of \eqref{mainest} are estimated in four steps.

\medskip

\textbf{Step 1: estimate on $P_1$}. Since $a(\cdot)$ and $b(\cdot)$ are non-negative functions, the term $P_1$ can be merged into the left-hand side of \eqref{mainest}. Indeed:
\[
\begin{split}
P_1 & = \int_{\{ x\in \Omega :\ p(z) \geq 2\}}(2-p(z))\ldots + \int_{\{ x\in \Omega :\  p(z) < 2\}}(2-p(z))\ldots \\
&\qquad + \int_{\{ x\in \Omega :\ q(z) \geq 2\}}(2-q(z))\ldots + \int_{\{ x\in \Omega :\  q(z) < 2\}}(2-q(z))\ldots
\\
&
\leq \int_{\{p(z) < 2\}}
(2-p(z)) a(z) (\epsilon^2 + \vert \nabla
u^{(m)}_\epsilon\vert ^2)^{\frac{p(z)-2}{2}-1} \left(\sum_{k=1}^N
\left(\nabla u^{(m)}_\epsilon \cdot \nabla
(u^{(m)}_\epsilon)_{x_k} \right)^2 \right)\,dx\\
& + \int_{\{q(z) < 2\}}
(2-q(z)) b(z) (\epsilon^2 + \vert \nabla
u^{(m)}_\epsilon\vert ^2)^{\frac{q(z)-2}{2}-1} \left(\sum_{k=1}^N
\left(\nabla u^{(m)}_\epsilon \cdot \nabla
(u^{(m)}_\epsilon)_{x_k} \right)^2 \right)\,dx,
\end{split}
\]
whence
\[
\begin{split}
\vert P_1\vert  & \leq \max\{0,2-p^-\}\int_{\Omega} a(z) (\epsilon^2+\vert \nabla u_\epsilon^{(m)}\vert ^2)^{\frac{p(z)-2}{2}}
\vert (u^{(m)}_\epsilon)_{xx}\vert ^2\,dx
\\
&
\qquad + \max\{0,2-q^-\}\int_{\Omega} b(z)(\epsilon^2+\vert \nabla u_\epsilon^{(m)}\vert ^2)^{\frac{q(z)-2}{2}}
\vert (u^{(m)}_\epsilon)_{xx}\vert ^2\,dx.
\end{split}
\]

\medskip

\textbf{Step 2: estimate on $P_2$}. By the Cauchy inequality, for every $\beta_0>0$
\begin{equation}\label{est:J2:new}
\begin{split}
\vert P_2\vert  &\leq \frac{1}{2} \|  \nabla p \|  _{\infty,\Omega} \int_{\Omega} \left((a(z))^{\frac{1}{2}} (\epsilon^2 + \vert \nabla u^{(m)}_\epsilon\vert ^2)^{\frac{p(z)-2}{4}} \sum_{k,i=1}^N \vert (u^{(m)}_\epsilon)_{x_k x_i }\vert  \right) \\
& \quad \times \left( (a(z))^{\frac{1}{2}}\vert (u^{(m)}_\epsilon)_{x_i}\vert    \vert \ln(\epsilon^2 + \vert \nabla u^{(m)}_\epsilon\vert ^2)\vert   (\epsilon^2 + \vert \nabla u^{(m)}_\epsilon\vert ^2)^{\frac{p(z)-2}{4}} \right) ~dx\\
&\quad + \frac{1}{2} \|  \nabla q \|  _{\infty,\Omega} \int_{\Omega} \left( (b(z))^{\frac{1}{2}} (\epsilon^2 + \vert \nabla u^{(m)}_\epsilon\vert ^2)^{\frac{q(z)-2}{4}} \sum_{k,i=1}^N \vert (u^{(m)}_\epsilon)_{x_k x_i }\vert  \right) \\
& \quad  \times \left( (b(z))^{\frac{1}{2}} \vert (u^{(m)}_\epsilon)_{x_i}\vert    \vert \ln(\epsilon^2 + \vert \nabla u^{(m)}_\epsilon\vert ^2)\vert   (\epsilon^2 + \vert \nabla u^{(m)}_\epsilon\vert ^2)^{\frac{q(z)-2}{4}} \right) ~dx\\
& \leq \beta_0 \int_{\Omega} \mathcal{F}^{(0,0)}_{\epsilon}(z, \nabla u_\epsilon^{(m)})  \sum_{k,i=1}^N \vert (u^{(m)}_\epsilon)_{x_k x_i }\vert ^2 ~dx + \mathcal{L}
\end{split}
\end{equation}
where
\[
\mathcal{L}:= C_1 \int_{\Omega} \ln^2(\epsilon^2 + \vert \nabla u^{(m)}_\epsilon\vert ^2)  \mathcal{F}^{(0,0)}_{\epsilon}(z,\nabla
u^{(m)}_\epsilon) \vert \nabla u^{(m)}_\epsilon\vert ^2 \,dx, \ \quad C_1=C_1(a^+,b^+,N, \beta_0).
\]
Now we apply \eqref{eq:log} in the following form: for $\vartheta \in (0,1)$ and $y>0$

\begin{equation}\label{ineqq}
y^{\frac{p}{2}} \ln^2 y \leq \left\{
         \begin{alignedat}{2}
             {} y^{\frac{p + \vartheta}{2}} (y^{\frac{-\vartheta}{2}} \ln^2(y)) \leq C(\vartheta,p^+) (y^{\frac{p + \vartheta}{2}})
             & {}
             && \quad\mbox{if}\ y \geq 1,
             \\
             y^{\frac{p^-}{2}} \ln^2(y) \leq C(p^-) \quad \quad \quad \quad  \quad \quad \quad & {}
             && \quad\mbox{if}\ y \in (0,1).
          \end{alignedat}
     \right.
\end{equation}
To estimate $\mathcal{L}$ we fix $t\in (0,T)$, take some $\varsigma \in (0,r^\sharp)$, apply \eqref{ineqq} with  $\vartheta=r_1-\varsigma$ to the first term and $\vartheta= r_2-\varsigma$ to the second term with $r_1$, $r_2$ defined in \eqref{eq:r-i}, and then use the interpolation inequality \eqref{eq:complete-new}:

\begin{equation}\label{est:inter:posit}
\begin{split}
\mathcal{L} & \leq C\int_\Omega \mathcal{F}_\epsilon^{(\underline{s}(z)+r^\sharp-\varsigma,\ \underline{s}(z)+r^\sharp-\varsigma)}(z,\nabla u_\epsilon^{(m)})\vert \nabla u_\epsilon^{(m)}\vert ^2\,dx
\\
&
\leq \beta_1 \int_\Omega \mathcal{F}^{(0,0)}_\epsilon(z,\nabla u_\epsilon^{(m)})\vert (u_\epsilon^{(m)})_{xx}\vert ^2\,dx +C
\end{split}
\end{equation}
with any $\beta_1 \in (0,1)$ and a constant $C=C(C_1,\vartheta, \beta_1)$. Gathering \eqref{est:J2:new} and \eqref{est:inter:posit}, we finally obtain:
\[
\vert P_2\vert  \leq (\beta_0 + \beta_1)\int_{\Omega}\mathcal{F}^{(0,0)}_{\epsilon}(x,\nabla
u_\epsilon^{(m)}) \vert (u_\epsilon^{(m)})_{xx}\vert ^{2}\,dx+ C
\]
with a constant $C$ depending on $\beta_0$, $\beta_1$, $r_\ast$, $r^{\#}$, $\overline{s}^+$, $\underline{s}_-$ and $\|  f\|_{2,Q_T}$, $\|  u_0\|_{2,\Omega}$, but independent of $\epsilon$ and $m$.

\medskip

\textbf{Step 3: estimates on $P_{a,b}$.}

\[
\begin{split}
\vert P_{a,b}\vert  & \leq C\int_{\Omega}\left((\epsilon^2+\vert \nabla u_\epsilon^{(m)}\vert ^2)^{\frac{p-1}{2}}\vert (u_{\epsilon}^{(m)})_{xx}\vert +(\epsilon^2+\vert \nabla u_\epsilon^{(m)}\vert ^2)^{\frac{q-1}{2}}\vert (u_{\epsilon}^{(m)})_{xx}\vert \right)\,dx
\\
&
\equiv C(\mathcal{I}_1+\mathcal{I}_2)
\end{split}
\]
with the constant $C=\sup_{\Omega}\vert \nabla a\vert +\sup_{\Omega}\vert \nabla b\vert $. Since $\alpha\leq a+b$ by assumption,  $\sqrt{\alpha}\leq \sqrt{a+b}\leq \sqrt{a}+\sqrt{b}$.

\[
\sqrt{\alpha} \mathcal{I}_1 \leq \int_{\Omega}(\sqrt{a}+\sqrt{b})(\epsilon^2+\vert \nabla u_\epsilon^{(m)}\vert ^2)^{\frac{p-1}{2}}\vert (u_{\epsilon}^{(m)})_{xx}\vert \,dx\equiv \mathcal{K}^{(1)}_a+\mathcal{K}^{(1)}_b.
\]
Applying the Cauchy inequality we estimate
\[
\begin{split}
\mathcal{K}^{(1)}_a & =\int_{\Omega}\sqrt{a}(\epsilon^2+\vert \nabla u_\epsilon^{(m)}\vert ^2)^{\frac{p-1}{2}}\vert (u_{\epsilon}^{(m)})_{xx}\vert \,dx
\\
&
\equiv \int_{\Omega}\left(a(\epsilon^2+\vert \nabla u_\epsilon^{(m)}\vert ^2)^{\frac{p-2}{2}} \vert (u_{\epsilon}^{(m)})_{xx}\vert ^2\right)^{\frac{1}{2}} (\epsilon^2+\vert \nabla u_\epsilon^{(m)}\vert ^2)^{\frac{p}{4}}\,dx
\\
& \leq \beta_2 \int_{\Omega} a(\epsilon^2+\vert \nabla u_\epsilon^{(m)}\vert ^2)^{\frac{p-2}{2}} \vert (u_{\epsilon}^{(m)})_{xx}\vert ^2\,dx + C\int_{\Omega}(\epsilon^2+\vert \nabla u_\epsilon^{(m)}\vert ^2)^{\frac{p}{2}}\,dx,
\end{split}
\]
with an arbitrary $\beta_2>0$ and $C=C(\beta_2)$. By the same token

\[
\begin{split}
\mathcal{K}^{(1)}_b & =\int_{\Omega}\sqrt{b}(\epsilon^2+\vert \nabla u_\epsilon^{(m)}\vert ^2)^{\frac{p-1}{2}}\vert (u_{\epsilon}^{(m)})_{xx}\vert \,dx
\\
&
\equiv \int_{\Omega}\left(b(\epsilon^2+\vert \nabla u_\epsilon^{(m)}\vert ^2)^{\frac{q-2}{2}} \vert (u_{\epsilon}^{(m)})_{xx}\vert ^2\right)^{\frac{1}{2}} (\epsilon^2+\vert \nabla u_\epsilon^{(m)}\vert ^2)^{\frac{p-1}{2}-\frac{q-2}{4}}\,dx
\\
& \leq \beta_2 \int_{\Omega} b(\epsilon^2+\vert \nabla u_\epsilon^{(m)}\vert ^2)^{\frac{q-2}{2}} \vert (u_{\epsilon}^{(m)})_{xx}\vert ^2\,dx + C\int_{\Omega}(\epsilon^2+\vert \nabla u_\epsilon^{(m)}\vert ^2)^{\frac{2p-q}{2}}\,dx,
\end{split}
\]
with an arbitrary $\beta_2>0$. The last integrals in the estimates for $\mathcal{K}^{(1)}_a$ and $\mathcal{K}^{(1)}_b$ are estimate by virtue of Lemma \ref{le:final-ell}, provided that

\begin{equation}
\label{eq:rho}
p(z)< \underline{s}(z)+\rho, \quad 2p(z)-q(z)<\underline{s}(z)+\rho
\end{equation}
for some $\rho\in (r_\ast,r^\sharp)$, which is true by virtue of \eqref{eq:gap-z}. %{\color{red} This is true if %$\gamma^\ast<\frac{2}{(N+2)}$.}
Indeed: it is sufficient to claim that

\[
\begin{split}
& p(z)\leq \overline{s}(z)<\underline{s}(z)+r^\sharp \quad \Leftrightarrow \qquad \overline{s}(z)-\underline{s}(z)<r^\sharp,
\\
& 2p(z)-q(z)\leq 2\overline{s}(z)-\underline{s}(z)<\underline{s}(z)+r^\sharp \qquad \Leftrightarrow \qquad 2(\overline{s}(z)-\underline{s}(z))<r^\sharp=2r_\ast,
\end{split}
\]
Since $\rho \in (r_\ast,r^\sharp)$, there exists $\varsigma  \in (0,r_\ast)$ such that $\rho = r^\sharp-\varsigma$ and  \eqref{eq:rho} holds. Splitting $\Omega$ into two parts,

\[
\Omega_-=\{x\in: \vert \nabla u_\epsilon^{(m)}\vert <\epsilon\},\qquad \Omega^+=\Omega\setminus \Omega_-,
\]
we estimate the last integral in the estimate for $\mathcal{K}^{(1)}_a$ with the use of Lemma \ref{le:final-ell} and \eqref{est:transfer}:

\[
\begin{split}
\int_{\Omega}(\epsilon^2+\vert \nabla u_\epsilon^{(m)}\vert ^{2})^{\frac{p}{2}}\,dx & \leq C+ \int_{\Omega}(\epsilon^2+\vert \nabla u_\epsilon^{(m)}\vert ^{2})^{\frac{\underline{s}+r^\sharp-\varsigma}{2}}\,dx\\
& \leq C+\frac{2}{\alpha}\int_{\Omega}\mathcal{F}_{\epsilon}^{(r_1-\varsigma, r_2-\varsigma)}(z,\nabla u_{\epsilon}^{(m)})\vert \nabla u_{\epsilon}^{(m)}\vert ^2\,dx
\\
& \leq C'+\beta_3\int_{\Omega}\mathcal{F}_{\epsilon}^{(0,0)}(z,\nabla u_{\epsilon}^{(m)})\vert (u_{\epsilon}^{(m)})_{xx}\vert ^2\,dx
\end{split}
\]
with an arbitrary $\beta_3>0$ and $C$ independent of $m$ and $\epsilon$. The corresponding term in the estimate on $\mathcal{K}^{(1)}_b$ is estimated similarly because

\[
\int_{\Omega}(\epsilon^2+\vert \nabla u_\epsilon^{(m)}\vert ^{2})^{\frac{2p-q}{2}}\,dx \leq C+ \int_{\Omega}(\epsilon^2+\vert \nabla u_\epsilon^{(m)}\vert ^{2})^{\frac{\underline{s}+r^\sharp-\varsigma}{2}}\,dx.
\]
Estimating of $\mathcal{I}_2$ is practically identical to estimating $\mathcal{I}_1$, the only difference is that the exponents $p$ and $q$ should be replaced. Gathering the above estimates we conclude that for every $\beta_{3}\in (0,1)$

\[
\vert P_{a,b}\vert \leq \beta_{3}\int_{\Omega}\mathcal{F}_{\epsilon}^{(0,0)}(z,\nabla u_\epsilon^{(m)})\vert (u_{\epsilon}^{(m)})_{xx}\vert ^2\,dx +C
\]
with a constant $C=C(\beta_3)$ independent of $m$ and $\epsilon$.

\

\textbf{Step 4: estimate on $P_{\partial \Omega}$.}
To estimate $P_{\partial \Omega}$ we use Lemma \ref{le:trace-old} and Theorem \ref{th:trace-main}:
\begin{equation*}
\begin{split}
\vert P_{\partial \Omega}\vert  & \leq  \left\vert \int_{\partial\Omega} \mathcal{F}^{(0,0)}_{\epsilon}(z,\nabla u_{\epsilon}^{(m)})\left(\Delta
u^{(m)}_\epsilon (\nabla u^{(m)}_\epsilon \cdot {\bf n})- \nabla
u^{(m)}_\epsilon \cdot \nabla (\nabla u^{(m)}_\epsilon \cdot{\bf n
})\right)\,dS \right\vert \\
& \leq C \int_{\partial \Omega} \mathcal{F}^{(0,0)}_{\epsilon}(z,\nabla u_{\epsilon}^{(m)})\vert \nabla u_\epsilon^{(m)}\vert ^2 \,dS\\
& \leq \beta_4 \int_{\Omega} \mathcal{F}^{(0,0)}_{\epsilon}(z, \nabla u_\epsilon^{(m)}) \vert (u_\epsilon^{(m)})_{xx}\vert ^2 \,dx
\\
&\qquad
+  C\left(1 +\int_{\Omega} \mathcal{F}_0^{(0,0)}(z,\nabla u_\epsilon^{(m)})\vert \nabla u_\epsilon^{(m)}\vert ^2\,dx \right)
 %\left(a(z) \vert \nabla u_\epsilon^{(m)}\vert ^{p(z)} + b(z) \vert \nabla %u_\epsilon^{(m)}\vert ^{q(z)}
\end{split}
\end{equation*}
with an arbitrary $\beta_4 \in (0,1)$ and $C$ depending upon $\beta_4$, $p$, $q$, $a$, $b$, $\partial \Omega$ and their differential properties, but not on $\epsilon$ and $m$. To complete the proof and obtain \eqref{eq:aux-est}, we gather the estimates of $P_1$, $P_2$, $P_{a,b}$, $P_{\partial \Omega}$ and choose $\beta_i$ so small that
\[
\min\{1,s^--1\}-\sum_{i=0}^4\beta_i>0.
\]
\end{proof}

\begin{lemma}
  \label{le:est-1}
Under the conditions of Lemma \ref{second}
  \begin{equation}
    \label{eq:ineq-0}
    \begin{split}
\sup_{(0,T)}\|  \nabla
u^{(m)}_\epsilon{(\cdot, t)}\|  ^2_{2,\Omega} & + \int_{Q_T}
\mathcal{F}^{(0,0)}_{\epsilon}(z, \nabla u_\epsilon^{(m)})
\vert (u^{(m)}_\epsilon)_{xx}\vert ^2 \,dz
\\
&
\leq  C %{\rm e}^{C'T}%
\left(1+\|  \nabla u_{0}\|  _{2,\Omega}^{2}+
\|  f\|  _{L^{2}(0,T;W^{1,2}_0(\Omega))}^2\right)
\end{split}
\end{equation}
and for any $\varsigma \in (0, r^{\#})$,
\begin{equation}
\label{eq:ineq-high}
\int_{Q_T}\vert \nabla u_{\epsilon}^{(m)}\vert ^{\underline{s}(z)+r^\sharp-\varsigma}\,dz\leq C'',
\end{equation}
with independent of $m$ and $\epsilon$ constants $C$, $C'$, $C''$.
\end{lemma}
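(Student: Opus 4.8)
The plan is to integrate the differential inequality \eqref{eq:aux-est} of Lemma~\ref{second} over $(0,T)$, to absorb the resulting right-hand side by the energy bound of Lemma~\ref{1st}, and then to deduce \eqref{eq:ineq-high} from \eqref{eq:ineq-0} by invoking the interpolation inequality \eqref{eq:principal-3} of Theorem~\ref{th:integr-par}.

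First, since the Galerkin coefficients $u_j^{(m)}$ are absolutely continuous and, by Lemma~\ref{1st}, the solution of \eqref{system} extends to the whole interval $(0,T)$, the function $t\mapsto \|\nabla u_\epsilon^{(m)}(\cdot,t)\|_{2,\Omega}^2=\sum_{j=1}^m\lambda_j (u_j^{(m)}(t))^2$ is absolutely continuous on $[0,T]$, so integrating \eqref{eq:aux-est} from $0$ to $t\le T$ is legitimate and gives
\begin{equation*}
\frac12\|\nabla u_\epsilon^{(m)}(\cdot,t)\|_{2,\Omega}^2+C_0\int_{Q_t}\mathcal{F}^{(0,0)}_\epsilon(z,\nabla u_\epsilon^{(m)})\,\vert(u_\epsilon^{(m)})_{xx}\vert^2\,dz\le \frac12\|\nabla u_0^{(m)}\|_{2,\Omega}^2+C_1T+C_1\int_{Q_T}\mathcal{F}^{(0,0)}_\epsilon(z,\nabla u_\epsilon^{(m)})\,\vert\nabla u_\epsilon^{(m)}\vert^2\,dz+C_1\|f\|_{L^2(0,T;W_0^{1,2}(\Omega))}^2.
\end{equation*}
Here $\|\nabla u_0^{(m)}\|_{2,\Omega}\le\|\nabla u_0\|_{2,\Omega}$, because $\{\phi_j\}$ is orthogonal both in $L^2(\Omega)$ and in $W_0^{1,2}(\Omega)$, so $u_0^{(m)}$ is at the same time the $L^2(\Omega)$- and the $W_0^{1,2}(\Omega)$-orthogonal projection of $u_0\in W_0^{1,2}(\Omega)$ onto $\mathcal{P}_m$ (alternatively, use the uniform bound $\|u_0^{(m)}\|_{W_0^{1,\mathcal{H}}}\le C$ from \eqref{eq:choice} and the embedding $W_0^{1,\mathcal{H}}(\Omega)\hookrightarrow W_0^{1,2}(\Omega)$). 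The term $\int_{Q_T}\mathcal{F}^{(0,0)}_\epsilon(z,\nabla u_\epsilon^{(m)})\,\vert\nabla u_\epsilon^{(m)}\vert^2\,dz$ is bounded, uniformly in $m$ and $\epsilon$, by \eqref{secderiboun}; combining this with the Poincar\'e inequalities $\|f\|_{2,Q_T}\le C\|f\|_{L^2(0,T;W_0^{1,2}(\Omega))}$ and $\|u_0\|_{2,\Omega}\le C\|\nabla u_0\|_{2,\Omega}$, and taking the supremum over $t\in(0,T)$, yields \eqref{eq:ineq-0}.

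To obtain \eqref{eq:ineq-high} I would apply Theorem~\ref{th:integr-par} with $u=u_\epsilon^{(m)}$. Under the hypotheses of Lemma~\ref{second} one has $\partial\Omega\in C^k$ with $k\ge 2+[\tfrac N2]$, hence $\phi_j\in C^\infty(\Omega)\cap H^k(\Omega)\hookrightarrow C^1(\overline\Omega)$, so $u_\epsilon^{(m)}$ belongs to the regularity class $C^{0}([0,T];C^1(\overline{\Omega})\cap H^2_0(\Omega))$ required in Theorem~\ref{th:integr-par}, being a finite linear combination of the $\phi_j$ with absolutely continuous (indeed $C^1$) coefficients. By \eqref{eq:ineq-0} the quantity $\int_{Q_T}\mathcal{F}^{(0,0)}_\epsilon(z,\nabla u_\epsilon^{(m)})\,\vert(u_\epsilon^{(m)})_{xx}\vert^2\,dz$ is finite and, by \eqref{secderiboun}, $M_0:=\sup_{(0,T)}\|u_\epsilon^{(m)}(t)\|_{2,\Omega}^2$ is bounded uniformly in $m,\epsilon$, so condition \eqref{eq:cond-embed-par} is met. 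Inequality \eqref{eq:principal-3} with, say, $\beta=\tfrac12$ then gives, for every $\varsigma\in(0,r^\sharp)$,
\begin{equation*}
\alpha\int_{Q_T}\vert\nabla u_\epsilon^{(m)}\vert^{\underline{s}(z)+r^\sharp-\varsigma}\,dz\le \tfrac12\int_{Q_T}\mathcal{F}^{(0,0)}_\epsilon(z,\nabla u_\epsilon^{(m)})\,\vert(u_\epsilon^{(m)})_{xx}\vert^2\,dz+C\le C'',
\end{equation*}
with $C''$ independent of $m$ and $\epsilon$ by \eqref{eq:ineq-0}; this is \eqref{eq:ineq-high}.

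The argument is essentially bookkeeping assembled from the preceding lemmas, so the only genuine points requiring care are: (i) the legitimacy of integrating the a.e.\ differential inequality \eqref{eq:aux-est}, which rests on the absolute continuity of the Galerkin coefficients and on the global-in-time solvability furnished by Lemma~\ref{1st}; and (ii) the verification that $u_\epsilon^{(m)}$ has the regularity demanded by Theorem~\ref{th:integr-par} and satisfies \eqref{eq:cond-embed-par} with constants independent of $m$ and $\epsilon$ — both guaranteed by the smoothness assumption $\partial\Omega\in C^k$, $k\ge 2+[\tfrac N2]$, together with \eqref{eq:ineq-0} and \eqref{secderiboun}.
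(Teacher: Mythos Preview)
Your proof is correct and follows essentially the same route as the paper: integrate the differential inequality \eqref{eq:aux-est} in time, control the right-hand side via the energy estimate \eqref{secderiboun} of Lemma~\ref{1st}, and then derive \eqref{eq:ineq-high} by applying the interpolation inequality of Theorem~\ref{th:integr-par} to $u_\epsilon^{(m)}$. The additional justifications you supply (absolute continuity of the Galerkin coefficients, the bound $\|\nabla u_0^{(m)}\|_{2,\Omega}\le\|\nabla u_0\|_{2,\Omega}$, and the verification that $u_\epsilon^{(m)}$ meets the regularity hypotheses of Theorem~\ref{th:integr-par}) are all valid and make explicit details that the paper leaves implicit.
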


\begin{proof}
Let us integrate \eqref{eq:aux-est} over the interval $(0,\tau)$ with $\tau\leq T$ and apply \eqref{secderiboun} to estimate the right-hand side:
\[
\begin{split}
\|  \nabla
u^{(m)}_\epsilon{(\cdot, \tau)}\|  ^2_{2,\Omega} &  + C_0 \int_{Q_\tau}
\mathcal{F}^{(0,0)}_{\epsilon}(z, \nabla u_\epsilon^{(m)})
\vert (u^{(m)}_\epsilon)_{xx}\vert ^2 \,dz
\\
& \leq C\left(1+\|  u_0\|  ^2_{W^{1,2}_0(\Omega)}+\|  f\|  ^2_{L^{2}(0,\tau;W^{1,2}_0(\Omega))}\right).
\end{split}
\]
Since $\tau\in (0,T)$ is arbitrary, inequality \eqref{eq:ineq-0} follows.

Estimate \eqref{eq:ineq-high} is an immediate consequence of Theorem \ref{th:integr-par} and inequality \eqref{eq:null-eps}: for every $\varsigma \in (0,r_\ast)$ and $r_1(z)$, $r_2(z)$ from the conditions of Theorem \ref{th:integr-par}

\[
\begin{split}
\int_{Q_T}\mathcal{F}_{0}^{(r_1-\varsigma, r_2-\varsigma)}(z,\nabla u_{\epsilon}^{(m)})\vert \nabla u_{\epsilon}^{(m)}\vert ^2\,dz & \leq C+\int_{Q_T}\mathcal{F}_{\epsilon}^{(r_1-\varsigma, r_2-\varsigma)}(z,\nabla u_{\epsilon}^{(m)})\vert \nabla u_{\epsilon}^{(m)}\vert ^2\,dz
\\
& \leq \delta \int_{Q_T}
\mathcal{F}^{(0,0)}_{\epsilon}(z, \nabla u_\epsilon^{(m)})
\vert (u^{(m)}_\epsilon)_{xx}\vert ^2 \,dz +C.
\end{split}
\]
The case $\varsigma \in [r_\ast, r^\sharp)$, is reduced to the considered one as in Lemma \ref{le:final-ell}.
\end{proof}

\begin{corollary}
Under the conditions of Lemma \ref{le:est-1}
\begin{equation}
\label{eq:ineq-high-1}
\begin{split}
& \|  (\epsilon^2+\vert \nabla u_\epsilon^{(m)}\vert ^2)^{\frac{p(z)-2}{2}}\nabla u_\epsilon^{(m)}\|  _{\overline{s}(\cdot))^{'},Q_T}
\\
&\qquad
+ \|  (\epsilon^2+\vert \nabla u_\epsilon^{(m)}\vert ^2)^{\frac{q(z)-2}{2}}\nabla u_\epsilon^{(m)}\|  _{(\overline{s}(\cdot))^{'},Q_T}\leq C
\end{split}
\end{equation}
with a constant $C$ independent of $m$ and $\epsilon$.
\end{corollary}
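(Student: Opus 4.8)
The plan is to reduce the claim to the modular--norm equivalence \eqref{eq:mod-2} together with the higher integrability bound \eqref{eq:ineq-high}. Since $N\ge 2$, we have $\underline{s}^->\frac{2N}{N+2}\ge 1$, so $\overline{s}(z)>1$ throughout $\overline{Q}_T$ and the conjugate exponent $(\overline{s}(\cdot))'=\overline{s}/(\overline{s}-1)$ ranges over a compact subinterval of $(1,\infty)$; hence $L^{(\overline{s}(\cdot))'}(Q_T)$ satisfies properties (1)--(5) of Section~\ref{sec:results}, and by \eqref{eq:mod-2} it suffices to bound the modulars
\[
\int_{Q_T}\Bigl(\beta_{\epsilon}^{\frac{p(z)-2}{2}}(\nabla u_\epsilon^{(m)})\,\vert\nabla u_\epsilon^{(m)}\vert\Bigr)^{(\overline{s}(z))'}dz
\quad\text{and}\quad
\int_{Q_T}\Bigl(\beta_{\epsilon}^{\frac{q(z)-2}{2}}(\nabla u_\epsilon^{(m)})\,\vert\nabla u_\epsilon^{(m)}\vert\Bigr)^{(\overline{s}(z))'}dz
\]
by a constant independent of $m$ and $\epsilon$.

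First I would treat the $p$-term. Using $\vert\xi\vert\le\beta_{\epsilon}^{1/2}(\xi)$ one gets $\beta_{\epsilon}^{\frac{p-2}{2}}(\nabla u_\epsilon^{(m)})\,\vert\nabla u_\epsilon^{(m)}\vert\le\beta_{\epsilon}^{\frac{p-1}{2}}(\nabla u_\epsilon^{(m)})$, so the integrand is dominated by $\beta_{\epsilon}^{\frac{(p(z)-1)(\overline{s}(z))'}{2}}(\nabla u_\epsilon^{(m)})$. Because $p(z)\le\overline{s}(z)$, one has $(p(z)-1)(\overline{s}(z))'=\frac{(p(z)-1)\overline{s}(z)}{\overline{s}(z)-1}\le\overline{s}(z)$, and, this exponent being bounded by $\overline{s}^+$, \eqref{eq:interchange} gives
\[
\beta_{\epsilon}^{\frac{(p(z)-1)(\overline{s}(z))'}{2}}(\nabla u_\epsilon^{(m)})\le C\bigl(1+\vert\nabla u_\epsilon^{(m)}\vert^{\overline{s}(z)}\bigr),\qquad C=C(p^\pm,q^\pm,N).
\]
The $q$-term is handled identically, with $q(z)\le\overline{s}(z)$ in place of $p(z)\le\overline{s}(z)$.

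It then remains to absorb $\int_{Q_T}\vert\nabla u_\epsilon^{(m)}\vert^{\overline{s}(z)}\,dz$, and this is where the gap condition \eqref{eq:gap-z} enters: $\max_{\overline{Q}_T}(\overline{s}-\underline{s})=\max_{\overline{Q}_T}\vert p-q\vert<r_\ast<r^\sharp$, so one may fix $\varsigma\in(0,r^\sharp)$ with $\overline{s}(z)<\underline{s}(z)+r^\sharp-\varsigma$ on $\overline{Q}_T$. Then $\vert\nabla u_\epsilon^{(m)}\vert^{\overline{s}(z)}\le 1+\vert\nabla u_\epsilon^{(m)}\vert^{\underline{s}(z)+r^\sharp-\varsigma}$, and integrating over $Q_T$ and applying \eqref{eq:ineq-high} with this particular $\varsigma$ bounds both modulars by a constant depending only on $\vert Q_T\vert$ and the data. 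Combining this with \eqref{eq:mod-2} gives \eqref{eq:ineq-high-1}. I do not anticipate a genuine obstacle here: the one delicate point is that $\varsigma$ must be admissible in \eqref{eq:ineq-high}, which is precisely what the balance condition \eqref{eq:gap-z} secures.
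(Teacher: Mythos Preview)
Your proof is correct and follows essentially the same route as the paper: bound the modular via $\beta_\epsilon^{\frac{p-2}{2}}\vert\nabla u\vert\le\beta_\epsilon^{\frac{p-1}{2}}$, observe that $(p-1)(\overline{s})'=\frac{(p-1)\overline{s}}{\overline{s}-1}\le\overline{s}$, and then use the gap condition \eqref{eq:gap-z} together with \eqref{eq:ineq-high} to control $\int_{Q_T}\vert\nabla u_\epsilon^{(m)}\vert^{\overline{s}(z)}\,dz$. The paper's argument is organized identically, differing only in that it writes the target exponent as $\underline{s}(z)+\varsigma$ rather than $\underline{s}(z)+r^\sharp-\varsigma$.
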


\begin{proof}
Condition \eqref{eq:gap-z} entails the inequality
\[
\max\left\{\frac{\overline{s}(z)(p(z)-1)}{\overline{s}(z)-1}, \frac{\overline{s}(z)(q(z)-1)}{\overline{s}(z)-1}\right\}\leq \overline{s}(z)\leq \underline{s}(z)+\varsigma \quad \text{for some} \ \varsigma \in (0, r^\sharp).
\]
By Young's inequality, the assertion follows then from \eqref{eq:ineq-high}, \eqref{eq:interchange} and \eqref{eq:null-eps}:
\[
\begin{split}
\int_{Q_T}(\epsilon^2+\vert \nabla u_\epsilon^{(m)}\vert ^2)^{\frac{\overline{s}(z)(p(z)-1)}{2(\overline{s}(z)-1)}}\,dz & + \int_{Q_T}(\epsilon^2+\vert \nabla u_\epsilon^{(m)}\vert ^2)^{\frac{\overline{s}(z)(q(z)-1)}{2(\overline{s}(z)-1})}\,dz
\\
&
\leq C\left(1+\int_{Q_T}\vert \nabla u_\epsilon^{(m)}\vert ^{\underline{s}(z)+\varsigma}\,dz\right)\leq C.
\end{split}
\]
\end{proof}

\begin{lemma}\label{le:time-der}
Assume the conditions of Lemma
\ref{second}. If \eqref{eq:choice} holds, then
\begin{equation}
\label{timederiest}
\begin{split}
& \|  (u^{(m)}_\epsilon)_t\|  ^2_{2,Q_T}
\\
&+\sup_{(0,T)}\int_{\Omega}
\left(a(z)(\epsilon^2+\vert \nabla u_{\epsilon}^{(m)}\vert ^2)^{\frac{p(z)}{2}} + b(z)(\epsilon^2+\vert \nabla u_{\epsilon}^{(m)}\vert ^2)^{\frac{q(z)}{2}}\right)\,dx \\
& \leq C\left(1+%\mathcal{N}(\nabla u_0)\right)
\int_\Omega \mathcal{F}_0^{(0,0)}((x,0),\nabla u_0)\vert \nabla u_0\vert ^2\,dx\right)
+\|  f\|  _{2,Q_T}^2
\end{split}
\end{equation}
with an independent of $m$ and $\epsilon$ constant $C$.
\end{lemma}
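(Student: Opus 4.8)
The estimate \eqref{timederiest} will follow from testing the Galerkin system \eqref{system} with the derivatives $(u_j^{(m)})'(t)$, which is the natural choice to produce the $L^2$-norm of the time derivative together with the time-derivative of an energy. First I would multiply the $j$-th equation of \eqref{system} by $(u_j^{(m)})'(t)$ and sum over $j=1,\dots,m$, obtaining
\[
\|(u_\epsilon^{(m)})_t(\cdot,t)\|_{2,\Omega}^2 = -\int_\Omega \mathcal{F}_\epsilon^{(0,0)}(z,\nabla u_\epsilon^{(m)})\nabla u_\epsilon^{(m)}\cdot\nabla (u_\epsilon^{(m)})_t\,dx + \int_\Omega f\,(u_\epsilon^{(m)})_t\,dx.
\]
The delicate point is that the first integral on the right must be recognized as (essentially) a total time-derivative. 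Introduce the primitive of the flux in the gradient variable, i.e. the function
\[
\Phi_\epsilon(z,\xi)=\int_0^{|\xi|^2}\left(a(z)(\epsilon^2+\tau)^{\frac{p(z)-2}{2}}+b(z)(\epsilon^2+\tau)^{\frac{q(z)-2}{2}}\right)d\tau,
\]
so that $\partial_{\xi}\Phi_\epsilon(z,\xi)=2\,\mathcal{F}_\epsilon^{(0,0)}(z,\xi)\xi$ and $\mathcal{F}_\epsilon^{(0,0)}(z,\nabla u)\nabla u\cdot\nabla u_t=\tfrac12\partial_t\big(\Phi_\epsilon(z,\nabla u)\big)-\tfrac12(\partial_t\Phi_\epsilon)(z,\nabla u)$, where the last term only hits the explicit $z$-dependence through $a$, $b$, $p$, $q$. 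Since $a,b,p,q\in C^{0,1}(\overline{Q}_T)$ this "frozen-coefficient" remainder is pointwise bounded by $C\big(1+\mathcal{F}_\epsilon^{(0,0)}(z,\nabla u)|\nabla u|^2+(a+b)|\nabla u|^{\underline{s}(z)}|\ln\beta_\epsilon(\nabla u)|\big)$; after using \eqref{eq:log} and \eqref{eq:null-eps} it is controlled by $C(1+\int_\Omega|\nabla u|^{\underline{s}(z)+r^\sharp-\varsigma}dx)$, which is already estimated by \eqref{eq:ineq-high} and \eqref{eq:principal-3}.

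\textbf{Carrying out the integration.} After this rewriting, integrating over $(0,\tau)$ and applying the Cauchy inequality to $\int f (u_\epsilon^{(m)})_t$ gives
\[
\tfrac12\int_{Q_\tau}|(u_\epsilon^{(m)})_t|^2\,dz+\tfrac12\int_\Omega\Phi_\epsilon(z,\nabla u_\epsilon^{(m)})(\cdot,\tau)\,dx
\le \tfrac12\int_\Omega\Phi_\epsilon((x,0),\nabla u_0^{(m)})\,dx+\tfrac12\|f\|_{2,Q_T}^2+(\text{remainder}).
\]
Now I would observe, using \eqref{eq:interchange} and \eqref{eq:null-eps}, that
\[
c\left(a(z)(\epsilon^2+|\nabla u|^2)^{\frac{p(z)}{2}}+b(z)(\epsilon^2+|\nabla u|^2)^{\frac{q(z)}{2}}\right)\le \Phi_\epsilon(z,\nabla u)+C\le C'\,\mathcal{F}_\epsilon^{(0,0)}(z,\nabla u)|\nabla u|^2+C',
\]
which converts the $\Phi_\epsilon$-term at time $\tau$ into the supremum quantity appearing on the left of \eqref{timederiest} (after taking $\operatorname{ess\,sup}_\tau$), and converts the initial term into $\int_\Omega\mathcal{F}_0^{(0,0)}((x,0),\nabla u_0^{(m)})|\nabla u_0^{(m)}|^2\,dx+C$. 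The latter is bounded by $C(1+\int_\Omega\mathcal{F}_0^{(0,0)}((x,0),\nabla u_0)|\nabla u_0|^2\,dx)$ uniformly in $m$: indeed $u_0^{(m)}\to u_0$ in $W_0^{1,\mathcal{H}}(\Omega)$ by \eqref{eq:choice}, and by \eqref{eq:norm-modular-M-O} the modular $\rho_{\mathcal{H}}(|\nabla u_0^{(m)}|)$ — which dominates $\int_\Omega\mathcal{F}_0^{(0,0)}((x,0),\nabla u_0^{(m)})|\nabla u_0^{(m)}|^2dx$ up to a constant by the second inequality in the auxiliary estimates, item (2) — stays bounded.

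\textbf{Absorbing the remainder and concluding.} Finally I would absorb the remainder terms. The term $C\int_{Q_\tau}\mathcal{F}_\epsilon^{(0,0)}(z,\nabla u_\epsilon^{(m)})|\nabla u_\epsilon^{(m)}|^2dz$ is bounded uniformly in $m,\epsilon$ by \eqref{secderiboun}, while $\int_{Q_\tau}|\nabla u_\epsilon^{(m)}|^{\underline{s}(z)+r^\sharp-\varsigma}dz$ is bounded uniformly in $m,\epsilon$ by \eqref{eq:ineq-high}, for any fixed $\varsigma\in(0,r^\sharp)$; neither requires any absorption into the left-hand side. Collecting everything, taking the essential supremum over $\tau\in(0,T)$, and renaming constants yields \eqref{timederiest} with a constant $C$ independent of $m$ and $\epsilon$. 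The main obstacle is the careful bookkeeping of the "frozen coefficient" remainder $(\partial_t\Phi_\epsilon)(z,\nabla u_\epsilon^{(m)})$ — it is the only place where the explicit $z$-dependence of $\mathcal{F}_\epsilon^{(0,0)}$ through the variable exponents creates a logarithmic term $|\ln\beta_\epsilon(\nabla u)|$ that must be swallowed by the already-established higher integrability \eqref{eq:ineq-high}, and one must check this logarithmic growth is indeed subcritical relative to $\underline{s}(z)+r^\sharp$, which is exactly what \eqref{eq:log} together with the gap condition \eqref{eq:gap-z} provides.
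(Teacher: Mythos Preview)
Your proposal is correct and follows essentially the same route as the paper: test \eqref{system} with $(u_j^{(m)})'$, rewrite $\mathcal{F}_\epsilon^{(0,0)}\nabla u\cdot\nabla u_t$ as a total time derivative of an energy plus remainder terms coming from the $t$-dependence of $a,b,p,q$, estimate the logarithmic remainder via \eqref{eq:log} and absorb it through the already-established higher integrability \eqref{eq:ineq-high}, and use \eqref{eq:choice} for the initial term. The only cosmetic difference is that the paper writes out the energy explicitly as $\int_\Omega\big(\tfrac{a}{p}\beta_\epsilon^{p/2}+\tfrac{b}{q}\beta_\epsilon^{q/2}\big)dx$ and the four remainder terms $\mathcal{I}_1,\dots,\mathcal{I}_4$ separately, whereas you package them into the single primitive $\Phi_\epsilon$ and its ``frozen-coefficient'' derivative.
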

\begin{proof}
Multiplying \eqref{system} by $(u^{(m)}_j)_t$ and summing over
$j=1,2,\dots,m$ we obtain the equality
\begin{equation}\label{est013}
\int_{\Omega} (u^{(m)}_\epsilon)^2_t \,dx + \int_{\Omega}
\mathcal{F}^{(0,0)}_{\epsilon}(z, \nabla u_\epsilon^{(m)})
\nabla u^{(m)}_\epsilon \cdot \nabla (u^{(m)}_\epsilon)_t \,dx =
\int_{\Omega} f (u^{(m)}_\epsilon)_t \,dx.
\end{equation}
The straighforward computation leads to the equality
\[
\begin{split}
& \mathcal{F}^{(0,0)}_{\epsilon}(z, \nabla u_\epsilon^{(m)})
\nabla u^{(m)}_\epsilon \cdot \nabla (u^{(m)}_\epsilon)_t
\\
&
=
\frac{d}{dt} \left( \frac{ a(\epsilon^2 + \vert \nabla u^{(m)}_\epsilon
\vert ^2)^{\frac{p}{2}}}{p} + \frac{ b(\epsilon^2 + \vert \nabla u^{(m)}_\epsilon
\vert ^2)^{\frac{q}{2}}}{q}\right)
\\ & + \frac{a p_t (\epsilon^2 + \vert \nabla u^{(m)}_\epsilon
\vert ^2)^{\frac{p}{2}}}{{p^2}} \left(1- \frac{p}{2}
\ln((\epsilon^2 + \vert \nabla u^{(m)}_\epsilon \vert ^2))\right) -  \frac{ a_t (\epsilon^2 + \vert \nabla u_\epsilon^{(m)}\vert ^2)^\frac{p}{2}}{p}\\
& + \frac{b q_t (\epsilon^2 + \vert \nabla u^{(m)}_\epsilon
\vert ^2)^{\frac{q}{2}}}{{q^2}} \left(1- \frac{q}{2}
\ln((\epsilon^2 + \vert \nabla u^{(m)}_\epsilon \vert ^2))\right) -  \frac{ b_t (\epsilon^2 + \vert \nabla u_\epsilon^{(m)}\vert ^2)^\frac{q}{2}}{q}.
\end{split}
\]
Using this equality we rewrite \eqref{est013} as

\begin{equation}\label{est014}
\begin{split}
& \|  (u^{(m)}_\epsilon)_t(\cdot,t)\|  ^2_{2,\Omega} + \frac{d}{dt}\int_{\Omega} \left(\frac{a(z)(\epsilon^2 + \vert \nabla u^{(m)}_\epsilon \vert ^2)^{\frac{p(z)}{2}}}{p(z)} + \frac{b(z) (\epsilon^2 + \vert \nabla u^{(m)}_\epsilon \vert ^2)^{\frac{q(z)}{2}}}{q(z)}\right)  \,dx\\
&=\int_{\Omega} f (u^{(m)}_\epsilon)_t \,dx  - \int_{\Omega} \frac{a(z) p_t (\epsilon^2 + \vert \nabla u^{(m)}_\epsilon \vert ^2)^{\frac{p(z)}{2}}}{p^2(z)} \left(1- \frac{p(z)}{2} \ln(\epsilon^2 + \vert \nabla u^{(m)}_\epsilon \vert ^2)\right)\,dx\\
& \qquad  - \int_{\Omega} \frac{b(z) q_t(z) (\epsilon^2 + \vert \nabla u^{(m)}_\epsilon
\vert ^2)^{\frac{q(z)}{2}}}{q^2(z)} \left(1- \frac{q(z)}{2}
\ln((\epsilon^2 + \vert \nabla u^{(m)}_\epsilon \vert ^2))\right)\,dx
\\
& \qquad + \int_{\Omega} \frac{a_t (\epsilon^2 + \vert \nabla u_\epsilon^{(m)}\vert ^2)^\frac{p(z)}{2}}{p(z)} \,dx + \int_{\Omega} \frac{b_t (\epsilon^2 + \vert \nabla u_\epsilon^{(m)}\vert ^2)^\frac{q(z)}{2}}{q(z)} \,dx
\\
&
\equiv \int_{\Omega} f (u^{(m)}_\epsilon)_t \,dx +\mathcal{I}_1+\mathcal{I}_2+\mathcal{I}_3 +\mathcal{I}_4.
\end{split}
\end{equation}
The first term on the right-hand side of \eqref{est014} is estimated by the Cauchy inequality:
\begin{equation}\label{est016}
\left\vert \int_{\Omega} f (u^{(m)}_\epsilon)_t \,dx\right\vert  \leq
\frac{1}{2} \|  (u^{(m)}_\epsilon)_t {(\cdot,t)}\|  ^2_{2,\Omega} +
\frac{1}{2}\|  f{(\cdot,t)}\|  ^2_{2,\Omega}.
\end{equation}
To estimate $\mathcal{I}_i$, fix a number $\ell \in (r_\ast, r^{\#})$ %and  $\varsigma= r^\sharp -\ell$ %
such that
\[
\overline{s}(z) < \underline{s}(z) + \ell < \underline{s}(z) + r^{\#}.
\]
Applying the Young inequality and using \eqref{eq:log} in the terms with the logarithmic growth we obtain:
\begin{equation*}
\begin{split}
\sum_{i=1}^{4}\vert \mathcal{I}_i\vert  & \leq
C\left(1+  \int_{\Omega} \vert \nabla u_\epsilon^{(m)}\vert ^{\underline{s}(z) + \ell} \,dx + \int_{\Omega} \mathcal{F}_{\epsilon}^{(r_1-\varsigma, r_2-\varsigma)}(z,\nabla u_{\epsilon}^{(m)})\vert \nabla u_{\epsilon}^{(m)}\vert ^2 \,dx \right)
\end{split}
\end{equation*}
with $r_i$ defined in \eqref{eq:r-i}. The required inequality \eqref{timederiest} follows after
gathering the above estimates, integrating the result in $t$,
and using \eqref{eq:choice} and \eqref{eq:ineq-high}.
\end{proof}

\section{The regularized problem}
\label{sec:reg-existence}
We are now in position to prove the existence and uniqueness of strong solutions of the regularized problem \eqref{eq:reg-prob} via passing to the limit as $m \to \infty$ in the equation satisfied by the functions $u^{(m)}_\epsilon$. We proceed in two steps. In the first step we prove the existence and establish the regularity properties of solutions to the problems in a smooth domain $\Omega$,  in the second step we will extend these results to the domains with $\partial\Omega\in C^2$.

Let us assume that $\partial\Omega \in C^{k}$ with $k\geq 2+[\frac{N}{2}]$ and $\epsilon\in (0,1)$ is a fixed parameter. Under the assumptions of Theorem \ref{th:main-result-1} on the rest of the data,
there exists a sequence of Galerkin approximations $\{u^{(m)}_\epsilon\}$ given by \eqref{eq:coeff} with the coefficients $u_j^{(m)}(t)$ defined  from the system of ordinary differential equations \eqref{system}. The functions $u^{(m)}(z)$ satisfy estimates
\eqref{secderiboun}, \eqref{gradbound}, \eqref{eq:ineq-0},
\eqref{eq:ineq-high}, \eqref{eq:ineq-high-1} and \eqref{timederiest}. The finite-dimensional approximations $u_{\epsilon}^{(m)}$ are constructed in the cylinders $\Omega\times (0,T_m)$, but the uniform estimates \eqref{secderiboun} and \eqref{timederiest} mean that $u_{\epsilon}^{(m)}(\cdot,T_m)\in W^{1,2}_0(\Omega)\cap W^{1,\mathcal{H}}_0(\Omega)$. This inclusion allows one to continue $u_{\epsilon}^{(m)}$ to an interval $(T_m,T_m+h)$. Continuing this procedure we exhaust the interval $(0,T)$ and obtain the approximate solution $u_\epsilon^{(m)}$ on the whole of the cylinder $Q_T$.

\subsection{Existence and uniqueness of strong solution}
\begin{theorem}
\label{th:exist-reg} Let $u_0$, $f$, $p$, $q$, $a$
satisfy the conditions of Theorem \ref{th:main-result-1} with $\partial\Omega\in C^k$, $k\geq 2+[\frac{N}{2}]$. Then for every
$\epsilon\in (0,1)$ problem \eqref{eq:reg-prob} has a unique  solution $u_\epsilon$ which satisfies the estimates
\begin{equation}
\label{eq:est-strong-eps-1}
\begin{split}
& \|  u_\epsilon\|  _{{ \mathbb{W}_{\overline{s}(\cdot)}(Q_T)}}\leq C_0,
\\
&
\operatorname{ess}\sup_{(0,T)}\|  u_\epsilon{(\cdot, t)}\|  _{2,\Omega}^{2}
+
\|  u_{\epsilon t}\|  _{2,Q_T}^{2}+ \operatorname{ess}\sup_{(0,T)}\|  \nabla u_\epsilon{(\cdot, t)}\|  _{2,\Omega}^{2}
\\
&
+
\operatorname{ess}\sup_{(0,T)}\int_{\Omega}
\left(a(z)(\epsilon^2+\vert \nabla u_\epsilon\vert ^2)^{\frac{p(z)}{2}}+b(z)(\epsilon^2+\vert \nabla u_\epsilon\vert ^2)^{\frac{q(z)}{2}}\right)
dx\leq C_1.
\end{split}
\end{equation}
Moreover, the solution $u_\epsilon$ has the following property of global higher
integrability of the gradient: for every $\varsigma \in (0, r^{\#})$,

\begin{equation}
\label{eq:grad-high-eps} \int_{Q_T} \vert \nabla u_{\epsilon}\vert ^{\underline{s}(z)+r^\sharp-\varsigma} \, dz \leq C_2.
\end{equation}
The constants $C_0$, $C_1$, $C_2$ depend on the data but are independent of $\epsilon$.
\end{theorem}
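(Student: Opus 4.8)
The plan is to obtain $u_\epsilon$ as a limit of the Galerkin sequence $\{u^{(m)}_\epsilon\}$ of Section~\ref{sec:reg-problem} and to transfer the $m$-uniform bounds to the limit. Fix $\varsigma\in(0,r^\sharp-r_\ast)$; by \eqref{eq:gap-z} we then have $\underline s(z)+r^\sharp-\varsigma>\overline s(z)$ in $\overline Q_T$, so \eqref{eq:ineq-high} also bounds $\nabla u^{(m)}_\epsilon$ in $L^{\overline s(\cdot)}(Q_T)$. From \eqref{secderiboun}, \eqref{eq:ineq-0}, \eqref{eq:ineq-high}, \eqref{eq:ineq-high-1}, \eqref{timederiest} one extracts a subsequence (not relabelled) along which $u^{(m)}_\epsilon$ converges weakly-$\ast$ in $L^\infty(0,T;W^{1,2}_0(\Omega))$ to some $u_\epsilon$, $(u^{(m)}_\epsilon)_t\rightharpoonup(u_\epsilon)_t$ in $L^2(Q_T)$, $\nabla u^{(m)}_\epsilon\rightharpoonup\nabla u_\epsilon$ in $L^{\underline s(\cdot)+r^\sharp-\varsigma}(Q_T)$, and the flux $\mathcal F^{(0,0)}_\epsilon(z,\nabla u^{(m)}_\epsilon)\nabla u^{(m)}_\epsilon\rightharpoonup\chi_\epsilon$ in $L^{(\overline s(\cdot))'}(Q_T)$. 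Since $W^{1,2}_0(\Omega)\hookrightarrow\hookrightarrow L^2(\Omega)\hookrightarrow W^{-1,2}(\Omega)$, the Aubin--Lions lemma gives $u^{(m)}_\epsilon\to u_\epsilon$ in $L^2(Q_T)$ and a.e. in $Q_T$, and (together with the uniform bound on $(u^{(m)}_\epsilon)_t$) $u^{(m)}_\epsilon(\cdot,t)\rightharpoonup u_\epsilon(\cdot,t)$ in $L^2(\Omega)$ for every $t$; at $t=0$ this yields, via \eqref{eq:choice}, the initial condition of Definition~\ref{def:weak:reg}(3). In particular $u_\epsilon\in\mathbb W_{\overline s(\cdot)}(Q_T)$ and $(u_\epsilon)_t\in L^2(Q_T)$.

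Passing to the limit in \eqref{system} tested against a fixed $w\in\mathcal N_k$ (admissible once $m\ge k$), then letting $k\to\infty$ and using the density of $\bigcup_m\mathcal N_m$ in $\mathbb W_{\overline s(\cdot)}(Q_T)$ (the corollary to Lemma~\ref{le:density-1}, which applies since $\partial\Omega\in C^k$ with $k\ge2+[\frac{N}{2}]$ forces \eqref{eq:k-1} for the exponent $\overline s$), we obtain
\[
\int_{Q_T}(u_\epsilon)_t\phi\,dz+\int_{Q_T}\chi_\epsilon\cdot\nabla\phi\,dz=\int_{Q_T}f\phi\,dz\qquad\forall\phi\in\mathbb W_{\overline s(\cdot)}(Q_T).
\]
The crux is to show $\chi_\epsilon=\mathcal F^{(0,0)}_\epsilon(z,\nabla u_\epsilon)\nabla u_\epsilon$, which I will do by Minty's method. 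Testing \eqref{system} with the finite-dimensional function $u^{(m)}_\epsilon$ gives the energy identity $\tfrac12\|u^{(m)}_\epsilon(T)\|_{2,\Omega}^2+\int_{Q_T}\mathcal F^{(0,0)}_\epsilon(z,\nabla u^{(m)}_\epsilon)|\nabla u^{(m)}_\epsilon|^2\,dz=\tfrac12\|u^{(m)}_0\|_{2,\Omega}^2+\int_{Q_T}fu^{(m)}_\epsilon\,dz$, while $\phi=u_\epsilon$ in the limit identity gives the analogue with $\chi_\epsilon\cdot\nabla u_\epsilon$, $u_0$, $u_\epsilon$. Using weak lower semicontinuity of $\|\cdot\|_{2,\Omega}^2$, $\|u^{(m)}_0\|_{2,\Omega}\to\|u_0\|_{2,\Omega}$ and $\int_{Q_T}fu^{(m)}_\epsilon\to\int_{Q_T}fu_\epsilon$, we deduce $\limsup_m\int_{Q_T}\mathcal F^{(0,0)}_\epsilon(z,\nabla u^{(m)}_\epsilon)|\nabla u^{(m)}_\epsilon|^2\,dz\le\int_{Q_T}\chi_\epsilon\cdot\nabla u_\epsilon\,dz$. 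Feeding this into the monotonicity inequality for $\xi\mapsto\mathcal F^{(0,0)}_\epsilon(z,\xi)\xi$ (the vector form of Proposition~\ref{pro:strict-monotone}, summed over the two phases) between $\nabla u^{(m)}_\epsilon$ and $\nabla w$ with $w\in\mathbb W_{\overline s(\cdot)}(Q_T)$ arbitrary, then taking $w=u_\epsilon\pm\lambda\phi$ and letting $\lambda\to0^+$, identifies $\chi_\epsilon$, so $u_\epsilon$ solves \eqref{eq:reg-prob}; running the same computation with $w=u_\epsilon$ shows in addition $\mathcal G_\epsilon(\nabla u^{(m)}_\epsilon,\nabla u_\epsilon)\to0$, whence by Lemma~\ref{le:cont-convergence} $\nabla u^{(m)}_\epsilon\to\nabla u_\epsilon$ in $L^{\underline s(\cdot)}(Q_T)$ and, along a further subsequence, a.e. in $Q_T$.

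The estimates \eqref{eq:est-strong-eps-1} and \eqref{eq:grad-high-eps} then follow by semicontinuity: the $\mathbb W_{\overline s(\cdot)}(Q_T)$-, $L^2(Q_T)$- and $L^\infty(0,T;W^{1,2}_0(\Omega))$-norms and the convex functional $v\mapsto\int_{Q_T}|\nabla v|^{\underline s(z)+r^\sharp-\varsigma}\,dz$ are weakly lower semicontinuous, which transfers the $m$-uniform bounds \eqref{secderiboun}, \eqref{eq:ineq-0}, \eqref{eq:ineq-high} with constants independent of $\epsilon$; the last term of \eqref{eq:est-strong-eps-1} comes from the a.e. convergence of $\nabla u^{(m)}_\epsilon$ combined with Fatou's lemma (in $x$, for a.e. $t$) and the uniform bound \eqref{timederiest}. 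For uniqueness, if $u_\epsilon,v_\epsilon$ are two strong solutions with data $(u_0,f)$, subtracting the two weak formulations and using $\phi=u_\epsilon-v_\epsilon$ (legitimate since $u_\epsilon-v_\epsilon\in\mathbb W_{\overline s(\cdot)}(Q_T)$ with $(u_\epsilon-v_\epsilon)_t\in L^2(Q_T)$, so $\int_{Q_t}(u_\epsilon-v_\epsilon)_t(u_\epsilon-v_\epsilon)\,dz=\tfrac12\|(u_\epsilon-v_\epsilon)(t)\|_{2,\Omega}^2$) yields $\tfrac12\|(u_\epsilon-v_\epsilon)(t)\|_{2,\Omega}^2+\int_{Q_t}\bigl(\mathcal F^{(0,0)}_\epsilon(z,\nabla u_\epsilon)\nabla u_\epsilon-\mathcal F^{(0,0)}_\epsilon(z,\nabla v_\epsilon)\nabla v_\epsilon\bigr)\cdot\nabla(u_\epsilon-v_\epsilon)\,dz=0$, and monotonicity of $\xi\mapsto\mathcal F^{(0,0)}_\epsilon(z,\xi)\xi$ forces $u_\epsilon\equiv v_\epsilon$. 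The main obstacle is the identification of $\chi_\epsilon$: one must carefully justify testing the Galerkin system with $u^{(m)}_\epsilon$ and the limit equation with $u_\epsilon$, and in particular control the boundary-in-time term through the weak convergence $u^{(m)}_\epsilon(T)\rightharpoonup u_\epsilon(T)$ in $L^2(\Omega)$, before Minty's argument can be closed.
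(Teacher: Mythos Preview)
Your argument is correct and follows essentially the same route as the paper: extract weak limits from the uniform Galerkin estimates, use compactness (you invoke Aubin--Lions, the paper cites Simon) for strong $L^2$ convergence, pass to the limit in the weak formulation, and identify the flux by the monotonicity/Minty trick (the paper defers this step to a cited reference). Two small remarks: the paper keeps the two flux components $\beta_\epsilon^{(p-2)/2}\nabla u^{(m)}_\epsilon$ and $\beta_\epsilon^{(q-2)/2}\nabla u^{(m)}_\epsilon$ separate when passing to weak limits, whereas you bundle them into a single $\chi_\epsilon$; and your derivation of the $\operatorname{ess\,sup}_t$ energy bound via a.e.\ convergence of gradients plus Fatou anticipates what the paper proves only afterwards (Lemma~\ref{lem:pointwise-epsilon}), while the paper's own proof of \eqref{eq:est-strong-eps-1} appeals more tersely to ``lower semicontinuity of the modular''.
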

\begin{proof}
The uniform with respect to $m$ and $\epsilon$ estimates \eqref{secderiboun}, \eqref{gradbound}, \eqref{eq:ineq-0},
\eqref{eq:ineq-high}, \eqref{eq:ineq-high-1} and \eqref{timederiest}  enable one to extract a subsequence
$u^{(m)}_{\epsilon}$ (for which we keep the same notation), and
functions $u_\epsilon$, $\mathcal{A}_{1, \epsilon}$, $\mathcal{A}_{2, \epsilon}$ such that

\begin{equation}
\label{eq:conv}
\begin{split}
& u^{(m)}_\epsilon \to u_\epsilon \quad \text{$\star$-weakly in $
L^\infty(0,T;L^2(\Omega))$}, \quad \text{$(u^{(m)}_{\epsilon})_t \rightharpoonup (u_{\epsilon})_t$ in
$L^2(Q_T)$},
\\
&\text{$\nabla u^{(m)}_\epsilon \rightharpoonup \nabla
u_\epsilon$  in $(L^{r(\cdot)}(Q_T))^N$}, \quad r(z)= \max\{2, \overline{s}(z)\},
\\
& \text{$\beta_\epsilon^{\frac{p(z)-2}{2}}(\nabla
u^{(m)}_\epsilon) \nabla
u^{(m)}_\epsilon\rightharpoonup \mathcal{A}_{1, \epsilon}$ in
$(L^{\overline{s}'(\cdot)}(Q_T))^N$,}
\\
& \text{$\beta_\epsilon^{\frac{q(z)-2}{2}}(\nabla
u^{(m)}_\epsilon) \nabla
u^{(m)}_\epsilon\rightharpoonup \mathcal{A}_{2, \epsilon}$ in
$(L^{\overline{s}'(\cdot)}(Q_T))^N$},
\end{split}
\end{equation}
where the third and fourth lines follow from the uniform estimate \eqref{eq:ineq-high-1}. The continuous Sobolev's embedding implies uniform boundedness of the functions $u^{(m)}_\epsilon$ and $(u^{(m)}_\epsilon)_t$ in $L^{\infty}(0,T;W_0^{1,\underline{s}^-}(\Omega))$ and
$L^2(0,T; L^2(\Omega))$ respectively, while $W_0^{1,r(\cdot,t)}(\Omega) \subseteq
W^{1,\underline{s}^-}_0(\Omega)\hookrightarrow L^2(\Omega)$. By \cite[Sec.8, Corollary 4]{simon-1987} the sequence $\{u^{(m)}_\epsilon\}$ is relatively compact in $C^0([0,T];L^2(\Omega))$ and there exists a subsequence $\{u^{(m_k)}_\epsilon\}$, which we also assume coinciding with $\{u^{(m)}_{\epsilon}\}$, such that $u^{(m)}_\epsilon \to u_\epsilon$ in
$C([0,T];L^2(\Omega))$ and a.e. in $Q_T.$
Let $\mathcal{N}_m$ be the finite-dimensional sets defined in \eqref{eq:N}.
Fix some $m\in \mathbb{N}$. By the method of construction
$u^{(m)}_\epsilon\in \mathcal{N}_m$. Since $\mathcal{N}_{k}\subset
\mathcal{N}_{m}$ for $k<m$, then for every $\xi_k\in
\mathcal{N}_k$ with $k\leq m$
\begin{equation}
\label{eq:ident-m} \int_{Q_T} u^{(m)}_{\epsilon t} \xi_k \,dz +
\int_{Q_T} \mathcal{F}^{(0,0)}_{\epsilon}(z, \nabla u_\epsilon^{(m)}) \nabla u^{(m)}_\epsilon
\cdot \nabla \xi_k \,dz = \int_{Q_T} f \xi_k \,dz.
\end{equation}
Since $\bigcup\limits_{k \in \mathbb{N}} \mathcal{N}_k$ is dense in $\mathbb{W}_{\overline{s}(\cdot)}(Q_T)$, for every $\xi\in \mathbb{W}_{\overline{s}(\cdot)}(Q_T)$ there exists a sequence such that $\xi_k\in \mathcal{N}_k$ and $\mathcal{N}_k\ni \xi_k\to \xi\in \mathbb{W}_{\overline{s}(\cdot)}(Q_T)$.
If $\zeta_m\rightharpoonup \zeta$ in $L^{\overline{s}'(\cdot)}(Q_T)$, then for every $\eta \in L^{\overline{s}(\cdot)}(Q_T)$ we have
\[
a(z) \eta \in L^{\overline{s}(\cdot)}(Q_T) \quad  \text{and}\quad \displaystyle\int_{Q_T}a \zeta_m \eta \,dz\to \int_{Q_T}a \zeta \eta \,dz.
\]
Using this fact we pass to the limit as $m\to\infty$ in \eqref{eq:ident-m} with a fixed $k$, and then letting $k\to \infty$, we conclude that

\begin{equation}
\label{eq:ident-lim} \int_{Q_T} u_{\epsilon t} \xi \,dz +
\int_{Q_T} a(z) \ \mathcal{A}_{1, \epsilon} \cdot \nabla \xi \,dz + \int_{Q_T} b(z) \ \mathcal{A}_{2, \epsilon} \cdot \nabla \xi \,dz = \int_{Q_T} f \xi
\,dz
\end{equation}
for all $\xi \in {\mathbb{W}_{\overline{s}(\cdot)}(Q_T)}$. The
classical argument based on monotonicity of the flux function (see Proposition \ref{pro:strict-monotone}),
the uniform estimates \eqref{eq:ineq-high}, and the density of $\bigcup\limits_{k \in \mathbb{N}} \mathcal{N}_k$ in ${\mathbb{W}_{\overline{s}(\cdot)}(Q_T)}$, allow one to identify the limit vectors $\mathcal{A}_{1, \epsilon}$ and $\mathcal{A}_{2, \epsilon}$ as follows (see, e.g., \cite[Theorem 6.1]{arora_shmarev2020} for the details): for every $\psi\in \mathbb{W}_{\overline{s}(\cdot)}(Q_T)$
\[
\int_{Q_T}\left(a(z) \mathcal{A}_{1, \epsilon}  + b(z) \mathcal{A}_{2, \epsilon}\right)\cdot\nabla \psi\,dz= \int_{Q_T}\mathcal{F}^{(0,0)}_{\epsilon}(z, \nabla u_\epsilon) \nabla u_{\epsilon} \cdot \nabla \psi\,dz.
\]
The initial condition for $u_\epsilon$ is fulfilled by continuity
because $u_\epsilon\in C^0([0,T];L^{2}(\Omega))$.

To prove the uniqueness we argue by contradiction. Let $v,w$ be two solutions of problem \eqref{eq:reg-prob}. Take an arbitrary $\rho \in (0,T]$. The uniform estimates \eqref{eq:ineq-high} allow us to take $v-w \in \mathbb{W}_{\overline{s}(\cdot)}(Q_T)$ for the test function in equalities \eqref{eq:def} for $v$ and $w$ in the cylinder $Q_\rho=\Omega\times (0,\rho)$. Subtracting these equalities and using the monotonicity of the flux (see Proposition \ref{pro:strict-monotone}) we arrive at the inequality

\[
\frac{1}{2}\|  v-w\|  _{2,\Omega}^{2}(\rho) =\int_{Q_\tau}(v-w)(v-w)_t\,dz\leq 0.
\]
It follows that $v(x,\rho)=w(x,\rho)$ a.e. in $\Omega$ for every $\rho \in [0,T]$.

Estimates \eqref{eq:est-strong-eps-1} follow from the uniform in $m$ estimates on the functions $u_{\epsilon}^{(m)}$ and their derivatives, the properties of weak convergence \eqref{eq:conv}, and the lower semicontinuity of the modular. Inequality \eqref{eq:ineq-high} yields that for every $\ell \in (0,r^\sharp)$ the sequence $\{\nabla u_{\epsilon}^{(m)}\}$ contains a subsequence which converges to $\nabla u_\epsilon$ weakly in $(L^{\underline{s}(\cdot)+\ell}(Q_T))^N$, whence \eqref{eq:grad-high-eps}.
\end{proof}

\subsection{Strong and almost everywhere convergence of the gradients}
\begin{lemma}[Strong convergence of $\{\nabla u_{\epsilon}^{(m)}\}$]
\label{lem:pointwise-epsilon}
Assume the conditions of Theorem \ref{th:exist-reg}. There is a subsequence of the sequence $\{u_{\epsilon}^{(m)}\}$ such that

\[
\nabla u_{\epsilon}^{(m)}\to \nabla u_\epsilon\quad \text{in $L^{\underline{s}(\cdot)}(Q_T)$ and a.e. in $Q_T$ as $m\to \infty$}.
\]
\end{lemma}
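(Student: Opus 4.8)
The plan is to establish strong convergence of the gradients via the standard monotonicity trick adapted to the variable-exponent double-phase setting, using the machinery already developed in the excerpt. First I would test equation \eqref{eq:ident-m} for $u_\epsilon^{(m)}$ with the admissible test function $\xi = u_\epsilon^{(m)} - u_\epsilon$ (note $u_\epsilon^{(m)}\in\mathcal{N}_m\subset\mathbb{W}_{\overline{s}(\cdot)}(Q_T)$ and, by \eqref{eq:ineq-high-1}, $u_\epsilon\in\mathbb{W}_{\overline{s}(\cdot)}(Q_T)$ as well, so this is legitimate — strictly speaking one first approximates $u_\epsilon$ by elements of $\bigcup_k\mathcal{N}_k$, uses that these are dense, and passes to the limit). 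This gives
\[
\int_{Q_T}(u_\epsilon^{(m)})_t(u_\epsilon^{(m)}-u_\epsilon)\,dz + \int_{Q_T}\mathcal{F}^{(0,0)}_\epsilon(z,\nabla u_\epsilon^{(m)})\nabla u_\epsilon^{(m)}\cdot\nabla(u_\epsilon^{(m)}-u_\epsilon)\,dz = \int_{Q_T}f(u_\epsilon^{(m)}-u_\epsilon)\,dz.
\]
The time-derivative term vanishes in the limit because $(u_\epsilon^{(m)})_t\rightharpoonup (u_\epsilon)_t$ in $L^2(Q_T)$ by \eqref{eq:conv} while $u_\epsilon^{(m)}\to u_\epsilon$ strongly in $L^2(Q_T)$ (indeed in $C^0([0,T];L^2(\Omega))$, as established in the proof of Theorem \ref{th:exist-reg}); the right-hand side vanishes for the same strong-convergence reason. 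Hence
\[
\limsup_{m\to\infty}\int_{Q_T}\mathcal{F}^{(0,0)}_\epsilon(z,\nabla u_\epsilon^{(m)})\nabla u_\epsilon^{(m)}\cdot\nabla(u_\epsilon^{(m)}-u_\epsilon)\,dz \leq 0.
\]

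Next I would subtract off the term $\int_{Q_T}\mathcal{F}^{(0,0)}_\epsilon(z,\nabla u_\epsilon)\nabla u_\epsilon\cdot\nabla(u_\epsilon^{(m)}-u_\epsilon)\,dz$, which tends to $0$ because $\nabla u_\epsilon^{(m)}\rightharpoonup\nabla u_\epsilon$ in $(L^{r(\cdot)}(Q_T))^N$ and $\mathcal{F}^{(0,0)}_\epsilon(z,\nabla u_\epsilon)\nabla u_\epsilon\in (L^{\overline{s}'(\cdot)}(Q_T))^N$ — this membership follows from the bound \eqref{eq:ineq-high-1} applied to the limit together with weak lower semicontinuity, or more directly from \eqref{eq:interchange}, \eqref{eq:null-eps} and \eqref{eq:grad-high-eps}. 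What remains is
\[
\limsup_{m\to\infty}\mathcal{G}_\epsilon(\nabla u_\epsilon^{(m)},\nabla u_\epsilon)=\limsup_{m\to\infty}\int_{Q_T}\big(\mathcal{F}^{(0,0)}_\epsilon(z,\nabla u_\epsilon^{(m)})\nabla u_\epsilon^{(m)}-\mathcal{F}^{(0,0)}_\epsilon(z,\nabla u_\epsilon)\nabla u_\epsilon\big)\cdot\nabla(u_\epsilon^{(m)}-u_\epsilon)\,dz\leq 0.
\]
Since $\mathcal{G}_\epsilon(\nabla u_\epsilon^{(m)},\nabla u_\epsilon)\geq 0$ by the strict monotonicity of Proposition \ref{pro:strict-monotone} (each term of $\mathcal{F}^{(0,0)}_\epsilon$ gives a nonnegative contribution $\mathcal{S}_{p}$, $\mathcal{S}_{q}$ after the algebraic splitting used in Proposition \ref{pro:strict-1}), we conclude $\mathcal{G}_\epsilon(\nabla u_\epsilon^{(m)},\nabla u_\epsilon)\to 0$.

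Finally I would invoke Lemma \ref{le:cont-convergence} (or directly Proposition \ref{pro:strict-1} together with Lemma \ref{le:cont-emb-1}): from $\mathcal{G}_\epsilon(\nabla u_\epsilon^{(m)},\nabla u_\epsilon)\to 0$ it follows that $\int_{Q_T}|\nabla(u_\epsilon^{(m)}-u_\epsilon)|^{\underline{s}(z)}\,dz\to 0$, i.e. $\nabla u_\epsilon^{(m)}\to\nabla u_\epsilon$ in the modular, hence in the norm of $L^{\underline{s}(\cdot)}(Q_T)$ by \eqref{eq:conv-modular}. Norm convergence in $L^{\underline{s}(\cdot)}(Q_T)$ yields, along a further subsequence, convergence almost everywhere in $Q_T$, which is the claim. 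The main obstacle is the admissibility and handling of the test function $\xi=u_\epsilon^{(m)}-u_\epsilon$: one must justify that $u_\epsilon\in\mathbb{W}_{\overline{s}(\cdot)}(Q_T)$ so that $\xi$ lies in the correct space, and then pass carefully through the dense family $\bigcup_k\mathcal{N}_k$ exactly as in the proof of Theorem \ref{th:exist-reg}, controlling the error terms by the uniform estimates \eqref{eq:ineq-high}, \eqref{eq:ineq-high-1}; the rest is the by-now-routine Minty–Browder monotonicity argument combined with the embeddings of Section \ref{sec:prelim}.
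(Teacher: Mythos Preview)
Your approach is correct and is essentially the paper's monotonicity argument; the conclusion via Lemma~\ref{le:cont-convergence} is identical. The one point that needs more care than your parenthetical suggests is the admissibility of the test function: identity \eqref{eq:ident-m} for $u_\epsilon^{(m)}$ holds only against $\xi\in\mathcal{N}_m$, so $\xi=u_\epsilon^{(m)}-u_\epsilon$ is not directly allowed, and approximating $u_\epsilon$ by an element of $\mathcal{N}_k$ with $k>m$ does not help. What is required is a \emph{diagonal} choice $w_m\in\mathcal{N}_m$ with $w_m\to u_\epsilon$ strongly in $\mathbb{W}_{\overline{s}(\cdot)}(Q_T)$; the paper produces such $w_m$ via Mazur's lemma (density of $\bigcup_m\mathcal{N}_m$ would do as well), then tests with $u_\epsilon^{(m)}-w_m$ and controls the error $\nabla(u_\epsilon-w_m)$ by \eqref{eq:ineq-high-1}. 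Once this is inserted, your remaining steps go through. A minor structural difference: where you subtract the term $\int_{Q_T}\mathcal{F}^{(0,0)}_\epsilon(z,\nabla u_\epsilon)\nabla u_\epsilon\cdot\nabla(u_\epsilon^{(m)}-u_\epsilon)\,dz$ and use weak convergence of $\nabla u_\epsilon^{(m)}$, the paper instead also tests the limit equation \eqref{eq:def-reg} with $u_\epsilon^{(m)}$ and with $w_m$ and combines the four identities into \eqref{eq:start}; the two organizations are equivalent.
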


\begin{proof}
The sequence $\{u_\epsilon^{(m)}\}$ possesses the convergence properties \eqref{eq:conv}. It follows from the weak convergence $\nabla u_\epsilon^{(m)}\rightharpoonup \nabla u_\epsilon$ in $L^{\overline{s}(\cdot)}(Q_T)$, the strong convergence $u_\epsilon^{(m)}\to u_{\epsilon}$ in $L^{2}(Q_T)$, and the Mazur Lemma, see \cite[Corollary 3.8, Chapter 3]{Brezis-book}, that there exists a sequence $\{v_\epsilon^{(m)}\}$ such that $v_\epsilon^{(m)}\in \mathcal{N}_m$, each $v_\epsilon^{(m)}$ is a convex combination of $
\{u_\epsilon^{(1)},u_\epsilon^{(2)},\ldots,u_\epsilon^{(m)}\}$, and

\begin{equation}
\label{eq:conv-hull}
\text{$v_\epsilon^{(m)}\to u_{\epsilon}$ in $\mathbb{W}_{\overline{s}(\cdot)}(Q_T)$}.
\end{equation}
Let us define $w_m\in \mathcal{N}_m$ as follows:

\[
\|  u_\epsilon-w_m\|  _{\mathbb{W}_{\overline{s}(\cdot)}(Q_T)}= \operatorname{dist}(u_\epsilon,\mathcal{N}_m)\equiv \min\{\|  u_\epsilon-w\|  _{\mathbb{W}_{\overline{s}(\cdot)}(Q_T)}: \,w\in \mathcal{N}_m\}.
\]
Because of \eqref{eq:conv-hull} such $w_m$ exists and

\begin{equation}
\label{eq:distance}
\operatorname{dist}(u_\epsilon,\mathcal{N}_m)=\|  u_\epsilon-w_m\|  _{\mathbb{W}_{\overline{s}(\cdot)}(Q_T)}\leq \|  u_\epsilon-v_\epsilon^{(m)}\|  _{\mathbb{W}_{\overline{s}(\cdot)}(Q_T)}\to 0\quad \text{as $m\to \infty$}.
\end{equation}
By the properties of the modular and the strong convergence $u_\epsilon^{(m)}\to u_{\epsilon}$ in $L^{2}(Q_T)$ we also have

\begin{equation}
\label{eq:distance-mod}
\begin{split}
& \|  u_\epsilon^{(m)}-w_m\|  _{2,Q_T}^2 +\int_{Q_T}\vert \nabla (u_\epsilon - w_m)\vert ^{\overline{s}(z)}\,dz
\\
& \leq 2\|  u_\epsilon^{(m)}-u_\epsilon\|  ^2_{2,Q_T}+2\|  u_{\epsilon}-w_m\|  ^2_{2,Q_T}+ \int_{Q_T}\vert \nabla (u_\epsilon - w_m)\vert ^{\overline{s}(z)}\,dz
\to 0
\end{split}
\end{equation}
as $m\to \infty$. Gathering \eqref{eq:def-reg} and \eqref{system} with the test function $u_\epsilon^{(m)} \in \mathcal{N}_m$  we obtain the equality

\[
\begin{split}
\int_{Q_T} & \left(\mathcal{F}_{\epsilon}^{(0,0)}(z,\nabla u_\epsilon^{(m)})\nabla u_\epsilon^{(m)}-\mathcal{F}_{\epsilon}^{(0,0)}(z,\nabla u_\epsilon)\nabla u_{\epsilon}\right)\nabla u_\epsilon^{(m)}\,dz
\\
&
=-\int_{Q_T}((u_\epsilon^{(m)})_t-u_{\epsilon t})u_\epsilon^{(m)}\,dz,
\end{split}
\]
which can be written in the form

\begin{equation}
\label{eq:defs}
\begin{split}
\int_{Q_T} & \left(\mathcal{F}_{\epsilon}^{(0,0)}(z,\nabla u_\epsilon^{(m)})\nabla u_\epsilon^{(m)}-\mathcal{F}_{\epsilon}^{(0,0)}(z,\nabla u_\epsilon)\nabla u_{\epsilon}\right)\nabla (u_\epsilon^{(m)}-u_\epsilon)\,dz
\\
& =-\int_{Q_T}((u_\epsilon^{(m)})_t-u_{\epsilon t})u_\epsilon^{(m)}\,dz
\\
&\quad
- \int_{Q_T} \left(\mathcal{F}^{(0,0)}_{\epsilon}(z,\nabla u_\epsilon^{(m)})\nabla u_\epsilon^{(m)}-\mathcal{F}^{(0,0)}_{\epsilon}(z,\nabla u_\epsilon)\nabla u_{\epsilon}\right)\nabla u_\epsilon\,dz.
\end{split}
\end{equation}
Taking $w_m\in \mathcal{N}_m$ for the test-function in \eqref{eq:def-reg}, \eqref{system}, we find that

\begin{equation}
\label{eq:test-w}
\begin{split}
 \int_{Q_T} & \left(\mathcal{F}^{(0,0)}_{\epsilon}(z,\nabla u_\epsilon^{(m)})\nabla u_\epsilon^{(m)}-\mathcal{F}^{(0,0)}_{\epsilon}(z,\nabla u_\epsilon)\nabla u_{\epsilon}\right)\nabla w_m\,dz
\\
& \quad
+\int_{Q_T} ((u_\epsilon^{(m)})_t-u_{\epsilon t})w_m\,dz=0.
\end{split}
\end{equation}
Adding \eqref{eq:test-w} to the right-hand side of \eqref{eq:defs} and using notation \eqref{eq:prelim-notation}, we obtain the equality

\begin{equation}
\label{eq:start}
\begin{split}
& \mathcal{G}_\epsilon(\nabla u_\epsilon^{(m)},\nabla u_\epsilon)
= -\int_{Q_T} ((u_\epsilon^{(m)})_t-u_{\epsilon t})(u_\epsilon^{(m)}-w_m)\,dz
\\
&
\qquad +  \int_{Q_T} \left(\mathcal{F}^{(0,0)}_{\epsilon}(z,\nabla u_\epsilon^{(m)})\nabla u_\epsilon^{(m)}-\mathcal{F}^{(0,0)}_{\epsilon}(z,\nabla u_\epsilon)\nabla u_{\epsilon}\right)\nabla (u_\epsilon-w_m)\,dz.
\end{split}
\end{equation}
The first term on the right-hand side of \eqref{eq:start} tends to zero as $m\to \infty$ because $(u_{\epsilon}-u_{\epsilon}^{(m)})_t$ are uniformly bounded in $L^2(Q_T)$ and $\|  u_\epsilon^{(m)}-w_m\|  _{2,Q_T}\to 0$ by the choice of $w_m$. By \eqref{eq:ineq-high-1} and due to the choice of $w_m$, the second term of \eqref{eq:start} is bounded by $C\|  \nabla u_{\epsilon}-\nabla w_m\|  _{\overline{s}(\cdot),Q_T}$ and also tends to zero as $m\to \infty$. Hence, $\mathcal{G}_{\epsilon}(\nabla u_\epsilon^{(m)},\nabla u_\epsilon)\to 0$ as $m\to \infty$. It follows now from Lemma \ref{le:cont-convergence} that

\[
\int_{Q_T}\vert \nabla (u_{\epsilon}^{(m)}-u_\epsilon)\vert ^{\underline{s}(z)}\,dz\to 0\quad \text{when $m\to \infty$.}
\]
By Riesz-Fischer Theorem $\nabla u^{(m)}_\epsilon\to \nabla u_{\epsilon}$ a.e. in $Q_T$ (up to a subsequence).
\end{proof}

\subsection{Second-order regularity}
\begin{theorem}
\label{th:flux-a.e.}
Let the conditions of Theorem \ref{th:exist-reg} hold. Then:

\begin{itemize}
\item[{\rm (i)}]  $(\mathcal{F}_{\epsilon}^{(0,0)})^\frac{1}{2}(z,\nabla u_{\epsilon})D_iu_{\epsilon}\in L^2(0,T;W^{1,2}(\Omega))$, $i=1,2,\ldots,N$, and

\[
\|  (\mathcal{F}_{\epsilon}^{(0,0)})^{\frac{1}{2}}(z,\nabla u_{\epsilon}) D_iu_{\epsilon}\|  _{L^2(0,T;W^{1,2}(\Omega))}\leq M,\quad i=1,2,\ldots,N,
\]
with an independent of $\epsilon$ constant $M$;

\item[{\rm (ii)}]
    $D_{ij}^2u_\epsilon\in L^{\underline{s}(\cdot)}_{loc}(Q_T\cap \{z:\,\max\{p(z), q(z)\}< 2\})$, $i,j=1,2,\ldots,N$, and

    \[
    \sum_{i,j=1}^{N}\|  D_{ij}^2u_\epsilon\|  _{\underline{s}(\cdot),Q_T\cap \{z:\,\max\{p(z), q(z)\}< 2\}}\leq M'
    \]
    with an independent of $\epsilon$ constant $M'$.
\end{itemize}
\end{theorem}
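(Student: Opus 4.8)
The plan is to deduce both assertions from the uniform second-order estimate \eqref{eq:ineq-0} for the Galerkin approximations $u^{(m)}_\epsilon$, the global higher integrability \eqref{eq:ineq-high}, and the interpolation inequalities of Section~\ref{sec:interpolation}, and then to pass to the limit $m\to\infty$ with the help of the a.e.\ convergence $\nabla u^{(m)}_\epsilon\to\nabla u_\epsilon$ of Lemma~\ref{lem:pointwise-epsilon}.

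\emph{Proof of (i).} Since $u^{(m)}_\epsilon(\cdot,t)\in C^1(\overline{\Omega})\cap H^k_{\mathcal D}(\Omega)$, for a.e.\ $z\in Q_T$
\[
D_i\Bigl((\mathcal{F}^{(0,0)}_\epsilon(z,\nabla u^{(m)}_\epsilon))^{1/2}D_j u^{(m)}_\epsilon\Bigr)=(\mathcal{F}^{(0,0)}_\epsilon)^{1/2}D^2_{ij}u^{(m)}_\epsilon+\frac{D_j u^{(m)}_\epsilon}{2(\mathcal{F}^{(0,0)}_\epsilon)^{1/2}}\,D_i\mathcal{F}^{(0,0)}_\epsilon .
\]
Squaring and integrating over $Q_T$, the first term contributes $\int_{Q_T}\mathcal{F}^{(0,0)}_\epsilon(z,\nabla u^{(m)}_\epsilon)|(u^{(m)}_\epsilon)_{xx}|^2\,dz\le C$ by \eqref{eq:ineq-0}. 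The derivative $D_i\mathcal{F}^{(0,0)}_\epsilon$ contains: the ``curvature'' terms $\propto a(z)(p(z)-2)\beta^{(p(z)-4)/2}_\epsilon(\nabla u^{(m)}_\epsilon)\bigl(\nabla u^{(m)}_\epsilon\cdot\nabla D_i u^{(m)}_\epsilon\bigr)$ and the analogue with $b,q$; the terms carrying $\nabla a$, $\nabla b$; and the logarithmic terms $\propto a(z)\beta^{(p(z)-2)/2}_\epsilon(\nabla u^{(m)}_\epsilon)\ln\beta_\epsilon(\nabla u^{(m)}_\epsilon)$ and the analogue with $b,q$. After dividing by $(\mathcal{F}^{(0,0)}_\epsilon)^{1/2}$, multiplying by $D_j u^{(m)}_\epsilon$, squaring, and using $|\nabla u^{(m)}_\epsilon|^2\le\beta_\epsilon(\nabla u^{(m)}_\epsilon)$, $\mathcal{F}^{(0,0)}_\epsilon\ge a\beta^{(p-2)/2}_\epsilon$, $\mathcal{F}^{(0,0)}_\epsilon\ge b\beta^{(q-2)/2}_\epsilon$, and $a+b\ge\alpha$: the curvature terms are pointwise dominated by $C\,\mathcal{F}^{(0,0)}_\epsilon|(u^{(m)}_\epsilon)_{xx}|^2$; the $\nabla a$, $\nabla b$ terms are handled exactly as in Step~3 of the proof of Lemma~\ref{second}, which, via the gap condition \eqref{eq:gap-z}, reduces them to $C\bigl(1+\int_{Q_T}|\nabla u^{(m)}_\epsilon|^{\underline{s}(z)+r^\sharp-\varsigma}\,dz\bigr)$ for a suitable $\varsigma\in(0,r_\ast)$; and the logarithmic terms, after applying \eqref{eq:log} with a small exponent shift, are dominated by $C\int_{Q_T}\mathcal{F}_\epsilon^{(r_1-\varsigma, r_2-\varsigma)}(z,\nabla u^{(m)}_\epsilon)|\nabla u^{(m)}_\epsilon|^2\,dz$ with $r_1,r_2$ as in Theorem~\ref{th:integr-par}, which by \eqref{eq:principal2} is $\le\beta\int_{Q_T}\mathcal{F}^{(0,0)}_\epsilon|(u^{(m)}_\epsilon)_{xx}|^2\,dz+C$ for any $\beta>0$. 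Together with \eqref{eq:ineq-0}, \eqref{eq:ineq-high} and \eqref{secderiboun} this gives $\sum_{i,j}\|D_i((\mathcal{F}^{(0,0)}_\epsilon(z,\nabla u^{(m)}_\epsilon))^{1/2}D_j u^{(m)}_\epsilon)\|_{2,Q_T}\le M$ uniformly in $m$ and $\epsilon$. Since also $(\mathcal{F}^{(0,0)}_\epsilon)^{1/2}(z,\nabla u^{(m)}_\epsilon)D_j u^{(m)}_\epsilon$ is bounded in $L^2(Q_T)$ by \eqref{secderiboun}, a subsequence converges weakly in $L^2(0,T;W^{1,2}(\Omega))$; the a.e.\ convergence of the gradients together with continuity of $\xi\mapsto(\mathcal{F}^{(0,0)}_\epsilon(z,\xi))^{1/2}\xi$ identifies the weak limit as $(\mathcal{F}^{(0,0)}_\epsilon)^{1/2}(z,\nabla u_\epsilon)D_j u_\epsilon$, and weak lower semicontinuity of the norm yields~(i).

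\emph{Proof of (ii).} Fix a compact $K\subset Q_T\cap\{z:\max\{p(z),q(z)\}<2\}$, so $\overline{s}(z)\le 2-2\delta_0$ on $K$ for some $\delta_0>0$. Since $a+b\ge\alpha$, $K=K_a\cup K_b$ with $K_a=K\cap\{a\ge\alpha/2\}$, $K_b=K\cap\{b\ge\alpha/2\}$, and on $K_a$ one has $\mathcal{F}^{(0,0)}_\epsilon(z,\xi)\ge\tfrac{\alpha}{2}\beta^{(p(z)-2)/2}_\epsilon(\xi)$. Covering $K$ by finitely many pieces on which the oscillation of $p$ and $q$ is small and applying the generalized H\"older inequality \eqref{eq:Holder} with exponents $2/\underline{s}(z)$ and $2/(2-\underline{s}(z))$ to the decomposition $|(u^{(m)}_\epsilon)_{xx}|^{\underline{s}(z)}=\bigl((\mathcal{F}^{(0,0)}_\epsilon)^{1/2}|(u^{(m)}_\epsilon)_{xx}|\bigr)^{\underline{s}(z)}(\mathcal{F}^{(0,0)}_\epsilon)^{-\underline{s}(z)/2}$, the integral $\int_{K_a}|(u^{(m)}_\epsilon)_{xx}|^{\underline{s}(z)}\,dz$ is estimated by the product of a power of $\int_{K_a}\mathcal{F}^{(0,0)}_\epsilon|(u^{(m)}_\epsilon)_{xx}|^2\,dz$ --- bounded by \eqref{eq:ineq-0} --- and a power of $\int_{K_a}(\mathcal{F}^{(0,0)}_\epsilon(z,\nabla u^{(m)}_\epsilon))^{-\underline{s}(z)/(2-\underline{s}(z))}\,dz$. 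On $K_a$ the last integrand is dominated by $C\,\beta^{\mu(z)}_\epsilon(\nabla u^{(m)}_\epsilon)$ with $2\mu(z)=\tfrac{\underline{s}(z)(2-p(z))}{2-\underline{s}(z)}\le\underline{s}(z)<\underline{s}(z)+r^\sharp$ (here $\underline{s}(z)\le p(z)$ is used), so by \eqref{eq:interchange} and the higher integrability \eqref{eq:ineq-high} this integral is finite and bounded uniformly in $m$, $\epsilon$; $K_b$ is treated identically with $q$ in place of $p$. Hence $\int_K|(u^{(m)}_\epsilon)_{xx}|^{\underline{s}(z)}\,dz\le C$ uniformly in $m$, $\epsilon$, and letting $m\to\infty$ (weak compactness in $L^{\underline{s}(\cdot)}(K)$ and the convergence $u^{(m)}_\epsilon\to u_\epsilon$ in the sense of distributions) gives $D^2_{ij}u_\epsilon\in L^{\underline{s}(\cdot)}(K)$ with a bound independent of $\epsilon$.

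I expect the main obstacle to be the estimate in part~(i) of the commutator term $D_j u^{(m)}_\epsilon\,D_i\mathcal{F}^{(0,0)}_\epsilon/(\mathcal{F}^{(0,0)}_\epsilon)^{1/2}$: absorbing, uniformly in $\epsilon$, the logarithmic contributions and the second-derivative contributions hidden inside $D_i\mathcal{F}^{(0,0)}_\epsilon$ is precisely where the interpolation machinery of Section~\ref{sec:interpolation} and the restriction $\max_{\overline{Q}_T}|p(z)-q(z)|<r_\ast$ are indispensable; once these are controlled, part~(ii) follows by the routine H\"older argument above, based on the a priori bound \eqref{eq:ineq-0} and the higher integrability \eqref{eq:ineq-high}.
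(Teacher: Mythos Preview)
Your proof is correct and follows essentially the same strategy as the paper's: for (i) you expand $D_i\bigl((\mathcal{F}^{(0,0)}_\epsilon)^{1/2}D_j u^{(m)}_\epsilon\bigr)$, control the curvature, coefficient-gradient, and logarithmic pieces exactly as in the proof of Lemma~\ref{second} and via Theorem~\ref{th:integr-par}, and pass to the limit using Lemma~\ref{lem:pointwise-epsilon}; the paper additionally invokes the uniform $L^{2+\delta}$ bound from \eqref{eq:grad-high-eps} and Vitali's theorem to upgrade the a.e.\ convergence of $(\mathcal{F}^{(0,0)}_\epsilon)^{1/2}\nabla u^{(m)}_\epsilon$ to strong $L^2$ convergence before identifying the distributional derivatives, which makes the limit passage slightly cleaner but is not essential. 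For (ii) you use the generalized H\"older inequality with exponents $2/\underline{s}(\cdot)$ and $2/(2-\underline{s}(\cdot))$ and the splitting $K=K_a\cup K_b$, whereas the paper uses Young's inequality and the pointwise bound $a|D^2u|^p\le C_1\beta_\epsilon^{p/2}+C_2 a\beta_\epsilon^{(p-2)/2}|D^2u|^2$; both routes reduce to the same two uniform estimates \eqref{eq:ineq-0} and \eqref{eq:ineq-high}, and your observation $\underline{s}(z)(2-p(z))/(2-\underline{s}(z))\le \underline{s}(z)$ (because $\underline{s}\le p$) is exactly what makes the H\"older route close.
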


\begin{proof} (i)
The almost everywhere convergence $\nabla u_{\epsilon}^{(m)}\to \nabla u_{\epsilon}$ in $Q_T$ implies

\begin{equation}
\label{eq:flux-conv-m}
(\mathcal{F}_\epsilon^{(0,0)})^{\frac{1}{2}}(z,\nabla u_{\epsilon}^{(m)})\nabla u_{\epsilon}^{(m)}\to (\mathcal{F}_\epsilon^{(0,0)})^{\frac{1}{2}}(z,\nabla u_{\epsilon})\nabla u_{\epsilon}\quad \text{a.e. in $Q_T$}.
\end{equation}
Due to the uniform estimates \eqref{eq:ineq-0}, \eqref{eq:ineq-high}, and inequality \eqref{eq:log}, for every $i,j=1,2,\ldots,N$
\[
\begin{split}
& \left\|  D_i \left((\mathcal{F}_{\epsilon}^{(0,0)})^{\frac{1}{2}}(z,\nabla u_{\epsilon}^{(m)}) D_ju_{\epsilon}^{(m)}\right)\right\|  _{2,Q_T}^{2}
\leq C\int_{Q_T}\mathcal{F}^{(0,0)}_{\epsilon}(z,\nabla u_{\epsilon}^{(m)})\vert u_{\epsilon xx}^{(m)}\vert ^2\,dz
\\
& \qquad
+C'\int_{Q_T}\left((\epsilon^2+\vert \nabla u_{\epsilon}^{(m)}\vert ^2)^{\frac{p(z)}{2}}+(\epsilon^2+\vert \nabla u_{\epsilon}^{(m)}\vert ^2)^{\frac{q(z)}{2}}\right)\vert \ln(\epsilon^2+\vert \nabla u_{\epsilon}^{(m)}\vert ^2)\vert \,dz
\\
& \qquad \leq  C''\left(1+ \int_{Q_T}\vert \nabla u_{\epsilon}^{(m)}\vert ^{\overline{s}(z)+\mu}\,dz\right) \leq C'''\left(1+ \int_{Q_T}\vert \nabla u_{\epsilon}^{(m)}\vert ^{\underline{s}(z)+r}\,dz\right)\leq M
\end{split}
\]
with some $r \in (r_\ast%\gamma^*%
, r^{\sharp})$ and a constant $M$ depending on
the norm of $u_0$ in $W^{1,\mathcal{H}}_0(\Omega)$, the norm of $f$ in $L^{2}(0,T;W^{1,2}_0(\Omega))$, the constants in conditions \eqref{eq:a-b}, and $N$, $p^\pm$, $q^\pm$. When estimating the terms with the logarithmic growth, we used \eqref{eq:log} with $\mu$ and $r$ chosen according to the inequalities $0<\mu<r-r_\ast/2%\gamma^\ast
$, $%\gamma^\ast
r_\ast<r<r^\sharp$ and
\[
\overline{s}(z) - \underline{s}(z) + \mu < %\gamma^*
r_\ast+ \mu \leq r < r^\sharp .
\]
Therefore, there is a subsequence $\{u_\epsilon^{(m_k)}\}$ (we may assume that it coincides with the whole sequence) and functions $\Theta_{ij}\in L^{2}(Q_T)$  such that

\[
D_i \left((\mathcal{F}_\epsilon^{(0,0)})^{\frac{1}{2}}(z,\nabla u_{\epsilon}^{(m)})D_ju_\epsilon^{(m)}\right) \rightharpoonup \Theta_{ij}\in L^{2}(Q_T)\quad \text{as $m\to \infty$}.
\]
The uniform global higher integrability of the gradients \eqref{eq:grad-high-eps} implies the existence of $\delta>0$ such that

\[
(\mathcal{F}_{\epsilon}^{(0,0)})^{\frac{1}{2}}(z,\nabla u_{\epsilon}^{(m)}) D_ju_{\epsilon}^{(m)}\quad \text{are uniformly bounded in $L^{2+\delta}(Q_T)$}.
\]
By Lemma \ref{lem:pointwise-epsilon} $(\mathcal{F}_{\epsilon}^{(0,0)})^{\frac{1}{2}}(z,\nabla u_{\epsilon}^{(m)}) D_ju_{\epsilon}^{(m)}$ converge pointwise to $(\mathcal{F}_{\epsilon}^{(0,0)})^{\frac{1}{2}}(z,\nabla u_{\epsilon}) D_ju_{\epsilon}$. It follows then from the Vitali convergence theorem that

\[
\text{$(\mathcal{F}_\epsilon^{(0,0)})^{\frac{1}{2}}(z,\nabla u_{\epsilon}^{(m)}) D_ju_{\epsilon}^{(m)}\to (\mathcal{F}_\epsilon^{(0,0)})^{\frac{1}{2}}(z,\nabla u_{\epsilon}) D_ju_{\epsilon}$ in $L^{2}(Q_T)$}.
\]
For every $\psi\in C^{\infty}(Q_T)$ with $\operatorname{supp}\psi \Subset Q_T$ and $i,j=1,\ldots,N$

\[
\begin{split}
& \left(D_i \left((\mathcal{F}_\epsilon^{(0,0)})^{\frac{1}{2}}(z,\nabla u_{\epsilon}^{(m)}) D_ju_{\epsilon}^{(m)}\right),\psi\right)_{2,Q_T}
\\
&
= - \left((\mathcal{F}_\epsilon^{(0,0)})^{\frac{1}{2}}(z,\nabla u_{\epsilon}^{(m)}) D_ju_{\epsilon}^{(m)},D_i\psi\right)_{2,Q_T}
\to  - \left((\mathcal{F}_{\epsilon}^{(0,0)})^{\frac{1}{2}}(z,\nabla u_{\epsilon}) D_ju_{\epsilon},D_i\psi\right)_{2,Q_T}
\end{split}
\]
as $m\to \infty$. Thus, it is necessary that

\[
\Theta_{ij}=D_i\left((\mathcal{F}_\epsilon^{(0,0)})^{\frac{1}{2}}(z,\nabla u_{\epsilon}) D_ju_{\epsilon}\right)\in L^{2}(Q_T)\quad \text{and}\quad \|  \Theta_{ij}\|  _{2,Q_T}^2\leq M.
\]

\medskip

(ii) Let us denote $\mathcal{D}_{p,q}=Q_T\cap \{z:\,\max\{p(z), q(z)\} < 2\}$ and take a set $\mathcal{B} \Subset \mathcal{D}_{p,q}$. By Young's inequality, \eqref{gradbound}, \eqref{eq:ineq-0} and due to the assumption $a(z) + b(z) \geq \alpha>0$, for every $i,j=1,2,\ldots,N$

\begin{equation}
\label{eq:int-bound}
\begin{split}
& \int_{\mathcal{B}}\vert D^2_{ij}u_{\epsilon}^{(m)}\vert ^{\underline{s}(z)}\,dz
\\
& \qquad \leq C\left( 1 + \int_{\mathcal{B}}a(z)\vert D^2_{ij}u_{\epsilon}^{(m)}\vert ^{p(z)}\,dz + \int_{\mathcal{B}}b(z)\vert D^2_{ij}u_{\epsilon}^{(m)}\vert ^{q(z)}\,dz\right) \\
\end{split}
\end{equation}
with a constant $C'$ independent of $\epsilon$ and $m$. Using the Young inequality we estimate the first integral on the right-hand side with the help of the inequality

\[
\begin{split}
a\vert D_{ij}^{2}u_{\epsilon}^{(m)}\vert ^{p} & =\dfrac{a^{1-\frac{p}{2}}}{(\epsilon^2+\vert \nabla u_\epsilon^{(m)}\vert ^2)^{\frac{p-2}{2}\frac{p}{2}}}\left(a(\epsilon^2+\vert \nabla u_\epsilon^{(m)}\vert ^2)^{\frac{p-2}{2}}\vert D_{ij}^{2}u_\epsilon^{(m)}\vert ^2\right)^{\frac{p}{2}}
\\
& \leq C_1 (\epsilon^2+\vert \nabla u_\epsilon^{(m)}\vert ^2)^{\frac{p}{2}} + C_2 a(\epsilon^2+\vert \nabla u_\epsilon^{(m)}\vert ^2)^{\frac{p-2}{2}}\vert D_{ij}^{2}u_\epsilon^{(m)}\vert ^2.
\end{split}
\]
The same inequality with $p$ and $a$ replaced by $q$ and $b$ is applied to the second integral. By virtue of \eqref{eq:grad-high-eps} and \eqref{eq:ineq-0} the right-hand side of \eqref{eq:int-bound} is bounded uniformly with respect to $m$ and $\epsilon$ by a constant $C$. It follows that there exist $\theta_{ij} \in L^{\underline{s}(\cdot)}(\mathcal{D}_{p,q})$ such that $D^{2}_{ij}u_{\epsilon}^{(m)}\rightharpoonup \theta_{ij}$ in $L^{\underline{s}(\cdot)}(\mathcal{B})$ (up to a subsequence). Since $\nabla u_\epsilon^{(m)}\rightharpoonup \nabla u_{\epsilon}$ in $L^{r(\cdot)}(Q_T)$ with $r(z)= \max\{2, \overline{s}(z)\}$, then for every $\psi\in C^{\infty}_0(\mathcal{B})$
\[
(\theta_{ij},\psi)_{2,Q_T}=\lim_{m\to \infty}(D^{2}_{ij}u^{(m)}_\epsilon,\psi)_{2,Q_T}=- \lim_{m\to \infty}(D_{i}u^{(m)}_\epsilon,D_j\psi)_{2, Q_T}=(D_iu_\epsilon,D_j\psi)_{2,Q_T}.
\]
It follows that $\theta_{ij}=D^{2}_{ij}u_{\epsilon}$, and $\|  D^{2}_{ij}u_{\epsilon}\|  _{\underline{s}(\cdot),\mathcal{B}}\leq C$ by the lower semicontinuity of the modular.
\end{proof}

\subsection{Regularized problem with $\partial\Omega\in C^2$}

Let $\partial\Omega\in C^2$. By Proposition \ref{pro:density-2} and due to the density of $C^{\infty}([0,T];C_0^{\infty}(\Omega)$ in $L^2(0,T;W_0^{1,2}(\Omega))$ there exist sequences $\{v_{0\delta}\}$, $\{f_\delta\}$ with the following properties:

\[
    \begin{split}
    & v_{0\delta}\in C_0^{\infty}(\Omega),\quad \text{$v_{0\delta}\to u_0$ in $W^{1,\mathcal{H}}_0(\Omega)$},\quad \operatorname{supp}v_{0\delta}\Subset \Omega,
    \\
    & f_\delta\in C^{\infty}([0,T];C_0^{\infty}(\Omega)),\quad \text{$f_\delta\to f$ in $L^2(0,T;H^1_0(\Omega))$},
    \\
    &\operatorname{supp}f_\delta(\cdot,t)\Subset \Omega\;\;\text{for all $t\in (0,T)$}.
    \end{split}
    \]
    Let $d(x)=\operatorname{dist}(x,\partial \Omega)\equiv \inf\{\vert x-y\vert :\;y\in \partial\Omega\}$ be the distance from the point $x\in \Omega$ to the boundary. By \cite[Lemma 14.16]{GT} there exists $\mu>0$ such that $d(x)\in C^2(\Gamma_\mu)$, where $\Gamma_\mu=\{x\in \overline{\Omega}: d(x)<\mu\}$. For $x\in \Gamma_{\mu/2}\setminus \Gamma_{\mu/4}=\{x\in \Omega:\,\mu/4<d(z)<\mu/2\}$ and $0<\sigma<\mu/4$ we consider the mollified distance $d_\sigma(x)=d(y)\star\phi_\sigma(x-y)$ where $\phi_\sigma(\cdot)$ denotes the Friedrichs mollifier. Since $d(x)\in C^2(\Gamma_\mu)$, it follows from the well known properties of the mollifier that for $x\in \Gamma_\beta\setminus \Gamma_{\gamma}$, $0<\sigma<\gamma<\beta$, and the multi-index $\alpha$, $0\leq \vert \alpha\vert \leq 2$,
    \[
    \vert D^\alpha_xd_\sigma(x)-D_x^{\alpha}d(x)\vert \leq \max\{\vert D^\alpha_xd(x)-D_y^{\alpha}d(y)\vert :\,\vert x-y\vert <\sigma<\gamma\}\to 0
    \]
    as $\sigma\to 0$. Moreover, $d_\sigma(x)\in C^{\infty}\left(\Gamma_\beta\setminus \Gamma_\gamma\right)$. Let us take the sequence of disjoint intervals $J_k=(a_k,b_k)$ with the endpoints $a_k=\mu2^{-(2k+1)}$, $b_k=\mu 2^{-2k}$, $k\in \mathbb{N}$, and the centers $c_k=(a_k+b_k)/2$. Let us also take the sequence of co-centered intervals
    \[
    I_k=\left(c_k-\dfrac{L_k}{4},c_k+\dfrac{L_k}{4}\right)\Subset J_k,\qquad L_k=b_k-a_k.
    \]
    Let $0<\sigma<c_k-L_k/4$. By Sard's theorem the set of critical values of $d_\sigma (x)$ in $I_k$ has zero measure. It follows that for every sufficiently large $k\in \mathbb{N}$ we may find $\delta_k\in I_k$ and $\sigma_k>0$ so small that
    \[
    \Sigma_{\delta_k}=\{x\in \Omega:\,d_{\sigma_k}(x)=\delta_k\}\subset \{x\in \Omega:\,a_k<d(x)<b_k\}.
    \]
    The surfaces $\Sigma_{\delta_k}$ have no common points, are $C^\infty$-smooth, and their  parametrizations are uniformly bounded in $C^2$. The domains $\Omega_{\delta_k}$ bounded by $\Sigma_{\delta_k}$ form an expanding sequence covering $\Omega$ when $k\to \infty$.

    Let $\{u_{\epsilon,\delta_k}\}$ be the sequence of strong solutions of problem \eqref{eq:reg-prob} in the cylinders $Q^{(\delta_k)}_T=\Omega_{\delta_k}\times (0,T)$ with the data $v_{0\delta_k}$, $f_{\delta_k}$, $\operatorname{supp} v_{0\delta_k}\Subset \Omega_{\delta_k}$, $\operatorname{supp} f_{\delta_k}(\cdot,t)\Subset \Omega_{\delta_k}$ for a.e. $t\in (0,T)$. By $\widetilde u_{\epsilon,\delta_k}$ we denote the zero continuations of $u_{\epsilon,\delta_k}$ from $Q_T^{(\delta_k)}$ to the cylinder $Q_T$. By Theorems \ref{th:exist-reg}, \ref{th:flux-a.e.} the continued functions satisfy the uniform estimates

    \begin{equation}
    \label{eq:est-loc-loc}
    \begin{split}
   \mathrm{(i)} & \qquad  \|  \partial_t\widetilde u_{\epsilon,\delta_k}\|  _{2,Q_T}
    \\
    &
    \qquad\quad  +\operatorname{ess} \sup_{(0,T)}\int_{\Omega}\left(1+\mathcal{F}_\epsilon^{(0,0)}(z,\nabla \widetilde u_{\epsilon,\delta_k})\right)\vert \nabla \widetilde u_{\epsilon,\delta_k}\vert ^2\,dx \leq C_{1},
   \\
   \mathrm{(ii)} & \qquad \operatorname{ess} \sup_{(0,T)}\|  \widetilde u_{\epsilon,\delta_k}\|  ^2_{2,\Omega} + \int_{Q_T}\mathcal{F}_\epsilon^{(0,0)}(z,\nabla \widetilde u_{\epsilon,\delta_k})\vert \nabla \widetilde u_{\epsilon,\delta_k}\vert ^2\,dz\leq C_{2},
    \\
    \mathrm{(iii)}& \qquad \left\|  D_{x_i}\left(\left(\mathcal{F}_\epsilon^{(0,0)}(z,\nabla \widetilde u_{\epsilon,\delta_k})\right)^{\frac{1}{2}}D_{x_j}\widetilde u_{\epsilon,\delta_k}\right)
\right\|  _{2,Q_T}\leq C_{3},\quad
\\
\mathrm{(iv)} & \qquad
\int_{Q_{T}}\vert \nabla \widetilde u_{\epsilon,\delta_k}\vert ^{\underline{s}(z)+r}\,dz\leq C_4,\qquad r\in (0,r^\sharp),
    \end{split}
    \end{equation}
    with constants $C_i$ depending on the data, but independent of $\epsilon$ and $\delta_k$; $C_1$, $C_3$, $C_4$ depend also on the $C^2$-norm of the parametrization of $\partial\Omega$. Using \eqref{eq:est-loc-loc} we may choose a subsequence with the following convergence properties:

    \[
    \begin{split}
    & \text{$\widetilde u_{\epsilon,\delta_k}\to u_{\epsilon}$ $\star$-weakly in $L^{\infty}(0,T;L^2(\Omega))$},
    \quad
    \text{$\partial_t\widetilde u_{\epsilon,\delta_k}\rightharpoonup \partial_tu_{\epsilon}$ in $L^2(Q_T)$},
    \\
    &
    \text{$\nabla \widetilde u_{\epsilon,\delta_k}\rightharpoonup \nabla u_\epsilon$ in $L^{\underline{s}(\cdot)+r}$},
    \quad \text{$a(z)\vert \nabla \widetilde u_{\epsilon,\delta_k}\vert ^{p-2}\nabla \widetilde u_{\epsilon,\delta_k} \rightharpoonup \mathcal{A}_{1,\epsilon}$ in $L^{p'(\cdot)}(Q_T)$},
    \\
    &
    \text{$b(z)\vert \nabla \widetilde u_{\epsilon,\delta_k})\vert ^{q-2}\nabla \widetilde u_{\epsilon,\delta_k} \rightharpoonup \mathcal{A}_{2,\epsilon}$ in $L^{q'(\cdot)}(Q_T)$}.
    \end{split}
    \]
    To identify the limits $\mathcal{A}_{i,\epsilon}$ and to prove that $u_{\epsilon}$ is a solution of problem \eqref{eq:reg-prob} in $Q_T$ we imitate the proof of Theorem \ref{th:exist-reg} and use Proposition \ref{pro:density-2}. The proof of uniqueness does not require any changes.

    The higher integrability of the gradient follows immediately from \eqref{eq:est-loc-loc} (iv).  To prove the second-order regularity we need the pointwise convergence of $\nabla \widetilde u_{\epsilon,\delta_k}$ in $Q_T$. We mimic the proof of Lemma 7.1. By Mazur's Lemma there is a sequence of convex combinations of $\{\widetilde u_{\epsilon,\delta_1}, \widetilde u_{\epsilon,\delta_2}, \dots,\widetilde u_{\epsilon,\delta_k}\}$ that converges to $u_\epsilon$ strongly in $\mathbb{W}_{\overline{s}(\cdot)}(Q_T)$. Let us denote this sequence by $\{w_{\delta_k}\}$, $\operatorname{supp} w_{\delta_k}\subseteq Q_{T}^{(\delta_k)}$. Let $W_{\delta_k}\in \mathbb{W}_{\overline{s}(\cdot)}(Q_T)$ with $\operatorname{supp}W_{\delta_k}\subseteq Q_T^{(\delta_k)}$ be defined as follows:

\[
\begin{split}
& \|  W_{\delta_k}-u_\epsilon\|  _{\mathbb{W}_{\overline{s}(\cdot)}(Q_T)}
\\
&
=\inf\left\{\|  w-u_\epsilon\|  _{\mathbb{W}_{\overline{s}(\cdot)}(Q_T)}\,\vert \, w\in \mathbb{W}_{\overline{s}(\cdot)}(Q_T), \ \operatorname{supp}w \subseteq Q_T^{(\delta_k)} \right\}\to 0\quad \text{as $\delta_k\to 0$}.
\end{split}
\]
In the identities

\[
\begin{split}
& \int_{Q_T} \left(\phi \partial_tu_\epsilon+\mathcal{F}_{\epsilon}^{(0,0)}(z,\nabla u_\epsilon)\nabla u_{\epsilon}\cdot\nabla \phi\right)\,dz=\int_{Q_T}f\phi\,dz\qquad \phi\in \mathbb{W}_{\overline{s}(\cdot)}(Q_T),
\\
& \int_{Q_T} \left(\phi \partial_t\widetilde u_{\epsilon,\mu}+\mathcal{F}_{\epsilon}^{(0,0)}(z,\nabla \widetilde u_{\epsilon,\mu})\nabla \widetilde u_{\epsilon,\mu}\cdot\nabla \phi\right)\,dz=\int_{Q_T}\widetilde f_{\mu}\phi\,dz \qquad \phi\in \mathbb{W}_{\overline{s}(\cdot)}(Q_T^{(\mu)})
\end{split}
\]
we may take for the test function $\phi=W_{\delta_k}-\widetilde u_{\epsilon,\mu}$ with $\delta_k \geq \mu$, which means that $\operatorname{supp}W_{\delta_k}\subseteq \operatorname{supp}\widetilde u_{\epsilon,\mu}$. Combining the results we obtain

\[
\begin{split}
& \mathcal{G}_{\epsilon}(\nabla u_\epsilon,\nabla \widetilde u_{\epsilon,\mu}) =
\int_{Q_T} \left(\mathcal{F}_{\epsilon}^{(0,0)}(z,\nabla u_{\epsilon})\nabla u_\epsilon - \mathcal{F}_{\epsilon}^{(0,0)}(z,\nabla \widetilde u_{\epsilon, \mu})\nabla \widetilde u_{\epsilon,\mu}\right)\cdot\nabla (u_\epsilon-\widetilde u_{\epsilon,\mu})\,dz
\\
& = \int_{Q_T}(f-\widetilde f_{\mu})(W_{\delta_k}-\widetilde u_{\epsilon,\mu}) - \int_{Q_T}(W_{\delta_k}-\widetilde u_{\epsilon,\mu})\partial_t(u_\epsilon-\widetilde u_{\epsilon,\mu})\,dz
\\
& \qquad -\int_{Q_T} \left(\mathcal{F}_{\epsilon}^{(0,0)}(z,\nabla u_{\epsilon})\nabla u_{\epsilon} - \mathcal{F}_{\epsilon}^{(0,0)}(z,\nabla \widetilde u_{\epsilon,\mu})\nabla \widetilde u_{\epsilon,\mu}\right)\cdot\nabla (W_{\delta_k}-u_\epsilon)\,dz.
\end{split}
\]
All terms on the right-hand side tend to zero as $\mu\to 0$. By virtue of Proposition \ref{pro:strict-1}, Lemma \ref{le:cont-emb-1} and Lemma \ref{le:cont-convergence} we have

\[
\int_{Q_T}\vert \nabla (u_\epsilon-\widetilde u_{\epsilon, \mu})\vert ^{\underline{s}(z)}\,dz\to 0\quad \text{and a.e. in $Q_T$ as $\mu\to 0$}.
\]
The second-order regularity follows now exactly as in the proof of Theorem \ref{th:flux-a.e.}. The above arguments are summarized in the following assertion.

\begin{theorem}
The assertions of Theorems \ref{th:exist-reg} and \ref{th:flux-a.e.} remain true for the domain $\Omega$ with $\partial \Omega\in C^2$.
\end{theorem}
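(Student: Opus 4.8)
The plan is to deduce the case $\partial\Omega\in C^2$ from the already-established case $\partial\Omega\in C^k$, $k\geq 2+[\frac{N}{2}]$ (Theorems \ref{th:exist-reg} and \ref{th:flux-a.e.}), by exhausting $\Omega$ from the interior with a sequence of smooth expanding subdomains $\Omega_{\delta_k}$ whose boundaries are uniformly bounded in $C^2$, and then passing to the limit in the regularized equation. To build such a family I would start from \cite[Lemma 14.16]{GT}, which gives $\mu>0$ with $d(x)=\dist(x,\partial\Omega)\in C^2(\Gamma_\mu)$ on the collar $\Gamma_\mu=\{d<\mu\}$; mollify $d$ on the dyadic annuli $J_k=(a_k,b_k)$, $a_k=\mu2^{-(2k+1)}$, $b_k=\mu2^{-2k}$, obtaining $d_\sigma\to d$ in $C^2$ on compact subsets of $\Gamma_\mu$ avoiding $\partial\Omega$; and apply Sard's theorem on the co-centred subintervals $I_k\Subset J_k$ to select regular values $\delta_k$ of $d_{\sigma_k}$ for suitably small $\sigma_k$. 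The level sets $\Sigma_{\delta_k}=\{d_{\sigma_k}=\delta_k\}$ are then $C^\infty$, pairwise disjoint, each contained in the annulus $\{a_k<d(x)<b_k\}$, and --- this is the point --- admit parametrizations uniformly bounded in $C^2$ because $d$ itself is $C^2$ up to $\partial\Omega$; the domains $\Omega_{\delta_k}$ they enclose exhaust $\Omega$.

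Next I would approximate the data by $v_{0\delta_k}\in C_0^\infty(\Omega)$ with $v_{0\delta_k}\to u_0$ in $W^{1,\mathcal{H}}_0(\Omega)$ (Proposition \ref{pro:density-2}) and $f_{\delta_k}\in C^\infty([0,T];C_0^\infty(\Omega))$ with $f_{\delta_k}\to f$ in $L^2(0,T;W^{1,2}_0(\Omega))$, arranged so that $\supp v_{0\delta_k}$ and $\supp f_{\delta_k}(\cdot,t)$ are compactly contained in $\Omega_{\delta_k}$; apply Theorems \ref{th:exist-reg}, \ref{th:flux-a.e.} in each smooth cylinder $Q_T^{(\delta_k)}=\Omega_{\delta_k}\times(0,T)$; and extend the solutions by zero to $\widetilde u_{\epsilon,\delta_k}$ on $Q_T$. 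The extended functions inherit the uniform bounds \eqref{eq:est-loc-loc}(i)--(iv) with constants independent of $\epsilon$ and $\delta_k$, precisely because the $C^2$-parametrizations of $\Sigma_{\delta_k}$ are uniformly controlled (these norms enter through the boundary term $P_{\partial\Omega}$ and the trace estimate of Theorem \ref{th:trace-main}). From these bounds I would extract a subsequence with $\widetilde u_{\epsilon,\delta_k}\to u_\epsilon$ $\star$-weakly in $L^\infty(0,T;L^2(\Omega))$, $\partial_t\widetilde u_{\epsilon,\delta_k}\rightharpoonup\partial_t u_\epsilon$ in $L^2(Q_T)$, $\nabla\widetilde u_{\epsilon,\delta_k}\rightharpoonup\nabla u_\epsilon$ in $L^{\underline{s}(\cdot)+r}(Q_T)$, and the two flux terms converging weakly to vectors $\mathcal{A}_{1,\epsilon}$, $\mathcal{A}_{2,\epsilon}$ in the appropriate conjugate Lebesgue spaces.

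The existence part is then finished as in Theorem \ref{th:exist-reg}: pass to the limit in the weak formulation of \eqref{eq:reg-prob}, identify $a\mathcal{A}_{1,\epsilon}+b\mathcal{A}_{2,\epsilon}=\mathcal{F}^{(0,0)}_\epsilon(z,\nabla u_\epsilon)\nabla u_\epsilon$ by the Minty--Browder monotonicity argument (Propositions \ref{pro:strict-monotone}, \ref{pro:strict-1}) together with the density of compactly supported test functions and Proposition \ref{pro:density-2}, and obtain uniqueness verbatim by testing the difference of two solutions against itself. For the qualitative part, the global higher integrability of the gradient is immediate from \eqref{eq:est-loc-loc}(iv) and lower semicontinuity of the modular. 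The second-order regularity requires strong $L^{\underline{s}(\cdot)}$-convergence of the gradients, which I would obtain by mimicking Lemma \ref{lem:pointwise-epsilon}: use Mazur's Lemma to produce convex combinations $w_{\delta_k}\to u_\epsilon$ strongly in $\mathbb{W}_{\overline{s}(\cdot)}(Q_T)$ with $\supp w_{\delta_k}\subseteq Q_T^{(\delta_k)}$, together with the element $W_{\delta_k}$ realizing the distance from $u_\epsilon$ to $\{w\in\mathbb{W}_{\overline{s}(\cdot)}(Q_T):\supp w\subseteq Q_T^{(\delta_k)}\}$; test the equations for $u_\epsilon$ and for $\widetilde u_{\epsilon,\mu}$ (with $\delta_k\geq\mu$, so $\supp W_{\delta_k}\subseteq\supp\widetilde u_{\epsilon,\mu}$) against $W_{\delta_k}-\widetilde u_{\epsilon,\mu}$, combine them, and show $\mathcal{G}_\epsilon(\nabla u_\epsilon,\nabla\widetilde u_{\epsilon,\mu})\to0$ as $\mu\to0$; then Lemma \ref{le:cont-convergence} yields $\nabla\widetilde u_{\epsilon,\mu}\to\nabla u_\epsilon$ in $L^{\underline{s}(\cdot)}(Q_T)$ and, up to a subsequence, a.e.\ in $Q_T$. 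With a.e.\ convergence of the gradients in hand, the proof of Theorem \ref{th:flux-a.e.} transfers unchanged --- Vitali's theorem for the $L^2$-convergence of $(\mathcal{F}^{(0,0)}_\epsilon)^{1/2}(z,\nabla u)D_j u$ and weak lower semicontinuity for the local $L^{\underline{s}(\cdot)}$-bound on $D^2_{ij}u$ over $\{z:\max\{p(z),q(z)\}<2\}$. I expect the main obstacle to be the construction of the exhausting family $\{\Omega_{\delta_k}\}$ with uniformly $C^2$-bounded boundaries: the constants in \eqref{eq:est-loc-loc}(i),(iii) would otherwise degenerate as $\delta_k\to0$, so the Sard-theorem selection of regular values of the \emph{mollified} distance --- rather than naive inner parallel sets, whose level surfaces need not even be $C^2$ --- is the genuinely delicate point of the reduction.
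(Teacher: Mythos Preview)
Your proposal is correct and follows the paper's own argument essentially step for step: the same mollified-distance construction with Sard's theorem on the dyadic intervals $J_k$, $I_k$ to produce the uniformly $C^2$-bounded smooth exhausting domains $\Omega_{\delta_k}$, the same zero-extension and uniform estimates \eqref{eq:est-loc-loc}, the same Minty--Browder identification of the flux, and the same Mazur-lemma device with the best-approximation elements $W_{\delta_k}$ to upgrade to strong $L^{\underline{s}(\cdot)}$-convergence of the gradients before transferring the proof of Theorem \ref{th:flux-a.e.}. Your closing remark correctly singles out the uniform $C^2$ control of $\Sigma_{\delta_k}$ as the only genuinely delicate ingredient.
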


\section{The degenerate problem}
\label{sec:proofs-main-results}
\subsection{Existence and uniqueness of strong solution: proof of Theorem \ref{th:main-result-1}}
 Let $\{u_\epsilon\}$ be the family of strong solutions of the regularized problems \eqref{eq:reg-prob} satisfying estimates \eqref{eq:est-strong-eps-1}. These uniform in $\epsilon$ estimates enable one to extract a sequence $\{u_{\epsilon_k}\}$ and find functions $u\in {\mathbb{W}_{\overline{s}(\cdot)}(Q_T)}$, $\mathcal{A}_1,\,\mathcal{A}_2 \in (L^{(\overline{s}(\cdot))^{'}}(Q_T))^N$ with the following properties:
\begin{equation}
\label{eq:conv-degenerate}
\begin{split}
& u_{\epsilon_k} \to u \quad \text{$\star$-weakly in $
L^\infty(0,T;L^2(\Omega))$}, \ \qquad \text{$u_{\epsilon_k t} \rightharpoonup u_{t}$ in
$L^2(Q_T)$},
\\
& \text{$\nabla u_{\epsilon_k} \rightharpoonup \nabla
u$  in $(L^{r(\cdot)}(Q_T))^N$} \quad \text{with} \ r(z)= \max\{2, \overline{s}(z)\},
\\
& \text{$\beta_{\epsilon_k}^{\frac{p(z)-2}{2}}(\nabla
u_{\epsilon_k}) \nabla
u_{\epsilon_k}\rightharpoonup \mathcal{A}_{1}$ in
$(L^{(\overline{s}(\cdot))^{'}}(Q_T))^N$,}
\\
& \text{$\beta_{\epsilon_k}^{\frac{q(z)-2}{2}}(\nabla
u_{\epsilon_k}) \nabla
u_{\epsilon_k}\rightharpoonup \mathcal{A}_{2}$ in
$(L^{(\overline{s}(\cdot))^{'}}(Q_T))^N$},
\end{split}
\end{equation}
where in third and fourth lines we make use of the uniform estimates \eqref{eq:est-strong-eps-1}. Moreover, $u\in C^0([0,T];L^2(\Omega))$. Each of $u_{\epsilon_k}$ satisfies the identity
\begin{equation}
\label{eq:ident-lim-k} \int_{Q_T} u_{\epsilon_k t} \xi \,dz +
\int_{Q_T} \mathcal{F}^{(0,0)}_{\epsilon_k}(z,\nabla u_{\epsilon_k})\nabla u_{\epsilon_k} \cdot \nabla \xi \,dz = \int_{Q_T} f \xi
\,dz
\end{equation}
for every $\xi \in \mathbb{W}_{\overline{s}(\cdot)}(Q_T)$, which yields

\begin{equation}
\label{eq:ident-prelim}
\int_{Q_T} u_{ t} \xi \,dz +
\int_{Q_T} (a(z) \mathcal{A}_1 + b(z) \mathcal{A}_2) \cdot \nabla \xi \,dz = \int_{Q_T} f \xi
\,dz.
\end{equation}
Identification of $\mathcal{A}_1$ and $\mathcal{A}_2$ is based on the monotonicity of the flux $\mathcal{F}_{\epsilon}^{(0,0)}(z,\xi)\xi$ with  $\epsilon\in [0,1)$, the argument is a literal repetition of the proof given in \cite[Theorem 2.1]{arora_shmarev2020}. We obtain the following equality:

\[
\int_{Q_T}\left(a(z) \mathcal{A}_{1}  + b(z) \mathcal{A}_{2}-\mathcal{F}_{0}^{(0,0)}(z,\nabla u)\right)\nabla u \cdot \nabla \phi\,dz=0\quad \forall \phi\in \mathbb{W}_{\overline{s}(\cdot)}(Q_T).
\]
Since $u\in C([0,T];L^{2}(\Omega))$, the initial condition is fulfilled by continuity. Estimates \eqref{eq:strong-est} follow from the uniform in $\epsilon$ estimates of Theorem \ref{th:exist-reg} and the lower semicontinuity of the modular. Uniqueness of a strong solution is an immediate consequence of the monotonicity of the flux.

\subsection{Continuity with respect to the data. Proof of Theorem \ref{th:energy}} Let $u$, $v$ be the strong solutions of problem \eqref{eq:main} with the data $\{u_0,f\}$ and $\{v_0,g\}$. The energy identity \eqref{eq:energy} follows if we take $u$ for the test function in \eqref{eq:def}. Let us take some $t,t+h\in [0,T]$. The function $w=u-v$ is an admissible test-function in identities \eqref{eq:def} for $u$ and $v$.  Combining these identities in the cylinder $\Omega\times (t,t+h)$ we obtain

\[
\begin{split}
\int_{t}^{t+h}\int_{\Omega}w_tw\,dz & + \int_{t}^{t+h}\int_{\Omega} \left(\mathcal{F}_0^{(0,0)}(z,\nabla u)\nabla u- \mathcal{F}_0^{(0,0)}(z,\nabla v)\nabla v\right)\cdot\nabla w\,dz
\\
&
= \int_{t}^{t+h}\int_{\Omega}(f-g)w\,dz.
\end{split}
\]
Let us divide this equality by $h$ and send $h\to 0$. By the Lebesgue differentiation theorem, for a.e. $t\in (0,T)$ each term has a limit, whence for a.e. $t\in (0,T)$

\begin{equation}
\label{eq:stab-1}
\begin{split}
\dfrac{1}{2}\dfrac{d}{dt}\left(\|  w\|  ^{2}_{2,\Omega}(t)\right) & +\int_{\Omega} \left(\mathcal{F}_0^{(0,0)}(z,\nabla u)\nabla u- \mathcal{F}_0^{(0,0)}(z,\nabla v)\nabla v\right)\cdot\nabla w\,dx
\\
&
= \int_{\Omega}(f-g)w\,dx.
\end{split}
\end{equation}
By Proposition \ref{pro:strict-monotone} the second term on the left-hand side is nonnegative. Dropping this term and applying the Cauchy inequality to the right-hand side we arrive at the differential inequality

\[
\dfrac{d}{dt}\left(\|  w\|  ^{2}_{2,\Omega}(t)\right) \leq \|  w(t)\|  ^2_{2,\Omega} + \|  (f-g)(t)\|  ^2_{2,\Omega},
\]
which can be integrated: for all $t\in [0,T]$

\begin{equation}
\label{eq:stab-2}
\begin{split}
\|  w(t)\|  _{2,\Omega}^2 & \leq {\rm e}^{t}\|  w(0)\|  ^2_{2,\Omega}+ \int_0^{t}{\rm e}^{\tau}\|  f-g\|  ^2_{2,\Omega}(\tau)\,d\tau
\\
& \leq {\rm e}^T\left(\|  w(0)\|  ^2_{2,\Omega}+\|  f-g\|  _{2,Q_T}^2\right).
\end{split}
\end{equation}
Let us revert to \eqref{eq:stab-1}, integrate it in $t$, and then drop the first nonnegative term in the resulting relation. In the notation \eqref{eq:prelim-notation}

\[
2\mathcal{G}_0(\nabla u-\nabla v)\leq \|  u_0-v_0\|  _{2,\Omega}^2+\|  f-g\|  ^2_{2,Q_T}+\|  w\|  _{2,Q_T}^2.
\]
According to \eqref{eq:stab-2}, the last term on the right-hand side tends to zero if the first two terms tend to zero. The assertion follows now from Proposition \ref{pro:strict-1} and Lemma \ref{le:cont-convergence}.

\subsection{Strong convergence of the gradients}

\begin{lemma}
\label{le:a-e-conv}
Let the conditions of Theorem \ref{th:main-result-1} be fulfilled and $\{u_\epsilon\}$ be the sequence of solutions of the regularized problems \eqref{eq:reg-prob}. Then $\nabla u_\epsilon \to \nabla u$ in $L^{\underline{s}(\cdot)}(Q_T)$ and a.e. in $Q_T$ (up to a subsequence).
\end{lemma}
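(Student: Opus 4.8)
The plan is to mimic the argument of Lemma~\ref{lem:pointwise-epsilon}, replacing the pair (Galerkin approximation, regularized solution) by the pair (regularized solution $u_\epsilon$, degenerate solution $u$) and using the monotonicity functional $\mathcal{G}_0$ — the case $\epsilon=0$ in \eqref{eq:prelim-notation} — in place of $\mathcal{G}_\epsilon$. A simplification relative to Lemma~\ref{lem:pointwise-epsilon} is that here both $u_\epsilon$ and $u$ belong to $\mathbb{W}_{\overline{s}(\cdot)}(Q_T)$, so $u_\epsilon-u$ is an admissible test function in both weak identities \eqref{eq:def-reg} and \eqref{eq:def}, and no auxiliary density sequences are needed.

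First I would insert $\phi=u_\epsilon-u$ into \eqref{eq:def-reg} written for $u_\epsilon$ and into \eqref{eq:def} written for $u$, subtract the two equalities, and use integration by parts in time together with $u_\epsilon(\cdot,0)=u(\cdot,0)=u_0$ to obtain
\[
\frac{1}{2}\|(u_\epsilon-u)(\cdot,T)\|_{2,\Omega}^2+\int_{Q_T}\left(\mathcal{F}_\epsilon^{(0,0)}(z,\nabla u_\epsilon)\nabla u_\epsilon-\mathcal{F}_0^{(0,0)}(z,\nabla u)\nabla u\right)\cdot\nabla(u_\epsilon-u)\,dz=0.
\]
Dropping the first term and adding and subtracting $\mathcal{F}_0^{(0,0)}(z,\nabla u_\epsilon)\nabla u_\epsilon$ under the integral gives
\[
\mathcal{G}_0(\nabla u_\epsilon,\nabla u)\leq -R_\epsilon,\qquad R_\epsilon:=\int_{Q_T}\left(\mathcal{F}_\epsilon^{(0,0)}(z,\nabla u_\epsilon)-\mathcal{F}_0^{(0,0)}(z,\nabla u_\epsilon)\right)\nabla u_\epsilon\cdot\nabla(u_\epsilon-u)\,dz.
\]
Since $\mathcal{G}_0\geq 0$ by Proposition~\ref{pro:strict-monotone} with $\epsilon=0$ and $a,b\geq 0$, it remains to show $R_\epsilon\to 0$ as $\epsilon\to 0$, whence $\mathcal{G}_0(\nabla u_\epsilon,\nabla u)\to 0$.

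To estimate $R_\epsilon$ I would split $Q_T$ according to $\{|\nabla u_\epsilon|\leq\epsilon\}$ and $\{|\nabla u_\epsilon|>\epsilon\}$. On the first set one has $\left|\left(\mathcal{F}_\epsilon^{(0,0)}(z,\xi)-\mathcal{F}_0^{(0,0)}(z,\xi)\right)\xi\right|\leq C(a^++b^+)(\epsilon^{p^--1}+\epsilon^{q^--1})$, which is $o(1)$ since $p^-,q^->\frac{2N}{N+2}\geq 1$; on the second set, writing $\beta_\epsilon^{\frac{p-2}{2}}(\xi)-|\xi|^{p-2}=|\xi|^{p-2}\big((1+\epsilon^2/|\xi|^2)^{\frac{p-2}{2}}-1\big)$ and using the elementary inequality $|(1+t)^\mu-1|\leq C_\mu t$ for $t\in[0,1]$, one gets a bound $\leq C\epsilon^{\gamma}\big(1+|\xi|^{\overline{s}(z)-2}\big)$ with some $\gamma=\gamma(p^\pm,q^\pm)>0$. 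Because the gap condition \eqref{eq:gap-z} forces $\overline{s}(z)-2<\underline{s}(z)-1$, Young's inequality with exponents $\underline{s}(z)$ and its conjugate, together with the uniform estimates \eqref{eq:est-strong-eps-1} and \eqref{eq:grad-high-eps} on $\nabla u_\epsilon$ and the higher integrability of $\nabla u$, reduce $R_\epsilon$ to a quantity bounded by $C\epsilon^{\gamma'}$, $\gamma'>0$ — these are exactly the type of pointwise inequalities collected in Section~\ref{sec:prelim}. Hence $\mathcal{G}_0(\nabla u_\epsilon,\nabla u)\to 0$; Lemma~\ref{le:cont-convergence} applied with $\epsilon=0$ then yields $\int_{Q_T}|\nabla(u_\epsilon-u)|^{\underline{s}(z)}\,dz\to 0$, i.e. $\nabla u_\epsilon\to\nabla u$ in $L^{\underline{s}(\cdot)}(Q_T)$, and the Riesz–Fischer theorem gives the a.e. convergence along a subsequence.

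The main obstacle is the control of $R_\epsilon$: one must verify that evaluating the \emph{degenerate} flux $\mathcal{F}_0^{(0,0)}$ at the gradient of the \emph{regularized} solution $u_\epsilon$ produces only a negligible error as $\epsilon\to 0$. This is precisely where the strict balance condition \eqref{eq:gap-z} (through $\overline{s}-\underline{s}<r_\ast$, hence $\overline{s}-2<\underline{s}-1$) and the uniform-in-$\epsilon$ integrability of $\nabla u_\epsilon$ are used; once $R_\epsilon\to 0$ is in hand, the remaining steps are routine and identical in spirit to the corresponding parts of the proofs of Lemma~\ref{lem:pointwise-epsilon} and of the assertions in Section~\ref{sec:reg-existence}.
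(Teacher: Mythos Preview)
Your proof is correct, but it differs from the paper's in one small yet consequential choice: where to add and subtract.

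Both arguments test \eqref{eq:def-reg} and \eqref{eq:def} with $u_\epsilon-u$ (you are right that this is simpler than Lemma~\ref{lem:pointwise-epsilon}, since no density sequences are needed). The paper, however, adds and subtracts $\mathcal{F}_\epsilon^{(0,0)}(z,\nabla u)\nabla u$ --- the \emph{regularized} flux evaluated at the \emph{fixed} limit $\nabla u$ --- and works with $\mathcal{G}_\epsilon(\nabla u_\epsilon,\nabla u)$ (Lemma~\ref{le:cont-convergence} applies uniformly for $\epsilon\in[0,1)$). Its remainder is
\[
J_2(\epsilon)=-\int_{Q_T}\bigl(\mathcal{F}_\epsilon^{(0,0)}(z,\nabla u)-\mathcal{F}_0^{(0,0)}(z,\nabla u)\bigr)\nabla u\cdot\nabla(u_\epsilon-u)\,dz,
\]
and since the flux difference is evaluated at a single fixed function, $J_2\to 0$ follows directly from H\"older's inequality and dominated convergence: the modular $\int|\sigma_\epsilon|^{p'(z)}\,dz$ with $\sigma_\epsilon=\bigl|(\epsilon^2+|\nabla u|^2)^{\frac{p-2}{2}}\nabla u-|\nabla u|^{p-2}\nabla u\bigr|$ has the $\epsilon$-free majorant $C(1+|\nabla u|^{p(z)})$ and $\sigma_\epsilon\to 0$ pointwise. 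The paper also treats the time derivative as $J_1=-\int(u_{\epsilon t}-u_t)(u_\epsilon-u)\,dz$, a product of weakly and strongly convergent sequences, rather than integrating it to a boundary term.

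You instead add and subtract $\mathcal{F}_0^{(0,0)}(z,\nabla u_\epsilon)\nabla u_\epsilon$ --- the \emph{degenerate} flux at the \emph{varying} argument $\nabla u_\epsilon$ --- and work with $\mathcal{G}_0$. This forces your remainder $R_\epsilon$ to carry a flux difference at a moving point, so dominated convergence is unavailable and you must extract a quantitative rate $O(\epsilon^\gamma)$ from the uniform-in-$\epsilon$ integrability of $\nabla u_\epsilon$. Your sketch of this (splitting at $|\nabla u_\epsilon|=\epsilon$, Taylor bound, then Young with the gap condition $\overline{s}-\underline{s}<r_\ast<1$) is correct but noticeably more delicate, particularly in the region $\{\epsilon<|\nabla u_\epsilon|<1\}$ when $\overline{s}(z)<2$, where the factor $|\xi|^{\overline{s}(z)-2}$ is singular and one must reuse the lower bound $|\xi|>\epsilon$.

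In short: your route works, but the paper's choice of decomposition trades a quantitative uniform estimate for a one-line dominated convergence argument.
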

\begin{proof}
The limit function $u\in \mathbb{W}_{\overline{s}(\cdot)}(Q_T)$ satisfies the identity
\begin{equation}
\label{eq:limit-eq}
\int_{Q_T} u_t\xi\,dz+\int_{Q_T} \mathcal{F}_0^{(0,0)}(z,\nabla u)\nabla u\cdot\nabla \xi \,dz=\int_{Q_T} f \xi \,dz\qquad \forall \xi\in \mathbb{W}_{\overline{s}(\cdot)}(Q_T).
\end{equation}
As distinguished from the case of the finite-dimensional Galerkin's approximations, the inclusions $u_\epsilon,u\in \mathbb{W}_{\overline{s}(\cdot)}(Q_T)$ allow us to take $u_\epsilon-u$ for the test-function in identities \eqref{eq:def-reg} and \eqref{eq:limit-eq}. Let us subtract these identities and rearrange the result in the following way:
\begin{equation}
\label{eq:rearrangement-1}
\begin{split}
\mathcal{G}_{\epsilon} & (\nabla u_\epsilon,\nabla u) \equiv \int_{Q_T}\left(\mathcal{F}^{(0,0)}_{\epsilon}(z,\nabla u_\epsilon)\nabla u_\epsilon-\mathcal{F}^{(0,0)}_{\epsilon}(z,\nabla u)\nabla u\right)\cdot \nabla (u_\epsilon-u)\,dz
\\
& = -\int_{Q_T}(u_{\epsilon}-u)_t(u_{\epsilon}-u)\,dz
\\
&
\qquad - \int_{Q_T}\left(\mathcal{F}^{(0,0)}_{\epsilon}(z,\nabla u)\nabla u- \mathcal{F}^{(0,0)}_{0}(z,\nabla u)\nabla u \right)\cdot \nabla (u_\epsilon-u)\,dz
\\
& \equiv  J_1(\epsilon)+J_2(\epsilon).
\end{split}
\end{equation}
By the choice of $\{u_\epsilon\}$
\[
J_1(\epsilon)=-\int_{Q_T}(u_{\epsilon t}-u_t)(u_\epsilon-u)\,dz\to 0 \quad \text{when $\epsilon\to 0$}
\]
as the product of weakly and strongly convergent sequences. By the generalized H\"older inequality \eqref{eq:Holder}

\[
\begin{split}
\vert J_2(\epsilon)\vert  & \leq 2 a^+\|  \nabla (u_\epsilon-u)\|  _{p(\cdot),Q_T} \|  (\epsilon^2+\vert \nabla u\vert ^2)^{\frac{p(\cdot)-2}{2}}\nabla u-\vert \nabla u\vert ^{p(\cdot)-2}\nabla u\|  _{p'(\cdot),Q_T}
\\
&
\qquad + 2 b^+\|  \nabla (u_\epsilon-u)\|  _{q(\cdot),Q_T} \|  (\epsilon^2+\vert \nabla u\vert ^2)^{\frac{q(\cdot)-2}{2}}\nabla u-\vert \nabla u\vert ^{q(\cdot)-2}\nabla u\|  _{q'(\cdot),Q_T}.
 \end{split}
\]
By \eqref{eq:conv-degenerate} the first factors in both terms on the right-hand side are bounded by an independent of $\epsilon$ constant $C$. To show that the second factors  tend to zero as $\epsilon\to 0$, it is sufficient to check that this is true for the modulars

\[
\mathcal{M}_{1, \epsilon}=\int_{Q_T} \sigma_{1,\epsilon}^{p'(z)}(z)\,dz, \qquad \sigma_{1, \epsilon}(z)\equiv \vert  (\epsilon^2+\vert \nabla u\vert ^2)^{\frac{p(z)-2}{2}}\nabla u- \vert \nabla u\vert ^{p(z)-2}\nabla u\vert ,
\]
\[
\mathcal{M}_{2, \epsilon}=\int_{Q_T} \sigma_{2,\epsilon}^{q'(z)}(z)\,dz, \qquad \sigma_{2,\epsilon}(z)\equiv \vert (\epsilon^2+\vert \nabla u\vert ^2)^{\frac{q(z)-2}{2}}\nabla u- \vert \nabla u\vert ^{q(z)-2}\nabla u\vert .
\]
We will consider in detail the integral $\mathcal{M}_{1,\epsilon}$. It is asserted that $\sigma_{1, \epsilon}(z) \to 0$ as $\epsilon\to 0$ for a.e. $z\in Q_T$. Indeed: for every $z\in Q_T$ either $\vert \nabla u(z)\vert =0$, whence  $\sigma_{1,\epsilon}(z)=0$ for all $\epsilon$, or $\vert \nabla u(z)\vert =\delta>0$ and

\[
\sigma_{1,\epsilon}(z)\leq \left\vert (\epsilon^2 +\delta^2)^{\frac{p(z)-2}{2}}- \delta^{p(z)-2}\right\vert \delta=\left\vert (\epsilon^2 +\delta^2)^{\frac{p(z)-1}{2}}- \delta^{p(z)-1}\right\vert \to 0\quad \text{as $\epsilon\to 0$}.
\]
The functions $\sigma_{1,\epsilon}^{p'(z)}(z)$ have the independent of $\epsilon$ integrable majorant:

\[
\begin{split}
\sigma^{p'(z)}_{1,\epsilon}(z) & \leq \left(\vert \nabla u\vert ^{p(z)-1}+\vert \nabla u\vert (\epsilon^2+\vert \nabla u\vert ^2)^{\frac{p(z)-2}{2}}\right)^{\frac{p(z)}{p(z)-1}}
\\
& \leq \left(\vert \nabla u\vert ^{p(z)-1}+(\epsilon^2+\vert \nabla u\vert ^2)^{\frac{p(z)-1}{2}}\right)^{\frac{p(z)}{p(z)-1}}
 \leq C\left(1+\vert \nabla u\vert ^{p(z)}\right)
\end{split}
\]
with a constant $C=C(p^\pm)$. By the dominated convergence theorem $\mathcal{M}_{1,\epsilon}\to 0$ as $\epsilon\to 0$. The same arguments show that $\mathcal{M}_{2,\epsilon}\to 0$ as $\epsilon\to 0$. Returning to \eqref{eq:rearrangement-1} we conclude that $\mathcal{G}_\epsilon(\nabla u_\epsilon,\nabla u)\to 0$ as $\epsilon\to 0$. By Lemma \ref{le:cont-convergence} $\vert \nabla (u_\epsilon-u)\vert \to 0$ in $L^{\underline{s}(\cdot)}(Q_T)$, whence the pointwise convergence $\nabla u_\epsilon\to \nabla u$ a.e. in $Q_T$.
\end{proof}

\subsection{Second-order regularity. Proof of Theorem \ref{th:global-reg}}
Fix $i,j\in \{1,2,\ldots,N\}$. By Theorem \ref{th:flux-a.e.} and Lemma \ref{le:a-e-conv}, there exists $\eta_{ij}\in L^{2}(Q_T)$ such that
\[
\begin{split}
& \text{$D_{j}\left((\mathcal{F}_{\epsilon_k}^{(0,0)})^{\frac{1}{2}}(z,\nabla u_{\epsilon_k})D_i u_{\epsilon_k}\right)\rightharpoonup \eta_{ij}$ in $L^{2}(Q_T)$},
\\
& (\mathcal{F}^{(0,0)}_{\epsilon_k})^{\frac{1}{2}}(z,\nabla u_{\epsilon_k})\nabla u_{\epsilon_k}\to (\mathcal{F}^{(0,0)}_{0})^{\frac{1}{2}}(z,\nabla u)\nabla u
\\
&\qquad \qquad \qquad
\equiv \left( a(z) \vert \nabla u\vert ^{p(z)-2} + b(z) \vert \nabla u\vert ^{q(z)-2} \right)^{\frac{1}{2}}\nabla u\quad \text{a.e. in $Q_T$}.
\end{split}
\]
By virtue of  \eqref{eq:grad-high-eps} $\|  (\mathcal{F}_{\epsilon_k}^{(0,0)})^{\frac{1}{2}}(z,\nabla u_{\epsilon_k})\nabla u_{\epsilon_k}\|  _{2+\delta,Q_T}$ are uniformly bounded for some $\delta>0$ whence by the Vitali convergence theorem
$
(\mathcal{F}_{\epsilon_k}^{(0,0)})^{\frac{1}{2}}(z,\nabla u_{\epsilon_k})\nabla u_{\epsilon_k}\to (\mathcal{F}_{0}^{(0,0)})^{\frac{1}{2}}(z,\nabla u)\nabla u$ in $L^{2}(Q_T)$.
It follows that $\eta_{ij}=D_{j}\left((\mathcal{F}_{0}^{(0,0)})^{\frac{1}{2}}(z,\nabla u)D_i u\right)$: for every $\phi \in C_0^{\infty}(\overline{Q}_T)$

\[
\begin{split}
-(\eta_{ij},\phi)_{2,Q_T} & =-\lim_{k\to \infty}\left(D_{j}\left((\mathcal{F}_{\epsilon_k}^{(0,0)})^{\frac{1}{2}}(z,\nabla u_{\epsilon_k})D_i u_{\epsilon_k}\right),\phi\right)_{2,Q_T}
\\
& = \lim_{k\to \infty}\left((\mathcal{F}_{\epsilon_k}^{(0,0)})^{\frac{1}{2}}(z,\nabla u_{\epsilon_k})D_i u_{\epsilon_k},D_j\phi\right)_{2,Q_T}
\\
&
= \left((\mathcal{F}_{0}^{(0,0)})^{\frac{1}{2}}(z,\nabla u)D_i u,D_j\phi\right)_{2,Q_T}.
\end{split}
\]

Let $\inf_{Q_T}\overline{s}(z)< 2$ and, thus, $\mathcal{D}_{p,q}=Q_T\cap \{z:\,\overline{s}(z)< 2\}\not=\emptyset$. Arguing as in the proof of Theorem \ref{th:flux-a.e.} we find that for every $\mathcal{B} \Subset \mathcal{D}_{p,q}$

\[
\begin{split}
\int_{\mathcal{B}}\vert D^2_{ij}u_{\epsilon}\vert ^{\underline{s}(z)}\,dz & \leq C
\end{split}
\]
with a constant $C$ independent of $\epsilon$ and $\mathcal{B}$. It follows that $D^2_{ij}u_{\epsilon_k}\rightharpoonup \zeta_{ij}\in L^{\underline{s}(\cdot)}(\mathcal{B})$ (up to a subsequence). Because of the weak convergence $\nabla u_{\epsilon_k}\rightharpoonup \nabla u$ in $L^{r(\cdot)}(Q_T)$ with $r(z)= \max\{2, \overline{s}(z)\}$, it is necessary that $\zeta_{ij}=D^{2}_{ij}u$. The
estimate $\|  D^{2}_{ij}u\|  _{\underline{s}(\cdot),\mathcal{B}}\leq C$ follows from the uniform estimate on $D^2_{ij}u_\epsilon$ as in the proof of Theorem \ref{th:flux-a.e.}.

\begin{remark}[Global boundedness of strong solutions]
Let the conditions of Theorem \ref{th:main-result-1} be fulfilled and, in addition, $f \in L^1(0,T; L^\infty(\Omega))$ and $u_0 \in L^\infty(\Omega)$. Then the strong solution of problem \eqref{eq:main} is bounded and satisfies the estimate
\begin{equation}\label{est:bdd}
\|  u(\cdot,t)\|  _{\infty,\Omega} \leq \|  u_0\|  _{\infty, \Omega} + \int_0^t  \|  f(\cdot, \tau)\|  _{\infty, \Omega}~d\tau
\end{equation}
\end{remark}
The proof is an imitation of the proof of \cite[Th.4.3]{ant-shm-book-2015}. Testing \eqref{eq:def} with the function $u_k^{2r-1}$, $u_k:= \min\{\vert u\vert , k\} \sign(u)$, $r\in \mathbb{N}$ and $k\geq K=1+\|  u_0\|  _{\infty,\Omega}$, we arrive at an integral inequality for the function $y_r(t)\equiv \|  u_k(t)\|  _{2r,\Omega}$. The conclusion follows because the solutions of this inequality are bounded uniformly with respect to $r$.

\bibliographystyle{siam}%{sn-mathphys}%
\bibliography{d-ph} %double-phase-refs}

\begin{thebibliography}{10}

\bibitem{AMS-2004}
{\sc E.~Acerbi, G.~Mingione, and G.~A. Seregin}, {\em Regularity results for
  parabolic systems related to a class of non-{N}ewtonian fluids}, Ann. Inst.
  H. Poincar\'{e} Anal. Non Lin\'{e}aire, 21 (2004), pp.~25--60.

\bibitem{AB-2019}
{\sc K.~Adimurthi and S.-S. Byun}, {\em Boundary higher integrability for very
  weak solutions of quasilinear parabolic equations}, J. Math. Pures Appl. (9),
  121 (2019), pp.~244--285.

\bibitem{ABO-2020}
{\sc K.~Adimurthi, S.-S. Byun, and J.~Oh}, {\em Interior and boundary higher
  integrability of very weak solutions for quasilinear parabolic equations with
  variable exponents}, Nonlinear Anal., 194 (2020), pp.~111370, 54.

\bibitem{Alves-2021}
{\sc C.~O. Alves and T.~Boudjeriou}, {\em Existence of solution for a class of
  heat equation with double criticality}, J. Math. Anal. Appl., 504 (2021),
  pp.~Paper No. 125403, 21.

\bibitem{Alves-Radulescu-2020}
{\sc C.~O. Alves and V.~D. R\u{a}dulescu}, {\em The {L}ane-{E}mden equation
  with variable double-phase and multiple regime}, Proc. Amer. Math. Soc., 148
  (2020), pp.~2937--2952.

\bibitem{ant-shm-book-2015}
{\sc S.~Antontsev and S.~Shmarev}, {\em Evolution {PDE}s with nonstandard
  growth conditions}, vol.~4 of Atlantis Studies in Differential Equations,
  Atlantis Press, Paris, 2015.
\newblock Existence, uniqueness, localization, blow-up.

\bibitem{Ant-Zh-2005}
{\sc S.~Antontsev and V.~Zhikov}, {\em Higher integrability for parabolic
  equations of {$p(x,t)$}-{L}aplacian type}, Adv. Differential Equations, 10
  (2005), pp.~1053--1080.

\bibitem{A-S}
{\sc R.~{Arora} and S.~Shmarev}, {\em Strong solutions of evolution equations
  with $p(x,t)$-laplacian: existence, global higher integrability of the
  gradients and second-order regularity}, J. Math. Anal. Appl., 493 (2020),
  p.~31.

\bibitem{arora_shmarev2020}
{\sc R.~Arora and S.~Shmarev}, {\em Strong solutions of the double phase
  parabolic equations with variable growth}, arXiv preprint arXiv:2010.08306v2,
   (2020).

\bibitem{bahrouni-2019}
{\sc A.~Bahrouni, V.~D. R{\u{a}}dulescu, and D.~D. Repov{\v{s}}}, {\em Double
  phase transonic flow problems with variable growth: nonlinear patterns and
  stationary waves}, Nonlinearity, 32 (2019), p.~2481.

\bibitem{ball-1976}
{\sc J.~M. Ball}, {\em Convexity conditions and existence theorems in nonlinear
  elasticity}, Arch. Ration. Mech. Anal., 63 (1976), pp.~337--403.

\bibitem{benci-2000}
{\sc V.~Benci, P.~d'Avenia, D.~Fortunato, and L.~Pisani}, {\em Solitons in
  several space dimensions: Derrick's problem and infinitely many solutions},
  Arch. Ration. Mech. Anal., 154 (2000), pp.~297--324.

\bibitem{B-D-2011}
{\sc V.~B\"{o}gelein and F.~Duzaar}, {\em Higher integrability for parabolic
  systems with non-standard growth and degenerate diffusions}, Publ. Mat., 55
  (2011), pp.~201--250.

\bibitem{BDM-2013}
{\sc V.~B\"{o}gelein, F.~Duzaar, and P.~Marcellini}, {\em Parabolic equations
  with {$p,q$}-growth}, J. Math. Pures Appl. (9), 100 (2013), pp.~535--563.

\bibitem{BDM-2013-ARMA}
\leavevmode\vrule height 2pt depth -1.6pt width 23pt, {\em Parabolic systems
  with {$p,q$}-growth: a variational approach}, Arch. Ration. Mech. Anal., 210
  (2013), pp.~219--267.

\bibitem{Brezis-book}
{\sc H.~Brezis}, {\em Functional analysis, {S}obolev spaces and partial
  differential equations}, Universitext, Springer, New York, 2011.

\bibitem{Gwiazda-2021}
{\sc M.~Bul\'{\i}\v{c}ek, P.~Gwiazda, and J.~Skrzeczkowski}, {\em Parabolic
  equations in {M}usielak-{O}rlicz spaces with discontinuous in time
  {$N$}-function}, J. Differential Equations, 290 (2021), pp.~17--56.

\bibitem{cherfils-2005}
{\sc L.~Cherfils and Y.~Il'Yasov}, {\em On the stationary solutions of
  generalized reaction diffusion equations with $p$-$q $-laplacian}, Commun.
  Pure Appl. Anal., 4 (2005), p.~9.

\bibitem{Chipot-mon}
{\sc M.~Chipot}, {\em Elliptic equations: an introductory course},
  Birkh\"{a}user Advanced Texts: Basler Lehrb\"{u}cher. [Birkh\"{a}user
  Advanced Texts: Basel Textbooks], Birkh\"{a}user Verlag, Basel, 2009.

\bibitem{chlebicka-2018}
{\sc I.~Chlebicka}, {\em A pocket guide to nonlinear differential equations in
  {M}usielak--{O}rlicz spaces}, Nonlinear Anal., 175 (2018), pp.~1--27.

\bibitem{Chlebicka-2019-1}
{\sc I.~Chlebicka, P.~Gwiazda, and A.~Zatorska-Goldstein}, {\em Parabolic
  equation in time and space dependent anisotropic {M}usielak-{O}rlicz spaces
  in absence of {L}avrentiev's phenomenon}, Ann. Inst. H. Poincar\'{e} Anal.
  Non Lin\'{e}aire, 36 (2019), pp.~1431--1465.

\bibitem{CM-2020}
{\sc A.~Cianchi and V.~G. Maz'ya}, {\em Second-{O}rder {R}egularity for
  {P}arabolic {$p$}-{L}aplace {P}roblems}, J. Geom. Anal., 30 (2020),
  pp.~1565--1583.

\bibitem{colombo2015bounded}
{\sc M.~Colombo and G.~Mingione}, {\em Bounded minimisers of double phase
  variational integrals}, Arch. Ration. Mech. Anal, 218 (2015), pp.~219--273.

\bibitem{colombo2015regularity}
\leavevmode\vrule height 2pt depth -1.6pt width 23pt, {\em Regularity for
  double phase variational problems}, Arch. Ration. Mech. Anal., 215 (2015),
  pp.~443--496.

\bibitem{Blanco-2021}
{\sc A.~Crespo-Blanco, L.~Gasi\'{n}ski, P.~Harjulento, and P.~Winkert}, {\em A
  new class of double phase variable exponent problems: Existence and
  uniqueness}, arXiv:2103.08928v1,  (2021), p.~29.

\bibitem{UF}
{\sc D.~V. Cruz-Uribe and A.~Fiorenza}, {\em Variable {L}ebesgue spaces},
  Applied and Numerical Harmonic Analysis, Birkh\"{a}user/Springer, Heidelberg,
  2013.
\newblock Foundations and harmonic analysis.

\bibitem{DHHR-2011}
{\sc L.~Diening, P.~Harjulehto, P.~H\"{a}st\"{o}, and M.~R{u}\v{z}i\v{c}ka},
  {\em Lebesgue and {S}obolev spaces with variable exponents}, vol.~2017 of
  Lecture Notes in Mathematics, Springer, Heidelberg, 2011.

\bibitem{DNR-2012}
{\sc L.~Diening, P.~N\"{a}gele, and M.~Ru\v{z}i\v{c}ka}, {\em Monotone operator
  theory for unsteady problems in variable exponent spaces}, Complex Var.
  Elliptic Equ., 57 (2012), pp.~1209--1231.

\bibitem{Duzaar-Mingione-Steffen-2011}
{\sc F.~Duzaar, G.~Mingione, and K.~Steffen}, {\em Parabolic systems with
  polynomial growth and regularity}, Mem. Amer. Math. Soc., 214 (2011),
  pp.~x+118.

\bibitem{Elmani-Meskine-2005-1}
{\sc A.~Elmahi and D.~Meskine}, {\em Parabolic equations in {O}rlicz spaces},
  J. London Math. Soc. (2), 72 (2005), pp.~410--428.

\bibitem{E}
{\sc A.~H. Erhardt}, {\em Compact embedding for {$p(x,t)$}-{S}obolev spaces and
  existence theory to parabolic equations with {$p(x,t)$}-growth}, Rev. Mat.
  Complut., 30 (2017), pp.~35--61.

\bibitem{esposito2004sharp}
{\sc L.~Esposito, F.~Leonetti, and G.~Mingione}, {\em Sharp regularity for
  functionals with (p, q) growth}, J. Differential Equations, 204 (2004),
  pp.~5--55.

\bibitem{Fan-2012}
{\sc X.~Fan}, {\em Differential equations of divergence form in
  {M}usielak-{S}obolev spaces and a sub-supersolution method}, J. Math. Anal.
  Appl., 386 (2012), pp.~593--604.

\bibitem{Gasinski-Winkert-2020}
{\sc L.~Gasi\'{n}ski and P.~Winkert}, {\em Existence and uniqueness results for
  double phase problems with convection term}, J. Differential Equations, 268
  (2020), pp.~4183--4193.

\bibitem{Gianetti-Passarelli-Scheven-2020}
{\sc F.~Giannetti, A.~Passarelli~di Napoli, and C.~Scheven}, {\em On higher
  differentiability of solutions of parabolic systems with discontinuous
  coefficients and {$(p,q)$}-growth}, Proc. Roy. Soc. Edinburgh Sect. A, 150
  (2020), pp.~419--451.

\bibitem{GT}
{\sc D.~Gilbarg and N.~S. Trudinger}, {\em Elliptic partial differential
  equations of second order}, Classics in Mathematics, Springer-Verlag, Berlin,
  2001.
\newblock Reprint of the 1998 edition.

\bibitem{GWWrZ-2015}
{\sc P.~Gwiazda, P.~Wittbold, A.~Wr\'{o}blewska-Kami\'{n}ska, and
  A.~Zimmermann}, {\em Renormalized solutions to nonlinear parabolic problems
  in generalized {M}usielak-{O}rlicz spaces}, Nonlinear Anal., 129 (2015),
  pp.~1--36.

\bibitem{Hasto-Harjulehto-2019-book}
{\sc P.~Harjulehto and P.~H\"{a}st\"{o}}, {\em Orlicz spaces and generalized
  {O}rlicz spaces}, vol.~2236 of Lecture Notes in Mathematics, Springer, Cham,
  2019.

\bibitem{Hasto-Ok-2019}
{\sc P.~{H{\"a}st{\"o}} and J.~{Ok}}, {\em {Maximal regularity for local
  minimizers of non-autonomous functionals}}, arXiv e-prints,  (2019),
  p.~arXiv:1902.00261.

\bibitem{hasto-ok}
{\sc P.~H{\"a}st{\"o} and J.~Ok}, {\em Higher integrability for parabolic
  systems with orlicz growth}, J. Differential Equations, 300 (2021),
  pp.~925--948.

\bibitem{Lewis-Kinnunen-2000}
{\sc J.~Kinnunen and J.~L. Lewis}, {\em Higher integrability for parabolic
  systems of {$p$}-{L}aplacian type}, Duke Math. J., 102 (2000), pp.~253--271.

\bibitem{KR}
{\sc O.~Kov\'{a}\v{c}ik and J.~R\'{a}kosn\'{\i}k}, {\em On spaces {$L^{p(x)}$}
  and {$W^{k,p(x)}$}}, Czechoslovak Math. J., 41(116) (1991), pp.~592--618.

\bibitem{Liu-Dai-2018}
{\sc W.~Liu and G.~Dai}, {\em Existence and multiplicity results for double
  phase problem}, J. Differential Equations, 265 (2018), pp.~4311--4334.

\bibitem{marcellini-1991}
{\sc P.~Marcellini}, {\em Regularity and existence of solutions of elliptic
  equations with $p,q$-growth conditions}, J. Differential Equations, 90
  (1991), pp.~1--30.

\bibitem{marcellini-2020}
\leavevmode\vrule height 2pt depth -1.6pt width 23pt, {\em A variational
  approach to parabolic equations under general and $p,q$-growth conditions},
  Nonlinear Anal., 194 (2020), p.~111456.

\bibitem{ok-2020}
{\sc J.~Ok}, {\em Regularity for double phase problems under additional
  integrability assumptions}, Nonlinear Anal., 194 (2020), p.~111408.

\bibitem{parviainen-2007}
{\sc M.~Parviainen}, {\em Global higher integrability for nonlinear parabolic
  partial differential equations in nonsmooth domains}, Helsinki University of
  Technology, 2007.

\bibitem{Parviainen-2009}
{\sc M.~Parviainen}, {\em Reverse h{\"o}lder inequalities for singular
  parabolic equations near the boundary}, J. Differential Equations, 246
  (2009), pp.~512--540.

\bibitem{Radulescu-2019}
{\sc V.~D. R\u{a}dulescu}, {\em Isotropic and anisotropic double-phase
  problems: old and new}, Opuscula Math., 39 (2019), pp.~259--279.

\bibitem{simon-1987}
{\sc J.~Simon}, {\em Compact sets in the space {$L^p(0,T;B)$}}, Ann. Mat. Pura
  Appl. (4), 146 (1987), pp.~65--96.

\bibitem{Singer-2015}
{\sc T.~Singer}, {\em Parabolic equations with {$p,q$}-growth: the subquadratic
  case}, Q. J. Math., 66 (2015), pp.~707--742.

\bibitem{Swirczewska-Gwiazda-2014}
{\sc A.~\'{S}wierczewska Gwiazda}, {\em Nonlinear parabolic problems in
  {M}usielak-{O}rlicz spaces}, Nonlinear Anal., 98 (2014), pp.~48--65.

\bibitem{ZZX}
{\sc C.~Zhang, S.~Zhou, and X.~Xue}, {\em Global gradient estimates for the
  parabolic {$p(x,t)$}-{L}aplacian equation}, Nonlinear Anal., 105 (2014),
  pp.~86--101.

\bibitem{Radulescu-Zhang-2018}
{\sc Q.~Zhang and V.~D. R\u{a}dulescu}, {\em Double phase anisotropic
  variational problems and combined effects of reaction and absorption terms},
  J. Math. Pures Appl. (9), 118 (2018), pp.~159--203.

\bibitem{zhikov-1986}
{\sc V.~V. Zhikov}, {\em Averaging of functionals of the calculus of variations
  and elasticity theory}, Izv. Akad. Nauk SSSR Ser. Mat., 50 (1986),
  pp.~675--710.

\bibitem{Zhikov-1995}
{\sc V.~V. Zhikov}, {\em On {L}avrentiev's phenomenon}, Russian J. Math. Phys.,
  3 (1995), pp.~249--269.

\bibitem{ZP}
{\sc V.~V. Zhikov and S.~E. Pastukhova}, {\em On the property of higher
  integrability for parabolic systems of variable order of nonlinearity}, Mat.
  Zametki, 87 (2010), pp.~179--200.

\end{thebibliography}

\end{document}